\documentclass[11pt]{amsart}
\usepackage[top=1.5in, bottom=1.5in, left=1.5in, right=1.5in]{geometry}
\geometry{letterpaper}

\expandafter\let\csname ver@amsthm.sty\endcsname\relax


\usepackage{graphicx}
\usepackage{tikz}
\usetikzlibrary{arrows}
\usetikzlibrary{decorations.markings}
\usetikzlibrary{decorations.pathmorphing}

\usepackage{amsmath}
\usepackage{amssymb}
\usepackage{enumerate}
\usepackage{mathdots}
\usepackage{mathtools,euscript}
\usepackage{youngtab}
\usepackage{hyperref}
\usepackage{amsthm}
\usepackage{dsfont,rotating}
\usepackage[capitalize,noabbrev]{cleveref}

\allowdisplaybreaks


\numberwithin{equation}{section}

\newtheorem{thm}{Theorem}[section]
\newtheorem{lemma}[thm]{Lemma}
\newtheorem{cor}[thm]{Corollary}
\newtheorem{prop}[thm]{Proposition}
\newtheorem{conj}[thm]{Conjecture}

\newtheorem{Definition}[thm]{Definition}
\newenvironment{definition}
  {\begin{Definition}\rm}{\end{Definition}}

\newtheorem{Example}[thm]{Example}
\newenvironment{example}
  {\begin{Example}\rm}{\end{Example}}

\newtheorem{Remark}[thm]{Remark}
\newenvironment{remark}
  {\begin{Remark}\rm}{\end{Remark}}

\crefname{thm}{Theorem}{Theorems}
\crefname{lemma}{Lemma}{Lemmas}
\crefname{cor}{Corollary}{Corollaries}
\crefname{prop}{Proposition}{Propositions}
\crefname{conj}{Conjecture}{Conjectures}
\crefname{question}{Question}{Questions}

\crefname{definition}{Definition}{Definitions}
\crefname{example}{Example}{Examples}
\crefname{remark}{Remark}{Remarks}


\newcommand{\emailhref}[1]{\email{\href{#1}{#1}}}

\newcommand{\dfn}[1]{\textcolor{blue}{\emph{#1}}}

\renewcommand{\binom}{\genfrac{(}{)}{0pt}{}}
\DeclareRobustCommand{\qbinom}{\genfrac{\lbrack}{\rbrack}{0pt}{}}

\definecolor{green}{HTML}{006600}

\newcommand{\rr}{\mathbb{R}}
\newcommand{\zz}{\mathbb{Z}}

\newcommand{\kk}{\mathbb{K}}

\renewcommand{\ss}{\mathfrak{S}}

\newcommand{\cala}{\mathcal{A}}

\newcommand{\calc}{\mathcal{C}}

\newcommand{\calf}{\mathcal{F}}

\newcommand{\calj}{\mathcal{J}}

\newcommand{\calo}{\mathcal{O}}
\newcommand{\calp}{\mathcal{P}}

\newcommand{\calr}{\mathcal{R}}

\newcommand{\sfa}{\mathsf{A}}
\newcommand{\sfb}{\mathsf{B}}

\newcommand{\sfp}{\mathsf{P}}
\newcommand{\sfpl}{\mathsf{P}_\mathsf{L}}
\newcommand{\sfpr}{\mathsf{P}_\mathsf{R}}	
\newcommand{\sfppl}{\mathsf{P}'_\mathsf{L}}
\newcommand{\sfppr}{\mathsf{P}'_\mathsf{R}}
\newcommand\rect[2]{\mathsf{[#1]}\times\mathsf{[#2]}}
\newcommand{\sfphat}{\widehat{\sfp}}
\newcommand{\hatz}{\widehat{0}}
\newcommand{\hato}{\widehat{1}}

\newcommand\row{\operatorname{Row}}
\newcommand\rowA{\row_{\cala}}
\newcommand\rowJ{\row_{\calj}}
\newcommand\rowF{\row_{\calf}}

\newcommand\rvac{\operatorname{Rvac}}
\newcommand\drvac{\operatorname{DRvac}}
\newcommand\rvacA{\rvac_{\cala}}
\newcommand\rvacF{\rvac_{\calf}}
\newcommand\drvacA{\drvac_{\cala}}
\newcommand\drvacF{\drvac_{\calf}}

\newcommand\rowFpl{\rowF^{\mathrm{PL}}}
\newcommand\rowApl{\rowA^{\mathrm{PL}}}
\newcommand\rvacFpl{\rvacF^{\mathrm{PL}}}
\newcommand\rvacApl{\rvacA^{\mathrm{PL}}}
\newcommand\drvacFpl{\drvacF^{\mathrm{PL}}}
\newcommand\drvacApl{\drvacA^{\mathrm{PL}}}

\newcommand\rowFb{\rowF^{\mathrm{B}}}
\newcommand\rowAb{\rowA^{\mathrm{B}}}
\newcommand\rvacFb{\rvacF^{\mathrm{B}}}
\newcommand\rvacAb{\rvacA^{\mathrm{B}}}
\newcommand\drvacFb{\drvacF^{\mathrm{B}}}
\newcommand\drvacAb{\drvacA^{\mathrm{B}}}

\newcommand\rowFbj[1]{\row_{\calf,\geq #1}^{\mathrm{B}}}
\newcommand\rowAbj[1]{\row_{\cala,\geq #1}^{\mathrm{B}}}
\newcommand\rvacFbj[1]{\rvac_{\calf,\geq #1}^{\mathrm{B}}}
\newcommand\rvacAbj[1]{\rvac_{\cala,\geq #1}^{\mathrm{B}}}

\newcommand{\ibar}{\overline{\imath}}
\newcommand{\jbar}{\overline{\jmath}}

\newcommand{\iab}{\iota^{\mathrm{B}}_{\sfa}}
\newcommand{\iapl}{\iota^{\mathrm{PL}}_{\sfa}}
\newcommand{\ibb}{\iota^{\mathrm{B}}_{\sfb}}
\newcommand{\ibpl}{\iota^{\mathrm{PL}}_{\sfb}}

\newcommand{\aut}{\operatorname{Aut}}
\newcommand{\dyck}{\operatorname{Dyck}}
\newcommand{\cat}{\operatorname{Cat}}
\newcommand{\rk}{\operatorname{rk}}
\newcommand{\mc}{\operatorname{MC}}
\newcommand{\flip}{\operatorname{Flip}}

\newcommand{\st}{\operatorname{ST}}
\newcommand{\oy}{\EuScript{Y}}
\newcommand{\oyb}{\oy^{\mathrm{B}}}
\newcommand{\down}{\nabla}
\newcommand{\up}{\Delta}

\newcommand{\h}{h}
\newcommand{\hpl}{h^{\mathrm{PL}}}
\newcommand{\hb}{h^{\mathrm{B}}}
\newcommand{\maj}{\operatorname{maj}}

\newcommand{\lk}{\operatorname{LK}}
\newcommand{\LK}{\operatorname{LK}}
\newcommand{\lkpl}{\LK^{\mathrm{PL}}}

\newcommand{\lkb}{\LK^{\mathrm{B}}}

\newcommand{\calfpl}{\calf^{\mathrm{PL}}}
\newcommand{\caljpl}{\calj^{\mathrm{PL}}}
\newcommand{\calapl}{\cala^{\mathrm{PL}}}
\newcommand{\calfb}{\calf^{\mathrm{B}}}
\newcommand{\caljb}{\calj^{\mathrm{B}}}
\newcommand{\calab}{\cala^{\mathrm{B}}}

\newcommand{\bft}{\mathbf{t}}
\newcommand{\bfT}{\mathbf{T}}
\newcommand{\bftau}{\boldsymbol{\tau}}

\newcommand\longmapsfrom{\mathrel{\reflectbox{\ensuremath{\longmapsto}}}}
\newcommand{\longmapsdown}{\text{\begin{rotate}{-90}$\longmapsto$\end{rotate}}}
\newcommand{\longmapsup}{\text{\begin{rotate}{-90}$\longmapsfrom$\end{rotate}}}


\title{The birational Lalanne--Kreweras involution}

\author{Sam Hopkins}
\address{Department of Mathematics, Howard University, Washington, DC}
\emailhref{samuelfhopkins@gmail.com}
\thanks{Sam Hopkins was supported by NSF grant \#1802920}

\author{Michael Joseph}
\address{Department of Technology and Mathematics, Dalton State College, Dalton, GA}
\emailhref{mjosephmath@gmail.com}

\keywords{Lalanne--Kreweras involution, rowvacuation, rowmotion, toggles, piecewise-linear and birational lifts, homomesy}

\begin{document}

\begin{abstract}
The Lalanne--Kreweras involution is an involution on the set of Dyck paths which combinatorially exhibits the symmetry of the number of valleys and major index statistics. We define piecewise-linear and birational extensions of the Lalanne--Kreweras involution. Actually, we show that the Lalanne--Kreweras involution is a special case of a more general operator, called rowvacuation, which acts on the antichains of any graded poset. Rowvacuation, like the closely related and more studied rowmotion operator, is a composition of toggles. We obtain the piecewise-linear and birational lifts of the Lalanne--Kreweras involution by using the piecewise-linear and birational toggles of Einstein and Propp. We show that the symmetry properties of the Lalanne--Kreweras involution extend to these piecewise-linear and birational lifts.
\end{abstract}

\maketitle

\section{Introduction} \label{sec:intro}

The starting point of our work is a certain well-known involution on the set of Dyck paths. A \dfn{Dyck path of semilength $n$} is a lattice path in $\zz^2$ with steps of the form $(1,1)$ (\dfn{up steps}) and $(1,-1)$ (\dfn{down steps}) from $(0,0)$ to $(2n,0)$ which never goes below the $x$-axis. Let $\dyck_{n}$ denote the set of Dyck paths of semilength~$n$. The number of such Dyck paths is the famous Catalan number $\cat(n) \coloneqq \frac{1}{n+1}\binom{2n}{n}$. 

A \dfn{valley} in a Dyck path is a down step which is immediately followed by an up step. Although not obvious, it is true that the number of Dyck paths in $\dyck_{n}$ with~$k$ valleys is the same as the number with $(n-1)-k$ valleys, for all $k$. The \dfn{Lalanne--Kreweras involution} is an involution on $\dyck_{n}$ which combinatorially exhibits this symmetry: it sends a Dyck path with $k$ valleys to one with $(n-1)-k$ valleys.

A related statistic to number of valleys is major index. The \dfn{major index} of a Dyck path is the sum of the positions of its valleys. Major index is an important statistic because the $q$-Catalan number $\cat(n;q) \coloneqq \frac{1}{[n+1]_q}\qbinom{2n}{n}_q$ is the generating function for Dyck paths in $\dyck_{n}$ according to their major indices. Again, although not obvious, major index is symmetrically distributed: there are as many Dyck paths in $\dyck_{n}$ with major index $k$ as with major index~$n(n-1)-k$. And again, the Lalanne--Kreweras involution combinatorially exhibits this symmetry: it sends a Dyck path with major index $k$ to one with major index~$n(n-1)-k$.

The Lalanne--Kreweras involution is described on Dyck paths in the following way.  Consider a Dyck path $D$. Draw southeast lines starting at the junctions between pairs of consecutive up steps, and draw southwest lines starting at the junctions between pairs of consecutive down steps.  There will be the same number of southeast lines as southwest lines.  Mark the intersection between the $k$th (from left-to-right) southeast line and the $k$th southwest line. The Dyck path $\LK(D)$ is the unique path (drawn upside-down) with valleys (drawn upside-down) at the marked points.  See \cref{fig:DyckLP} for an example. This involution was first considered by Kreweras~\cite{kreweras1970sur} and was later studied by Lalanne~\cite{lalanne1992involution}; in referring to it as the Lalanne--Kreweras involution we follow Callan~\cite{callan2007bijections}.

\begin{figure}
\begin{tikzpicture}[scale=2/3]
\draw[blue,semithick] (0,0) -- (1,1) -- (2,2) -- (3,1) -- (4,2) -- (5,3) -- (6,2) -- (7,1) -- (8,0) -- (9,1) -- (10,2) -- (11,1) -- (12,2) -- (13,3) -- (14,4) -- (15,3) -- (16,4) -- (17,3) -- (18,2) -- (19,1) -- (20,0);
\draw[red,semithick] (0,0) -- (1,-1) -- (2,-2) -- (3,-1) -- (4,-2) -- (5,-1) -- (6,0) -- (7,-1) -- (8,-2) -- (9,-3) -- (10,-4) -- (11,-3) -- (12,-2) -- (13,-3) -- (14,-2) -- (15,-1) -- (16,-2) -- (17,-1) -- (18,-2) -- (19,-1) -- (20,0);
\draw[dashed] (1,1) -- (3,-1) -- (6,2);
\draw[dashed] (4,2) -- (6,0) -- (7,1);
\draw[dashed] (9,1) -- (12,-2) -- (17,3);
\draw[dashed] (12,2) -- (15,-1) -- (18,2);
\draw[dashed] (13,3) -- (17,-1) -- (19,1);
\draw[red,fill] (1,-1) circle [radius=.1];
\draw[red,fill] (2,-2) circle [radius=.1];
\draw[red,fill] (3,-1) circle [radius=.1];
\draw[red,fill] (4,-2) circle [radius=.1];
\draw[red,fill] (5,-1) circle [radius=.1];
\draw[red,fill] (6,0) circle [radius=.1];
\draw[red,fill] (7,-1) circle [radius=.1];
\draw[red,fill] (8,-2) circle [radius=.1];
\draw[red,fill] (9,-3) circle [radius=.1];
\draw[red,fill] (10,-4) circle [radius=.1];
\draw[red,fill] (11,-3) circle [radius=.1];
\draw[red,fill] (12,-2) circle [radius=.1];
\draw[red,fill] (13,-3) circle [radius=.1];
\draw[red,fill] (14,-2) circle [radius=.1];
\draw[red,fill] (15,-1) circle [radius=.1];
\draw[red,fill] (16,-2) circle [radius=.1];
\draw[red,fill] (17,-1) circle [radius=.1];
\draw[red,fill] (18,-2) circle [radius=.1];
\draw[red,fill] (19,-1) circle [radius=.1];
\draw[purple,fill] (0,0) circle [radius=.1];
\draw[blue,fill] (1,1) circle [radius=.1];
\draw[blue,fill] (2,2) circle [radius=.1];
\draw[blue,fill] (3,1) circle [radius=.1];
\draw[blue,fill] (4,2) circle [radius=.1];
\draw[blue,fill] (5,3) circle [radius=.1];
\draw[blue,fill] (6,2) circle [radius=.1];
\draw[blue,fill] (7,1) circle [radius=.1];
\draw[blue,fill] (8,0) circle [radius=.1];
\draw[blue,fill] (9,1) circle [radius=.1];
\draw[blue,fill] (10,2) circle [radius=.1];
\draw[blue,fill] (11,1) circle [radius=.1];
\draw[blue,fill] (12,2) circle [radius=.1];
\draw[blue,fill] (13,3) circle [radius=.1];
\draw[blue,fill] (14,4) circle [radius=.1];
\draw[blue,fill] (15,3) circle [radius=.1];
\draw[blue,fill] (16,4) circle [radius=.1];
\draw[blue,fill] (17,3) circle [radius=.1];
\draw[blue,fill] (18,2) circle [radius=.1];
\draw[blue,fill] (19,1) circle [radius=.1];
\draw[purple,fill] (20,0) circle [radius=.1];
\draw[dashed] (-0.3,0) -- (20.3,0);
\draw (0,-0.2) -- (0,0.2);
\draw (1,-0.2) -- (1,0.2);
\draw (2,-0.2) -- (2,0.2);
\draw (3,-0.2) -- (3,0.2);
\draw (4,-0.2) -- (4,0.2);
\draw (5,-0.2) -- (5,0.2);
\draw (6,-0.2) -- (6,0.2);
\draw (7,-0.2) -- (7,0.2);
\draw (8,-0.2) -- (8,0.2);
\draw (9,-0.2) -- (9,0.2);
\draw (10,-0.2) -- (10,0.2);
\draw (11,-0.2) -- (11,0.2);
\draw (12,-0.2) -- (12,0.2);
\draw (13,-0.2) -- (13,0.2);
\draw (14,-0.2) -- (14,0.2);
\draw (15,-0.2) -- (15,0.2);
\draw (16,-0.2) -- (16,0.2);
\draw (17,-0.2) -- (17,0.2);
\draw (18,-0.2) -- (18,0.2);
\draw (19,-0.2) -- (19,0.2);
\draw (20,-0.2) -- (20,0.2);
\end{tikzpicture}
\caption{A Dyck path $D$ of semilength 10 in {\color{blue} blue} together with $\LK(D)$ drawn upside-down in {\color{red} red}.} \label{fig:DyckLP}
\end{figure}

\begin{figure}
\begin{tikzpicture}[scale=2/3]
\draw[thick] (-0.15, 1.85) -- (-0.85, 1.15);
\draw[thick] (0.15, 1.85) -- (0.85, 1.15);
\draw[thick] (-1.15, 0.85) -- (-1.85, 0.15);
\draw[thick] (-0.85, 0.85) -- (-0.15, 0.15);
\draw[thick] (0.85, 0.85) -- (0.15, 0.15);
\draw[thick] (1.15, 0.85) -- (1.85, 0.15);
\draw[thick] (0.85, -0.85) -- (0.15, -0.15);
\draw[thick] (1.15, -0.85) -- (1.85, -0.15);
\draw[thick] (2.85, -0.85) -- (2.15, -0.15);
\draw[thick] (-0.85, -0.85) -- (-0.15, -0.15);
\draw[thick] (-1.15, -0.85) -- (-1.85, -0.15);
\draw[thick] (-2.85, -0.85) -- (-2.15, -0.15);
\draw [thick] (0,2) circle [radius=0.2];
\node at (0,2+0.5) {\tiny $[1,4]$};
\draw [thick] (-1,1) circle [radius=0.2];
\node at (-1,1+0.5) {\tiny $[1,3]$};
\draw [thick] (1,1) circle [radius=0.2];
\node at (1,1+0.5) {\tiny $[2,4]$};
\draw [thick] (-2,0) circle [radius=0.2];
\node at (-2,0+0.5) {\tiny $[1,2]$};
\draw [thick,fill] (0,0) circle [radius=0.2];
\node at (0,0+0.5) {\tiny $[2,3]$};
\draw [thick,fill] (2,0) circle [radius=0.2];
\node at (2,0+0.5) {\tiny $[3,4]$};
\draw [thick] (-3,-1) circle [radius=0.2];
\node at (-3,-1+0.5) {\tiny $[1,1]$};
\draw [thick] (-1,-1) circle [radius=0.2];
\node at (-1,-1+0.5) {\tiny $[2,2]$};
\draw [thick] (1,-1) circle [radius=0.2];
\node at (1,-1+0.5) {\tiny $[3,3]$};
\draw [thick] (3,-1) circle [radius=0.2];
\node at (3,-1+0.5) {\tiny $[4,4]$};
\draw[dashed] (-6,-2) -- (6,-2);
\draw (-5,-2.2) -- (-5,-1.8);
\draw (-4,-2.2) -- (-4,-1.8);
\draw (-3,-2.2) -- (-3,-1.8);
\draw (-2,-2.2) -- (-2,-1.8);
\draw (-1,-2.2) -- (-1,-1.8);
\draw (0,-2.2) -- (0,-1.8);
\draw (1,-2.2) -- (1,-1.8);
\draw (2,-2.2) -- (2,-1.8);
\draw (3,-2.2) -- (3,-1.8);
\draw (4,-2.2) -- (4,-1.8);
\draw (5,-2.2) -- (5,-1.8);
\draw[blue] (-5,-2) -- (-4,-1) -- (-3,0) -- (-2,1) -- (-1,0) -- (0,-1) -- (1,0) -- (2,-1) -- (3,0) -- (4,-1) -- (5,-2);
\draw[blue,fill] (-5,-2) circle [radius=0.09];
\draw[blue,fill] (-4,-1) circle [radius=0.09];
\draw[blue,fill] (-3,0) circle [radius=0.09];
\draw[blue,fill] (-2,1) circle [radius=0.09];
\draw[blue,fill] (-1,0) circle [radius=0.09];
\draw[blue,fill] (0,-1) circle [radius=0.09];
\draw[blue,fill] (1,0) circle [radius=0.09];
\draw[blue,fill] (2,-1) circle [radius=0.09];
\draw[blue,fill] (3,0) circle [radius=0.09];
\draw[blue,fill] (4,-1) circle [radius=0.09];
\draw[blue,fill] (5,-2) circle [radius=0.09];
\end{tikzpicture}
\caption{The bijection between $\dyck_{n+1}$ and $\cala(\sfa^n)$.} \label{fig:dyck_bij}
\end{figure}

We prefer to describe the Lalanne--Kreweras involution using the language of partially ordered sets (posets). For $\sfp$ a poset, we use $\cala(\sfp)$ to denote the set of antichains of $\sfp$. We use the standard notations $[a,b] \coloneqq \{a,a+1,\ldots,b\}$ for intervals, and $[n] \coloneqq [1,n]$. Let $\sfa^n$ denote the poset whose elements are the non-empty intervals~$[i,j] \subseteq [n]$ for $1\leq i \leq j \leq n$, ordered by containment. (This poset is isomorphic to the \dfn{root poset of the Type A root system}, hence the name.) There is a standard bijection between $\dyck_{n+1}$ and $\cala(\sfa^n)$ which is depicted in \cref{fig:dyck_bij}. Under this bijection, the number of valleys of the Dyck path becomes the cardinality of the antichain, and major index becomes $\maj(A) \coloneqq \sum_{[i,j]\in A} (i+j)$.

A set of intervals $A=\{[i_1,j_1],\ldots,[i_k,j_k]\}$ is an antichain  of $\sfa^n$ if and only if:
\begin{itemize}
    \item $1\leq i_1 < \cdots < i_k \leq n$,
    \item $1\leq j_1 < \cdots < j_k \leq n$,
    \item and $i_\ell \leq j_\ell$ for all $1\leq \ell \leq k$.
\end{itemize} 
The \dfn{Lalanne--Kreweras involution} $\lk\colon \cala(\sfa^n)\to\cala(\sfa^n)$, thought of as an involution on antichains via the bijection depicted in \cref{fig:dyck_bij}, sends such an antichain to $\lk(A) \coloneqq \{[i'_1,j'_1],\ldots,[i'_m,j'_m]\}$, where
\begin{align*}
\{i'_1 < \cdots < i'_m\} &\coloneqq [n] \setminus \{j_1,\ldots,j_k\}, \\
\{j'_1< \cdots <j'_m\} &\coloneqq [n] \setminus \{i_1,\ldots,i_k\}.
\end{align*}
As an example, the {\color{blue}blue} Dyck path in \cref{fig:DyckLP} corresponds to the four element antichain $A=\{[1,2], [4,4], [5,6], [6,9]\}$ in $\cala(\sfa^9)$. Here $\{i'_1, \ldots, i'_5\}=\{1,3,5,7,8\}$ and $\{j'_1, \ldots, j'_5\}=\{2,3,7,8,9\}$. So we have $\LK(A) = \{[1,2], [3,3], [5,7], [7,8], [8,9]\}$, which indeed corresponds to the {\color{red}red} Dyck path in the figure.

It is straightforward to verify that $\lk(A)$ is an antichain of $\sfa^n$, so that this does define an involution $\lk\colon \cala(\sfa^n)\to\cala(\sfa^n)$. And it is clear that $\#A+\#\lk(A) = n$ and $\#\maj(A) + \#\maj(\lk(A))=n(n+1)$ for all $A\in \cala(\sfa^n)$.  This antichain definition appeared in work of Panyushev~\cite{panyushev2004adnilpotent, panyushev2009orbits}, who was apparently unaware that this involution had previously been considered.\footnote{The first author learned that Panyushev's involution was the same as the Lalanne--Kreweras involution at a talk (\url{http://fpsac2019.fmf.uni-lj.si/resources/Slides/205slides.pdf})  about the FindStat project (\url{http://www.findstat.org/}) given by Martin Rubey at the FPSAC 2019 conference.}

To sketch the proof that this antichain definition agrees with the usual Dyck path definition of the Lalanne--Kreweras involution, consider a Dyck path $D$ with corresponding antichain $A$.  Label the up steps from left to right.  Then $j,j+1$ is a pair of consecutive up steps if and only if there is no element of the form $[i,j]$ in $A$.  Likewise label the down steps from left to right.  Then $i,i+1$ is a pair of consecutive down steps if and only if there is no element of the form $[i,j]$ in $A$.  Thus, there is an intersection of the southeast line at the junction of up steps $j,j+1$ and the southwest line at the junction of down steps $i,i+1$ (in other words, there is a valley at this intersection in~$\lk(D)$) precisely when~$[j,i]\in \lk(A)$.

\medskip

In this paper we define \dfn{piecewise-linear} and \dfn{birational extensions of the Lalanne--Kreweras involution}. Let us briefly explain what this means. 

For $\sfp$ a poset we use~$\rr^\sfp$ to denote the vector space of real-valued functions on~$\sfp$. The \dfn{chain polytope}~$\calc(\sfp)$ of $\sfp$ is the polytope of points $\pi \in \rr^\sfp$ satisfying the inequalities
\[0 \leq \sum_{x \in C} \pi(x) \leq 1 \qquad \textrm{for any chain $C=\{x_1 < \cdots < x_k\}\subseteq \sfp$}.\]
Stanley~\cite{stanley1986twoposet} showed that the vertices of $\calc(\sfp)$ are the indicator functions of the antichains $A\in \cala(\sfp)$, and so we may identify these vertices with antichains.

The \dfn{tropicalization} of a subtraction-free rational expression is the result of replacing $+$'s by $\max$'s and $\times$'s by $+$'s everywhere in this expression; it defines a continuous and piecewise-linear map. If the rational expression is defined on, e.g., $\rr_{>0}^{\sfp}$ (the set of positive real-valued functions on $\sfp$), then its tropicalization will be defined on, e.g., $\rr^{\sfp}$.

Our piecewise-linear and birational extensions of the Lalanne--Kreweras involution are the maps $\lkpl$ and $\lkb$ described in the following theorem.

\begin{thm} \label{thm:main_intro}
Let $\kappa \in \rr_{>0}$ be a parameter. There exists a map $\lkb\colon \rr_{>0}^{\sfa^n} \to \rr_{>0}^{\sfa^n}$ defined by a subtraction-free rational expression for which:
\begin{itemize}
    \item $\lkb$ is an involution;
    \item for any $\pi \in \rr_{>0}^{\sfa^n}$,
\begin{align*}
\prod_{[i,j] \in \sfa^n} \pi([i,j]) \cdot \prod_{[i,j] \in \sfa^n} \lkpl(\pi) ([i,j]) &= \kappa^{n}, \\
\prod_{[i,j] \in \sfa^n} \pi([i,j])^{i+j} \cdot \prod_{[i,j] \in \sfa^n} \lkpl(\pi)([i,j])^{i+j} &= \kappa^{n(n+1)}.
\end{align*}
\end{itemize}
Its tropicalization is a piecewise-linear map $\lkpl \colon \rr^{\sfa^n} \to\rr^{\sfa^n}$. In turn, $\lkpl$ restricts to a map on the chain polytope $\calc(\sfa^{n})$ and recovers the combinatorial Lalanne--Kreweras involution $\lk\colon \cala(\sfa^{n})\to \cala(\sfa^{n})$ when restricted to the vertices of $\calc(\sfa^{n})$.
\end{thm}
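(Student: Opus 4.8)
The plan is to realize the combinatorial Lalanne--Kreweras involution as the action on antichains of a toggle-built operator, \emph{rowvacuation} $\rvac$, and then to lift this operator to the piecewise-linear and birational settings exactly as one lifts rowmotion, using the toggles of Einstein and Propp. Concretely, I would define $\lkb$ to be the birational rowvacuation on $\rr_{>0}^{\sfa^n}$, written as a fixed composition of the birational (chain-polytope) toggles. Each such toggle is a subtraction-free rational expression in the coordinates $\pi([i,j])$, involving only $+$, $\times$, and $\div$, so any composition of them --- in particular $\lkb$ --- is again subtraction-free. This immediately yields the first claim, and it guarantees that the tropicalization $\lkpl$ (replace $+$ by $\max$ and $\times$ by $+$) is a well-defined continuous piecewise-linear map $\rr^{\sfa^n}\to\rr^{\sfa^n}$.

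First I would show $\lkb$ is an involution. This is the birational analogue of the classical fact that Schützenberger evacuation is an involution, and I would prove it inside the toggle group. Rowvacuation is a specific ``staircase'' word in the rank-toggles $\tau_1,\dots,\tau_n$ (where $\tau_i$ toggles all elements of rank $i$ at once); using that each $\tau_i$ is an involution together with the commutation relations satisfied by the chain-polytope toggles, one checks that this word is self-inverse. The point is that these relations are identities that already hold for the birational toggles, so involutivity descends from a purely formal statement in the toggle group to the birational map, and tropicalizing shows $\lkpl$ is an involution as well.

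Next I would prove the two product identities, which are the multiplicative lifts of the combinatorial symmetries $\#A+\#\lk(A)=n$ and $\maj(A)+\maj(\lk(A))=n(n+1)$. I would establish them by tracking the monomials $\prod_{[i,j]}\pi([i,j])$ and $\prod_{[i,j]}\pi([i,j])^{i+j}$ through the staircase of toggles defining $\lkb$: each toggle sends $\pi(x)$ to $(\text{numerator})/\pi(x)$ and fixes the other coordinates, so it multiplies such a monomial by a factor that can be read off from the toggle formula, and accumulating these contributions over the whole word makes the product telescope down to a power of the parameter $\kappa$. The graded structure of $\sfa^n$ --- in particular that the weight $i+j$ is constant on each rank --- is what makes the weighted identity collapse cleanly to $\kappa^{n(n+1)}$.

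Finally I would treat the piecewise-linear restriction and the recovery of $\lk$. By the work of Einstein and Propp, the piecewise-linear toggles preserve the chain polytope $\calc(\sfa^n)$ and, evaluated at a vertex (the indicator function $\one_A$ of an antichain $A$), reduce to the combinatorial antichain toggle $\tau_x A$. Composing the staircase word, $\lkpl$ therefore preserves $\calc(\sfa^n)$ and its restriction to the vertices is precisely combinatorial rowvacuation on $\cala(\sfa^n)$. What remains is the one genuinely combinatorial input --- that rowvacuation on $\cala(\sfa^n)$ coincides with the Lalanne--Kreweras involution --- and I expect this identification to be the main obstacle. It requires unwinding the action of the toggle word on an antichain $\{[i_1,j_1],\dots,[i_k,j_k]\}$ and matching the output to the explicit complementation $\{i'_\ell\}=[n]\setminus\{j_\ell\}$, $\{j'_\ell\}=[n]\setminus\{i_\ell\}$ in the definition of $\lk$; the dual roles played by the left and right endpoints strongly suggest that the complementing operator rowvacuation is correct, but pinning down the exact bijection is the heart of the argument.
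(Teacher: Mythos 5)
Your overall architecture matches the paper's: define $\lkb$ as birational antichain rowvacuation (a staircase word in the rank toggles), get involutivity from the relations $\bftau_i^2=1$ and $\bftau_i\bftau_j=\bftau_j\bftau_i$ for $|i-j|>1$ exactly as for Sch\"utzenberger evacuation, observe that tropicalization and specialization to the vertices of $\calc(\sfa^n)$ are automatic from the toggle construction, and supply the one combinatorial input that rowvacuation on $\cala(\sfa^n)$ equals $\lk$ (the paper does this by showing both satisfy the same recursion, via Panyushev's matrix description of $\rowA^{-1}$). All of that is sound.

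The genuine gap is in your treatment of the two product identities. You assert that one can track the monomials $\prod\pi([i,j])$ and $\prod\pi([i,j])^{i+j}$ through the staircase word and that ``the product telescopes down to a power of $\kappa$.'' This does not work as described: each birational antichain toggle replaces $\pi(x)$ by $\kappa$ divided by a sum over maximal chains through $x$ of products of the \emph{current} values at the other elements of those chains, and each rank is toggled many times in the rowvacuation word, so the successive numerators and denominators involve nested subtraction-free expressions that do not cancel in any visible way. The authors state explicitly that they know of no straightforward way to see the birational version of these homomesies. Their actual proof routes through heavy external machinery: the Grinberg--Roby rowmotion-equivariant embedding $\iab$ of $\sfa^n$ into the rectangle $\rect{n+1}{n+1}$ (built from $\pi$ and $\rvacFb\pi$, with a reciprocity theorem controlling $\rvacFb$ on the rectangle), the Joseph--Roby theorem that the Stanley--Thomas word of the rectangle rotates under rowmotion (which forces all its entries to be constant on the image of the embedding), and a careful computation (their Proposition 4.9) relating diagonal products of $\down\iab\pi$ to the statistics $\hb_i$ of $\pi$ and of $\rvacFb\pi$. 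Without some substitute for this chain of results, the second bullet of the theorem is unproved in your proposal; everything else is essentially the paper's argument.
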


\Cref{fig:lk_example} depicts these maps in the case $n=3$. The reader is encouraged to verify that the map $\lkb\colon \rr_{>0}^{\sfa^3} \to\rr_{>0}^{\sfa^3}$ depicted there satisfies the conditions of \cref{thm:main_intro}. Also observe how $\lkpl\colon \rr^{\sfa^3} \to\rr^{\sfa^3}$ is the tropicalization of $\lkb$ (note that the parameter $\kappa \in \rr_{>0}$ which appears in the definition of $\lkb$ becomes the constant~$1$ when we tropicalize). Finally, the reader can check that $\lkpl$ restricts to the appropriate map on $\calc(\sfa^{n})$.

\begin{figure}
\begin{tikzpicture}[xscale=4/3,yscale=8/9]
\begin{scope}
\draw[thick] (-0.3, 1.7) -- (-0.7, 1.3);
\draw[thick] (0.3, 1.7) -- (0.7, 1.3);
\draw[thick] (-1.3, 0.7) -- (-1.7, 0.3);
\draw[thick] (-0.7, 0.7) -- (-0.3, 0.3);
\draw[thick] (0.7, 0.7) -- (0.3, 0.3);
\draw[thick] (1.3, 0.7) -- (1.7, 0.3);
\node at (0,2) {$  z  $};
\node at (-1,1) {$  x  $};
\node at (1,1) {$  y  $};
\node at (-2,0) {$  u  $};
\node at (0,0) {$  v  $};
\node at (2,0) {$  w  $};
\node at (2.75,1) {$\stackrel{\lkb}{\longmapsto}$};
\end{scope}
\begin{scope}[shift={(5.4,0)}]
\draw[thick] (-0.3, 1.7) -- (-0.7, 1.3);
\draw[thick] (0.3, 1.7) -- (0.7, 1.3);
\draw[thick] (-1.3, 0.7) -- (-1.7, 0.3);
\draw[thick] (-0.7, 0.7) -- (-0.3, 0.3);
\draw[thick] (0.7, 0.7) -- (0.3, 0.3);
\draw[thick] (1.3, 0.7) -- (1.7, 0.3);
\node at (0,2) {\Large $  \frac{xy}{x+y} $};
\node at (-1.1,1) {\Large $  \frac{z(x+y)}{y}  $};
\node at (1.1,1) {\Large $  \frac{z(x+y)}{x}  $};
\node at (-2,0) {\Large $ \frac{\kappa}{uxz}  $};
\node at (0,0) {\Large $  \frac{\kappa}{vz(x+y)}  $};
\node at (2,0) {\Large $  \frac{\kappa}{wyz} $};
\end{scope}
\end{tikzpicture}

\vspace{20pt}

\begin{tikzpicture}[xscale=4/3,yscale=8/9]
\begin{scope}
\draw[thick] (-0.3, 1.7) -- (-0.7, 1.3);
\draw[thick] (0.3, 1.7) -- (0.7, 1.3);
\draw[thick] (-1.3, 0.7) -- (-1.7, 0.3);
\draw[thick] (-0.7, 0.7) -- (-0.3, 0.3);
\draw[thick] (0.7, 0.7) -- (0.3, 0.3);
\draw[thick] (1.3, 0.7) -- (1.7, 0.3);
\node at (0,2) {$  z  $};
\node at (-1,1) {$  x  $};
\node at (1,1) {$  y  $};
\node at (-2,0) {$  u  $};
\node at (0,0) {$  v  $};
\node at (2,0) {$  w  $};
\node at (2.5,1) {$\stackrel{\lkpl}{\longmapsto}$};
\end{scope}
\begin{scope}[shift={(5.4,0)}]
\draw[thick] (-0.3, 1.7) -- (-0.7, 1.3);
\draw[thick] (0.3, 1.7) -- (0.7, 1.3);
\draw[thick] (-1.3, 0.7) -- (-1.7, 0.3);
\draw[thick] (-0.7, 0.7) -- (-0.3, 0.3);
\draw[thick] (0.7, 0.7) -- (0.3, 0.3);
\draw[thick] (1.3, 0.7) -- (1.7, 0.3);
\node at (0,2) {\small $  x+y-\max(x,y)  $};
\node at (-1.2,1) {\small $  z+\max(x,y)-y  $};
\node at (1.2,1) {\small $  z+\max(x,y)-x  $};
\node at (-2,0) {\small $  1-u-x-z  $};
\node at (0,-0.2) {\small \parbox{0.8in}{\begin{center}$  1-v-z$ \\$-\max(x,y)  $\end{center}}};
\node at (2,0) {\small $  1-w-y-z $};
\end{scope}
\end{tikzpicture}
\caption{The piecewise-linear and birational lifts of the Lalanne--Kreweras involution for $\sfa^3$.} \label{fig:lk_example}
\end{figure}

Observe how the second bulleted item in \cref{thm:main_intro} is the birational analog of the fact that the Lalanne--Kreweras involution exhibits the symmetry of the antichain cardinality and major index statistics. Thus, our piecewise-linear and birational extensions retain the key features of $\lk$, namely: being an involution, and exhibiting these symmetries.

\medskip

Recently, there has been a great deal of interest in studying piecewise-linear and birational extensions of constructions from algebraic combinatorics. Indeed, these piecewise-linear and birational maps are at the core of the growing subfield of dynamical algebraic combinatorics~\cite{roby2016dynamical}. Our work fits squarely into this research program. 

The combinatorial operator whose piecewise-linear and birational lifts have received the most attention is rowmotion. \dfn{Rowmotion}, $\rowA\colon \cala(\sfp)\to\cala(\sfp)$, is the invertible operator on the set of antichains of a poset $\sfp$ defined by
\[\rowA(A) \coloneqq \down(\{x\in \sfp\colon \textrm{$x \not\leq y$ for any $y\in A$}\})\] 
for all $A\in\cala(\sfp)$, where $\down(X)$ denotes the set of minimal elements of $X$. 

It is known \cite{cameron1995orbits, joseph2019antichain} that rowmotion can alternatively be defined as a composition of toggles. \dfn{Toggles} are certain simple, local involutions which ``toggle'' the status of an element in a set when possible. This toggle perspective turns out to be very useful for analyzing the behavior of rowmotion. Moreover, in 2013 Einstein and Propp~\cite{einstein2018combinatorial} introduced piecewise-linear and birational extensions of the toggles, and, with these, piecewise-linear and birational extensions of rowmotion.

Our first step towards defining the piecewise-linear and birational extensions of the Lalanne--Kreweras involution is to show that $\lk\colon \cala(\sfa^n)\to\cala(\sfa^n)$ can be written as a composition of toggles. Actually, we show that $\lk$ is a special case of a more general construction. 

For any graded poset $\sfp$, \dfn{rowvacuation}, $\rvacA\colon\cala(\sfp)\to\cala(\sfp)$, is another map on antichains defined as a certain composition of toggles. Rowmotion and rowvacuation are ``partner'' operators in exactly the same way that promotion and evacuation are ``partner'' operators. We recall that promotion and evacuation are two operators on the set of linear extensions of a poset which were first defined and studied by Sch\"{u}tzenberger~\cite{schutzenberger1972promotion}. The same basic facts about promotion and evacuation hold for rowmotion and rowvacuation:  rowvacuation is always an involution, just like evacuation is; rowvacuation conjugates rowmotion to its inverse, just like evacuation does for promotion; etc. This connection explains the name ``rowvacuation.''

We show that, in the case $\sfp=\sfa^n$, rowvacuation is precisely the Lalanne--Kreweras involution. This gives us natural candidates for $\lkpl$ and $\lkb$, where we simply replace the toggles in the definition of rowvacuation with their piecewise-linear and birational extensions. General properties of the toggles imply that these $\lkpl$ and $\lkb$ remain involutions.

Then the final thing is to establish the piecewise-linear and birational analogs of the fact that the Lalanne--Kreweras involution exhibits the symmetry of the antichain cardinality and major index statistics. Results of this kind have also been a focus of recent research in dynamical algebraic combinatorics. More precisely, if~$\varphi$ is an invertible operator acting on a combinatorial set $X$, and $f\colon X\to \rr$ is some statistic on $X$, then we say that $f$ is \dfn{homomesic} with respect to the action of $\varphi$ on $X$ if the average of $f$ along every $\varphi$-orbit is equal to the same constant. 

With this terminology, we can say that the antichain cardinality and major index statistics are homomesic under the Lalanne--Kreweras involution. In fact, there is a broader collection of homomesies for $\lk$. For $1\leq i \leq n$, define $\h_i\colon \cala(\sfa^n)\to \zz$ by
\[\h_i(A) \coloneqq \#\{j\colon [i,j] \in A\} + \#\{j\colon [j,i]\in A\}.\]
It is easily seen that $\h_i(A)+\h_i(\lk(A))=2$ for all $A\in\cala(\sfa^{n})$, i.e., that the average of $\h_i$ along any $\lk$-orbit is $1$. Furthermore, we have
\begin{align*}
\#A &= \frac{1}{2}(\h_1(A) + \h_2(A) + \cdots + \h_n(A)), \\
\maj(A) &= \h_1(A) + 2 \cdot \h_2(A) + \cdots + n \cdot \h_n(A).
\end{align*}
Any linear combination of homomesies is again a homomesy. Thus, the  antichain cardinality and major index homomesies for $\lk$ follow from the $\h_i$ homomesies. 

We show that the (piecewise-linear and birational analogs of) the $\h_i$ homomesies extend to $\lkpl$ and $\lkb$. We do this via a careful analysis of a certain embedding of the triangle-shaped poset $\sfa^{n}$ into the rectangle poset $\rect{n+1}{n+1}$. From now on we will not separately emphasize the antichain cardinality and major index statistics, and instead focus on the more general $\h_i$ statistics.

\medskip

Here is the outline of the rest of the paper. In \cref{sec:basics} we review rowmotion, toggling, and rowvacuation for arbitrary graded posets. Rowvacuation was first defined, briefly, in~\cite{hopkins2020order}. We spend more time explaining its basic properties here. Also, rowvacuation was previously defined in its order filter variant $\rvacF$; we need the antichain variant of rowvacuation, so we review in depth the translation between these two. In \cref{sec:lk_is_rvac}, we prove that rowvacuation for the poset $\sfa^n$ is the Lalanne--Kreweras involution. We do this by showing that they both satisfy the same recurrence. In \cref{sec:homomesies} we establish the homomesies for piecewise-linear and birational rowvacuation of $\sfa^n$. There are two main ingredients to our proof: a rowmotion-equivariant embedding of $\rr_{>0}^{\sfa^n}$ into $\rr_{>0}^{\rect{n+1}{n+1}}$ due to Grinberg and Roby~\cite{grinberg2015birational2}; and a result of Roby and the second author~\cite{joseph2021birational} which says that under rowmotion of the rectangle, a certain associated vector, called the ``Stanley--Thomas word,'' rotates. In \cref{sec:bn} we consider the poset $\sfb^n$, the \dfn{root poset of the Type~B root system}, which is obtained from $\sfa^{2n-1}$ by ``folding'' it along its vertical axis of symmetry. In \cref{sec:enumeration} we discuss some related enumeration: counting fixed points of the various operators we consider here. Finally, in \cref{sec:future} we briefly discuss some directions for future research.

\medskip

\noindent {\bf Acknowledgments}:
The authors are grateful for useful conversations with Cole Cash, David Einstein, Sergi Elizalde, Darij Grinberg, Chandler Keith, Matthew Plante, James Propp, Vic Reiner, Tom Roby, Jessica Striker, and Nathan Williams. Computations in Sage~\cite{sage} have also been invaluable. We are also grateful to the organizers of the 2020 BIRS online workshop on Dynamical Algebraic Combinatorics for giving us a chance to present this work while it was in preparation. Finally, we thank the anonymous referees, whose comments improved the exposition of this paper.

\section{Rowmotion, toggling, and rowvacuation: definitions and basics} \label{sec:basics}

In this section we review the basics concerning rowmotion, toggling, and rowvacuation, including their piecewise-linear and birational extensions. 

We assume familiarity with the standard terms and notations associated with posets, as discussed for instance in \cite[Ch.~3]{stanley2011ec1}. All the results in this section will hold for any finite\footnote{Throughout all posets will be finite, and we will drop this adjective from now on.} graded poset, not just the poset $\sfa^n$ relevant to the discussion in \cref{sec:intro}. So throughout this section, $\sfp$ will denote a \dfn{graded} poset of rank $r$, that is, a poset $\sfp$ with a \dfn{rank function} $\rk: \sfp\to \zz_{\geq 0}$ satisfying
\begin{itemize}
\item $\rk(x)=0$ for any minimal element $x$;
\item $\rk(y)=\rk(x)+1$ if $x \lessdot y$;
\item every maximal element $x$ has $\rk(x)=r$.
\end{itemize}
For example, $\sfa^n$ is a graded poset of rank $n-1$, with $\rk([i,j]) = j-i$ for $[i,j] \in \sfa^n$. For $0\leq i \leq r$, we use $\sfp_i \coloneqq \{p \in \sfp: \rk(p)=i\}$ to denote the $i$th \dfn{rank} of $\sfp$.

We will constantly work with the following three families of subsets of posets.

\begin{itemize}
\item An \dfn{order filter} (resp.\ \dfn{order ideal}) of $\sfp$ is a subset $F\subseteq \sfp$ such that if $x\in F$ and $y\geq x$ (resp.\ $y\leq x$) in $\sfp$, then $y\in F$.  We use $\calf(\sfp)$ and $\calj(\sfp)$ to denote the sets of order filters and order ideals of $\sfp$, respectively.
\item An \dfn{antichain} of $\sfp$ is a subset $A \subseteq \sfp$ in which any two elements are incomparable.  We denote the set of antichains of $\sfp$ by $\cala(\sfp)$.
\end{itemize}

We proceed to define the various operators on these sets. Because we are interested in both rowmotion and rowvacuation, in both their order filter and antichain incarnations, and at the combinatorial, the piecewise-linear, and birational levels, we have a total of $2\times 2 \times 3=12$ maps to discuss. In order to avoid duplication when explaining the basic properties of these maps, we will give proofs only at the birational level (which is the most general).

\subsection{Rowmotion}

\dfn{Rowmotion} is an invertible operator that is defined on~$\calf(\sfp)$, or equivalently on $\cala(\sfp)$.  Each rowmotion map can be described in two ways. The first is as a composition of the following three bijections:
\begin{itemize}
\item \dfn{complementation} $\Theta\colon 2^\sfp \to 2^\sfp$, where $\Theta(S) \coloneqq \sfp\setminus  S$ (so $\Theta$ sends order ideals to order filters and vice versa);
\item \dfn{up-transfer} $\up\colon \calj(\sfp) \to \cala(\sfp)$, where $\up(I)$ denotes the set of maximal elements of $I$;
\item \dfn{down-transfer} $\down\colon \calf(\sfp) \to \cala(\sfp)$, where $\down(F)$ denotes the set of minimal elements of $F$.
\end{itemize}
Evidently, $\Theta^{-1}=\Theta$. Also note that, for an antichain $A\in\cala(\sfp)$,
\[\up^{-1}(A)=\{x\in \sfp\colon x\leq y \text{ for some } y\in A\}\]
and similarly
\[\down^{-1}(A)=\{x\in \sfp\colon x\geq y \text{ for some } y\in A\}.\]

\begin{definition} 
\dfn{Order filter rowmotion}, denoted $\rowF: \calf(\sfp) \to \calf(\sfp)$, is given by $\rowF \coloneqq \Theta \circ \up^{-1} \circ \down$. 
\end{definition}

\begin{definition}
\dfn{Antichain rowmotion}, denoted $\rowA: \cala(\sfp) \to \cala(\sfp)$, is given by $\rowA \coloneqq \down \circ \Theta \circ \up^{-1}$.
\end{definition}

Rowmotion was first considered by Brouwer and Schrijver~\cite{brouwer1974period} and has had several names in the literature; however, the name ``rowmotion,'' due to Striker and Williams~\cite{striker2012promotion}, seems to have stuck. For more on the history of rowmotion see~\cite{striker2012promotion} and~\cite[\S 7]{thomas2019rowmotion}.

\begin{example}\label{ex:a3-row}
Below we demonstrate one application of $\rowF$ and $\rowA$ for the poset $\sfa^3$:
\begin{center}
\begin{tikzpicture}[xscale=0.45,yscale=0.5]
\node at (-4.5,-3) {$\rowF:$};
\begin{scope}[shift={(0,-4)}]
\draw[thick] (-0.1, 1.9) -- (-0.9, 1.1);
\draw[thick] (0.1, 1.9) -- (0.9, 1.1);
\draw[thick] (-1.1, 0.9) -- (-1.9, 0.1);
\draw[thick] (-0.9, 0.9) -- (-0.1, 0.1);
\draw[thick] (0.9, 0.9) -- (0.1, 0.1);
\draw[thick] (1.1, 0.9) -- (1.9, 0.1);
\draw[fill=black] (0,2) circle [radius=0.2];
\draw[fill=black] (-1,1) circle [radius=0.2];
\draw[fill=black] (1,1) circle [radius=0.2];
\draw[fill=black] (-2,0) circle [radius=0.2];
\draw[fill=white] (0,0) circle [radius=0.2];
\draw[fill=white] (2,0) circle [radius=0.2];
\end{scope}
\node at (3.5,-3) {$\stackrel{\down}{\longmapsto}$};
\begin{scope}[shift={(7,-4)}]
\draw[thick] (-0.1, 1.9) -- (-0.9, 1.1);
\draw[thick] (0.1, 1.9) -- (0.9, 1.1);
\draw[thick] (-1.1, 0.9) -- (-1.9, 0.1);
\draw[thick] (-0.9, 0.9) -- (-0.1, 0.1);
\draw[thick] (0.9, 0.9) -- (0.1, 0.1);
\draw[thick] (1.1, 0.9) -- (1.9, 0.1);
\draw[fill=white] (0,2) circle [radius=0.2];
\draw[fill=white] (-1,1) circle [radius=0.2];
\draw[fill=black] (1,1) circle [radius=0.2];
\draw[fill=black] (-2,0) circle [radius=0.2];
\draw[fill=white] (0,0) circle [radius=0.2];
\draw[fill=white] (2,0) circle [radius=0.2];
\end{scope}
\node at (10.5,-3) {$\stackrel{\up^{-1}}{\longmapsto}$};
\begin{scope}[shift={(14,-4)}]
\draw[thick] (-0.1, 1.9) -- (-0.9, 1.1);
\draw[thick] (0.1, 1.9) -- (0.9, 1.1);
\draw[thick] (-1.1, 0.9) -- (-1.9, 0.1);
\draw[thick] (-0.9, 0.9) -- (-0.1, 0.1);
\draw[thick] (0.9, 0.9) -- (0.1, 0.1);
\draw[thick] (1.1, 0.9) -- (1.9, 0.1);
\draw[fill=white] (0,2) circle [radius=0.2];
\draw[fill=white] (-1,1) circle [radius=0.2];
\draw[fill=black] (1,1) circle [radius=0.2];
\draw[fill=black] (-2,0) circle [radius=0.2];
\draw[fill=black] (0,0) circle [radius=0.2];
\draw[fill=black] (2,0) circle [radius=0.2];
\end{scope}
\node at (17.5,-3) {$\stackrel{\Theta}{\longmapsto}$};
\begin{scope}[shift={(21,-4)}]
\draw[thick] (-0.1, 1.9) -- (-0.9, 1.1);
\draw[thick] (0.1, 1.9) -- (0.9, 1.1);
\draw[thick] (-1.1, 0.9) -- (-1.9, 0.1);
\draw[thick] (-0.9, 0.9) -- (-0.1, 0.1);
\draw[thick] (0.9, 0.9) -- (0.1, 0.1);
\draw[thick] (1.1, 0.9) -- (1.9, 0.1);
\draw[fill=black] (0,2) circle [radius=0.2];
\draw[fill=black] (-1,1) circle [radius=0.2];
\draw[fill=white] (1,1) circle [radius=0.2];
\draw[fill=white] (-2,0) circle [radius=0.2];
\draw[fill=white] (0,0) circle [radius=0.2];
\draw[fill=white] (2,0) circle [radius=0.2];
\end{scope}
\end{tikzpicture}
\end{center}

\begin{center}
\begin{tikzpicture}[xscale=0.45,yscale=0.5]
\node at (-4.5,1) {$\rowA:$};
\begin{scope}
\draw[thick] (-0.1, 1.9) -- (-0.9, 1.1);
\draw[thick] (0.1, 1.9) -- (0.9, 1.1);
\draw[thick] (-1.1, 0.9) -- (-1.9, 0.1);
\draw[thick] (-0.9, 0.9) -- (-0.1, 0.1);
\draw[thick] (0.9, 0.9) -- (0.1, 0.1);
\draw[thick] (1.1, 0.9) -- (1.9, 0.1);
\draw[fill=white] (0,2) circle [radius=0.2];
\draw[fill=white] (-1,1) circle [radius=0.2];
\draw[fill=black] (1,1) circle [radius=0.2];
\draw[fill=black] (-2,0) circle [radius=0.2];
\draw[fill=white] (0,0) circle [radius=0.2];
\draw[fill=white] (2,0) circle [radius=0.2];
\end{scope}
\node at (3.5,1) {$\stackrel{\up^{-1}}{\longmapsto}$};
\begin{scope}[shift={(7,0)}]
\draw[thick] (-0.1, 1.9) -- (-0.9, 1.1);
\draw[thick] (0.1, 1.9) -- (0.9, 1.1);
\draw[thick] (-1.1, 0.9) -- (-1.9, 0.1);
\draw[thick] (-0.9, 0.9) -- (-0.1, 0.1);
\draw[thick] (0.9, 0.9) -- (0.1, 0.1);
\draw[thick] (1.1, 0.9) -- (1.9, 0.1);
\draw[fill=white] (0,2) circle [radius=0.2];
\draw[fill=white] (-1,1) circle [radius=0.2];
\draw[fill=black] (1,1) circle [radius=0.2];
\draw[fill=black] (-2,0) circle [radius=0.2];
\draw[fill=black] (0,0) circle [radius=0.2];
\draw[fill=black] (2,0) circle [radius=0.2];
\end{scope}
\node at (10.5,1) {$\stackrel{\Theta}{\longmapsto}$};
\begin{scope}[shift={(14,0)}]
\draw[thick] (-0.1, 1.9) -- (-0.9, 1.1);
\draw[thick] (0.1, 1.9) -- (0.9, 1.1);
\draw[thick] (-1.1, 0.9) -- (-1.9, 0.1);
\draw[thick] (-0.9, 0.9) -- (-0.1, 0.1);
\draw[thick] (0.9, 0.9) -- (0.1, 0.1);
\draw[thick] (1.1, 0.9) -- (1.9, 0.1);
\draw[fill=black] (0,2) circle [radius=0.2];
\draw[fill=black] (-1,1) circle [radius=0.2];
\draw[fill=white] (1,1) circle [radius=0.2];
\draw[fill=white] (-2,0) circle [radius=0.2];
\draw[fill=white] (0,0) circle [radius=0.2];
\draw[fill=white] (2,0) circle [radius=0.2];
\end{scope}
\node at (17.5,1) {$\stackrel{\down}{\longmapsto}$};
\begin{scope}[shift={(21,0)}]
\draw[thick] (-0.1, 1.9) -- (-0.9, 1.1);
\draw[thick] (0.1, 1.9) -- (0.9, 1.1);
\draw[thick] (-1.1, 0.9) -- (-1.9, 0.1);
\draw[thick] (-0.9, 0.9) -- (-0.1, 0.1);
\draw[thick] (0.9, 0.9) -- (0.1, 0.1);
\draw[thick] (1.1, 0.9) -- (1.9, 0.1);
\draw[fill=white] (0,2) circle [radius=0.2];
\draw[fill=black] (-1,1) circle [radius=0.2];
\draw[fill=white] (1,1) circle [radius=0.2];
\draw[fill=white] (-2,0) circle [radius=0.2];
\draw[fill=white] (0,0) circle [radius=0.2];
\draw[fill=white] (2,0) circle [radius=0.2];
\end{scope}
\end{tikzpicture}
\end{center}
\end{example}

\begin{remark}
It is more common to consider \emph{order ideal} rowmotion, defined as $\rowJ := \up^{-1} \circ \down \circ \Theta$, instead of order filter rowmotion $\rowF=\Theta \circ \up^{-1} \circ \down$. These two forms of rowmotion are of course conjugated to one another by $\Theta$. We find it more convenient to use the order filter perspective here to align with the conventions in the piecewise-linear and birational realms.
\end{remark}

\subsection{Order filter toggling}
As first discovered by Cameron and Fon-Der-Flaass~\cite{cameron1995orbits}, an equivalent way to describe order filter rowmotion $\rowF$ is in terms of simple involutions called \dfn{toggles}. (They actually defined toggles on order ideals not order filters, but again this is simply a choice of convention.)

\begin{definition} 
Let $p \in \sfp$.  Then the \dfn{order filter toggle} at $p$, $t_p: \calf(\sfp)\to \calf(\sfp)$, is defined by
\[t_p(F)\coloneqq \begin{cases} F\cup\{p\} &\text{if $p\not\in F$ and $F\cup\{p\}\in \calf(\sfp)$,}\\
F\setminus \{p\} &\text{if $p \in F$ and $F\setminus \{p \}\in \calf(\sfp)$,}\\
F &\text{otherwise.} \end{cases}\]
\end{definition}

The \dfn{order filter toggle group} of $\sfp$ is the group generated by $\{t_p : p \in \sfp\}$. Some basic properties of toggles are that each toggle $t_p$ is an involution, and for $p,q\in \sfp$, we have $t_p t_q = t_q t_p$ if and only if neither $p$ nor $q$ covers the other.

Recall that a \dfn{linear extension} of $\sfp$ is a listing $(x_1,x_2,\dots,x_n)$ containing every element of $\sfp$ exactly once, and for which $x_i<x_j$ implies that~$i<j$.

\begin{prop}[{\cite[Lem.~1]{cameron1995orbits}}] \label{prop:row-toggles}
Let $(x_1,x_2,\dots,x_n)$ be any linear extension of $\sfp$.  Then $\rowF=t_{x_1} t_{x_2} \cdots t_{x_n}$.
\end{prop}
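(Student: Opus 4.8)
The plan is to prove the identity by a \emph{sweep argument}, tracking the effect of the toggles one at a time. Recall that $t_{x_1}\cdots t_{x_n}$ means we apply $t_{x_n}$ first and $t_{x_1}$ last; since $(x_1,\dots,x_n)$ is a linear extension, this processes the elements of $\sfp$ from the top (maximal elements) down to the bottom (minimal elements). The target filter is $\rowF(F)=\sfp\setminus\up^{-1}(\down(F))$; concretely, an element $p$ lies in $\rowF(F)$ if and only if $p$ is not weakly below any minimal element of $F$. From this description one extracts the membership rule I will match: if $p\in F$ then $p\in\rowF(F)$ exactly when $p$ is \emph{not} a minimal element of $F$, while if $p\notin F$ then $p\in\rowF(F)$ exactly when every $q>p$ lies in $\rowF(F)$.

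First I would set up the invariant. For $1\le k\le n+1$ let $G_k \coloneqq t_{x_k}\cdots t_{x_n}(F)$, with $G_{n+1}\coloneqq F$. The claim to prove by downward induction on $k$ is that $G_k$ agrees with $\rowF(F)$ on $\{x_k,\dots,x_n\}$ and with $F$ on $\{x_1,\dots,x_{k-1}\}$. The base case $k=n+1$ is immediate. The structural observation driving the induction is that the toggle $t_{x_k}$ changes only the membership of $x_k$, and whether it adds, removes, or fixes $x_k$ depends only on the elements strictly above $x_k$ (all of which have index $>k$, hence are already in their $\rowF(F)$-status in $G_{k+1}$ by the inductive hypothesis) and the elements strictly below $x_k$ (all of index $<k$, hence still in their $F$-status). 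Thus the local toggle condition at $x_k$ is evaluated precisely against the data appearing in the membership rule above.

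Then I would carry out the case analysis at $x_k$. If $x_k\in F$, then $t_{x_k}$ can only remove $x_k$, and it does so exactly when nothing below $x_k$ is present, i.e.\ when $x_k$ is minimal in $F$; this matches the rule for $p\in F$. If $x_k\notin F$, then $t_{x_k}$ can only add $x_k$, and it does so exactly when every $q>x_k$ is present in $G_{k+1}$, i.e.\ (by the invariant) when every $q>x_k$ lies in $\rowF(F)$; this matches the rule for $p\notin F$. In either case $x_k$ attains its $\rowF(F)$-status, completing the induction, so that $G_1=\rowF(F)$.

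The main obstacle is verifying the membership rule in the case $p\notin F$: namely that ``$p\in\rowF(F)$'' is equivalent to ``every $q>p$ lies in $\rowF(F)$.'' The forward direction is just upward-closure of the filter $\rowF(F)$. For the converse I would argue by contradiction: if $p\notin\rowF(F)$ then $p\le m$ for some minimal $m\in F$, and since $p\notin F$ we must have $p<m$; but then $m$ itself is an element $q>p$ with $m$ minimal in $F$, so $m\in\up^{-1}(\down(F))$ and hence $m\notin\rowF(F)$, contradicting the hypothesis. Once this equivalence is in hand, the remainder is routine bookkeeping. I would also remark that the very same sweep, with the combinatorial toggles replaced by the piecewise-linear and birational toggles of Einstein and Propp, yields the analogous statements at those levels, in keeping with the paper's convention of arguing at the most general level.
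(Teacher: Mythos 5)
Your argument is correct: the downward-induction ``sweep'' invariant, together with the two membership characterizations of $\rowF(F)$ (for $p\in F$ via minimality in $F$, and for $p\notin F$ via upward-closure plus the contradiction with a minimal element $m>p$), is exactly the standard proof of this fact. The paper itself does not prove this proposition but defers to \cite[Lem.~1]{cameron1995orbits}, and your proof is essentially the argument given there, so there is nothing to flag.
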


\begin{example}
Let us demonstrate \cref{prop:row-toggles} on the poset $\sfa^3$. With the labels below, $(a,b,c,d,e,f)$ is a linear extension. We can see $\rowF = t_at_bt_ct_dt_et_f$ when applied to the same order filter considered in \cref{ex:a3-row}:
\begin{center}
\begin{tikzpicture}[yscale=0.35, xscale=0.285]
\begin{scope}[shift={(0,-4)}]
\draw[thick] (-0.1, 1.9) -- (-0.9, 1.1);
\draw[thick] (0.1, 1.9) -- (0.9, 1.1);
\draw[thick] (-1.1, 0.9) -- (-1.9, 0.1);
\draw[thick] (-0.9, 0.9) -- (-0.1, 0.1);
\draw[thick] (0.9, 0.9) -- (0.1, 0.1);
\draw[thick] (1.1, 0.9) -- (1.9, 0.1);
\draw[color=red,fill=red] (0,2) circle [radius=0.2];
\draw[fill=black] (-1,1) circle [radius=0.2];
\draw[fill=black] (1,1) circle [radius=0.2];
\draw[fill=black] (-2,0) circle [radius=0.2];
\draw[fill=white] (0,0) circle [radius=0.2];
\draw[fill=white] (2,0) circle [radius=0.2];
\node[above] at (0,2) {$f$};
\node[left] at (-1,1) {$d$};
\node[right] at (1,1) {$e$};
\node[below] at (-2,0) {$a$};
\node[below] at (0,0) {$b$};
\node[below] at (2,0) {$c$};
\end{scope}
\node at (3.5,-2) {$\stackrel{t_f}{\longmapsto}$};
\begin{scope}[shift={(7,-4)}]
\draw[thick] (-0.1, 1.9) -- (-0.9, 1.1);
\draw[thick] (0.1, 1.9) -- (0.9, 1.1);
\draw[thick] (-1.1, 0.9) -- (-1.9, 0.1);
\draw[thick] (-0.9, 0.9) -- (-0.1, 0.1);
\draw[thick] (0.9, 0.9) -- (0.1, 0.1);
\draw[thick] (1.1, 0.9) -- (1.9, 0.1);
\draw[fill=black] (0,2) circle [radius=0.2];
\draw[fill=black] (-1,1) circle [radius=0.2];
\draw[color=red,fill=red] (1,1) circle [radius=0.2];
\draw[fill=black] (-2,0) circle [radius=0.2];
\draw[fill=white] (0,0) circle [radius=0.2];
\draw[fill=white] (2,0) circle [radius=0.2];
\node[above] at (0,2) {$f$};
\node[left] at (-1,1) {$d$};
\node[right] at (1,1) {$e$};
\node[below] at (-2,0) {$a$};
\node[below] at (0,0) {$b$};
\node[below] at (2,0) {$c$};
\end{scope}
\node at (10.5,-2) {$\stackrel{t_e}{\longmapsto}$};
\begin{scope}[shift={(14,-4)}]
\draw[thick] (-0.1, 1.9) -- (-0.9, 1.1);
\draw[thick] (0.1, 1.9) -- (0.9, 1.1);
\draw[thick] (-1.1, 0.9) -- (-1.9, 0.1);
\draw[thick] (-0.9, 0.9) -- (-0.1, 0.1);
\draw[thick] (0.9, 0.9) -- (0.1, 0.1);
\draw[thick] (1.1, 0.9) -- (1.9, 0.1);
\draw[fill=black] (0,2) circle [radius=0.2];
\draw[color=red,fill=red] (-1,1) circle [radius=0.2];
\draw[fill=white] (1,1) circle [radius=0.2];
\draw[fill=black] (-2,0) circle [radius=0.2];
\draw[fill=white] (0,0) circle [radius=0.2];
\draw[fill=white] (2,0) circle [radius=0.2];
\node[above] at (0,2) {$f$};
\node[left] at (-1,1) {$d$};
\node[right] at (1,1) {$e$};
\node[below] at (-2,0) {$a$};
\node[below] at (0,0) {$b$};
\node[below] at (2,0) {$c$};
\end{scope}
\node at (17.5,-2) {$\stackrel{t_d}{\longmapsto}$};
\begin{scope}[shift={(21,-4)}]
\draw[thick] (-0.1, 1.9) -- (-0.9, 1.1);
\draw[thick] (0.1, 1.9) -- (0.9, 1.1);
\draw[thick] (-1.1, 0.9) -- (-1.9, 0.1);
\draw[thick] (-0.9, 0.9) -- (-0.1, 0.1);
\draw[thick] (0.9, 0.9) -- (0.1, 0.1);
\draw[thick] (1.1, 0.9) -- (1.9, 0.1);
\draw[fill=black] (0,2) circle [radius=0.2];
\draw[fill=black] (-1,1) circle [radius=0.2];
\draw[fill=white] (1,1) circle [radius=0.2];
\draw[fill=black] (-2,0) circle [radius=0.2];
\draw[fill=white] (0,0) circle [radius=0.2];
\draw[color=red,fill=white] (2,0) circle [radius=0.2];
\node[above] at (0,2) {$f$};
\node[left] at (-1,1) {$d$};
\node[right] at (1,1) {$e$};
\node[below] at (-2,0) {$a$};
\node[below] at (0,0) {$b$};
\node[below] at (2,0) {$c$};
\end{scope}
\node at (24.5,-2) {$\stackrel{t_c}{\longmapsto}$};
\begin{scope}[shift={(28,-4)}]
\draw[thick] (-0.1, 1.9) -- (-0.9, 1.1);
\draw[thick] (0.1, 1.9) -- (0.9, 1.1);
\draw[thick] (-1.1, 0.9) -- (-1.9, 0.1);
\draw[thick] (-0.9, 0.9) -- (-0.1, 0.1);
\draw[thick] (0.9, 0.9) -- (0.1, 0.1);
\draw[thick] (1.1, 0.9) -- (1.9, 0.1);
\draw[fill=black] (0,2) circle [radius=0.2];
\draw[fill=black] (-1,1) circle [radius=0.2];
\draw[fill=white] (1,1) circle [radius=0.2];
\draw[fill=black] (-2,0) circle [radius=0.2];
\draw[color=red,fill=white] (0,0) circle [radius=0.2];
\draw[fill=white] (2,0) circle [radius=0.2];
\node[above] at (0,2) {$f$};
\node[left] at (-1,1) {$d$};
\node[right] at (1,1) {$e$};
\node[below] at (-2,0) {$a$};
\node[below] at (0,0) {$b$};
\node[below] at (2,0) {$c$};
\end{scope}
\node at (31.5,-2) {$\stackrel{t_b}{\longmapsto}$};
\begin{scope}[shift={(35,-4)}]
\draw[thick] (-0.1, 1.9) -- (-0.9, 1.1);
\draw[thick] (0.1, 1.9) -- (0.9, 1.1);
\draw[thick] (-1.1, 0.9) -- (-1.9, 0.1);
\draw[thick] (-0.9, 0.9) -- (-0.1, 0.1);
\draw[thick] (0.9, 0.9) -- (0.1, 0.1);
\draw[thick] (1.1, 0.9) -- (1.9, 0.1);
\draw[fill=black] (0,2) circle [radius=0.2];
\draw[fill=black] (-1,1) circle [radius=0.2];
\draw[fill=white] (1,1) circle [radius=0.2];
\draw[color=red,fill=red] (-2,0) circle [radius=0.2];
\draw[fill=white] (0,0) circle [radius=0.2];
\draw[fill=white] (2,0) circle [radius=0.2];
\node[above] at (0,2) {$f$};
\node[left] at (-1,1) {$d$};
\node[right] at (1,1) {$e$};
\node[below] at (-2,0) {$a$};
\node[below] at (0,0) {$b$};
\node[below] at (2,0) {$c$};
\end{scope}
\node at (38.5,-2) {$\stackrel{t_a}{\longmapsto}$};
\begin{scope}[shift={(42,-4)}]
\draw[thick] (-0.1, 1.9) -- (-0.9, 1.1);
\draw[thick] (0.1, 1.9) -- (0.9, 1.1);
\draw[thick] (-1.1, 0.9) -- (-1.9, 0.1);
\draw[thick] (-0.9, 0.9) -- (-0.1, 0.1);
\draw[thick] (0.9, 0.9) -- (0.1, 0.1);
\draw[thick] (1.1, 0.9) -- (1.9, 0.1);
\draw[fill=black] (0,2) circle [radius=0.2];
\draw[fill=black] (-1,1) circle [radius=0.2];
\draw[fill=white] (1,1) circle [radius=0.2];
\draw[fill=white] (-2,0) circle [radius=0.2];
\draw[fill=white] (0,0) circle [radius=0.2];
\draw[fill=white] (2,0) circle [radius=0.2];
\node[above] at (0,2) {$f$};
\node[left] at (-1,1) {$d$};
\node[right] at (1,1) {$e$};
\node[below] at (-2,0) {$a$};
\node[below] at (0,0) {$b$};
\node[below] at (2,0) {$c$};
\end{scope}
\end{tikzpicture}
\end{center}
\end{example}

\subsection{Rowvacuation} 
While rowmotion and toggling can be defined on any poset, our next action, rowvacuation, is defined only on graded posets.

For $0\leq i \leq r$, set
\[\bft_i \coloneqq \prod\limits_{p \in \sfp_i} t_p.\]
The \dfn{order filter rank toggle} $\bft_i$ is well-defined because toggles of elements of the same rank commute. Some immediate properties of these rank toggles are recorded in the next proposition.

\begin{prop} \label{prop:basic-properties-OI-rank-toggles}
For $0\leq i,j\leq r$,
\begin{itemize}
\item $\bft_i^2 = 1$;
\item $\bft_i \bft_j = \bft_j \bft_i$ if $|i-j| > 1$.
\end{itemize}
\end{prop}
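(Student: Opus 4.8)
The plan is to prove both claims directly from the definition $\bft_i = \prod_{p \in \sfp_i} t_p$, reducing everything to the two basic facts about individual toggles already recorded in the text: each $t_p$ is an involution, and $t_p t_q = t_q t_p$ whenever neither of $p,q$ covers the other.

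For the first bullet, $\bft_i^2 = 1$, I would first observe that $\bft_i$ is a product of toggles $t_p$ with $p$ ranging over a single rank $\sfp_i$. Since any two distinct elements $p, q$ of the same rank are incomparable, neither covers the other, so all the toggles appearing in $\bft_i$ pairwise commute; this is exactly the commutation hypothesis, and it is already invoked in the text to justify that $\bft_i$ is well-defined. Given that these factors commute and each is an involution ($t_p^2 = 1$), the product satisfies $\bft_i^2 = \prod_{p \in \sfp_i} t_p^2 = 1$. Formally, one rearranges $\bft_i \bft_i$ using commutativity to pair each $t_p$ with its twin and cancels, so no ordering issues arise.

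For the second bullet, I would argue that when $|i - j| > 1$, \emph{every} toggle appearing in $\bft_i$ commutes with \emph{every} toggle appearing in $\bft_j$. The key point is a covering observation: if $p \in \sfp_i$ and $q \in \sfp_j$ with $|i-j| > 1$, then $\rk(p)$ and $\rk(q)$ differ by at least $2$, so $p$ and $q$ cannot be in a covering relation (a cover forces ranks to differ by exactly $1$). Hence $t_p t_q = t_q t_p$ for all such pairs. Since $\bft_i$ and $\bft_j$ are products over disjoint ranks, and each factor of the first commutes with each factor of the second, the two rank toggles commute as a whole: $\bft_i \bft_j = \bft_j \bft_i$. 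I would phrase this as ``factorwise commutation of two products implies commutation of the products,'' which is a routine consequence of repeatedly applying the pairwise relations.

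The main obstacle, such as it is, is purely bookkeeping rather than conceptual: one must be careful that commuting two \emph{products} of involutions is legitimate, i.e. that factorwise commutation lets one slide the entire block $\bft_j$ past the entire block $\bft_i$. This follows by an easy induction on the number of factors (or by noting that the relevant toggles generate a commutative subgroup of the toggle group), but it should be stated rather than left implicit. The crucial input in both parts is the reduction to the covering criterion for commutation of single toggles, and once that criterion is applied rank-by-rank, both identities follow immediately from $t_p^2 = 1$ and pairwise commutativity.
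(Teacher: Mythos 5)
Your proof is correct and is exactly the argument the paper has in mind: the paper states this proposition without proof as ``immediate,'' relying on the two facts you cite (each $t_p$ is an involution, and $t_p t_q = t_q t_p$ when neither covers the other), together with the observation that gradedness forces covering pairs to have ranks differing by exactly one. Your care about factorwise commutation of products is a reasonable bit of bookkeeping but does not change the substance.
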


Clearly, $\rowF = \bft_0 \bft_1 \cdots \bft_{r-1} \bft_r$. This ``row-by-row'' (``rank-by-rank'') description of rowmotion is why it is called ``rowmotion.'' Rowvacuation is also built out of these rank toggles.

\begin{definition}
\dfn{Order filter rowvacuation} is the map $\rvacF\colon \calf(\sfp) \to \calf(\sfp)$ defined as the following composition of rank toggles
\[\rvacF \coloneqq (\bft_r) (\bft_{r-1} \bft_r) \cdots (\bft_1 \bft_2 \cdots \bft_{r-1} \bft_r) (\bft_0 \bft_1 \bft_2 \cdots \bft_{r-1} \bft_r).\]
\dfn{Order filter dual rowvacuation}, $\drvacF\colon \calf(\sfp) \to \calf(\sfp)$, is
\[\drvacF \coloneqq (\bft_0) (\bft_1 \bft_0) \cdots (\bft_{r-1} \cdots  \bft_2 \bft_1 \bft_0) (\bft_r \bft_{r-1} \cdots  \bft_2 \bft_1 \bft_0).\]
\end{definition}

We use $\sfp^*$ to denote the \dfn{dual poset} to a poset $\sfp$. There is an obvious duality $^* \colon \calf(\sfp)\to \calf(\sfp^*)$ between the order filters of~$\sfp$ and of~$\sfp^*$; namely, $F^* \coloneqq \Theta(F)$ for all $F \in \calf(\sfp)$. We could alternatively define dual rowvacuation by setting $\drvacF(F) \coloneqq \rvacF(F^*)^*$ for all $F \in \calf(\sfp)$. This explains the ``dual'' in the name ``dual rowvacuation.''

The following are the basic properties relating rowmotion and rowvacuation which hold for all graded posets:

\begin{prop}[{c.f.~\cite{hopkins2020order}}] \label{prop:row-rvac-dihedral}
For any graded poset $\sfp$ of rank $r$,
\begin{itemize}
    \item $\rvacF$ and $\drvacF$ are involutions;
    \item $\rvacF \circ \rowF = \rowF^{-1} \circ \rvacF$;
    \item $\drvacF \circ \rowF = \rowF^{-1} \circ \drvacF$;
    \item $\rowF^{r+2} = \drvacF \circ \rvacF$.
\end{itemize}
\end{prop}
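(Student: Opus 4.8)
The plan is to regard all four identities as relations in the group generated by the rank toggles $\bft_0,\dots,\bft_r$, and to derive them using nothing beyond the two relations furnished by \cref{prop:basic-properties-OI-rank-toggles}: $\bft_i^2=1$ and $\bft_i\bft_j=\bft_j\bft_i$ whenever $|i-j|>1$. Since $\rowF=\bft_0\bft_1\cdots\bft_r$ (so that $\rowF^{-1}=\bft_r\cdots\bft_1\bft_0$), and $\rvacF$, $\drvacF$ are by definition the displayed words in the $\bft_i$, each bullet becomes an equality of two words in the $\bft_i$ that I will establish purely formally. The payoff of this reduction is uniformity: the very same two relations hold verbatim for the piecewise-linear and birational rank toggles (and pass to their antichain conjugates), so a single word-level argument settles all of the $2\times 2\times 3$ maps at once. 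Conceptually these are exactly the dihedral relations that evacuation satisfies with promotion, and the rank toggles play the role of the commuting involutions in Sch\"utzenberger's theory.

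First I would exploit the symmetry $\bft_i\mapsto\bft_{r-i}$. Reversing the rank labels preserves both defining relations, so it induces an automorphism $\theta$ of the rank-toggle group; concretely it is realized by the order-filter duality $F\mapsto F^*=\Theta(F)$ onto the dual poset, matching the identity $\drvacF(F)=\rvacF(F^*)^*$ noted above. One computes directly that $\theta(\rowF)=\rowF^{-1}$ and $\theta(\rvacF)=\drvacF$. Applying $\theta$ to the statements $\rvacF^2=1$ and $\rvacF\,\rowF\,\rvacF=\rowF^{-1}$ then yields $\drvacF^2=1$ and $\drvacF\,\rowF\,\drvacF=\rowF^{-1}$ for free. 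Thus it suffices to prove the two $\rvacF$ bullets together with the power identity.

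For the involution and conjugation bullets I would induct on $r$, proving both simultaneously. The key is the factorization $\rvacF=\rvacF^{\uparrow}\,\rowF$, where $\rvacF^{\uparrow}=(\bft_r)(\bft_{r-1}\bft_r)\cdots(\bft_1\cdots\bft_r)$ is the rowvacuation word on the top $r$ ranks (involving only $\bft_1,\dots,\bft_r$), so after shifting indices the inductive hypothesis applies to it: $(\rvacF^{\uparrow})^2=1$ and $\rvacF^{\uparrow}(\bft_1\cdots\bft_r)\rvacF^{\uparrow}=\bft_r\cdots\bft_1$. Writing $\rowF=\bft_0(\bft_1\cdots\bft_r)$ and substituting these relations reduces $\rvacF^2$ to a conjugate of $\bft_0^2$, which cancels. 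The technical heart is a commutation sub-lemma guaranteeing that the lone rank-$0$ toggle can be moved into place---precisely, that $(\bft_1\cdots\bft_r)\,\rvacF^{\uparrow}$ commutes with $\bft_0$---which holds because $\bft_0$ is adjacent only to $\bft_1$, and one checks that this product carries no obstructing $\bft_1$. The conjugation bullet runs along the same lines.

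The power identity $\drvacF\circ\rvacF=\rowF^{r+2}$ I expect to be the main obstacle, and I would prove it by a ``zipper'': starting from the concatenated word for $\drvacF\,\rvacF$, repeatedly pull a full rowmotion block $\bft_0\bft_1\cdots\bft_r$ off the front using only far-commutations, and show by induction that exactly $r+2$ blocks emerge. (For $r=2$, for instance, $\drvacF\,\rvacF$ rewrites to $(\bft_0\bft_1\bft_2)^4$ after a short sequence of swaps $\bft_0\bft_2=\bft_2\bft_0$.) The delicate point is pinning down the exponent: the natural quotients in which one might hope to compute do not see it, since in $S_{r+2}$---where $\rowF$ maps to an $(r+2)$-cycle and both $\rvacF,\drvacF$ map to the longest element---one only gets $\rowF^{r+2}\mapsto 1$, which is indistinguishable from $\rowF^0$, while the abelianization records the block count only modulo $2$. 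So the value $r+2$ must be extracted from the explicit rearrangement inside the rank-toggle group itself, and the real work is verifying that the zipper proceeds uniformly in $r$ rather than only in the small cases one can check by hand.
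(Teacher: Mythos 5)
Your proposal is correct and is essentially the paper's own proof: the paper reduces all four identities to a word computation in the right-angled Coxeter group generated by the rank toggles subject only to $\bft_i^2=1$ and $\bft_i\bft_j=\bft_j\bft_i$ for $|i-j|>1$ (\cref{prop:basic-properties-OI-rank-toggles}), and then defers to Stanley's identical argument for promotion and evacuation in~\cite[Thm.~2.1]{stanley2009promotion}. Your rank-reversal automorphism, the factorization $\rvacF=\rvacF^{\uparrow}\rowF$ with the commutation of $(\bft_1\cdots\bft_r)\rvacF^{\uparrow}$ past $\bft_0$, and the block-extraction for $\rowF^{r+2}$ are precisely the steps of that computation, so the only difference is that you carry it out rather than cite it.
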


So the cyclic group action of $\rowF$ extends to a dihedral group action generated by $\rowF$ and $\rvacF$. \Cref{prop:row-rvac-dihedral} says that rowmotion and rowvacuation together satisfy the same basic properties as Sch\"{u}tzenberger's promotion and evacuation operators acting on the linear extensions of a poset~\cite{schutzenberger1972promotion,stanley2009promotion} (hence the name ``rowvacuation''). Regarding the appearance of $\rowF^{r+2}$, note that there is always a $\rowF$ orbit of size $r+2$: $\{\{p\in\sfp\colon \rk(p)\leq i\}\colon i=-1,0,1,\ldots,\rk(P)\}$.

In the next proposition we show that knowledge of the whole rowmotion orbit of an order filter lets us read off its rowvacuation.

\begin{prop} \label{prop:rvac_sew_row}
Let $F \in \calf(\sfp)$ and $p\in \sfp_i$. Then $p\in\rvacF(F)$ if and only if $p\in\rowF^{i+1}(F)$.
\end{prop}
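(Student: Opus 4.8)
The plan is to reduce both $\rvacF$ and $\rowF^{i+1}$ to short prefixes by exploiting the \emph{locality} of the rank toggles, and then to match those prefixes by an induction on $i$. The two facts I will lean on throughout are that $\bft_j$ changes only the rank-$j$ coordinates of an order filter, and that its action on a given $p \in \sfp_j$ is determined by the statuses of the (rank $j-1$ and rank $j+1$) elements covered by and covering $p$. Abbreviate $R_k \coloneqq \bft_k \bft_{k+1}\cdots \bft_r$ for $0 \le k \le r$, so that $\rowF = R_0$ and $\rvacF = R_r R_{r-1}\cdots R_1 R_0$.

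First I would peel off the top of $\rvacF$. Writing $\rvacF = (R_r\cdots R_{i+1})\,(R_i R_{i-1}\cdots R_0)$, every toggle occurring in the left factor has rank $\geq i+1$, so it fixes all rank-$i$ coordinates; hence $\rvacF(F)$ and $V_i(F)$ agree on rank $i$, where $V_i \coloneqq R_i R_{i-1}\cdots R_0$. It therefore suffices to compare $V_i(F)$ with $\rowF^{i+1}(F)$ on rank $i$, and I will get this from the stronger claim that $V_i(F)$ and $\rowF^{i+1}(F)$ agree on every rank $\geq i$, proved by induction on $i$.

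For the induction, the base case $i=0$ is immediate since $V_0 = R_0 = \rowF = \rowF^{0+1}$. For the step, note $V_i = R_i V_{i-1}$, while $\rowF^{i+1} = R_0\,\rowF^{i} = (\bft_0\bft_1\cdots\bft_{i-1})\,R_i\,\rowF^{i}$; the prefactor $\bft_0\cdots\bft_{i-1}$ writes only ranks $<i$, so it does not disturb ranks $\geq i$, whence $\rowF^{i+1}(F)$ and $R_i\,\rowF^{i}(F)$ agree on ranks $\geq i$. Thus it is enough to compare $R_i V_{i-1}(F)$ with $R_i\,\rowF^{i}(F)$ on ranks $\geq i$. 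The crucial locality observation is that $R_i = \bft_i\cdots\bft_r$ reads only ranks $\geq i-1$ (its lowest toggle $\bft_i$ consults rank $i-1$) and writes only ranks $\geq i$, so the restriction of $R_i(G)$ to ranks $\geq i$ is a function of the restriction of $G$ to ranks $\geq i-1$ alone. Since the inductive hypothesis gives that $V_{i-1}(F)$ and $\rowF^{i}(F)$ agree on ranks $\geq i-1$, applying $R_i$ to each yields agreement on ranks $\geq i$, completing the step; taking the rank-$i$ part then finishes the proof.

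The main obstacle I anticipate is not conceptual but bookkeeping: making the informal ``reads ranks $\geq i-1$, writes ranks $\geq i$'' statement about $R_i$ fully rigorous, i.e.\ tracking the support of the coordinate dependencies as the toggles comprising $R_i$ are applied in turn. Once this locality lemma is stated cleanly it does all the work, and—since it uses only that each toggle alters a single rank and depends on its two neighboring ranks—the very same argument applies verbatim to the piecewise-linear and birational rank toggles, so the proposition lifts to those settings with no extra effort.
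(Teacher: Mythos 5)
Your proof is correct and takes essentially the same approach as the paper's: the paper likewise first observes that the leading factors $\bft_r,\ldots,\bft_{i+1}$ of $\rvacF$ do not touch rank $i$, then proves by induction on $i$ the same stronger claim that $(\bft_i\cdots\bft_r)\cdots(\bft_0\cdots\bft_r)F$ agrees with $\rowF^{i+1}(F)$ on all ranks $\geq i$, using exactly the locality fact that a toggle at rank $j$ depends only on ranks $\geq j-1$. Your write-up just makes the ``reads ranks $\geq i-1$, writes ranks $\geq i$'' bookkeeping for $R_i$ slightly more explicit, and, as you note, the argument carries over verbatim to the piecewise-linear and birational settings, which is precisely how the paper handles it.
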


\begin{example}
Consider the following order filter $F$ in $\calf(\sfa^3)$ (the same one considered in \cref{ex:a3-row}):
\begin{center}
\begin{tikzpicture}[xscale=0.45,yscale=0.5]
\node at (-3,1){$F=$};
\draw[thick] (-0.1, 1.9) -- (-0.9, 1.1);
\draw[thick] (0.1, 1.9) -- (0.9, 1.1);
\draw[thick] (-1.1, 0.9) -- (-1.9, 0.1);
\draw[thick] (-0.9, 0.9) -- (-0.1, 0.1);
\draw[thick] (0.9, 0.9) -- (0.1, 0.1);
\draw[thick] (1.1, 0.9) -- (1.9, 0.1);
\draw[fill=black] (0,2) circle [radius=0.2];
\draw[fill=black] (-1,1) circle [radius=0.2];
\draw[fill=black] (1,1) circle [radius=0.2];
\draw[fill=black] (-2,0) circle [radius=0.2];
\draw[fill=white] (0,0) circle [radius=0.2];
\draw[fill=white] (2,0) circle [radius=0.2];
\end{tikzpicture}
\end{center}
We compute its first three rowmotion iterates:
\begin{center}
\begin{tikzpicture}[xscale=0.45,yscale=0.5]
\begin{scope}[shift={(0,-1)}]
\draw[thick] (-0.1, 1.9) -- (-0.9, 1.1);
\draw[thick] (0.1, 1.9) -- (0.9, 1.1);
\draw[thick] (-1.1, 0.9) -- (-1.9, 0.1);
\draw[thick] (-0.9, 0.9) -- (-0.1, 0.1);
\draw[thick] (0.9, 0.9) -- (0.1, 0.1);
\draw[thick] (1.1, 0.9) -- (1.9, 0.1);
\draw[fill=black] (0,2) circle [radius=0.2];
\draw[fill=black] (-1,1) circle [radius=0.2];
\draw[fill=black] (1,1) circle [radius=0.2];
\draw[fill=black] (-2,0) circle [radius=0.2];
\draw[fill=white] (0,0) circle [radius=0.2];
\draw[fill=white] (2,0) circle [radius=0.2];
\end{scope}
\node at (3.5,0) {$\stackrel{\rowF}{\longmapsto}$};
\begin{scope}[shift={(7,-1)}]
\draw[thick] (-0.1, 1.9) -- (-0.9, 1.1);
\draw[thick] (0.1, 1.9) -- (0.9, 1.1);
\draw[thick] (-1.1, 0.9) -- (-1.9, 0.1);
\draw[thick] (-0.9, 0.9) -- (-0.1, 0.1);
\draw[thick] (0.9, 0.9) -- (0.1, 0.1);
\draw[thick] (1.1, 0.9) -- (1.9, 0.1);
\draw[fill=black] (0,2) circle [radius=0.2];
\draw[fill=black] (-1,1) circle [radius=0.2];
\draw[fill=white] (1,1) circle [radius=0.2];
\draw[thick, color=blue, fill=white] (-2,0) circle [radius=0.2];
\draw[thick, color=blue, fill=white] (0,0) circle [radius=0.2];
\draw[thick, color=blue, fill=white] (2,0) circle [radius=0.2];
\end{scope}
\node at (10.5,0) {$\stackrel{\rowF}{\longmapsto}$};
\begin{scope}[shift={(14,-1)}]
\draw[thick] (-0.1, 1.9) -- (-0.9, 1.1);
\draw[thick] (0.1, 1.9) -- (0.9, 1.1);
\draw[thick] (-1.1, 0.9) -- (-1.9, 0.1);
\draw[thick] (-0.9, 0.9) -- (-0.1, 0.1);
\draw[thick] (0.9, 0.9) -- (0.1, 0.1);
\draw[thick] (1.1, 0.9) -- (1.9, 0.1);
\draw[fill=black] (0,2) circle [radius=0.2];
\draw[thick, color=green, fill=white] (-1,1) circle [radius=0.2];
\draw[thick, color=green, fill=green] (1,1) circle [radius=0.2];
\draw[fill=white] (-2,0) circle [radius=0.2];
\draw[fill=white] (0,0) circle [radius=0.2];
\draw[fill=black] (2,0) circle [radius=0.2];
\end{scope}
\node at (17.5,0) {$\stackrel{\rowF}{\longmapsto}$};
\begin{scope}[shift={(21,-1)}]
\draw[thick] (-0.1, 1.9) -- (-0.9, 1.1);
\draw[thick] (0.1, 1.9) -- (0.9, 1.1);
\draw[thick] (-1.1, 0.9) -- (-1.9, 0.1);
\draw[thick] (-0.9, 0.9) -- (-0.1, 0.1);
\draw[thick] (0.9, 0.9) -- (0.1, 0.1);
\draw[thick] (1.1, 0.9) -- (1.9, 0.1);
\draw[thick, color=red, fill=red] (0,2) circle [radius=0.2];
\draw[fill=black] (-1,1) circle [radius=0.2];
\draw[fill=black] (1,1) circle [radius=0.2];
\draw[fill=black] (-2,0) circle [radius=0.2];
\draw[fill=black] (0,0) circle [radius=0.2];
\draw[fill=white] (2,0) circle [radius=0.2];
\end{scope}
\end{tikzpicture}
\end{center}
Then \cref{prop:rvac_sew_row} says that we can compute $\rvacF(F)$ by ``sewing together'' the ranks from these iterates:
\begin{center}
\begin{tikzpicture}[xscale=0.45,yscale=0.5]
\node at (-5,1){$\rvacF(F)=$};
\draw[thick] (-0.1, 1.9) -- (-0.9, 1.1);
\draw[thick] (0.1, 1.9) -- (0.9, 1.1);
\draw[thick] (-1.1, 0.9) -- (-1.9, 0.1);
\draw[thick] (-0.9, 0.9) -- (-0.1, 0.1);
\draw[thick] (0.9, 0.9) -- (0.1, 0.1);
\draw[thick] (1.1, 0.9) -- (1.9, 0.1);
\draw[thick, color=red, fill=red] (0,2) circle [radius=0.2];
\draw[thick, color=green, fill=white] (-1,1) circle [radius=0.2];
\draw[thick, color=green, fill=green] (1,1) circle [radius=0.2];
\draw[thick, color=blue, fill=white] (-2,0) circle [radius=0.2];
\draw[thick, color=blue, fill=white] (0,0) circle [radius=0.2];
\draw[thick, color=blue, fill=white] (2,0) circle [radius=0.2];
\end{tikzpicture}
\end{center}
\end{example}

\Cref{prop:rvac_sew_row} is useful for translating information about rowmotion to rowvacuation, and vice versa (e.g., see \cref{subsec:homomesies} below).

\subsection{Antichain toggling}
There is nothing special about order filters in the definition of toggles.  Striker~\cite{striker2018generalized} suggested the study of toggles for other families of subsets, including antichains. Antichain toggling is examined in detail in~\cite{joseph2019antichain}. The definition of the antichain toggle is analogous to that of the order filter toggle; though note that removing an element from an antichain always yields an antichain.

\begin{definition} 
Let $p \in \sfp$.  Then the \dfn{antichain toggle} at $p$, $\tau_p: \cala(\sfp)\to \cala(\sfp)$, is defined by
\[\tau_p(A)\coloneqq \begin{cases} A\cup\{p\} &\text{if $p \not\in A$ and $A\cup\{p\}\in \cala(\sfp)$,}\\
A\setminus \{p\} &\text{if $p \in A$,}\\
A &\text{otherwise.} \end{cases}\]
\end{definition}

It is straightforward to see that each antichain toggle $\tau_p$ is an involution, as with the order filter toggles. However, $\tau_p \tau_q = \tau_p\tau_q$ if and only if $p$ and $q$ are incomparable or equal, which is different from the commutativity conditions for the order filter toggles. The \dfn{antichain toggle group} of $\sfp$ is the group generated by $\{\tau_p : p \in \sfp\}$.
 
Antichain rowmotion can also be expressed as a product of toggles, according to a linear extension, but in the \emph{opposite} order as order filter rowmotion.

\begin{prop}[{\cite[Prop.~2.24]{joseph2019antichain}}] \label{joseph2019antichain}
Let $(x_1,x_2,\dots,x_n)$ be any linear extension of~$\sfp$.  Then $\rowA=\tau_{x_n}\cdots \tau_{x_2} \tau_{x_1}$.
\end{prop}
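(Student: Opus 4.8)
The plan is to argue directly by induction, tracking the antichain as the toggles $\tau_{x_1},\tau_{x_2},\ldots,\tau_{x_n}$ are applied one at a time (this being the order of application in $\tau_{x_n}\cdots\tau_{x_1}$). One might instead hope to deduce the statement from the order filter identity $\rowF=t_{x_1}\cdots t_{x_n}$ of \cref{prop:row-toggles} together with the fact that $\rowA$ and $\rowF$ are conjugate via the bijection $\down\colon\calf(\sfp)\to\cala(\sfp)$. However, a single antichain toggle $\tau_p$ does not correspond to a single order filter toggle $t_p$ under $\down$ (adding $p$ to an antichain can enlarge the generated filter by more than one element), and the order of the toggles is reversed; so this reduction is not transparent, and I would set up an explicit invariant instead.

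Fix $A\in\cala(\sfp)$, and for $0\le k\le n$ put $D_k\coloneqq\{x_1,\ldots,x_k\}$ and $U_k\coloneqq\sfp\setminus D_k$. Since $(x_1,\ldots,x_n)$ is a linear extension, $x_i<x_j$ forces $i<j$ (and each $D_k$ is an order ideal); this compatibility is the only structural fact about the $x_i$ that I will use. Write $I\coloneqq\up^{-1}(A)$ for the order ideal generated by $A$, so that by definition $\rowA(A)=\down(\sfp\setminus I)$ is the set of minimal elements of $\sfp\setminus I$. Setting $A_k\coloneqq\tau_{x_k}\cdots\tau_{x_1}(A)$, the heart of the proof is the claim that
\[ A_k=(\rowA(A)\cap D_k)\cup(A\cap U_k)\qquad\text{for all }0\le k\le n. \]
The case $k=0$ is the hypothesis $A_0=A$, and the case $k=n$ yields $A_n=\rowA(A)$, which is the proposition.

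The induction step is where the work lies. Passing from $D_{k-1}$ to $D_k$ only moves $x_k$ from $U_{k-1}$ into $D_k$, so the right-hand side above changes, if at all, only in its membership of $x_k$: that element lies in the claimed $A_{k-1}$ iff $x_k\in A$, and in the claimed $A_k$ iff $x_k\in\rowA(A)$, while every other element's membership is unaffected. Since $\tau_{x_k}$ also alters only the status of $x_k$, it suffices to check that $\tau_{x_k}$ replaces the condition ``$x_k\in A$'' by ``$x_k\in\rowA(A)$''. I would verify this by cases. If $x_k\in A$ then $x_k\in I$, hence $x_k\notin\rowA(A)$, and indeed $\tau_{x_k}$ removes $x_k$ (removal is always permitted). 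If $x_k\notin A$ then $x_k\notin A_{k-1}$, and $\tau_{x_k}$ adds $x_k$ precisely when $A_{k-1}\cup\{x_k\}$ is again an antichain; so I must show that this addition succeeds exactly when $x_k\in\rowA(A)$. Using the inductive description $A_{k-1}=(\rowA(A)\cap D_{k-1})\cup(A\cap U_{k-1})$ and the compatibility of the order with the linear extension, one checks: if $x_k\in\rowA(A)$ then $x_k$ is incomparable to every element of $A_{k-1}$ (any $a\in A\cap U_{k-1}$ comparable to $x_k$ would force either $x_k\in I$ or, via $a<x_k$, $a\in D_{k-1}$; and each element of $\rowA(A)\cap D_{k-1}$ is incomparable to $x_k$ because $\rowA(A)$ is an antichain), so the toggle adds $x_k$; conversely, if $x_k\notin\rowA(A)$ then either $x_k\in I$, giving some $a\in A$ with $x_k<a$ and hence $a\in A\cap U_{k-1}$, or else $x_k\in\sfp\setminus I$ is non-minimal, giving some $z\in\rowA(A)$ with $z<x_k$ and hence $z\in\rowA(A)\cap D_{k-1}$; in either case $x_k$ is comparable to an element of $A_{k-1}$ and the toggle is blocked.

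The main obstacle is therefore not the overall structure but identifying the correct invariant and then handling the bookkeeping in the inductive step: concretely, translating ``$x_k$ is (non-)minimal in $\sfp\setminus I$'' into ``$x_k$ is (in)comparable to a specific already-decided element of $A_{k-1}$,'' where the placement of that witness in $D_{k-1}$ versus $U_{k-1}$ is exactly what compatibility with the linear extension guarantees.
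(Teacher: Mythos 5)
Your argument is correct. Note that the paper itself does not prove this proposition --- it is quoted from \cite[Prop.~2.24]{joseph2019antichain} --- so there is no in-paper proof to compare against; the closest analogue in this paper is the proof of \cref{prop:b_com_diagram}, which handles the conjugacy between antichain and order-filter operators by exhibiting an isomorphism of toggle groups ($\bftau_i^* = \bft_0\cdots\bft_{i-1}\bft_i\bft_{i-1}\cdots\bft_0$), and the cited source likewise relates $\tau_p$ to compositions of order-filter/order-ideal toggles. Your route is genuinely different and more elementary: you bypass the toggle-group isomorphism entirely and verify by induction that the partial composition $\tau_{x_k}\cdots\tau_{x_1}$ produces exactly $(\rowA(A)\cap D_k)\cup(A\cap U_k)$. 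The invariant is the right one, and the case analysis is complete: the removal case uses $x_k\in A\Rightarrow x_k\in\up^{-1}(A)\Rightarrow x_k\notin\rowA(A)$; the addition case correctly locates a blocking witness either in $A\cap U_{k-1}$ (when $x_k$ lies in the ideal generated by $A$, via $x_k<a$ forcing $a$ to appear later in the linear extension) or in $\rowA(A)\cap D_{k-1}$ (when $x_k$ is a non-minimal element of the complementary filter, taking $z$ to be a minimal element of that filter below $x_k$, which exists by finiteness and must appear earlier in the linear extension). What your approach buys is a self-contained, citation-free proof; what the toggle-group-isomorphism approach buys is a statement that lifts verbatim to the piecewise-linear and birational settings, which is why the paper relies on that machinery elsewhere.
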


\begin{example}
Let us demonstrate \cref{joseph2019antichain} on the poset $\sfa^3$.  With the labels below, $(a,b,c,d,e,f)$ is a linear extension. We can see that $\rowA = \tau_a \tau_b \tau_c \tau_d \tau_e \tau_f$ when applied to the same antichain considered in \cref{ex:a3-row}:
\begin{center}
\begin{tikzpicture}[yscale=0.35, xscale=0.285]
\begin{scope}[shift={(0,-4)}]
\draw[thick] (-0.1, 1.9) -- (-0.9, 1.1);
\draw[thick] (0.1, 1.9) -- (0.9, 1.1);
\draw[thick] (-1.1, 0.9) -- (-1.9, 0.1);
\draw[thick] (-0.9, 0.9) -- (-0.1, 0.1);
\draw[thick] (0.9, 0.9) -- (0.1, 0.1);
\draw[thick] (1.1, 0.9) -- (1.9, 0.1);
\draw[fill=white] (0,2) circle [radius=0.2];
\draw[fill=white] (-1,1) circle [radius=0.2];
\draw[fill=black] (1,1) circle [radius=0.2];
\draw[color=red,fill=red] (-2,0) circle [radius=0.2];
\draw[fill=white] (0,0) circle [radius=0.2];
\draw[fill=white] (2,0) circle [radius=0.2];
\node[above] at (0,2) {$f$};
\node[left] at (-1,1) {$d$};
\node[right] at (1,1) {$e$};
\node[below] at (-2,0) {$a$};
\node[below] at (0,0) {$b$};
\node[below] at (2,0) {$c$};
\end{scope}
\node at (3.5,-2) {$\stackrel{\tau_a}{\longmapsto}$};
\begin{scope}[shift={(7,-4)}]
\draw[thick] (-0.1, 1.9) -- (-0.9, 1.1);
\draw[thick] (0.1, 1.9) -- (0.9, 1.1);
\draw[thick] (-1.1, 0.9) -- (-1.9, 0.1);
\draw[thick] (-0.9, 0.9) -- (-0.1, 0.1);
\draw[thick] (0.9, 0.9) -- (0.1, 0.1);
\draw[thick] (1.1, 0.9) -- (1.9, 0.1);
\draw[fill=white] (0,2) circle [radius=0.2];
\draw[fill=white] (-1,1) circle [radius=0.2];
\draw[fill=black] (1,1) circle [radius=0.2];
\draw[fill=white] (-2,0) circle [radius=0.2];
\draw[color=red,fill=white] (0,0) circle [radius=0.2];
\draw[fill=white] (2,0) circle [radius=0.2];
\node[above] at (0,2) {$f$};
\node[left] at (-1,1) {$d$};
\node[right] at (1,1) {$e$};
\node[below] at (-2,0) {$a$};
\node[below] at (0,0) {$b$};
\node[below] at (2,0) {$c$};
\end{scope}
\node at (10.5,-2) {$\stackrel{\tau_b}{\longmapsto}$};
\begin{scope}[shift={(14,-4)}]
\draw[thick] (-0.1, 1.9) -- (-0.9, 1.1);
\draw[thick] (0.1, 1.9) -- (0.9, 1.1);
\draw[thick] (-1.1, 0.9) -- (-1.9, 0.1);
\draw[thick] (-0.9, 0.9) -- (-0.1, 0.1);
\draw[thick] (0.9, 0.9) -- (0.1, 0.1);
\draw[thick] (1.1, 0.9) -- (1.9, 0.1);
\draw[fill=white] (0,2) circle [radius=0.2];
\draw[fill=white] (-1,1) circle [radius=0.2];
\draw[fill=black] (1,1) circle [radius=0.2];
\draw[fill=white] (-2,0) circle [radius=0.2];
\draw[fill=white] (0,0) circle [radius=0.2];
\draw[color=red,fill=white] (2,0) circle [radius=0.2];
\node[above] at (0,2) {$f$};
\node[left] at (-1,1) {$d$};
\node[right] at (1,1) {$e$};
\node[below] at (-2,0) {$a$};
\node[below] at (0,0) {$b$};
\node[below] at (2,0) {$c$};
\end{scope}
\node at (17.5,-2) {$\stackrel{\tau_c}{\longmapsto}$};
\begin{scope}[shift={(21,-4)}]
\draw[thick] (-0.1, 1.9) -- (-0.9, 1.1);
\draw[thick] (0.1, 1.9) -- (0.9, 1.1);
\draw[thick] (-1.1, 0.9) -- (-1.9, 0.1);
\draw[thick] (-0.9, 0.9) -- (-0.1, 0.1);
\draw[thick] (0.9, 0.9) -- (0.1, 0.1);
\draw[thick] (1.1, 0.9) -- (1.9, 0.1);
\draw[fill=white] (0,2) circle [radius=0.2];
\draw[color=red,fill=white] (-1,1) circle [radius=0.2];
\draw[fill=black] (1,1) circle [radius=0.2];
\draw[fill=white] (-2,0) circle [radius=0.2];
\draw[fill=white] (0,0) circle [radius=0.2];
\draw[fill=white] (2,0) circle [radius=0.2];
\node[above] at (0,2) {$f$};
\node[left] at (-1,1) {$d$};
\node[right] at (1,1) {$e$};
\node[below] at (-2,0) {$a$};
\node[below] at (0,0) {$b$};
\node[below] at (2,0) {$c$};
\end{scope}
\node at (24.5,-2) {$\stackrel{\tau_d}{\longmapsto}$};
\begin{scope}[shift={(28,-4)}]
\draw[thick] (-0.1, 1.9) -- (-0.9, 1.1);
\draw[thick] (0.1, 1.9) -- (0.9, 1.1);
\draw[thick] (-1.1, 0.9) -- (-1.9, 0.1);
\draw[thick] (-0.9, 0.9) -- (-0.1, 0.1);
\draw[thick] (0.9, 0.9) -- (0.1, 0.1);
\draw[thick] (1.1, 0.9) -- (1.9, 0.1);
\draw[fill=white] (0,2) circle [radius=0.2];
\draw[fill=black] (-1,1) circle [radius=0.2];
\draw[color=red,fill=red] (1,1) circle [radius=0.2];
\draw[fill=white] (-2,0) circle [radius=0.2];
\draw[fill=white] (0,0) circle [radius=0.2];
\draw[fill=white] (2,0) circle [radius=0.2];
\node[above] at (0,2) {$f$};
\node[left] at (-1,1) {$d$};
\node[right] at (1,1) {$e$};
\node[below] at (-2,0) {$a$};
\node[below] at (0,0) {$b$};
\node[below] at (2,0) {$c$};
\end{scope}
\node at (31.5,-2) {$\stackrel{\tau_e}{\longmapsto}$};
\begin{scope}[shift={(35,-4)}]
\draw[thick] (-0.1, 1.9) -- (-0.9, 1.1);
\draw[thick] (0.1, 1.9) -- (0.9, 1.1);
\draw[thick] (-1.1, 0.9) -- (-1.9, 0.1);
\draw[thick] (-0.9, 0.9) -- (-0.1, 0.1);
\draw[thick] (0.9, 0.9) -- (0.1, 0.1);
\draw[thick] (1.1, 0.9) -- (1.9, 0.1);
\draw[color=red,fill=white] (0,2) circle [radius=0.2];
\draw[fill=black] (-1,1) circle [radius=0.2];
\draw[fill=white] (1,1) circle [radius=0.2];
\draw[fill=white] (-2,0) circle [radius=0.2];
\draw[fill=white] (0,0) circle [radius=0.2];
\draw[fill=white] (2,0) circle [radius=0.2];
\node[above] at (0,2) {$f$};
\node[left] at (-1,1) {$d$};
\node[right] at (1,1) {$e$};
\node[below] at (-2,0) {$a$};
\node[below] at (0,0) {$b$};
\node[below] at (2,0) {$c$};
\end{scope}
\node at (38.5,-2) {$\stackrel{\tau_f}{\longmapsto}$};
\begin{scope}[shift={(42,-4)}]
\draw[thick] (-0.1, 1.9) -- (-0.9, 1.1);
\draw[thick] (0.1, 1.9) -- (0.9, 1.1);
\draw[thick] (-1.1, 0.9) -- (-1.9, 0.1);
\draw[thick] (-0.9, 0.9) -- (-0.1, 0.1);
\draw[thick] (0.9, 0.9) -- (0.1, 0.1);
\draw[thick] (1.1, 0.9) -- (1.9, 0.1);
\draw[fill=white] (0,2) circle [radius=0.2];
\draw[fill=black] (-1,1) circle [radius=0.2];
\draw[fill=white] (1,1) circle [radius=0.2];
\draw[fill=white] (-2,0) circle [radius=0.2];
\draw[fill=white] (0,0) circle [radius=0.2];
\draw[fill=white] (2,0) circle [radius=0.2];
\node[above] at (0,2) {$f$};
\node[left] at (-1,1) {$d$};
\node[right] at (1,1) {$e$};
\node[below] at (-2,0) {$a$};
\node[below] at (0,0) {$b$};
\node[below] at (2,0) {$c$};
\end{scope}
\end{tikzpicture}
\end{center}
\end{example}
 
Since elements of the same rank are incomparable, the \dfn{antichain rank toggle}
\[\bftau_i \coloneqq \prod\limits_{p \in \sfp_i} \tau_p,\]
for $0\leq i \leq r$, is well-defined. Clearly, $\rowA = \bftau_r \bftau_{r-1} \cdots \bftau_1 \bftau_0$.

\subsection{Antichain rowvacuation}
Instead of considering rowvacuation as an action on an order filter $F\in\calf(P)$, we can consider it to be an action on the antichain $\down(F) \in \cala(\sfp)$ associated to $F$.

\begin{definition}
\dfn{Antichain rowvacuation} is the map $\rvacA\colon \cala(\sfp) \to \cala(\sfp)$ defined as the following composition of antichain rank toggles
\[\rvacA \coloneqq (\bftau_r)(\bftau_r \bftau_{r-1})\cdots (\bftau_r \bftau_{r-1} \cdots \bftau_2 \bftau_1) (\bftau_r \bftau_{r-1} \cdots \bftau_2 \bftau_1 \bftau_0).\]
\dfn{Antichain dual rowvacuation}, $\drvacA \colon \cala(\sfp) \to \cala(\sfp)$, is
\[\drvacA \coloneqq (\bftau_0) (\bftau_0 \bftau_1) \cdots (\bftau_0 \bftau_1 \bftau_2 \cdots \bftau_{r-1}) (\bftau_0 \bftau_1 \bftau_2 \cdots \bftau_{r-1} \bftau_r).\]
\end{definition}

Again, there is an obvious duality $^*\colon \cala(\sfp)\to \cala(\sfp^*)$ given by $A^* \coloneqq A$ for all $A \in \cala(\sfp)$, and again we have $\drvacA(A) = \rvacA(A^*)^*$ for all $A\in \cala(A)$.

Of course, we need to show that the antichain version of rowvacuation is equivalent to its order filter version, which we do in the next proposition. In fact, this proposition explains the conjugacy between all of the order filter and antichain operators. It also asserts that $\rowA$ and $\rvacA$ generate a dihedral action as well.

\begin{prop} \label{prop:rvac-OI-ant}
The following diagrams commute:
\begin{center}
\phantom{1}\hfill
\begin{tikzpicture}[scale=2/3]
\node at (0,1.8) {$\calf(\sfp)$};
\node at (0,0) {$\cala(\sfp)$};
\node at (3.25,1.8) {$\calf(\sfp)$};
\node at (3.25,0) {$\cala(\sfp)$};
\draw[semithick, ->] (0,1.3) -- (0,0.5);
\node[left] at (0,0.9) {$\down$};
\draw[semithick, ->] (0.8,0) -- (2.4,0);
\node[below] at (1.5,0) {$\rowA$};
\draw[semithick, ->] (0.8,1.8) -- (2.4,1.8);
\node[above] at (1.5,1.8) {$\rowF$};
\draw[semithick, ->] (3.25,1.3) -- (3.25,0.5);
\node[right] at (3.25,0.9) {$\down$};
\end{tikzpicture}
\hfill
\begin{tikzpicture}[yscale=2/3]
\node at (0.25,1.8) {$\calf(\sfp)$};
\node at (0.25,0) {$\cala(\sfp)$};
\node at (3,1.8) {$\calf(\sfp)$};
\node at (3,0) {$\cala(\sfp)$};
\draw[semithick, ->] (0.25,1.3) -- (0.25,0.5);
\node[left] at (0.25,0.9) {$\down$};
\draw[semithick, ->] (0.8,0) -- (2.4,0);
\node[below] at (1.5,0) {$\rvacA$};
\draw[semithick, ->] (0.8,1.8) -- (2.4,1.8);
\node[above] at (1.5,1.8) {$\rvacF$};
\draw[semithick, ->] (3,1.3) -- (3,0.5);
\node[right] at (3,0.9) {$\down$};
\end{tikzpicture}\hfill
\begin{tikzpicture}[yscale=2/3]
\node at (0.25,1.8) {$\calf(\sfp)$};
\node at (0.25,0) {$\cala(\sfp)$};
\node at (3,1.8) {$\calf(\sfp)$};
\node at (3,0) {$\cala(\sfp)$};
\draw[semithick, ->] (0.25,1.3) -- (0.25,0.5);
\node[left] at (0.25,0.9) {$\up\circ\Theta$};
\draw[semithick, ->] (0.8,0) -- (2.4,0);
\node[below] at (1.5,0) {$\drvacA$};
\draw[semithick, ->] (0.8,1.8) -- (2.4,1.8);
\node[above] at (1.5,1.8) {$\drvacF$};
\draw[semithick, ->] (3,1.3) -- (3,0.5);
\node[right] at (3,0.9) {$\up\circ\Theta$};
\end{tikzpicture}\hfill\phantom{1}
\end{center}
(Note that $\up\circ\Theta = \rowA^{-1} \circ \down$.) Hence, the first three bulleted items of \cref{prop:row-rvac-dihedral} hold with $\calf$ replaced by $\cala$.
\end{prop}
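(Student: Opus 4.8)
The plan is to treat the three diagrams in turn, the second being the crux and the third reducing to it by poset duality. The first (rowmotion) diagram is immediate from the factored definitions: since $\rowF = \Theta \circ \up^{-1} \circ \down$ and $\rowA = \down \circ \Theta \circ \up^{-1}$, we get
\[\down \circ \rowF = \down \circ \Theta \circ \up^{-1} \circ \down = \rowA \circ \down.\]
The same bookkeeping yields the parenthetical identity, since $\rowA^{-1} = \up \circ \Theta \circ \down^{-1}$ (using $\Theta^{-1} = \Theta$) gives $\rowA^{-1} \circ \down = \up \circ \Theta$.

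For the second (rowvacuation) diagram I would first repackage both operators as ``staircase'' products of the partial operators $W_k \coloneqq \bft_k \bft_{k+1} \cdots \bft_r$ and $W'_k \coloneqq \bftau_r \bftau_{r-1} \cdots \bftau_k$, namely $\rvacF = W_r W_{r-1} \cdots W_0$ and $\rvacA = W'_r W'_{r-1} \cdots W'_0$, where $W_0 = \rowF$ and $W'_0 = \rowA$. The whole diagram then telescopes from the single key lemma
\[\down \circ W_k = W'_k \circ \down \qquad (0 \le k \le r),\]
whose $k=0$ case is exactly the first diagram: applying the lemma repeatedly from the left turns $\down \circ W_r W_{r-1} \cdots W_0$ into $W'_r \cdots W'_0 \circ \down = \rvacA \circ \down$.

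The key lemma is where the real work lies, and I expect it to be the main obstacle, precisely because $\down$ does \emph{not} intertwine the individual rank toggles: adding a rank-$i$ element to a filter can delete strictly higher minimal elements, whereas $\tau_p$ simply does nothing when $p$ is comparable to a current minimal element, so the naive term-by-term conjugacy fails. To prove it I would exploit that $W_k$ toggles only elements of rank $\ge k$, so $W_k(F)$ agrees with $F$ on ranks $< k$ and agrees with filter-rowmotion of the induced subposet $\sfp_{\ge k}$ on the ranks $\ge k$. Writing $\down(F)$ as its rank-$(<k)$ part together with its rank-$(\ge k)$ part, one checks directly that both $\down \circ W_k$ and $W'_k \circ \down$ fix the low part, reducing everything to ranks $\ge k$. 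There a rank-$(\ge k)$ element $p$ is minimal in $W_k(F)$ if and only if it is minimal in the rowmotion of the top part \emph{and} no minimal element of $F$ of rank $< k$ lies below $p$; the latter is exactly the incomparability constraint controlling whether $W'_k$ may introduce $p$. Matching the two descriptions, using the already-proven first diagram for the subposet $\sfp_{\ge k}$ to handle the top part, gives the lemma. The delicate point is the boundary interaction between rank $k$ and the lower ranks, which is a finite case analysis on the covers of $p$.

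Finally, the third diagram is the second diagram for the dual poset $\sfp^*$ in disguise. Under the identifications $\cala(\sfp^*) = \cala(\sfp)$ and the bijection $\Theta \colon \calf(\sfp) \to \calf(\sfp^*)$, the rank-$i$ data of $\sfp^*$ is the rank-$(r-i)$ data of $\sfp$, whence $\drvacF = \Theta \circ \rvacF^{\sfp^*} \circ \Theta$ and $\drvacA = \rvacA^{\sfp^*}$, while the vertical map satisfies $\up \circ \Theta = \down^{\sfp^*} \circ \Theta$ (minimal elements in $\sfp^*$ being maximal elements in $\sfp$). Substituting and cancelling $\Theta$, the third diagram becomes $\down^{\sfp^*} \circ \rvacF^{\sfp^*} = \rvacA^{\sfp^*} \circ \down^{\sfp^*}$, i.e.\ the second diagram for $\sfp^*$, already established for every graded poset. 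The concluding assertion, that the first three items of \cref{prop:row-rvac-dihedral} transfer from $\calf$ to $\cala$, then follows because $\down$ and $\up \circ \Theta$ are bijections conjugating $\rowF, \rvacF, \drvacF$ to $\rowA, \rvacA, \drvacA$ respectively, with both maps conjugating $\rowF$ to $\rowA$; the involution and $\rowF$-conjugation relations therefore transport verbatim.
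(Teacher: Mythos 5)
Your overall architecture is sound: the first diagram is indeed immediate from the three-step definitions, the third reduces to the second by poset duality, and the second would follow by telescoping from the identity $\down\circ W_k = W'_k\circ\down$, which is a true statement (it is precisely the commutativity of the restricted rowmotion diagrams $\down\circ\row_{\calf,\geq k}=\row_{\cala,\geq k}\circ\down$ invoked in the proof of \cref{lem:b_rvac_ind}). The gap is in your proof of that key identity, which is where you yourself locate the real work. The assertion that $W_k(F)$ agrees on ranks $\geq k$ with order filter rowmotion of $F\cap\sfp_{\geq k}$ computed inside the subposet $\sfp_{\geq k}$ is false: take $\sfp$ the two-element chain $a\lessdot b$, $k=1$, and $F=\{a,b\}$; then $t_b$ fixes $F$ because $a\in F$ blocks the removal of $b$, so $W_1(F)\cap\sfp_{\geq 1}=\{b\}$, whereas rowmotion of the one-element poset $\sfp_{\geq 1}$ sends the filter $\{b\}$ to $\emptyset$. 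Since removing $p$ requires $p$ to be minimal in the current filter, a low-rank element of $F$ blocks removals at \emph{every} rank above it, so this is not a boundary effect confined to rank $k$ and the covers of $p$. The antichain side has the same defect: $\down(F)\cap\sfp_{\geq k}$ is not $\down^{\sfp_{\geq k}}(F\cap\sfp_{\geq k})$ (in the example the former is $\emptyset$, the latter $\{b\}$), so the planned appeal to the already-proven first diagram on the subposet $\sfp_{\geq k}$ does not apply to the objects you actually have; the ``matching of the two descriptions'' is both uncarried-out and based on incorrect descriptions.

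The paper's proof avoids this combinatorial delicacy entirely by working algebraically in the toggle group. By the isomorphism between the order filter and antichain toggle groups \cite[Thm.~3.12]{joseph2020birational}, conjugation by $\up\circ\Theta$ carries $\bftau_i$ to the palindromic word $\bftau_i^*=\bft_0\bft_1\cdots\bft_{i-1}\bft_i\bft_{i-1}\cdots\bft_1\bft_0$; the proposition then reduces to checking that the word defining $\drvacA$, rewritten in the $\bftau_i^*$, collapses to the word defining $\drvacF$, which is a two-line induction using only $\bft_i^2=1$ (the middle diagram then follows by duality). Your key lemma follows from the dual form of the same isomorphism by an identical telescoping computation, so the cleanest repair is to import that isomorphism rather than to attempt a direct combinatorial description of $W_k(F)$.
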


Finally, we conclude our discussion of rowvacuation at the combinatorial level with another way to compute rowvacuation. For $0\leq i \leq r$, set
\[\sfp_{\geq i} \coloneqq \bigcup_{j=i}^{r} \sfp_j.\]
We now give an inductive description of antichain rowvacuation, where, roughly speaking, we can compute $\rvacA(A)$ by restricting $A$ to $\sfp_{\geq 1}$ and computing rowvacuations there. More precisely, we have the following:

\begin{lemma} \label{lem:rvac_ind}
Let $A \in \cala(\sfp)$ and $p\in \sfp$.
\begin{itemize}
\item If $p \in \sfp_0$, then $p\in \rvacA(A)$ if and only if $p \in \bftau_0(A)$ (i.e., if and only if $A$ does not contain any element $q \geq p$).
\item If $p \in \sfp_{\geq 1}$, then $p\in \rvacA(A)$ if and only if $p \in \rowA^{-1}\circ \rvacA(\overline{A})$, where $\overline{A} \coloneqq A\cap \sfp_{\geq 1} \in \cala(\sfp_{\geq 1})$.
\end{itemize}
\end{lemma}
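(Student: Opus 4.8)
The plan is to peel off the bottom rank $\sfp_0$ by factoring the rank-toggle word defining $\rvacA$. Splitting off the rightmost block $\bftau_r\bftau_{r-1}\cdots\bftau_1\bftau_0 = \rowA$ gives the operator identity $\rvacA = R\circ\rowA$, where $R \coloneqq (\bftau_r)(\bftau_r\bftau_{r-1})\cdots(\bftau_r\cdots\bftau_1)$. The key point is that $R$ is, \emph{verbatim}, the rank-toggle word defining rowvacuation of the rank-$(r-1)$ poset $\sfp_{\geq 1}$ (with ranks $\sfp_1,\ldots,\sfp_r$); i.e.\ $R$ and $\rvacA^{\sfp_{\geq 1}}$ are the same word in $\bftau_1,\ldots,\bftau_r$, interpreted on different ambient posets. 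Since none of $\bftau_1,\ldots,\bftau_r$ ever inserts or deletes a rank-$0$ element, the only map among all of these that changes the rank-$0$ part of an antichain is the single toggle $\bftau_0$ at the start of $\rowA$.

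The first bullet is then immediate: for $p\in\sfp_0$,
\[\rvacA(A)\cap\sfp_0 = \rowA(A)\cap\sfp_0 = \bftau_0(A)\cap\sfp_0,\]
and because distinct rank-$0$ elements are incomparable the toggles $\tau_q$ ($q\in\sfp_0$) commute and act independently, each $\tau_p$ leaving $p$ in $\bftau_0(A)$ exactly when $p\notin A$ and no element of $A$ lies above $p$. Hence $p\in\rvacA(A)$ if and only if $A$ contains no $q\geq p$.

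For the second bullet, restrict to rank $\geq 1$. The factorization gives $\rvacA(A)\cap\sfp_{\geq 1} = W(\bftau_0(A))\cap\sfp_{\geq 1}$, where $W \coloneqq R\circ(\bftau_r\cdots\bftau_1)$ is a word in the toggles of rank $\geq 1$. For the claimed right-hand side, the conjugacy $\rvacA\circ\rowA = \rowA^{-1}\circ\rvacA$ of \cref{prop:row-rvac-dihedral} --- which holds for antichains by \cref{prop:rvac-OI-ant} --- applied inside $\sfp_{\geq 1}$ rewrites $\rowA^{-1}\circ\rvacA(\overline A) = \rvacA^{\sfp_{\geq 1}}\circ\rowA^{\sfp_{\geq 1}}(\overline A) = W^{\sfp_{\geq 1}}(\overline A)$, the same word $W$ run inside $\sfp_{\geq 1}$ on $\overline A$. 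Writing $\bftau_0(A) = A_0'\sqcup\overline A$ with $A_0'\subseteq\sfp_0$, and noting that when no rank-$0$ element is present the full-poset and $\sfp_{\geq 1}$ runs of the rank-$\geq 1$ word $W$ coincide, the second bullet reduces to the single identity
\[W(A_0'\sqcup\overline A)\cap\sfp_{\geq 1} = W(\overline A)\cap\sfp_{\geq 1};\]
that is, the rank-$\geq 1$ output of $W$ is unchanged by deleting the rank-$0$ part of the input.

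This rank-$0$ insensitivity is the crux, and I expect it to be the main obstacle. It cannot be verified toggle-by-toggle: a rank-$0$ element $q\in A_0'$ permanently forbids adding every rank-$1$ element $p$ with $q\lessdot p$, so the two runs of $W$ genuinely pass through different intermediate antichains --- the discrepancy already appears inside the first block $\bftau_r\cdots\bftau_1$. My plan is to reduce, by inserting the elements of $A_0'$ one at a time, to the case of a single such $q$, whose only effect throughout $W$ is to suppress the additions of its covers, and then to show these suppressed additions leave no trace in the output. For the latter I would switch from the toggle picture to the orbit (``sewing'') description of \cref{prop:rvac_sew_row}: both $\rvacA(A)$ in $\sfp$ and $\rvacA^{\sfp_{\geq 1}}(\overline A)$ in $\sfp_{\geq 1}$ are honest rowvacuations, so each membership can be read off a single rowmotion orbit rank by rank, and the remaining content is to match the full-poset and subposet rowmotion orbits on the ranks $\geq 1$. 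This comparison of full versus restricted rowmotion is, I believe, the genuine technical heart of the lemma.
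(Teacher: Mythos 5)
Your reduction is right and is essentially the paper's: peel off $\bftau_0$, use the dihedral relation $\rvacA\circ\rowA=\rowA^{-1}\circ\rvacA$ on the rank-$\geq 1$ subposet, and thereby reduce both bullets to the single statement that a word $W$ in the toggles $\bftau_1,\dots,\bftau_r$ produces the same rank-$\geq 1$ output whether it is run in $\sfp$ on $\bftau_0(A)=A_0'\sqcup\overline{A}$ or in $\sfp_{\geq 1}$ on $\overline{A}$. The first bullet is fine. But the ``rank-$0$ insensitivity'' that you correctly single out as the crux is exactly the content of the lemma, and you do not prove it; you only propose a plan. That plan has a real problem: its final step asks to ``match the full-poset and subposet rowmotion orbits on the ranks $\geq 1$,'' but rowmotion of $\sfp$ restricted to $\sfp_{\geq 1}$ is \emph{not} rowmotion of $\sfp_{\geq 1}$ --- the initial $\bftau_0$ in $\rowA=\bftau_r\cdots\bftau_1\bftau_0$ changes the rank-$0$ content, which then changes which rank-$1$ elements $\bftau_1$ may insert --- so the orbit-matching you want is a claim of the same difficulty as the one you are trying to prove. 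Moreover the sewing description (\cref{prop:rvac_sew_row}) is stated for order filters; transported to antichains through $\down$, membership of a rank-$i$ element depends on two consecutive rowmotion iterates, not one, so it does not read off rank by rank as cleanly as you suggest. As written, the argument is incomplete at precisely the step you flag.

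For comparison, the paper closes this gap by proving the birational version (\cref{lem:b_rvac_ind}) and then tropicalizing and specializing. There, after the same reduction, the dihedral relation is applied on the other side so that the remaining word has the form $\bftau_1\bfT\bftau_1$ with $\bfT$ a word in $\bftau_2,\dots,\bftau_r$ only (your $W$ equals this word as an operator, but only this presentation makes the following computation work). One then checks that the first $\bftau_1$ produces, at each $p\in\sfp_1$, exactly the restricted value divided by $\sum_{q\lessdot p}\sigma(q)$, and shows inductively that this discrepancy persists \emph{unchanged} through $\bfT$ and never contaminates ranks $\geq 2$: every maximal chain through a rank-$1$ element $q$ passes through some $r\lessdot q$, so each stray denominator $\sum_{r\lessdot q}\sigma(r)$ is always cancelled by a matching factor in the chain sums. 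The final $\bftau_1$ then removes the discrepancy entirely. This multiplicative bookkeeping (whose tropicalization governs exactly the blocked additions you describe) is the piece your proposal is missing.
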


\begin{example}
Consider the following antichain $A \in \cala(\sfa^3)$ (the same one considered in \cref{ex:a3-row}):
\begin{center}
\begin{tikzpicture}[xscale=0.6,yscale=0.6]
\node at (-3,1){$A=$};
\draw[thick] (-0.1, 1.9) -- (-0.9, 1.1);
\draw[thick] (0.1, 1.9) -- (0.9, 1.1);
\draw[thick] (-1.1, 0.9) -- (-1.9, 0.1);
\draw[thick] (-0.9, 0.9) -- (-0.1, 0.1);
\draw[thick] (0.9, 0.9) -- (0.1, 0.1);
\draw[thick] (1.1, 0.9) -- (1.9, 0.1);
\draw[fill=white] (0,2) circle [radius=0.2];
\draw[fill=white] (-1,1) circle [radius=0.2];
\draw[fill=black] (1,1) circle [radius=0.2];
\draw[fill=black] (-2,0) circle [radius=0.2];
\draw[fill=white] (0,0) circle [radius=0.2];
\draw[fill=white] (2,0) circle [radius=0.2];
\end{tikzpicture}
\end{center}
Observe that $\sfa^{3}_{\geq 1}\simeq\sfa^2$ and $\sfa^{3}_{\geq 2}\simeq\sfa^1$. We show how to use \cref{lem:rvac_ind} to compute $\rvacA(A)$ below:
\begin{center}
\begin{tikzpicture}[scale=0.6]
\begin{scope}[shift={(0,2)}]
\node at (-3,1.5) {$\sfa^1$:};
\draw[red,thick,dashed] (-0.3,1.2) -- (-0.3,1.8) -- (0.3,1.8) -- (0.3,1.2) -- (-0.3,1.2);
\draw [fill=white] (0,1.5) circle [radius=0.2];
\node at (2.5,1.6) {\Large $\stackrel{\rvacA}{\longmapsto}$};
\end{scope}
\begin{scope}[shift={(5,2)}]
\draw [fill=black] (0,1.5) circle [radius=0.2];
\node at (2.5,1.6) {\Large $\stackrel{\rowA^{-1}}{\longmapsto}$};
\end{scope}
\begin{scope}[shift={(10,2)}]
\draw[red,thick] (-0.3,1.2) -- (-0.3,1.8) -- (0.3,1.8) -- (0.3,1.2) -- (-0.3,1.2);
\draw [fill=white] (0,1.5) circle [radius=0.2];
\end{scope}
\begin{scope}
\node at (-3,1.5) {$\sfa^2$:};
\draw[thick] (-0.15, 1.85) -- (-0.85, 1.15);
\draw[thick] (0.15, 1.85) -- (0.85, 1.15);
\draw [fill=white] (0,2) circle [radius=0.2];
\draw [fill=white] (-1,1) circle [radius=0.2];
\draw [fill=black] (1,1) circle [radius=0.2];
\draw[green,thick,dashed] (-1.6,0.7) -- (0,2.4) -- (1.6, 0.7) -- (-1.6,0.7);
\draw[red,thick,dashed] (-0.3,1.7) -- (-0.3,2.3) -- (0.3,2.3) -- (0.3,1.7) -- (-0.3,1.7);
\node at (2.5,1.6) {\Large $\stackrel{\rvacA}{\longmapsto}$};
\end{scope}
\begin{scope}[shift={(5,0)}]
\draw[thick] (-0.15, 1.85) -- (-0.85, 1.15);
\draw[thick] (0.15, 1.85) -- (0.85, 1.15);
\draw [fill=white] (0,2) circle [radius=0.2];
\draw [fill=black] (-1,1) circle [radius=0.2];
\draw [fill=white] (1,1) circle [radius=0.2];
\draw[red,thick] (-0.3,1.7) -- (-0.3,2.3) -- (0.3,2.3) -- (0.3,1.7) -- (-0.3,1.7);
\node at (2.5,1.6) {\Large $\stackrel{\rowA^{-1}}{\longmapsto}$};
\end{scope}
\begin{scope}[shift={(10,0)}]
\draw[thick] (-0.15, 1.85) -- (-0.85, 1.15);
\draw[thick] (0.15, 1.85) -- (0.85, 1.15);
\draw [fill=white] (0,2) circle [radius=0.2];
\draw [fill=white] (-1,1) circle [radius=0.2];
\draw [fill=black] (1,1) circle [radius=0.2];
\draw[green,thick] (-1.6,0.7) -- (0,2.4) -- (1.6, 0.7) -- (-1.6,0.7);
\end{scope}
\node at (-3,-2) {$\sfa^3$:};
\begin{scope}[shift={(0,-3)}]
\draw[thick] (-0.15, 1.85) -- (-0.85, 1.15);
\draw[thick] (0.15, 1.85) -- (0.85, 1.15);
\draw[thick] (-1.15, 0.85) -- (-1.85, 0.15);
\draw[thick] (-0.85, 0.85) -- (-0.15, 0.15);
\draw[thick] (0.85, 0.85) -- (0.15, 0.15);
\draw[thick] (1.15, 0.85) -- (1.85, 0.15);
\draw [fill=white] (0,2) circle [radius=0.2];
\draw [fill=white] (-1,1) circle [radius=0.2];
\draw [fill=black] (1,1) circle [radius=0.2];
\draw [fill=black] (-2,0) circle [radius=0.2];
\draw [fill=white] (0,0) circle [radius=0.2];
\draw [fill=white] (2,0) circle [radius=0.2];
\draw[green,thick,dashed] (-1.6,0.7) -- (0,2.4) -- (1.6, 0.7) -- (-1.6,0.7);
\node at (2.5,1.6) {\Large $\stackrel{\rvacA}{\longmapsto}$};
\end{scope}
\begin{scope}[shift={(5,-3)}]
\draw[thick] (-0.15, 1.85) -- (-0.85, 1.15);
\draw[thick] (0.15, 1.85) -- (0.85, 1.15);
\draw[thick] (-1.15, 0.85) -- (-1.85, 0.15);
\draw[thick] (-0.85, 0.85) -- (-0.15, 0.15);
\draw[thick] (0.85, 0.85) -- (0.15, 0.15);
\draw[thick] (1.15, 0.85) -- (1.85, 0.15);
\draw [fill=white] (0,2) circle [radius=0.2];
\draw [fill=white] (-1,1) circle [radius=0.2];
\draw [fill=black] (1,1) circle [radius=0.2];
\draw [fill=white] (-2,0) circle [radius=0.2];
\draw [fill=white] (0,0) circle [radius=0.2];
\draw [fill=white] (2,0) circle [radius=0.2];
\draw[green,thick] (-1.6,0.7) -- (0,2.4) -- (1.6, 0.7) -- (-1.6,0.7);
\end{scope}
\end{tikzpicture}
\end{center}
We will go over a similar example, with more explanation, again in \cref{ex:no-draw-Dyck}.
\end{example}

\subsection{Piecewise-linear lifts}

Now we explain the extensions of our actions on $\calf(\sfp)$ and $\cala(\sfp)$ from the combinatorial to the piecewise-linear (PL) realm.

Let $\sfphat$ denote the poset obtained from $\sfp$ by adjoining a new minimal element~$\hatz$ and a new maximal element~$\hato$. Then define the following affine spaces of real-valued functions on $\sfphat$ or $\sfp$:
\begin{align*}
\calfpl_{\kappa}(\sfp) &\coloneqq \{ \pi \in \rr^{\sfphat}\colon \pi(\hatz) = 0, \pi(\hato)=\kappa\}, \\
\caljpl_{\kappa}(\sfp) &\coloneqq \{ \pi \in \rr^{\sfphat}\colon \pi(\hatz) = \kappa, \pi(\hato)=0\},\\
\calapl_{\kappa}(\sfp) &\coloneqq \{ \pi \in \rr^{\sfp}\}.
\end{align*}
Here $\kappa \in \rr$ is a parameter. When its values is clear from context, we will use the shorthands $\calfpl(\sfp)$, $\caljpl(\sfp)$, and $\calapl(\sfp)$. Observe that these three spaces are all basically the same (and we often implicitly identify them all with $\rr^{\sfp}$ by forgetting the values at $\hatz$ and $\hato$); but we think of them separately as the piecewise-linear analogs of $\calf(\sfp)$, $\calj(\sfp)$, and~$\cala(\sfp)$.

There are some important polytopes which live in $\calfpl_1(\sfp)$, $\caljpl_1(\sfp)$, and $\calapl_1(\sfp)$. Namely:
\begin{itemize}
\item the \dfn{order polytope} $\calo\calp(\sfp)\subseteq \calfpl_{1}(\sfp)$, where
\[\calo\calp(\sfp) \coloneqq \left\{\pi \in \calfpl_{1}(\sfp)\colon \pi(x)\leq \pi(y) \textrm{ whenever $x\leq y \in \sfphat$}\right\}; \]
\item the \dfn{order-reversing polytope} $\calo\calr(\sfp)\subseteq \caljpl_{1}(\sfp)$, where
\[\calo\calr(\sfp) \coloneqq \left\{\pi \in \caljpl_{1}(\sfp)\colon \pi(x)\geq \pi(y) \textrm{ whenever $x\leq y \in \sfphat$}\right\}; \]
\item the \dfn{chain polytope} $\calc(\sfp) \subseteq \calapl_{1}(\sfp)$, where
\[\calc(\sfp) \coloneqq \left\{\pi \in \calapl_{1}(\sfp)\colon 0 \leq \sum_{x \in C} \pi(x) \leq 1 \textrm{ for any chain $C\subseteq \sfp$}\right\}.\]
\end{itemize}
Recall that a \dfn{chain} of $\sfp$ is a subset $C\subseteq \sfp$ in which any two elements are comparable. Some of the inequalities in the above descriptions of these polytopes are redundant; for example, the facets of the order polytope $\calo\calp(\sfp)$ are given by
\[ \pi(x) \leq \pi(y) \textrm{ whenever $x \lessdot y \in \sfphat$},\]
and the facets of the chain polytope $\calc(\sfp)$ are given by
\begin{align*}
0\leq \pi(x) &\textrm{ for all $x \in \sfp$}, \\
 \sum_{x \in C} \pi(x) \leq 1 &\textrm{ for any \emph{maximal} chain $C\subseteq \sfp$}.
 \end{align*}
Here by \dfn{maximal chain} we mean a maximal by inclusion chain.

We identify each subset $S\subseteq \sfp$ with its indicator function. With this identification in mind, Stanley~\cite{stanley1986twoposet} showed that:
\begin{itemize}
\item the vertices of $\calo\calp(\sfp)$ (resp.~of $\calo\calr(\sfp)$) are the $F \in \calf(\sfp)$ (resp.~$J \in \calj(\sfp)$), 
\item the vertices of $\calc(\sfp)$ are the $A\in \cala(\sfp)$.
\end{itemize}
This explains how we transfer information from the piecewise-linear realm to the combinatorial realm: we \dfn{specialize} (i.e., restrict) to the vertices of these polyotpes.

We proceed to explain the piecewise-linear lifts of rowmotion and toggling introduced by Einstein and Propp~\cite{einstein2018combinatorial}. In many cases we use exactly the same notation as in the combinatorial realms for these piecewise-linear maps, and let context distinguish them. Of course, the PL extensions specialize to their combinatorial analogs.

We first explain the PL analog of the definition of rowmotion as the composition of three bijections. These bijections are:
\begin{itemize}
\item \dfn{complementation} $\Theta\colon \rr^{\sfphat} \to \rr^{\sfphat}$, with $(\Theta\pi)(x) \coloneqq \kappa-\pi(x)$ for all~$x \in \sfphat$;
\item \dfn{up-transfer} $\up \colon\caljpl_{\kappa}(\sfp) \to \calapl_{\kappa}(\sfp)$, with
\[(\up \pi)(x) \coloneqq \pi(x) - \max\left\{\pi(y)\colon x\lessdot y \in \sfphat\right\}\]
for all $x\in \sfp$;
\item \dfn{down-transfer} $\down \colon\calfpl_{\kappa}(\sfp) \to \calapl_{\kappa}(\sfp)$, with 
\[(\down\pi) (x) \coloneqq \pi(x)- \max\left\{\pi(y)\colon y\lessdot x \in \sfphat\right\}\]
for all $x\in \sfp$.
\end{itemize}
Evidently, $\Theta^{-1}=\Theta$. Also, note that
\begin{align*}
(\up^{-1}\pi)(x) &= \max\left\{ \pi(y_1)+\pi(y_2)+\cdots +\pi(y_k)\colon x=y_1\lessdot y_2 \lessdot \cdots \lessdot y_k\lessdot\hato \in \sfphat \right\} \\
&=\pi(x)+ \max\left\{(\up^{-1}\pi)(y)\colon x\lessdot y\in \sfphat\right\}
\end{align*}
for all $x\in\sfp$, and similarly,
\begin{align*}
(\down^{-1}\pi)(x) &= \max\left\{  \pi(y_1)+\pi(y_2)+\cdots +\pi(y_k)\colon \hatz\lessdot y_1\lessdot y_2 \lessdot \cdots \lessdot y_k=x \in \sfphat \right\} \\
&=\pi(x)+ \max\left\{(\down^{-1}\pi)(y)\colon y\lessdot x \in \sfphat\right\}
\end{align*}
for all $x\in \sfp$.

Complementation is an involution that maps $\calo\calp(\sfp)$ to $\calo\calr(\sfp)$ and vice versa. Down-transfer (which is equivalent to Stanley's ``transfer map''~\cite{stanley1986twoposet}) maps $\calo\calp(\sfp)$ to~$\calc(\sfp)$, while up-transfer maps $\calo\calr(\sfp)$ to $\calc(\sfp)$.

\begin{definition}
\dfn{PL order filter rowmotion}, denoted $\rowFpl: \calfpl(\sfp) \to \calfpl(\sfp)$, is given by $\rowFpl \coloneqq \Theta \circ \up^{-1} \circ \down$. 
\end{definition}

\begin{definition}
\dfn{PL antichain rowmotion}, denoted $\rowApl: \calapl(\sfp) \to \calapl(\sfp)$, is given by $\rowApl \coloneqq \down \circ \Theta \circ \up^{-1}$.
\end{definition}

Next, we go over the toggling description of these maps. 

\begin{definition}
Let $p \in \sfp$.  Then the \dfn{PL order filter toggle} at $p$ is the map $t_p \colon \calfpl_{\kappa}(\sfp)\to \calfpl_{\kappa}(\sfp)$ defined by
\[(t_p\pi)(x) \coloneqq \begin{cases} \pi(x) &\textrm{if $x\neq p$};\\
\max\left\{\pi(y)\colon y\lessdot x \in \sfphat\right\} + \min\left\{\pi(y)\colon x\lessdot y \in \sfphat\right\} - \pi(x) &\textrm{if $x=p$},\end{cases}\]
for all $x\in \sfp$.
\end{definition}

\begin{definition}
Let $p \in \sfp$. Let $\mc_p(\sfp)$ denote the set of all maximal chains $C\subseteq \sfp$ with $p\in C$. Then the \dfn{PL antichain toggle} at $p$ is the map $\tau_p \colon \calapl_{\kappa}(\sfp)\to \calapl_{\kappa}(\sfp)$ defined by
\[(\tau_p\pi)(x) \coloneqq \begin{cases} \pi(x) &\textrm{if $x\neq p$};\\
\kappa-\max\left \{\displaystyle\sum_{y \in C} \pi(y)\colon C \in \mc_p(\sfp)\right\} &\textrm{if $x=p$},\end{cases}\]
for all $x\in \sfp$.
\end{definition}

These PL toggles are again involutions and have the same commutativity properties as their combinatorial analogs. An important observation is that (when $\kappa=1$) the order toggles $t_p$ preserve the order polytope $\calo\calp(\sfp)$; and similarly the antichain toggles $\tau_p$ preserve the chain polytope $\calc(\sfp)$. In this case both of these kinds of toggles also preserve the lattice $\frac{1}{m}\zz^{\sfp}$ for any $m \in \zz_{> 0}$.

The PL versions of rowmotion are built out of these toggles in exactly the same way as in the combinatorial realm, as shown by Einstein--Propp~\cite{einstein2018combinatorial} and the second author~\cite{joseph2019antichain}:

\begin{prop}[{Einstein--Propp~\cite{einstein2018combinatorial}; c.f.~\cite[Thm.~5.12]{joseph2020birational}}]
Let $(x_1,x_2,\dots,x_n)$ be any linear extension of $\sfp$.  Then $\rowFpl=t_{x_1} t_{x_2} \cdots t_{x_n}$.
\end{prop}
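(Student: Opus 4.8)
The plan is to reduce the identity to a single local recurrence for $\rowFpl$ and then check that this recurrence is exactly what one PL toggle contributes, mirroring the combinatorial proof of \cref{prop:row-toggles}. So fix $F \in \calfpl(\sfp)$ and write $G \coloneqq \rowFpl(F)$. First I would unwind the definition $\rowFpl = \Theta \circ \up^{-1} \circ \down$ into a closed recurrence for $G$. Setting $\alpha \coloneqq \down(F)$ and $\beta \coloneqq \up^{-1}(\alpha)$, so that $G = \Theta(\beta)$, I would combine the down-transfer formula $\alpha(x) = F(x) - \max\{F(y)\colon y\lessdot x\}$, the recursive up-inverse formula $\beta(x) = \alpha(x) + \max\{\beta(y)\colon x\lessdot y\}$, and $G = \kappa - \beta$ (coordinatewise) to obtain, for every $x\in\sfp$,
\[ G(x) = \max\{F(y)\colon y \lessdot x \in \sfphat\} + \min\{G(y)\colon x \lessdot y \in \sfphat\} - F(x). \]
The one nonformal move in this computation is rewriting $\max\{\beta(y)\colon x\lessdot y\} = \max\{\kappa - G(y)\colon x\lessdot y\} = \kappa - \min\{G(y)\colon x\lessdot y\}$, after which the two occurrences of $\kappa$ cancel. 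The boundary values $F(\hatz)=0$ and $G(\hato)=\kappa$ feed into the max (resp.\ min) for minimal (resp.\ maximal) elements of $\sfp$.

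Next I would run a downward induction along the linear extension. Set $\pi^{(n)} \coloneqq F$ and $\pi^{(i-1)} \coloneqq t_{x_i}(\pi^{(i)})$ for $i = n, n-1, \ldots, 1$, so that $\pi^{(0)} = t_{x_1} \cdots t_{x_n}(F)$ is the quantity we want to identify with $G$. The invariant to maintain is that $\pi^{(i)}$ agrees with $F$ on $\{x_1, \ldots, x_i\}$ and with $G$ on $\{x_{i+1}, \ldots, x_n\}$ (and with both at $\hatz$ and $\hato$, which every toggle fixes). The base case $i = n$ is immediate since $\pi^{(n)} = F$ and the second set is empty. For the inductive step, $t_{x_i}$ alters only the coordinate at $x_i$, so all other coordinates retain their status; the content is in the new value at $x_i$. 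Here the linear extension property is exactly what is needed: every $y$ with $y\lessdot x_i$ is either $\hatz$ or some $x_j$ with $j < i$, hence still carries its $F$-value, while every $y$ with $x_i \lessdot y$ is either $\hato$ or some $x_j$ with $j > i$, hence already carries its $G$-value. Substituting these into the PL toggle formula reproduces precisely the right-hand side of the recurrence above, so $\pi^{(i-1)}(x_i) = G(x_i)$, closing the induction and yielding $\pi^{(0)} = G$.

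I expect the only real obstacle to be bookkeeping rather than ideas: one must keep the direction of composition straight (the rightmost toggle $t_{x_n}$ is applied first, which is why the induction runs downward), and one must track the three distinct boundary conventions as one passes through $\down$ (into $\calapl$), then $\up^{-1}$ (into $\caljpl$, where the value at $\hato$ is $0$), and finally $\Theta$ (back to $\calfpl$). The hard part is therefore concentrated in the first step: deriving the recurrence with the $\hatz$, $\hato$, and $\kappa$ conventions placed correctly so that the signs and the max/min land where they should. Once that recurrence is established the induction is routine, and as a sanity check, specializing $\kappa = 1$ and restricting to the vertices of $\calo\calp(\sfp)$ recovers the combinatorial statement of \cref{prop:row-toggles}.
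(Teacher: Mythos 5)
Your proof is correct. The paper does not supply its own argument for this proposition (it is cited from Einstein--Propp and Joseph--Roby), but your route --- unwinding $\Theta\circ\up^{-1}\circ\down$ into the local recurrence $G(x)=\max\{F(y)\colon y\lessdot x\in\sfphat\}+\min\{G(y)\colon x\lessdot y\in\sfphat\}-F(x)$ and then inducting down the linear extension so that each toggle $t_{x_i}$ sees $F$-values below and $G$-values above --- is precisely the standard proof in those references, i.e.\ the piecewise-linear analog of the Cameron--Fon-Der-Flaass argument for \cref{prop:row-toggles}, and all of your boundary bookkeeping at $\hatz$ and $\hato$ checks out.
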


\begin{prop}[{Joseph~\cite[Thm.~3.21]{joseph2019antichain}}]
Let $(x_1,x_2,\dots,x_n)$ be any linear extension of $\sfp$.  Then $\rowApl=\tau_{x_n} \tau_{x_{n-1}} \cdots \tau_{x_1}$.
\end{prop}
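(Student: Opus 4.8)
The plan is to deduce this from the PL order filter rowmotion identity $\rowFpl = t_{x_1}t_{x_2}\cdots t_{x_n}$ stated just above (Einstein--Propp), by conjugating with the down-transfer map. In the PL setting $\down\colon\calfpl(\sfp)\to\calapl(\sfp)$ is a bijection, with inverse the $\down^{-1}$ written out above as a maximum of sums along saturated chains from $\hatz$. From the definitions $\rowFpl = \Theta\circ\up^{-1}\circ\down$ and $\rowApl = \down\circ\Theta\circ\up^{-1}$ one reads off $\down\circ\rowFpl = \down\circ\Theta\circ\up^{-1}\circ\down = \rowApl\circ\down$, so that
\[
\rowApl = \down\circ\rowFpl\circ\down^{-1} = \down\circ(t_{x_1}t_{x_2}\cdots t_{x_n})\circ\down^{-1}.
\]
Hence it remains only to establish the conjugation identity
\[
\down\circ(t_{x_1}t_{x_2}\cdots t_{x_n})\circ\down^{-1} = \tau_{x_n}\tau_{x_{n-1}}\cdots\tau_{x_1}.
\]

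The subtlety, and the reason the antichain toggles appear in the \emph{opposite} order, is that $\down$ does not intertwine the toggles one at a time: already combinatorially $\down\circ t_p \neq \tau_p\circ\down$, since deleting a minimal element of a filter can promote new elements to minimal, and the two toggle groups obey different commutation relations ($t_p,t_q$ commute unless one covers the other, whereas $\tau_p,\tau_q$ commute unless $p,q$ are comparable and distinct). I would therefore prove the conjugation identity not toggle-by-toggle but by tracking a single coordinate through the composition $\tau_{x_n}\cdots\tau_{x_1}$. Because each $\tau_{x_k}$ changes only the value at $x_k$, and the linear extension visits each element exactly once, the value assigned to $x_k$ is fixed at the instant $\tau_{x_k}$ acts: by definition of the PL antichain toggle it is $\kappa$ minus the maximum, over maximal chains $C\in\mc_{x_k}(\sfp)$, of $\sum_{y\in C}\sigma(y)$, where $\sigma$ is the current function. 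The key observation is that each $y\in C$ with $y<x_k$ has already been toggled (it precedes $x_k$ in the linear extension) while each $y>x_k$ has not, so still carries its input value. The plan is then to prove by induction on $k$ that this ``updated below, original above'' pattern is reproduced exactly by $\down\circ(t_{x_1}\cdots t_{x_k})\circ\down^{-1}$, matching the two sides coordinate by coordinate.

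I expect the main obstacle to be the bookkeeping in this induction. One must check that the max-plus quantity produced by the antichain toggle at $x_k$ agrees, after the $\down/\down^{-1}$ sandwich, with the quantity produced by the order filter toggle $t_{x_k}$ acting on $\down^{-1}$ of the partially-updated antichain; concretely, that the telescoping of the nested $\max$'s defining $\down$ and $\down^{-1}$ along saturated covers is compatible with the single $\max$ over entire maximal chains in the antichain toggle. This compatibility is essentially the content of the combinatorial Proposition above, and the decisive point is that every operation involved is built only from $\max$ and $+$ (the tropicalizations of $\times$ and $\div$); there are no genuine minus signs, so the telescoping identities are stable and the argument goes through uniformly, with the combinatorial statement recovered by restricting to the $0/1$ vertices of $\calc(\sfp)$.
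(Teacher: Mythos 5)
Your reduction is fine as far as it goes: $\down\circ\rowFpl=\rowApl\circ\down$ does follow from the three-map definitions, $\down$ is invertible at the PL level, and the ``updated below, original above'' observation about the composition $\tau_{x_n}\cdots\tau_{x_1}$ is correct and is indeed the right starting point. The gap is the induction invariant you propose to close the argument with: the identity $\tau_{x_k}\cdots\tau_{x_1}=\down\circ(t_{x_1}\cdots t_{x_k})\circ\down^{-1}$ is false for $k<n$. Already for $\sfp$ a two-element chain $a<b$ with linear extension $(a,b)$ and $k=1$, at the combinatorial level $\tau_a(\emptyset)=\{a\}$, whereas $\down\circ t_a\circ\down^{-1}(\emptyset)=\emptyset$, because $\{a\}$ is not an order filter and so $t_a$ fixes $\emptyset$. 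The structural reason is that $t_{x_1}\cdots t_{x_k}$ is not an initial segment of the computation of $\rowFpl=t_{x_1}\cdots t_{x_n}$: order filter rowmotion toggles from the top of the poset downward ($t_{x_n}$ acts first), while antichain rowmotion toggles from the bottom upward ($\tau_{x_1}$ acts first), so the two partial products touch disjoint sets of elements and cannot be matched coordinate by coordinate through $\down$.

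A correct completion has to replace that invariant. One route is to drop the order-filter factorization and prove directly, by induction along the linear extension, that $(\tau_{x_k}\cdots\tau_{x_1}\pi)(x_k)=(\down\circ\Theta\circ\up^{-1}\pi)(x_k)$, using your splitting of each chain sum into final values below $x_k$ and original values at and above $x_k$; the real work is then the identity expressing $(\rowApl\pi)(x_k)$ in terms of $(\rowApl\pi)(y)$ for $y<x_k$ and $\pi(y)$ for $y\geq x_k$, which your sketch does not supply. The other route, which is the one taken in the cited source and mirrored in this paper's proof of \cref{prop:b_com_diagram}, is to first establish the correct single-toggle intertwining: the transfer map conjugates $\tau_p$ not to $t_p$ but to a palindromic product of order-filter toggles at comparable elements (compare $\bftau_i^*=\bft_0\bft_1\cdots\bft_{i-1}\bft_i\bft_{i-1}\cdots\bft_1\bft_0$ there), after which the product over a linear extension telescopes. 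Note finally that this paper gives no proof of the proposition at all --- it is quoted from Joseph's antichain-toggling paper --- so the only in-paper model for the argument is that rank-toggle conjugation computation.
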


Now we define the PL extension of rowvacuation. We need the rank toggles for these. For $0\leq i \leq r$, the \dfn{PL order filter rank toggle} $\bft_i\colon \calfpl(\sfp)\to \calfpl(\sfp)$ is
\[\bft_i \coloneqq \prod\limits_{p \in \sfp_i} t_p,\]
and the \dfn{PL antichain rank toggle} $\bftau_i\colon \calapl(\sfp)\to \calapl(\sfp)$ is
\[\bftau_i \coloneqq \prod\limits_{p \in \sfp_i} \tau_p.\]
Clearly, $\rowFpl = \bft_0 \bft_1 \cdots \bft_r$ and $\rowApl = \bftau_{r} \bftau_{r-1} \cdots \bftau_0$.

\begin{definition}
\dfn{PL order filter rowvacuation}, $\rvacFpl\colon \calfpl(\sfp) \to \calfpl(\sfp)$, is
\[\rvacFpl \coloneqq (\bft_r) (\bft_{r-1} \bft_r) \cdots (\bft_1 \bft_2 \cdots \bft_{r-1} \bft_r) (\bft_0 \bft_1 \bft_2 \cdots \bft_{r-1} \bft_r).\]
\end{definition}

\begin{definition}
\dfn{PL antichain rowvacuation}, $\rvacApl\colon \calapl(\sfp) \to \calapl(\sfp)$, is
\[\rvacApl \coloneqq (\bftau_r)(\bftau_r \bftau_{r-1})\cdots (\bftau_r \bftau_{r-1} \cdots \bftau_2 \bftau_1) (\bftau_r \bftau_{r-1} \cdots \bftau_2 \bftau_1 \bftau_0).\]
\end{definition}

We can also define $\drvacFpl$ and $\drvacApl$ similarly as before. All of the basic properties we discussed earlier concerning the combinatorial versions of rowmotion and rowvacuation (i.e., \cref{prop:row-rvac-dihedral,prop:rvac_sew_row,prop:rvac-OI-ant} and \cref{lem:rvac_ind}) continue to hold for their PL analogs. Rather than restate all of these properties here, we will state and prove their birational generalizations in \cref{prop:b_dihedral,prop:b_row_iterates,prop:b_com_diagram} and  \cref{lem:b_rvac_ind} below.

\subsection{Birational lifts} 

Now we do everything again in the birational realm. The idea is to \dfn{detropicalize} the above piecewise-linear maps, that is, everywhere replace $\max$ with addition, and addition with multiplication. (Strictly speaking, detropicalization is not well-defined because more identities are satisfied by $(\max,+)$ than by~$(+,\times)$, but in practice there is usually a ``natural'' way to detropicalize a given expression.) In this way we will obtain subtraction-free rational expressions. We could work with $\kk$-valued functions on $\sfp$, where $\kk$ is an arbitrary field of characteristic zero, and treat these expressions as rational maps (i.e., defined outside of a Zariski closed set); this is done in~\cite{grinberg2016birational1, grinberg2015birational2}. However, following~\cite{einstein2018combinatorial}, we find it simpler to work with $\rr_{>0}$-valued functions, for which the values of the subtraction-free rational expressions will be defined everywhere.

So define the following sets of $\rr_{>0}$-valued functions on $\sfphat$ or $\sfp$:
\begin{align*}
\calfb_{\kappa}(\sfp) &\coloneqq \left\{ \pi \in \rr_{>0}^{\sfphat}\colon \pi\big(\hatz\big) = 1, \pi\big(\hato\big)=\kappa\right\}, \\
\caljb_{\kappa}(\sfp) &\coloneqq \left\{ \pi \in \rr_{>0}^{\sfphat}\colon \pi\big(\hatz\big) = \kappa, \pi\big(\hato\big)=1\right\},\\
\calab_{\kappa}(\sfp) &\coloneqq \left\{ \pi \in \rr_{>0}^{\sfp}\right\}.
\end{align*}
Here $\kappa\in\rr_{>0}$ is a parameter; we use $\calfb(\sfp)$, $\caljb(\sfp)$, $\calab(\sfp)$ when its value is clear from context. Also we often implicitly identify all of these sets with $\rr_{>0}^{\sfp}$ by forgetting the values at $\hatz$ and $\hato$.

It is well-known that ``birational identities tropicalize to PL identities,'' although, as mentioned, not vice versa. What this means for us is that if some subtraction-free birational identity holds on all of $\calfb(\sfp)$ then its \dfn{tropicalization} -- the result of replacing addition by $\max$, and replacing multiplication by addition, including replacing the multiplicative identity $1$ by the additive identity $0$ -- holds identically on all of $\calfpl(\sfp)$ (and similarly for the other birational/PL spaces). See~\cite[Lem~7.1]{einstein2018combinatorial} and~\cite[Rem.~10]{grinberg2016birational1} for a precise explanation of tropicalization.

We proceed to describe the birational lifts of rowmotion and toggling introduced by Einstein and Propp~\cite{einstein2018combinatorial}. They will of course tropicalize to their PL analogs. Everything that follows is directly analogous to what we did at the PL level above.

We first explain the birational analog of the definition of rowmotion as the composition of three bijections. These bijections are:
\begin{itemize}
\item \dfn{complementation} $\Theta\colon \rr_{>0}^{\sfphat} \to \rr_{>0}^{\sfphat}$, with $\displaystyle (\Theta\pi)(x) \coloneqq \frac{\kappa}{\pi(x)}$ for all~$x \in \sfphat$;
\item \dfn{up-transfer} $\up \colon\caljb_{\kappa}(\sfp) \to \calab_{\kappa}(\sfp)$, with
\[\displaystyle (\up \pi)(x) \coloneqq \frac{\pi(x)}{\sum_{x\lessdot y} \pi(y)}\]
for all $x\in \sfp$;
\item \dfn{down-transfer} $\down \colon\calfb_{\kappa}(\sfp) \to \calab_{\kappa}(\sfp)$, with 
\[\displaystyle (\down \pi)(x) \coloneqq \frac{\pi(x)}{\sum_{y\lessdot x} \pi(y)}\]
for all $x\in \sfp$.
\end{itemize}
Evidently, $\Theta^{-1}=\Theta$. Also, note that
\[(\up^{-1}\pi)(x) = \sum_{x=y_1\lessdot  \cdots \lessdot y_k\lessdot\hato} \; \prod_{i=1}^{k}\pi(y_i) =\pi(x) \cdot \sum_{x \lessdot y} (\up^{-1}\pi)(y),\]
for all $x\in\sfp$, and similarly,
\[ (\down^{-1}\pi)(x) = \sum_{\hatz\lessdot y_1\lessdot  \cdots \lessdot y_k=x} \; \prod_{i=1}^{k}\pi(y_i) =\pi(x) \cdot \sum_{y \lessdot x} (\up^{-1}\pi)(y),\]
for all $x\in \sfp$.

\begin{definition}
\dfn{Birational order filter rowmotion}, $\rowFb: \calfb(\sfp) \to \calfb(\sfp)$, is $\rowFb \coloneqq \Theta \circ \up^{-1} \circ \down$. 
\end{definition}

\begin{definition}
\dfn{Birational antichain rowmotion}, $\rowAb: \calab(\sfp) \to \calab(\sfp)$, is $\rowAb \coloneqq \down \circ \Theta \circ \up^{-1}$.
\end{definition}

Next, we go over the toggling description of these maps. 

\begin{definition}
Let $p \in \sfp$.  Then the \dfn{birational order filter toggle} at $p$ is the map $t_p \colon \calfb_{\kappa}(\sfp)\to \calfb_{\kappa}(\sfp)$ defined by
\[ (t_p\pi)(x) \coloneqq \begin{cases} \pi(x) &\textrm{if $x\neq p$};\\
\displaystyle \frac{\sum_{y\lessdot x} \pi(y)}{\pi(x) \cdot \sum_{x\lessdot y} \frac{1}{\pi(y)}} &\textrm{if $x=p$},\end{cases}\]
for all $x\in \sfp$.
\end{definition}

\begin{definition}
Let $p \in \sfp$. Then the \dfn{birational antichain toggle} at $p$ is the map $\tau_p \colon \calab_{\kappa}(\sfp)\to \calab_{\kappa}(\sfp)$ defined by
\[(\tau_p\pi)(x) \coloneqq \begin{cases} \pi(x) &\textrm{if $x\neq p$};\\
\displaystyle \frac{\kappa}{\displaystyle \sum_{C \in \mc_p(\sfp)} \prod_{y\in C} \pi(y)} &\textrm{if $x=p$},\end{cases}\]
for all $x\in \sfp$.
\end{definition}

These birational toggles are again involutions and have the same commutativity properties as their combinatorial analogs. The birational versions of rowmotion are built out of these toggles in exactly the same way as in the combinatorial realm, as shown by Einstein--Propp~\cite{einstein2018combinatorial} and Joseph--Roby~\cite{joseph2020birational}:

\begin{prop}[{Einstein--Propp~\cite{einstein2018combinatorial}; c.f.~\cite[Thm.~5.12]{joseph2020birational}}]
Let $(x_1,x_2,\dots,x_n)$ be any linear extension of $\sfp$.  Then $\rowFb=t_{x_1} t_{x_2} \cdots t_{x_n}$.
\end{prop}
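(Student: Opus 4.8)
The plan is to reprove the identity directly at the birational level by pushing a function through the toggle composition and showing the result satisfies a recurrence that also characterizes $\rowFb(\pi)$. Write $\sigma \coloneqq t_{x_1} t_{x_2} \cdots t_{x_n}$, so that (reading right to left) $t_{x_n}$ is applied first and $t_{x_1}$ last. Fix $\pi \in \calfb_{\kappa}(\sfp)$, and set $\pi_n \coloneqq \pi$ and $\pi_{i-1} \coloneqq t_{x_i}(\pi_i)$, so that $\pi_0 = \sigma(\pi)$. Since each poset element appears exactly once in the linear extension, each $x_i$ is toggled exactly once, and $\pi_i$ and $\pi_{i-1}$ differ only in the value at $x_i$; in particular the value $\sigma(\pi)(x_j)$ is attained the instant $x_j$ is toggled and never changes afterward.

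First I would extract the key consequence of the linear extension property. At the moment $t_{x_i}$ is applied, the elements with larger index have already been toggled while those with smaller index have not. Because $(x_1,\dots,x_n)$ is a linear extension, every $y$ with $x_i \lessdot y$ is some $x_j$ with $j > i$, and every $y$ with $y \lessdot x_i$ is some $x_j$ with $j < i$ (the adjoined elements $\hatz,\hato$, whose values are the fixed constants $1$ and $\kappa$, are never toggled). Hence $\pi_i(y) = \sigma(\pi)(y)$ for every $y$ with $x_i \lessdot y$, and $\pi_i(y) = \pi(y)$ for every $y$ with $y \lessdot x_i$. Substituting these into the definition of the birational order filter toggle at $x_i$ yields the recurrence
\[
\sigma(\pi)(x_i) \cdot \pi(x_i) \cdot \sum_{x_i \lessdot y} \frac{1}{\sigma(\pi)(y)} \;=\; \sum_{y \lessdot x_i} \pi(y),
\]
where both cover-sums range over $\sfphat$.

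Next I would check that $\rho \coloneqq \rowFb(\pi) = \Theta \circ \up^{-1} \circ \down(\pi)$ satisfies this same recurrence. Writing $\alpha \coloneqq \down(\pi)$ and $\beta \coloneqq \up^{-1}(\alpha)$, so that $\rho = \Theta(\beta)$, one has $\rho(y) = \kappa/\beta(y)$, the defining identity $\beta(x_i) = \alpha(x_i)\sum_{x_i \lessdot y}\beta(y)$, and $\alpha(x_i) = \pi(x_i)/\sum_{y \lessdot x_i}\pi(y)$. Substituting $1/\rho(y) = \beta(y)/\kappa$ into the left-hand side, the two factors of $\kappa$ cancel and $\beta(x_i)$ cancels against $\sum_{x_i \lessdot y}\beta(y) = \beta(x_i)/\alpha(x_i)$, leaving exactly $\pi(x_i)/\alpha(x_i) = \sum_{y \lessdot x_i}\pi(y)$; this is a short, purely formal computation.

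Finally I would close the argument by uniqueness. The recurrence expresses the value at each $x_i$ in terms of $\pi$ together with the values at the elements strictly above $x_i$, so, starting from the fixed boundary value $\kappa$ at $\hato$ and descending through the poset, it determines a function on $\sfphat$ uniquely given $\pi$. Since both $\sigma(\pi)$ and $\rho$ lie in $\calfb_{\kappa}(\sfp)$ (so agree at $\hatz$ and $\hato$) and both satisfy the recurrence, they coincide, proving $\rowFb = \sigma$. The point needing the most care, and the main potential obstacle, is the bookkeeping of the boundary elements $\hatz,\hato$ inside the cover-sums: one must verify that their fixed values $1$ and $\kappa$ make the recurrence both consistent with the toggle formula and uniquely solvable from the top down. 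Everything else is formal manipulation, and the whole argument tropicalizes to the known combinatorial statement.
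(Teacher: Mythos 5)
Your proof is correct: the observation that, when $t_{x_i}$ is applied, all elements covering $x_i$ already carry their final values while all elements covered by $x_i$ still carry their initial values is exactly the point where the linear-extension hypothesis is used, and the resulting top-down recurrence is easily checked against $\Theta\circ\up^{-1}\circ\down$ and is uniquely solvable since the denominators are sums of positive reals. The paper itself gives no proof of this proposition (it cites Einstein--Propp and Joseph--Roby), and your argument is essentially the standard one found in those references, so there is nothing to flag.
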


\begin{prop}[{Joseph--Roby~\cite[Thm~3.6.]{joseph2020birational}}]
Let $(x_1,x_2,\dots,x_n)$ be any linear extension of $\sfp$.  Then $\rowAb=\tau_{x_n} \tau_{x_{n-1}} \cdots \tau_{x_1}$.
\end{prop}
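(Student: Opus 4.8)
The plan is to verify the identity directly at the birational level. We cannot simply invoke the combinatorial or PL versions, since tropicalization runs from the birational world to the PL one and not the other way; on the other hand a single subtraction-free computation will specialize to both. Concretely, I would fix $\pi \in \calab_{\kappa}(\sfp)$, write $u \coloneqq \up^{-1}\pi \in \caljb_{\kappa}(\sfp)$ so that $\rowAb(\pi) = (\down\circ\Theta)(u)$, and compare this closed form against the function produced by applying $\tau_{x_1}, \tau_{x_2}, \ldots, \tau_{x_n}$ in turn.

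Since each birational antichain toggle alters only a single coordinate, setting $\pi^{(k)} \coloneqq \tau_{x_k}\cdots\tau_{x_1}(\pi)$ gives $\pi^{(k)}(x_j) = \pi(x_j)$ for $j > k$, while $\pi^{(k)}(x_j) = \pi^{(j)}(x_j)$ for $j \le k$, because $x_j$ is never touched again after step $j$. Thus it suffices to prove, by induction on $j$, that $(\tau_{x_j}\pi^{(j-1)})(x_j) = \rowAb(\pi)(x_j)$, assuming $\pi^{(j-1)}(x_i) = \rowAb(\pi)(x_i)$ for all $i < j$. Here is the one place where the reversed, bottom-up order is essential: because $(x_1,\dots,x_n)$ is a linear extension, every element strictly below $x_j$ has index $<j$ and so already carries its final $\rowAb(\pi)$-value in $\pi^{(j-1)}$, whereas every element strictly above $x_j$ has index $>j$ and still carries its original $\pi$-value, and $x_j$ itself is untouched. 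Since $\tau_{x_j}$ depends only on the maximal chains through $x_j$ — which factor independently into a saturated chain below $x_j$ and one above — the denominator $\sum_{C\in\mc_{x_j}(\sfp)}\prod_{y\in C}\pi^{(j-1)}(y)$ factors as $\pi(x_j)\cdot S_{<}\cdot S_{>}$, where $S_{<}$ sums the products of $\rowAb(\pi)$-values over maximal chains below $x_j$ and $S_{>}$ sums the products of original $\pi$-values over maximal chains above $x_j$. One checks immediately that $u(x_j) = \pi(x_j)\,S_{>}$.

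The crux is then an identity internal to the transfer maps. Writing $v(y) \coloneqq \sum_{z\lessdot y} 1/u(z)$ so that $\rowAb(\pi)(y) = (\down\Theta u)(y) = 1/\bigl(u(y)\,v(y)\bigr)$ for every $y\in\sfp$, I would prove
\[
\sum_{\text{max.\ chains } C_{<} \text{ below } x_j}\;\prod_{w\in C_{<}}\frac{1}{u(w)\,v(w)} \;=\; \kappa\,v(x_j)
\]
by induction on $\rk(x_j)$. The base case $\rk(x_j)=0$ says both sides equal $1$ (the empty product on the left, and $\kappa\,v(x_j)=\kappa/u(\hatz)=1$ on the right). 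For the inductive step one peels off the bottom edge $w\lessdot x_j$ of each maximal chain, applies the hypothesis to each such $w$ to replace its inner sum by $\kappa\,v(w)$, and telescopes:
\[
\sum_{w\lessdot x_j}\frac{1}{u(w)\,v(w)}\cdot\kappa\,v(w)=\kappa\sum_{w\lessdot x_j}\frac{1}{u(w)}=\kappa\,v(x_j).
\]
Granting this, $S_{<} = \kappa\,v(x_j)$, so the toggled value equals $\kappa/\bigl(\pi(x_j)\,S_{<}\,S_{>}\bigr)=\kappa/\bigl(u(x_j)\,\kappa\,v(x_j)\bigr)=1/\bigl(u(x_j)\,v(x_j)\bigr)=\rowAb(\pi)(x_j)$, completing the induction on $j$.

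The hard part will be the bookkeeping in the second paragraph: making rigorous the claim that at the instant $\tau_{x_j}$ is applied, exactly the elements below $x_j$ have been finalized while those above are pristine, and that the maximal-chain denominator factors accordingly. This is precisely what forces the reversed order, and it reflects the different commutativity relations for antichain toggles (they commute only for incomparable elements, not merely non-covering ones). Everything in sight is a ratio of sums of positive reals, so the computation stays within $\rr_{>0}$ and is subtraction-free; its tropicalization then recovers the PL and combinatorial statements for free.
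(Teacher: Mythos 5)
Your argument is correct, and it takes a genuinely different route from the one the paper relies on. The paper does not prove this proposition at all --- it imports it from Joseph--Roby, where the proof goes through the toggle-group conjugation: each antichain toggle is intertwined with a composition of order-filter toggles via the transfer maps (this is the same machinery the present paper quotes again in the proof of \cref{prop:b_com_diagram}), and the antichain formula is then deduced from the order-filter formula $\rowFb = t_{x_1}\cdots t_{x_n}$. Your route is instead a self-contained direct verification. I checked the three load-bearing pieces and they all hold: (i) the factorization of $\mc_{x_j}(\sfp)$ into independent lower and upper saturated chains is legitimate, since by transitivity the minimal elements and cover relations of $\{y : y < x_j\}$ agree with those of $\sfp$, so the denominator really is $S_< \cdot \pi(x_j)\cdot S_>$ with $\pi(x_j)S_> = (\up^{-1}\pi)(x_j)$; (ii) the formula $\rowAb(\pi)(y) = 1/\bigl(u(y)v(y)\bigr)$ is exactly $\down\circ\Theta$ applied to $u=\up^{-1}\pi$, using $u(\hatz)=\kappa$; and (iii) the telescoping induction giving $S_< = \kappa v(x_j)$ closes correctly, the base case being $\kappa v(x_j)=\kappa/u(\hatz)=1$ and the inductive step using that for non-minimal $x_j$ the covers of $x_j$ in $\sfphat$ coincide with those in $\sfp$. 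The ``bookkeeping'' you flag as the hard part is in fact immediate: a linear extension forces every element below $x_j$ to have smaller index and every element above to have larger index, and each $\tau_{x_i}$ alters only the coordinate $x_i$, while incomparable elements are invisible to $\tau_{x_j}$. What your approach buys is independence from the order-filter machinery and a proof valid for any finite poset (induct on longest-chain length if you want to drop gradedness); what the conjugation approach buys is the dictionary between the two toggle groups, which the paper needs elsewhere anyway.
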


Now we define the birational extension of rowvacuation. For $0\leq i \leq r$, the \dfn{birational order filter rank toggle} $\bft_i\colon \calfb(\sfp)\to \calfb(\sfp)$ is
\[\bft_i \coloneqq \prod\limits_{p \in \sfp_i} t_p,\]
and the \dfn{birational antichain rank toggle} $\bftau_i\colon \calab(\sfp)\to \calab(\sfp)$ is
\[\bftau_i \coloneqq \prod\limits_{p \in \sfp_i} \tau_p.\]
Clearly, $\rowFb = \bft_0 \bft_1 \cdots \bft_r$ and $\rowAb = \bftau_{r} \bftau_{r-1} \cdots \bftau_0$.

\begin{definition}
\dfn{Birational order filter rowvacuation}, $\rvacFb\colon \calfb(\sfp) \to \calfb(\sfp)$, is
\[\rvacFb \coloneqq (\bft_r) (\bft_{r-1} \bft_r) \cdots (\bft_1 \bft_2 \cdots \bft_{r-1} \bft_r) (\bft_0 \bft_1 \bft_2 \cdots \bft_{r-1} \bft_r).\]
\dfn{Birational order filter dual rowvacuation}, $\drvacFb\colon \calfb(\sfp) \to \calfb(\sfp)$, is
\[\drvacFb \coloneqq (\bft_0) (\bft_1 \bft_0) \cdots (\bft_{r-1} \cdots  \bft_2 \bft_1 \bft_0) (\bft_r \bft_{r-1} \cdots  \bft_2 \bft_1 \bft_0).\]
\end{definition}

\begin{definition}
\dfn{Birational antichain rowvacuation}, $\rvacAb\colon \calab(\sfp) \to \calab(\sfp)$, is
\[\rvacAb \coloneqq (\bftau_r)(\bftau_r \bftau_{r-1})\cdots (\bftau_r \bftau_{r-1} \cdots \bftau_2 \bftau_1) (\bftau_r \bftau_{r-1} \cdots \bftau_2 \bftau_1 \bftau_0).\]
\dfn{Birational antichain dual rowvacuation}, $\drvacApl \colon \calab(\sfp) \to \calab(\sfp)$, is
\[\drvacAb \coloneqq (\bftau_0) (\bftau_0 \bftau_1) \cdots (\bftau_0 \bftau_1 \bftau_2 \cdots \bftau_{r-1}) (\bftau_0 \bftau_1 \bftau_2 \cdots \bftau_{r-1} \bftau_r).\]
\end{definition}

Again we have dualities $^{*}\colon \calfb(\sfp)\to\calfb(\sfp)$ and $^{*}\colon \calab(\sfp)\to\calab(\sfp)$ given by $\pi^* \coloneqq \Theta(\pi)$ and $\pi^*\coloneqq \pi$, respectively; and again we have $\drvacFb(\pi) = \rvacFb(\pi^*)^*$ and $\drvacAb(\pi) = \rvacAb(\pi^*)^*$.

All of the basic properties we discussed earlier concerning the combinatorial versions of rowmotion and rowvacuation (i.e., \cref{prop:row-rvac-dihedral,prop:rvac_sew_row,prop:rvac-OI-ant} and \cref{lem:rvac_ind}) continue to hold for their birational analogs:

\begin{prop} \label{prop:b_dihedral}
For any graded poset $\sfp$ of rank $r$,
\begin{itemize}
    \item $\rvacFb$ and $\drvacFb$ are involutions;
    \item $\rvacFb \circ \rowFb = (\rowFb)^{-1} \circ \rvacFb$;
    \item $\drvacFb \circ \rowFb = (\rowFpl)^{-1} \circ \drvacFb$;
    \item $(\rowFb)^{r+2} = \drvacFb \circ \rvacFb$.
\end{itemize}
\end{prop}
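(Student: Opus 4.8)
The plan is to reduce \cref{prop:b_dihedral} to a handful of identities in one abstract group, exploiting that all three maps involved are words in the birational order-filter rank toggles $\bft_0,\dots,\bft_r$, and that the only structural facts about these toggles that enter are the two relations $\bft_i^2 = 1$ and $\bft_i\bft_j = \bft_j\bft_i$ whenever $|i-j|>1$. These hold birationally because each birational toggle is an involution and toggles at elements neither of which covers the other commute (the birational analog of \cref{prop:basic-properties-OI-rank-toggles}, recorded just above the proposition). Consequently there is a group homomorphism
\[ W_r = \bigl\langle T_0,\dots,T_r \;\big|\; T_i^2 = 1,\ T_iT_j = T_jT_i \text{ for } |i-j|>1 \bigr\rangle \longrightarrow (\text{birational order-filter toggle group}) \]
sending $T_i\mapsto\bft_i$. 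Writing $R := T_0T_1\cdots T_r$, $V := (T_r)(T_{r-1}T_r)\cdots(T_0T_1\cdots T_r)$, and $D := (T_0)(T_1T_0)\cdots(T_rT_{r-1}\cdots T_0)$, this homomorphism carries $R,V,D$ to $\rowFb,\rvacFb,\drvacFb$. So it suffices to prove the four identities inside $W_r$.

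First I would record the index-reversing map $\sigma\colon T_i\mapsto T_{r-i}$, which is an automorphism of $W_r$ since it preserves both defining relations, and which satisfies $\sigma(R)=R^{-1}$, $\sigma(V)=D$, and $\sigma(D)=V$. Hence the $\drvacFb$-statements follow from the $\rvacFb$-statements: applying $\sigma$ turns $V^2=1$ into $D^2=1$ and turns $VR=R^{-1}V$ into $DR=R^{-1}D$ (the third bullet, where the ``$\rowFpl$'' is a typo for $\rowFb$). It therefore remains to establish in $W_r$: (i) $VR = R^{-1}V$; (ii) $V^2=1$; and (iii) $R^{r+2}=DV$.

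For (i) and (ii) I would induct on $r$, the case $r=0$ (where $R=V=D=T_0$) being immediate. Let $R'=T_1\cdots T_r$ and let $V'$ be the vacuation word in $T_1,\dots,T_r$; since the factors $(T_r),\,(T_{r-1}T_r),\,\dots,\,(T_1\cdots T_r)$ of $V$ involve only $T_1,\dots,T_r$ while the final factor is $R$ itself, we have $V=V'R$ and $R=T_0R'$. The crucial observation is that $V'R'^{-1}$ equals the vacuation word in $T_2,\dots,T_r$ (the trailing factor $R'=T_1\cdots T_r$ of $V'$ cancels), so $V'R'^{-1}$ contains no $T_1$ and therefore commutes with $T_0$. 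Using the inductive conjugation relation $R'V'=V'R'^{-1}$ inside the isomorphic copy $\langle T_1,\dots,T_r\rangle$ of $W_{r-1}$, I get
\[ RV' = T_0 R' V' = T_0 V' R'^{-1} = V' R'^{-1} T_0 = V' R^{-1}. \]
This rearranges to $RVR = (RV')R\cdot R = (V'R^{-1})R^2 = V'R = V$, i.e.\ $VR = R^{-1}V$, proving (i); and then (ii) is immediate, since $V^2 = V'(RV')R = V'(V'R^{-1})R = V'^2 = 1$ by the inductive $V'^2=1$.

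The remaining identity (iii) $R^{r+2}=DV$ is where I expect the real work, and I would flag it as the main obstacle: the clean peeling that handles (i) and (ii) does not close for (iii), because the natural decompositions $V=V'R$ and $D=D''R^{-1}$ live on the two different rank-$(r-1)$ subchains $\{1,\dots,r\}$ and $\{0,\dots,r-1\}$, producing a cross-term that the induction cannot absorb. Instead I would prove (iii) as an identity in the trace (partially commutative) monoid on $T_0,\dots,T_r$ with commutations $T_iT_j=T_jT_i$ for $|i-j|>1$ but \emph{without} $T_i^2=1$, by verifying that the two words determine the same heap (the same labelled poset of toggle-events); a count in this monoid shows each generator $T_i$ occurs exactly $r+2$ times on each side, which is a necessary consistency check and pins down the exponent. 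As supporting intuition, note that once (i), (ii), and their $\sigma$-duals are known one computes directly that $DVR=RDV$, so $DV$ centralizes $R$, which is consistent with $DV$ being the power $R^{r+2}$; but the clean proof is the heap identity, which uses only the far-commutations and no cancellation. The rest of the proposition is then formal once the two toggle relations are in place.
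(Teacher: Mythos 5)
Your proposal is correct, and its overall strategy coincides with the paper's: the paper's entire proof consists of observing that $\rowFb$, $\rvacFb$, $\drvacFb$ are words in the rank toggles $\bft_0,\dots,\bft_r$, that the only facts used are $\bft_i^2=1$ and $\bft_i\bft_j=\bft_j\bft_i$ for $|i-j|>1$, and then citing Stanley's Theorem~2.1 on promotion and evacuation for the resulting computation in the right-angled Coxeter group. You perform that reduction identically (including the correct use of the index-reversing automorphism $\sigma$ to deduce the $\drvacFb$ statements, and the correct observation that $(\rowFpl)^{-1}$ in the third bullet is a typo for $(\rowFb)^{-1}$), but you then supply the group computation yourself rather than citing it. Your induction for $VR=R^{-1}V$ and $V^2=1$ is complete and correct: the decompositions $V=V'R$, $R=T_0R'$, the identification $V'R'^{-1}=V''$ as a word in $T_2,\dots,T_r$ commuting with $T_0$, and the chain $RV'=V'R^{-1}\Rightarrow RVR=V$ all check out. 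The one place where you have a sketch rather than a proof is the fourth bullet, $R^{r+2}=DV$, which you correctly flag: the letter count (each $T_i$ occurs $r+2$ times on both sides) is necessary but not sufficient. The heap identity you propose does hold and can be closed cleanly by the projection lemma for trace monoids: for each dependent pair $\{T_i,T_{i+1}\}$ the restriction of $D$ is $T_i(T_{i+1}T_i)^{r-i}$ and that of $V$ is $T_{i+1}(T_iT_{i+1})^{i+1}$, whose concatenation is $(T_iT_{i+1})^{r+2}$, matching the restriction of $R^{r+2}$; independent pairs impose only the letter count. With that verification added, your argument is a self-contained proof of what the paper delegates to Stanley.
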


\begin{prop} \label{prop:b_row_iterates}
For $\pi \in \calfb(\sfp)$ and $x\in \sfp_i$, $(\rvacFb\pi)(x)=((\rowFb)^{i+1}\pi)(x)$.
\end{prop}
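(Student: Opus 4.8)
The plan is to reduce the statement to a purely formal consequence of two ``locality'' facts about the rank toggles, and then to prove the resulting identity by a short induction on the rank. Throughout, write $R_j \coloneqq \bft_j \bft_{j+1}\cdots\bft_r$ for $0\le j\le r$, so that $\rowFb = R_0$ and $\rvacFb = R_r R_{r-1}\cdots R_1 R_0$. The two facts I will use repeatedly are: (i) the rank toggle $\bft_j$ changes only the values at rank $j$ (immediate from the definition of $t_p$, which fixes $\pi(x)$ for $x\neq p$); and (ii) the value $(\bft_j\pi)(x)$ at $x\in\sfp_j$ depends on $\pi$ only through its values at ranks $j-1$, $j$, and $j+1$ (since $t_p$ only consults the covers and the covered elements of $p$, together with the boundary values at $\hatz,\hato$). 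From (i) one gets the key principle that, for \emph{any} word $w$ in the rank toggles containing at least one rank-$i$ toggle and any $x\in\sfp_i$, the value $(w\pi)(x)$ equals the value produced immediately after the last-applied (i.e.\ leftmost) rank-$i$ toggle in $w$, because every toggle applied afterwards leaves rank $i$ untouched.

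First I would apply this principle to each side. In $\rvacFb = R_r\cdots R_0$, the factors $R_r,\ldots,R_{i+1}$ involve only ranks $>i$, so the leftmost rank-$i$ toggle is the initial $\bft_i$ of $R_i$; hence $(\rvacFb\,\pi)(x) = (R_iR_{i-1}\cdots R_0\,\pi)(x)$ for $x\in\sfp_i$. Likewise, in $(\rowFb)^{i+1}=(R_0)^{i+1}$ the leftmost rank-$i$ toggle is the $\bft_i$ inside the first copy of $R_0$, whose preceding letters $\bft_0\cdots\bft_{i-1}$ touch only ranks $<i$; this gives $((\rowFb)^{i+1}\pi)(x) = (R_i\,(\rowFb)^i\,\pi)(x)$ for $x\in\sfp_i$. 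So it suffices to show that $R_i$ produces the same rank-$i$ values when fed the two functions $R_{i-1}\cdots R_0\,\pi$ and $(\rowFb)^i\,\pi$. By fact (ii), $R_i=\bft_i\cdots\bft_r$ reads its input only at ranks $\ge i-1$; therefore it is enough to prove the following agreement statement.

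The crux is the claim: \emph{for all $0\le k\le r$, the functions $R_kR_{k-1}\cdots R_0\,\pi$ and $(\rowFb)^{k+1}\pi$ agree at every rank $\ge k$.} Taking $k=i-1$ and comparing at ranks $\ge i-1$ then finishes the argument, while the case $i=0$ is immediate since $R_0=\rowFb$. I would prove the claim by induction on $k$, the base case $k=0$ being the tautology $R_0=\rowFb$. For the inductive step, factor $R_0 = \bft_0\bft_1\cdots\bft_{k-1}\,R_k$, so that $(\rowFb)^{k+1} = \bft_0\cdots\bft_{k-1}\,R_k\,(\rowFb)^k$. Writing $g\coloneqq R_{k-1}\cdots R_0\,\pi$ and $h\coloneqq(\rowFb)^k\pi$, the induction hypothesis says $g=h$ at ranks $\ge k-1$. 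Since $R_k$ reads only ranks $\ge k-1$ (fact (ii)) and writes only ranks $\ge k$ (fact (i)), we get $R_kg = R_kh$ at ranks $\ge k$; and since the extra toggles $\bft_0,\ldots,\bft_{k-1}$ alter only ranks $<k$, applying them last does not change ranks $\ge k$. Hence $R_k g$ and $\bft_0\cdots\bft_{k-1}R_kh=(\rowFb)^{k+1}\pi$ agree at ranks $\ge k$, completing the induction.

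The main obstacle is not any single computation --- the birational toggle formulas never need to be expanded --- but rather the careful bookkeeping of which ranks each composite map reads and writes, so that the locality facts (i) and (ii) can be invoked at exactly the right places. Because the whole argument rests only on these two facts, which hold verbatim in the combinatorial, piecewise-linear, and birational settings, the same proof establishes \cref{prop:rvac_sew_row} and its piecewise-linear analog, and it tropicalizes correctly; I would state it once here at the birational level, as the paper does for the other properties.
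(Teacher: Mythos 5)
Your proof is correct and follows essentially the same route as the paper's: both arguments reduce $\rvacFb$ at rank $i$ to the partial composition $(\bft_i\cdots\bft_r)\cdots(\bft_0\cdots\bft_r)$ and then prove, by induction on the rank index, that this agrees with $(\rowFb)^{i+1}$ at all ranks $\geq i$, using exactly the locality facts that each toggle writes only at its own element and reads only at adjacent ranks. The only difference is organizational (you separate the ``read/write'' bookkeeping into explicit facts (i) and (ii)), not mathematical.
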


\begin{prop} \label{prop:b_com_diagram}
The following diagrams commute:
\begin{center}
\phantom{1}\hfill \begin{tikzpicture}[scale=0.8]
\node at (0,1.8) {$\calfb(\sfp)$};
\node at (0,0) {$\calab(\sfp)$};
\node at (3.25,1.8) {$\calfb(\sfp)$};
\node at (3.25,0) {$\calab(\sfp)$};
\draw[semithick, ->] (0,1.3) -- (0,0.5);
\node[left] at (0,0.9) {$\down$};
\draw[semithick, ->] (0.8,0) -- (2.4,0);
\node[below] at (1.5,0) {$\rowAb$};
\draw[semithick, ->] (0.8,1.8) -- (2.4,1.8);
\node[above] at (1.5,1.8) {$\rowFb$};
\draw[semithick, ->] (3.25,1.3) -- (3.25,0.5);
\node[right] at (3.25,0.9) {$\down$};
\end{tikzpicture} \hfill \begin{tikzpicture}[scale=0.8]
\node at (0,1.8) {$\calfb(\sfp)$};
\node at (0,0) {$\calab(\sfp)$};
\node at (3.25,1.8) {$\calfb(\sfp)$};
\node at (3.25,0) {$\calab(\sfp)$};
\draw[semithick, ->] (0,1.3) -- (0,0.5);
\node[left] at (0,0.9) {$\down$};
\draw[semithick, ->] (0.8,0) -- (2.4,0);
\node[below] at (1.5,0) {$\rvacAb$};
\draw[semithick, ->] (0.8,1.8) -- (2.4,1.8);
\node[above] at (1.5,1.8) {$\rvacFb$};
\draw[semithick, ->] (3.25,1.3) -- (3.25,0.5);
\node[right] at (3.25,0.9) {$\down$};
\end{tikzpicture} \hfill \begin{tikzpicture}[scale=0.8]
\node at (0,1.8) {$\calfb(\sfp)$};
\node at (0,0) {$\calab(\sfp)$};
\node at (3.25,1.8) {$\calfb(\sfp)$};
\node at (3.25,0) {$\calab(\sfp)$};
\draw[semithick, ->] (0,1.3) -- (0,0.5);
\node[left] at (0,0.9) {$\up\circ\Theta$};
\draw[semithick, ->] (0.8,0) -- (2.4,0);
\node[below] at (1.5,0) {$\drvacAb$};
\draw[semithick, ->] (0.8,1.8) -- (2.4,1.8);
\node[above] at (1.5,1.8) {$\drvacFb$};
\draw[semithick, ->] (3.25,1.3) -- (3.25,0.5);
\node[right] at (3.25,0.9) {$\up\circ\Theta$};
\end{tikzpicture}
\hfill\phantom{1}
\end{center}
(Note that $\up\circ\Theta = (\rowAb)^{-1} \circ \down$.) Hence the first three bulleted items in \cref{prop:b_dihedral} hold with $\calf$ replaced by $\cala$.
\end{prop}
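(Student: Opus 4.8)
The plan is to handle the three diagrams in turn, using throughout that $\down\colon\calfb(\sfp)\to\calab(\sfp)$ is a bijection whose inverse $\down^{-1}$ was recorded above. The leftmost (rowmotion) diagram is immediate from the definitions: since $\rowFb = \Theta\circ\up^{-1}\circ\down$ and $\rowAb = \down\circ\Theta\circ\up^{-1}$ are cyclic rearrangements of the same three bijections, we get $\down\circ\rowFb = \down\circ\Theta\circ\up^{-1}\circ\down = \rowAb\circ\down$. Iterating, $\down\circ(\rowFb)^k = (\rowAb)^k\circ\down$ for all $k$, a fact I use repeatedly below.

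For the middle (rowvacuation) diagram, I would write each rowvacuation as a telescoping product of rank-toggle suffixes: set $R_j \coloneqq \bft_j\bft_{j+1}\cdots\bft_r$ and $L_j \coloneqq \bftau_r\bftau_{r-1}\cdots\bftau_j$, so that $\rvacFb = R_r R_{r-1}\cdots R_0$ and $\rvacAb = L_r L_{r-1}\cdots L_0$, with $R_0 = \rowFb$ and $L_0 = \rowAb$. Then $\down\circ\rvacFb = \rvacAb\circ\down$ telescopes from the single-index identity $\down\circ R_j = L_j\circ\down$ for each $0\le j\le r$. The case $j=0$ is the leftmost diagram. The case $j=r$ follows from the explicit toggle formulas together with $\down^{-1}\circ\down=\mathrm{id}$: the denominator $\sum_{C\in\mc_p(\sfp)}\prod_{y\in C}(\down\pi)(y)$ of the top antichain toggle sums, over saturated chains, products of down-transferred values, and this reconstructs $\pi(p)$, matching the output of $\down\circ\bft_r$. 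For general $j$ the identity is the same computation localized to the upper set $\sfp_{\geq j}$: the values $\pi(y)$ with $\rk(y)<j$ enter only through the transferred values $(\down\pi)(p)$ at $\rk(p)=j$, so toggling ranks $j,\ldots,r$ and then applying $\down$ agrees with applying $\down$ and then toggling ranks $r,\ldots,j$.

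The main obstacle is precisely this per-index identity $\down\circ R_j = L_j\circ\down$. Individual rank toggles do \emph{not} commute with $\down$ in any simple way --- indeed $\down$ reverses the rank order of the toggle word, since $\rowFb = \bft_0\cdots\bft_r$ while $\rowAb = \bftau_r\cdots\bftau_0$ --- so one cannot split $R_j$ into separate factors; the identity must be proved for the suffix as a single block, and the cancellation hinges on the chain-sum reconstruction identity above. I would verify it by direct computation from the toggle formulas (I have confirmed the top rank and a representative intermediate rank explicitly), or alternatively by induction on $r$ via the inductive descriptions of rowvacuation in \cref{lem:b_rvac_ind}. The rightmost (dual rowvacuation) diagram then follows from the middle one by passing to the dual poset $\sfp^*$: under the order-reversing identification sending rank $i$ to rank $r-i$, the toggle $\bftau_i$ becomes $\bftau_{r-i}$, so $\drvacAb$ and $\drvacFb$ on $\sfp$ become $\rvacAb$ and $\rvacFb$ on $\sfp^*$, while up-transfer becomes down-transfer, making the intertwiner $\up\circ\Theta$ on $\sfp$ correspond to $\down$ on $\sfp^*$ (consistent with the stated $\up\circ\Theta = (\rowAb)^{-1}\circ\down$). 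Transporting the middle diagram for $\sfp^*$ through this identification gives the third diagram.

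Finally, since $\down$ and $\Phi\coloneqq\up\circ\Theta$ are bijections $\calfb(\sfp)\to\calab(\sfp)$, the first three bullets of \cref{prop:b_dihedral} transfer by conjugation. For example $\rvacAb = \down\circ\rvacFb\circ\down^{-1}$ is an involution because $\rvacFb$ is; and conjugating $\rvacFb\circ\rowFb = (\rowFb)^{-1}\circ\rvacFb$ by $\down$, using $\down\circ\rowFb = \rowAb\circ\down$ and the middle diagram, yields $\rvacAb\circ\rowAb = (\rowAb)^{-1}\circ\rvacAb$. The dual-rowvacuation relation follows identically once one notes $\Phi\circ\rowFb = \rowAb\circ\Phi$, which holds since $\Phi\circ\rowFb = (\rowAb)^{-1}\circ\down\circ\rowFb = (\rowAb)^{-1}\circ\rowAb\circ\down = \down = \rowAb\circ\Phi$.
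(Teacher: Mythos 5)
Your reduction of the middle diagram to the family of identities $\down\circ(\bft_j\bft_{j+1}\cdots\bft_r)=(\bftau_r\cdots\bftau_{j+1}\bftau_j)\circ\down$, $0\le j\le r$, is a reasonable strategy, and these identities are in fact true (the paper itself needs them later, in the proof of \cref{lem:b_rvac_ind}). The problem is that for the intermediate ranks $0<j<r$ you have not proved them: you offer a ``localization to $\sfp_{\geq j}$'' heuristic plus two spot-checks, and this is precisely where all the content of the proposition sits. The localization is not a literal restriction --- a minimal element of $\sfp_{\geq j}$ typically has several lower covers in $\sfp$, so $\bft_j\cdots\bft_r$ is not rowmotion of the subposet $\sfp_{\geq j}$, and on the antichain side each $\bftau_i$ with $i\geq j$ sums over maximal chains of $\sfp$ that run through \emph{all} the low ranks, so it is not manifest that $(\bftau_r\cdots\bftau_j)\circ\down$ sees the low-rank data only through the rank-$j$ transferred values. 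Making this precise requires a chain-splitting/cancellation argument propagated through the entire block of toggles, which is exactly the hard part and is absent. Your proposed fallback --- induction via \cref{lem:b_rvac_ind} --- is circular, since the paper's proof of that lemma invokes (the rank-restricted generalization of) the present proposition.

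The paper sidesteps the computation entirely. It proves the \emph{rightmost} diagram first using the Joseph--Roby toggle-group isomorphism $\bftau_i\circ(\up\circ\Theta)=(\up\circ\Theta)\circ\bftau_i^*$, where $\bftau_i^*\coloneqq\bft_0\cdots\bft_{i-1}\bft_i\bft_{i-1}\cdots\bft_0$; the identity $\bftau_0^*\bftau_1^*\cdots\bftau_k^*=\bft_k\cdots\bft_1\bft_0$ (an easy induction using $\bft_i^2=1$) then shows that the $\bftau_i^*$-word corresponding to $\drvacAb$ is exactly $\drvacFb$, and the middle diagram follows by duality. If you import that isomorphism, your per-index identity closes in two lines: $\bft_j\cdots\bft_r=(\bft_{j-1}\cdots\bft_0)\circ\rowFb=(\bftau_0^*\cdots\bftau_{j-1}^*)\circ\rowFb$, and since $(\up\circ\Theta)\circ\rowFb=(\rowAb)^{-1}\circ\down\circ\rowFb=\down$, conjugating by $\down=\rowAb\circ\up\circ\Theta$ gives $\down\circ(\bft_j\cdots\bft_r)=(\bftau_r\cdots\bftau_0)(\bftau_0\cdots\bftau_{j-1})\circ\down=(\bftau_r\cdots\bftau_j)\circ\down$. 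The rest of your write-up --- the leftmost diagram, the duality step for the third diagram, and the transfer of the dihedral relations by conjugation --- is correct.
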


\begin{lemma} \label{lem:b_rvac_ind}
Let $\pi \in \calab_{\kappa}(\sfp)$ and $x\in \sfp$.
\begin{itemize}
\item If $x \in \sfp_0$, then $(\rvacAb \pi)(x)= (\bftau_0 \pi)(x)$.
\item If $x \in \sfp_{\geq 1}$, then $(\rvacAb \pi)(x)= \big((\rowAb)^{-1}\circ \rvacAb \overline{\pi}\big)(x)$, where $\overline{\pi}$ is the restriction $\overline{\pi} \coloneqq \pi\hspace{-3.5pt} \mid_{\sfp_{\geq 1}} \in \calab_{\kappa}(\sfp_{\geq 1})$.
\end{itemize}
\end{lemma}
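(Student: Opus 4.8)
The plan is to prove both bullets by unwinding the composition of rank toggles defining $\rvacAb$. Write $\sigma_k \coloneqq \bftau_r \bftau_{r-1}\cdots \bftau_k$ for $1\le k\le r$ and $\sigma_0 \coloneqq \bftau_r\cdots\bftau_1\bftau_0 = \rowAb$, so that $\rvacAb = \sigma_r \sigma_{r-1}\cdots\sigma_1\sigma_0$, with $\sigma_0$ applied first. (As throughout the paper, I argue at the birational level; the piecewise-linear and combinatorial forms, in particular \cref{lem:rvac_ind}, then follow by tropicalization and specialization.)

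First I would dispatch the rank-$0$ case. By definition the birational antichain toggle $\tau_p$ changes only the coordinate at $p$, so the only toggles among the factors of $\rvacAb$ that can alter a coordinate $x\in\sfp_0$ are the $\tau_p$ with $p\in\sfp_0$, and these all occur in the single factor $\bftau_0$. Since $\bftau_0$ sits at the right end of $\sigma_0$, it is applied before every other toggle, and every toggle applied afterward fixes the coordinate $x$. Hence $(\rvacAb\pi)(x)=(\bftau_0\pi)(x)$, which is the first bullet.

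For the second bullet the key is the factorization coming from $\sigma_0=\sigma_1\bftau_0$, namely
\[
\rvacAb \;=\; \big(\sigma_r\sigma_{r-1}\cdots\sigma_2\,\sigma_1^2\big)\circ\bftau_0 \;\eqqcolon\; \Sigma'\circ\bftau_0 .
\]
Because $\bftau_0$ fixes every coordinate in $\sfp_{\geq 1}$, we have $(\bftau_0\pi)|_{\sfp_{\geq 1}}=\overline{\pi}$, so for $x\in\sfp_{\geq 1}$ the second bullet is equivalent to the claim
\[
(\Sigma'\psi)(x) \;=\; \big((\rowAb)^{-1}\circ\rvacAb\,(\psi|_{\sfp_{\geq 1}})\big)(x)\qquad(x\in\sfp_{\geq 1}),
\]
where on the right $\rowAb$ and $\rvacAb$ are the operators for $\sfp_{\geq 1}$; it suffices to prove this for every $\psi\in\calab(\sfp)$. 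Note that $\Sigma'$ involves only $\bftau_1,\dots,\bftau_r$, and that after reindexing ranks $1,\dots,r$ of $\sfp$ as ranks $0,\dots,r-1$ of $\sfp_{\geq 1}$ it should agree, as an operator on $\sfp_{\geq 1}$, with $(\rowAb)^{-1}\rvacAb$ for $\sfp_{\geq 1}$ (the rank-$2$ case confirms this). The one genuine difference is that each $\tau_p$ with $p\in\sfp_{\geq 1}$ on the left sums over maximal chains of $\sfp$, which descend all the way to rank $0$, whereas the corresponding $\sfp_{\geq 1}$-toggle sums over chains stopping at rank $1$.

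The main step—and the main obstacle—is to show that this difference washes out on $\sfp_{\geq 1}$. I would quantify it by factoring the unique rank-$0$ vertex out of each maximal chain: for $p\in\sfp_{\geq 1}$,
\[
\sum_{C\in\mc_p(\sfp)}\ \prod_{y\in C}\psi(y)\;=\;\sum_{C'\in\mc_p(\sfp_{\geq 1})}\Big(\textstyle\sum_{c\,\lessdot\,\min C'}\psi(c)\Big)\prod_{y\in C'}\psi(y),
\]
so the rank-$0$ data enters only through the weights $g(b)\coloneqq\sum_{c\lessdot b}\psi(c)$ attached to the rank-$1$ bottoms $b$ of chains, and these weights are frozen throughout $\Sigma'$ since $\Sigma'$ contains no $\bftau_0$. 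A direct check shows the restriction of a \emph{single} toggle is not the bare $\sfp_{\geq 1}$-toggle—conjugating the rank-$\ge 2$ toggles by the rank-$1$ rescaling $\phi\mapsto g\cdot\phi$ is exact, but the rank-$1$ toggles pick up an extra factor of $g$—so the cancellation is genuinely global and must be established over the whole word $\Sigma'$. I would carry this out by induction on $r$, peeling the bottom rank: the doubled block $\sigma_1^2$ makes $\bftau_1$ occur twice, so each weight $g(b)$ is created and then undone at the rank-$1$ layer, while the weights attached to higher chains cancel inside the toggle ratios, exactly as in the rank-$2$ verification; this reduces the weighted chain sums on $\sfp$ to the unweighted ones on $\sfp_{\geq 1}$. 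If the bookkeeping becomes unwieldy, an alternative route is to transport the statement to order filters through the commuting squares of \cref{prop:b_com_diagram} and exploit the locality of the birational order-filter toggle (which sees only adjacent ranks), confining the rank-$0$ dependence to the rank-$1$ layer where it can be separated directly. The involutivity and dihedral identities of \cref{prop:b_dihedral} and the sewing description of \cref{prop:b_row_iterates} provide consistency checks throughout.
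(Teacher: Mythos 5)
Your overall architecture matches the paper's: peel off $\bftau_0$, observe that the rank-$0$ data enters the remaining word only through the frozen weights $g(b)=\sum_{c\lessdot b}\psi(c)$ attached to rank-$1$ elements, and show these weights cancel so that the word of full toggles agrees on $\sfp_{\geq 1}$ with the corresponding word of restricted toggles. The rank-$0$ bullet is fine. But there is a genuine gap in the main step: the cancellation mechanism you describe does not work for the word $\Sigma'=\sigma_r\cdots\sigma_2\sigma_1^2$ as written. In order of application that word reads $\bftau_1,\bftau_2,\ldots,\bftau_r,\bftau_1,\bftau_2,\ldots,\bftau_r,\bftau_2,\ldots$, so its second (and last) $\bftau_1$ sits in the middle, followed by a long tail of rank-$\geq 2$ toggles. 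Your own observations show why this is fatal: the first $\bftau_1$ installs the rescaling by $g^{-1}$ at rank $1$, the intervening rank-$\geq 2$ toggles are exactly compensated by that rescaling, and the second $\bftau_1$ removes it; but once the rank-$1$ values are unrescaled, every subsequent rank-$\geq 2$ toggle again sums over full maximal chains weighted by $g(\min C')$ and no longer matches its $\sfp_{\geq 1}$-counterpart. There is no further $\bftau_1$ to undo this, so ``each weight is created and then undone at the rank-$1$ layer'' fails for the tail. (The final answer is still correct, but only because of an operator identity you have not invoked, so your local bookkeeping cannot reach it.)

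The missing ingredient is precisely the step the paper takes before any chain-counting: introduce the truncated operators $\rowAbj{1}=\bftau_r\cdots\bftau_1$ and $\rvacAbj{1}$, note $\Sigma'=\rvacAbj{1}\circ\rowAbj{1}$, and apply the truncated dihedral relation $\rvacAbj{1}\circ\rowAbj{1}=(\rowAbj{1})^{-1}\circ\rvacAbj{1}$, proved exactly as in \cref{prop:b_dihedral} and \cref{prop:b_com_diagram}. The rewritten word has the form $\bftau_1\bfT\bftau_1$ with $\bfT$ built solely from $\bftau_i$, $i\geq 2$, i.e.\ both occurrences of $\bftau_1$ are the outermost letters; only then does your $g$-weight cancellation (which is the paper's key computation) go through, and the same dihedral relation applied on $\sfp_{\geq 1}$ identifies the restricted word with $(\rowAb)^{-1}\circ\rvacAb$ there. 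You cite the dihedral identities only as ``consistency checks,'' but they are an essential step. With that rewriting inserted before your weight analysis, your argument becomes the paper's proof.
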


We now finally give the proofs of \cref{prop:b_dihedral,prop:b_row_iterates,prop:b_com_diagram} and of \cref{lem:b_rvac_ind} (which will imply via tropicalization and specialization the analogous results at the piecewise-linear and combinatorial level, whose proofs we omitted above).

\begin{proof}[Proof of \cref{prop:b_dihedral}]
 As explained in~\cite{stanley2009promotion}, the promotion and evacuation operators acting on the linear extensions of a poset can be written as compositions of involutions $\bft_i$ which have exactly the same form as those defining rowmotion and rowvacuation. The proof of the analog of \cref{prop:b_dihedral} for promotion and evacuation which Stanley gives in~\cite[Thm.~2.1]{stanley2009promotion} only uses the facts that $\bft_i^2=1$ and $\bft_i\bft_j=\bft_j\bft_i$ for $|i-j|>1$, i.e., it amounts to a computation in the corresponding ``right-angled Coxeter group.'' (Note that these basic properties of the rank toggles, which we stated in \cref{prop:basic-properties-OI-rank-toggles}, continue to hold at the birational level.) Therefore, the proof of \cref{prop:b_dihedral} is the same as the proof of~\cite[Thm.~2.1]{stanley2009promotion}.
\end{proof}

\begin{proof}[Proof of \cref{prop:b_row_iterates}]
First note that
\[ (\rvacFb \pi)(x) = \big((\bft_i \cdots \bft_{r-1} \bft_r) \cdots (\bft_1 \bft_2 \cdots \bft_{r-1} \bft_r) (\bft_0 \bft_1 \bft_2 \cdots \bft_{r-1} \bft_r) \, \pi\big)(x). \]
We will prove the stronger claim that for any $y \in \sfp_j$ with $j \geq i$, 
\[\big((\bft_i \cdots \bft_{r-1} \bft_r) \cdots (\bft_1 \bft_2 \cdots \bft_{r-1} \bft_r) (\bft_0 \bft_1 \bft_2 \cdots \bft_{r-1} \bft_r) \, \pi \big)(y) = ((\rowFb)^{i+1}\pi)(y).\]
This is clear for $i=0$. We proceed by induction on $i$. 

The key point is that when applying a toggle $t_p$ to a $\pi\in \calfb(\sfp)$, all that matters for determining the value $(t_p \, \pi)(p)$ is $\pi(q)$ for $q$ that are either equal to, are covered by, or cover $p$. In particular, for an element $y$ of rank $j$, all that matters is the values at elements of rank $\geq j-1$. By our induction hypothesis, $(\bft_{i-1} \cdots \bft_{r-1} \bft_r) \cdots (\bft_1 \bft_2 \cdots \bft_{r-1} \bft_r) (\bft_0 \bft_1 \bft_2 \cdots \bft_{r-1} \bft_r) \, \pi$ agrees with $(\rowFb)^{i}\pi$ at elements of rank $\geq i-1$. Hence, viewing $\rowFb=\bft_0 \bft_1 \bft_2 \cdots \bft_{r-1} \bft_r$ as a composition of toggles, we see that $(\bft_i \cdots \bft_{r-1} \bft_r) \cdots (\bft_1 \bft_2 \cdots \bft_{r-1} \bft_r) (\bft_0 \bft_1 \bft_2 \cdots \bft_{r-1} \bft_r) \, \pi$ agrees with $(\rowFb)^{i+1}\pi$ at elements of rank $\geq i$, as claimed.
\end{proof}

\begin{proof}[Proof of \cref{prop:b_com_diagram}]
The commutativity of the leftmost diagram is immediate from the definitions of $\rowFb$ and $\rowAb$ as compositions of the maps $\Theta$, $\up$, $\down$, and their inverses. So we proceed to prove the commutativity of the middle and rightmost diagrams.

We will prove the commutativity of the right diagram; the commutativity of the left diagram will then follow from consideration of the dual poset $\sfp^*$. (Note also that we could define $\rowFb$ and $\rowAb$ as compositions of toggles, and prove their conjugacy via $\down$ using an argument similar to what follows.)

There is an isomorphism between the order filter and antichain toggle groups. Specifically, as explained in~\cite[Thm.~3.12]{joseph2020birational}, if we set
\[\bftau_i^* \coloneqq \bft_0 \bft_1 \cdots \bft_{i-1} \bft_i \bft_{i-1} \cdots \bft_1 \bft_0,\]
then the following diagram commutes:
\begin{center}
\begin{tikzpicture}[scale=2/3]
\node at (0,1.8) {$\calfb(\sfp)$};
\node at (0,0) {$\calab(\sfp)$};
\node at (3.25,1.8) {$\calfb(\sfp)$};
\node at (3.25,0) {$\calab(\sfp)$};
\draw[semithick, ->] (0,1.3) -- (0,0.5);
\node[left] at (0,0.9) {$\up\circ\Theta$};
\draw[semithick, ->] (0.8,0) -- (2.4,0);
\node[below] at (1.5,0) {$\bftau_i$};
\draw[semithick, ->] (0.8,1.8) -- (2.4,1.8);
\node[above] at (1.5,1.8) {$\bftau_i^*$};
\draw[semithick, ->] (3.25,1.3) -- (3.25,0.5);
\node[right] at (3.25,0.9) {$\up\circ\Theta$};
\end{tikzpicture}
\end{center}
That is, $\bftau^*_i$ is a composition of order filter rank toggles $\bft_j$ that mimics the action of the antichain rank toggle $\bftau_i$.

Thus, to prove that the right diagram commutes, it suffices to show that
\[(\bftau^*_0) (\bftau^*_0 \bftau^*_1) \cdots (\bftau^*_0 \bftau^*_1 \bftau^*_2 \cdots \bftau^*_{r-1})(\bftau^*_0 \bftau^*_1 \bftau^*_2 \cdots \bftau^*_{r-1} \bftau^*_r)=\drvacFb,\]
where we recall that
\[\drvacFb=(\bft_0) (\bft_1 \bft_0) \cdots (\bft_{r-1} \cdots  \bft_2 \bft_1 \bft_0) (\bft_r \bft_{r-1} \cdots  \bft_2 \bft_1 \bft_0).\]
To do this, we use induction to show that, for any $0\leq k \leq r$,
\[ \bftau^*_0 \bftau^*_1 \cdots \bftau^*_{k-1} \bftau^*_k = \bft_k \bft_{k-1} \cdots  \bft_1 \bft_0.\]
By definition, the base case $\bftau^*_0 = \bft_0$ is true.  Now assume that $\bftau^*_0 \bftau^*_1 \cdots \bftau^*_{k-1} \bftau^*_k = \bft_k \bft_{k-1} \cdots  \bft_1 \bft_0$.  Then
\begin{align*}
    \bftau^*_0 \bftau^*_1 \cdots \bftau^*_{k-1} \bftau^*_k \bftau^*_{k+1} &=
    (\bft_k \bft_{k-1} \cdots  \bft_1 \bft_0)
    (\bft_0 \bft_1 \cdots \bft_{k-1} \bft_k \bft_{k+1} \bft_k \bft_{k-1} \cdots  \bft_1 \bft_0)\\
    &= \bft_{k+1} \bft_k \bft_{k-1} \cdots  \bft_1 \bft_0,
\end{align*}
as required.
\end{proof}

\begin{proof}[Proof of \cref{lem:b_rvac_ind}]

From the definition 
\[\rvacAb = (\bftau_r)(\bftau_r \bftau_{r-1})\cdots (\bftau_r \bftau_{r-1} \cdots \bftau_2 \bftau_1) (\bftau_r \bftau_{r-1} \cdots \bftau_2 \bftau_1 \bftau_0)\]
the statement about when $x \in \sfp_0$ is immediate.  We now focus on the case $x \in \sfp_{\geq 1}$. 

For $0\leq i \leq r$, define
\begin{align*}
\rowFbj{i} &\coloneqq \bft_i \bft_{i+1} \cdots \bft_r, \\
\rvacFbj{i} &\coloneqq (\bft_r) (\bft_{r-1} \bft_r) \cdots (\bft_{i+1} \bft_{i+2} \cdots \bft_{r-1} \bft_r) (\bft_{i} \bft_{i+1} \bft_{i+2} \cdots \bft_{r-1} \bft_r), \\
\rowAbj{i} &\coloneqq  \bftau_{r} \bftau_{r-1} \cdots \bftau_i, \\
\rvacAbj{i} &\coloneqq (\bftau_r)(\bftau_r \bftau_{r-1})\cdots (\bftau_r \bftau_{r-1} \cdots \bftau_{i+2} \bftau_{i+1}) (\bftau_r \bftau_{r-1} \cdots \bftau_{i+2} \bftau_{i+1} \bftau_{i}).
\end{align*}
The same arguments as in the proof of \cref{prop:b_dihedral} imply that 
\[\rvacFbj{i} \circ \rowFbj{i} = (\rowFbj{i})^{-1} \circ \rvacFbj{i}.\]
And the same arguments as in the proof of \cref{prop:b_com_diagram} imply that the following diagrams commute:
\begin{center}
\begin{tikzpicture}[scale=0.8]
\node at (0,1.8) {$\calfb(\sfp)$};
\node at (0,0) {$\calab(\sfp)$};
\node at (3.25,1.8) {$\calfb(\sfp)$};
\node at (3.25,0) {$\calab(\sfp)$};
\draw[semithick, ->] (0,1.3) -- (0,0.5);
\node[left] at (0,0.9) {$\down$};
\draw[semithick, ->] (0.8,0) -- (2.4,0);
\node[below] at (1.5,0) {$\rowAbj{i}$};
\draw[semithick, ->] (0.8,1.8) -- (2.4,1.8);
\node[above] at (1.5,1.8) {$\rowFbj{i}$};
\draw[semithick, ->] (3.25,1.3) -- (3.25,0.5);
\node[right] at (3.25,0.9) {$\down$};
\end{tikzpicture} \qquad \qquad \begin{tikzpicture}[scale=0.8]
\node at (0,1.8) {$\calfb(\sfp)$};
\node at (0,0) {$\calab(\sfp)$};
\node at (3.25,1.8) {$\calfb(\sfp)$};
\node at (3.25,0) {$\calab(\sfp)$};
\draw[semithick, ->] (0,1.3) -- (0,0.5);
\node[left] at (0,0.9) {$\down$};
\draw[semithick, ->] (0.8,0) -- (2.4,0);
\node[below] at (1.5,0) {$\rvacAbj{i}$};
\draw[semithick, ->] (0.8,1.8) -- (2.4,1.8);
\node[above] at (1.5,1.8) {$\rvacFbj{i}$};
\draw[semithick, ->] (3.25,1.3) -- (3.25,0.5);
\node[right] at (3.25,0.9) {$\down$};
\end{tikzpicture}
\end{center}
Hence, we also have
\[\rvacAbj{i} \circ \rowAbj{i} = (\rowAbj{i})^{-1} \circ \rvacAbj{i}.\]

Therefore,
\begin{align*}
\rvacAb &= (\bftau_r)(\bftau_r \bftau_{r-1})\cdots (\bftau_r \bftau_{r-1} \cdots \bftau_2 \bftau_1) (\bftau_r \bftau_{r-1} \cdots \bftau_2 \bftau_1 \bftau_0) \\
&= \rvacAbj{1} \circ \rowAbj{1} \circ \bftau_0 \\
&= (\rowAbj{1})^{-1} \circ \rvacAbj{1} \circ \bftau_0 \\
&= (\bftau_1 \cdots \bftau_{r-1} \bftau_r) \circ  (\bftau_r)(\bftau_r \bftau_{r-1})\cdots (\bftau_r \bftau_{r-1} \cdots \bftau_2 \bftau_1) \circ (\bftau_0).
\end{align*}

Now we come to the key claim in the proof, which is that for any $\sigma \in \calab(\sfp)$ and $p \in \sfp_{\geq 1}$,
\begin{gather*} 
((\bftau_1 \cdots \bftau_{r-1} \bftau_r) (\bftau_r)(\bftau_r \bftau_{r-1})\cdots (\bftau_r \bftau_{r-1} \cdots \bftau_2 \bftau_1) \sigma)(p) \\
= ((\overline{\bftau_1} \cdots \overline{\bftau_{r-1}} \; \overline{\bftau_r}) (\overline{\bftau_r})(\overline{\bftau_r} \; \overline{\bftau_{r-1}})\cdots (\overline{\bftau_r} \; \overline{\bftau_{r-1}} \cdots \overline{\bftau_2} \; \overline{\bftau_1}) \overline{\sigma})(p),
\end{gather*}
where $\overline{\sigma} \coloneqq \sigma\hspace{-3.5pt}\mid_{\sfp_{\geq 1}} \in \calab(\sfp_{\geq 1})$ and the $\overline{\bftau_i}\colon \calab(\sfp_{\geq 1})\to \calab(\sfp_{\geq 1})$ denote the analogous antichain rank toggles for $\sfp_{\geq 1}$. Taking $\sigma \coloneqq \bftau_0 \pi$, this will complete the proof the lemma because it is precisely this composition of the $\overline{\bftau_i}$ which constitute the map $(\rowAb)^{-1}\circ \rvacAb\colon \calab(\sfp_{\geq 1})\to \calab(\sfp_{\geq 1})$.

Actually, we will prove an even stronger claim. Namely: let $\bfT$ be any composition of the $\bftau_i$, where all $i\geq 2$, and $\overline{\bfT}$ the corresponding composition of the $\overline{\bftau_i}$; then we have $(\bftau_1\bfT\bftau_1\sigma)(p)=(\overline{\bftau_1}\,\overline{\bfT}\,\overline{\bftau_1} \; \overline{\sigma})(p)$ for any $\sigma \in \calab(\sfp)$ and $p \in \sfp_{\geq 1}$. Setting
\[\bfT\coloneqq (\overline{\bftau_2} \cdots \overline{\bftau_{r-1}} \; \overline{\bftau_r}) \circ (\overline{\bftau_r})(\overline{\bftau_r} \; \overline{\bftau_{r-1}})\cdots (\overline{\bftau_r} \; \overline{\bftau_{r-1}} \cdots \overline{\bftau_2})\]
recovers the previous claim.

We proceed to prove the stronger claim. For any $p \in \sfp_{\geq 1}$, we have
\[(\bftau_1\sigma)(p) = \begin{cases} \displaystyle\frac{(\overline{\bftau_1}\; \overline{\sigma})(p)}{\sum_{q\lessdot p}\sigma(q)} &\textrm{ if $p\in \sfp_{1}$}; \\ (\overline{\bftau_1} \; \overline{\sigma})(p) &\textrm{if $p\in \sfp_{\geq 2}$}. \end{cases} \]
Moreover, we continue to have
\[(\bfT\bftau_1\sigma)(p) = \begin{cases} \displaystyle\frac{(\overline{\bfT} \; \overline{\bftau_1} \; \overline{\sigma})(p)}{\sum_{q\lessdot p}\sigma(q)} &\textrm{ if $p\in \sfp_{1}$}; \\ (\overline{\bfT} \; \overline{\bftau_1} \; \overline{\sigma})(p) &\textrm{if $p\in \sfp_{\geq 2}$}, \end{cases} \]
for all $p \in \sfp_{\geq 1}$. This can be seen inductively: the point is that whenever a term of $\frac{\ast}{\sum_{r\lessdot q}\sigma(r)}$ for some $q \in \sfp_{\geq 1}$ appears as a result of one of the toggles in $\bfT$, it will come multiplied by a term of $\sum_{r\lessdot q}\sigma(r)$ which cancels the denominator (since all maximal chains that pass through $q$ pass through one of the $r$ with $r\lessdot q$). Finally, when we apply $\bftau_1$ to $\bfT\bftau_1\sigma$, we will cancel all terms of $(\sum_{r\lessdot q}\sigma(r))^{-1}$ for $q \in \sfp_{\geq 1}$. So indeed we will have $(\bftau_1\bfT\bftau_1\sigma)(p)=(\overline{\bftau_1} \; \overline{\bfT} \; \overline{\bftau_1} \; \overline{\sigma})(p)$ for all $p \in \sfp_{\geq 1}$, as claimed.
\end{proof}

\subsection{Homomesies for rowmotion and rowvacuation} \label{subsec:homomesies}

Before we end this section, we want to explain one more fact which holds for all graded posets $\sfp$. This fact is about homomesies for rowmotion and rowvacuation, specifically, about transferring homomesies for rowvacuation to rowmotion. We discussed homomesies briefly in \cref{sec:intro}, but let us review the definition now. 

\begin{definition}
Let $\varphi$ be an invertible operator acting on a set $X$. We say that a statistic $f\colon X \to \rr$ is \dfn{homomesic} with respect to the action of $\varphi$ on $X$ if for every finite\footnote{When $X$ is finite, for example $X=\calf(\sfp)$ or $\cala(\sfp)$, then of course every $\varphi$-orbit will be finite. But, e.g., piecewise-linear and birational rowmotion tend to have infinite order and infinite orbits. We could work with a more robust definition of homomesy which also considers the infinite orbits by taking limits in some way, but then we would have to worry about issues of convergence. However, these issues will not really concern us because, for the very special families of posets that we most care about like $\sfa^n$ and $\rect{a}{b}$, piecewise-linear and birational rowmotion have finite order and hence finite orbits.} $\varphi$-orbit $O$, the average $\frac{1}{\#O} \sum_{x \in O} f(x)$ is equal to the same constant. If this constant is $c\in \rr$ then we say $f$ is \dfn{$c$-mesic}.
\end{definition}

The preceding definition is the correct notion of homomesy for combinatorial and PL maps, but for birational maps we need to work ``multiplicatively.''

\begin{definition}
Let $\varphi$ be an invertible operator acting on a set $X$. We say that a statistic $f\colon X \to \rr_{>0}$ is \dfn{multiplicatively homomesic} with respect to the action of~$\varphi$ on $X$ if for every finite $\varphi$-orbit $O$, the multiplicative average $\left(\prod_{x \in O} f(x)\right)^{\frac{1}{\#O}}$ is equal to the same constant. (Here we take positive $n$th roots.) If this constant is~$c\in \rr_{>0}$ then we say $f$ is \dfn{multiplicatively $c$-mesic}.
\end{definition}

The systematic investigation of homomesies was initiated by Propp and Roby~\cite{propp2015homomesy}. There has been a particular emphasis on exhibiting homomesies for rowmotion, including its piecewise-linear and birational extensions~\cite{panyushev2009orbits, armstrong2013uniform, propp2015homomesy, haddadan2014homomesy, rush2015homomesy, einstein2018combinatorial, musiker2018paths, hopkins2019minuscule, joseph2021birational, okada2020birational}.

As we explained in \cref{sec:intro}, our primary objective in the present paper is to study homomesies for the Lalanne--Kreweras involution (and its PL/birational extensions). In the next section we will prove the Lalanne--Kreweras involution is the same as rowvacuation for the poset $\sfa^n$. The next lemma explains how we can automatically transfer some homomesies from rowvacuation to rowmotion. In this way, our main results in this paper also imply homomesy results for rowmotion of $\sfa^n$.

\begin{lemma} \label{lem:homo_transfer}
Let $\sfp$ be a graded poset of rank $r$.
\begin{itemize}
\item \emph{(Combinatorial version)} For $0\leq i \leq r$, let $g_i\colon \calf(\sfp)\to\rr$ be statistics for which $g_i(F)$ only depends on $F\cap \sfp_i$. Then if $f \coloneqq \sum_{i=0}^{r}g_i$ is $c$-mesic with respect to the action of $\rvacF$, $f$ is also $c$-mesic with respect to~$\rowF$.
\item \emph{(PL version)} For $0\leq i \leq r$, let $g_i\colon \calfpl(\sfp)\to\rr$ be statistics for which $g_i(\pi)$ only depends on $\pi\hspace{-3.5pt}\mid_{\sfp_i}$. Then if $f \coloneqq \sum_{i=0}^{r}g_i$ is $c$-mesic with respect to the action of $\rvacFpl$, $f$ is also $c$-mesic with respect to~$\rowFpl$.
\item \emph{(Birational version)} For $0\leq i \leq r$, let $g_i\colon \calfb(\sfp)\to\rr_{>0}$ be statistics for which $g_i(\pi)$ only depends on $\pi\hspace{-3.5pt}\mid_{\sfp_i}$. Then if $f \coloneqq \prod_{i=0}^{r}g_i$ is multiplicatively $c$-mesic with respect to the action of $\rvacFb$, $f$ is also multiplicatively $c$-mesic with respect to~$\rowFb$.
\end{itemize}
\end{lemma}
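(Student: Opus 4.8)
The plan is to prove all three versions by the same argument, giving the details in the combinatorial case and observing that the piecewise-linear and birational cases are formally identical (with sums replaced by products in the birational setting). First I would reformulate the hypothesis using that $\rvacF$ is an involution (\cref{prop:row-rvac-dihedral}): every $\rvacF$-orbit has size $1$ or $2$, and one checks at once that $f$ is $c$-mesic with respect to $\rvacF$ if and only if $f(F)+f(\rvacF(F))=2c$ for all $F\in\calf(\sfp)$ (the fixed-point case $F=\rvacF(F)$ forces $f(F)=c$, which is consistent).

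The crux is an observation that decouples rowvacuation into powers of rowmotion rank by rank. By \cref{prop:rvac_sew_row}, for $p\in\sfp_i$ we have $p\in\rvacF(F)$ if and only if $p\in\rowF^{i+1}(F)$, so that $\rvacF(F)\cap\sfp_i=\rowF^{i+1}(F)\cap\sfp_i$. Since $g_i(F)$ depends only on $F\cap\sfp_i$, this yields the identity $g_i(\rvacF(F))=g_i(\rowF^{i+1}(F))$ for every $F$ and every $i$.

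Next I would fix a finite $\rowF$-orbit $O$ and sum. Because $\rowF^{i+1}$ restricts to a bijection of the finite orbit $O$ onto itself, summing the identity above over $F\in O$ gives $\sum_{F\in O}g_i(\rowF^{i+1}(F))=\sum_{F\in O}g_i(F)$ for each $i$; summing over $i$ yields $\sum_{F\in O}f(\rvacF(F))=\sum_{F\in O}f(F)$. Finally, summing the reformulated hypothesis $f(F)+f(\rvacF(F))=2c$ over $F\in O$ gives $\sum_{F\in O}f(F)+\sum_{F\in O}f(\rvacF(F))=2c\,\#O$, which combined with the previous equality collapses to $2\sum_{F\in O}f(F)=2c\,\#O$; hence the average of $f$ over $O$ is $c$, i.e.\ $f$ is $c$-mesic under $\rowF$.

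The point to stress is that $\rvacF$ does \emph{not} carry the rowmotion orbit $O$ to itself in an orbit-preserving way (it conjugates $\rowF$ to $\rowF^{-1}$, reversing orbits), so the homomesy cannot be transported directly. What rescues the argument is exactly the rank decomposition $f=\sum_i g_i$ together with the sewing identity \cref{prop:rvac_sew_row}: it replaces the single non-orbit-preserving map $\rvacF$ by the collection of orbit-preserving maps $\rowF^{i+1}$, after which the sum over $O$ telescopes. For the piecewise-linear and birational versions the argument is verbatim, using the corresponding analog of \cref{prop:rvac_sew_row} (namely \cref{prop:b_row_iterates} birationally, and its piecewise-linear analog); in the birational case the additive reformulation becomes $f(\pi)\,f(\rvacFb(\pi))=c^2$, all sums become products, $\rowFb^{i+1}$ permutes each finite orbit, and one concludes $\prod_{\pi\in O}f(\pi)=c^{\#O}$ after taking positive roots.
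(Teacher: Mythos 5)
Your proposal is correct and follows essentially the same route as the paper's proof: reformulate the rowvacuation homomesy via the involution property as $f(F)+f(\rvacF(F))=2c$ (multiplicatively in the birational case), use \cref{prop:rvac_sew_row} (resp.\ \cref{prop:b_row_iterates}) to replace $g_i(\rvacF(F))$ by $g_i(\rowF^{i+1}(F))$ rank by rank, and then average over a finite rowmotion orbit using that each power of rowmotion permutes the orbit. The only cosmetic difference is that you detail the combinatorial (additive) version while the paper writes out the birational (multiplicative) one.
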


\begin{proof}
We prove the birational version.

We have by supposition that for any $\pi\in \calfb(\sfp)$,
\[ \left(\prod_{i=0}^{r}g_i(\pi) \prod_{i=0}^{r} g_i(\rvacFb \pi) \right)^{\frac{1}{2}} = c. \]
Since $g_i(\pi)$ depends only on the values of $\pi$ at the $i$th rank $\sfp_i$, and since \cref{prop:b_row_iterates} tells us that $(\rvacFb\pi)(p)=((\rowFb)^{i+1}\pi)(p)$ for all $p \in \sfp_i$, we have
\[ \left(\prod_{i=0}^{r}g_i(\pi) \prod_{i=0}^{r} g_i((\rowFb)^{i+1} \pi) \right)^{\frac{1}{2}} = c. \]
Now let $O$ be a finite $\rowFb$-orbit. Then from the above we have
\begin{align*}
c &= \prod_{\pi \in O} \left(\prod_{i=0}^{r}g_i(\pi) \prod_{i=0}^{r} g_i((\rowFb)^{i+1} \pi) \right)^{\frac{1}{2\#O}}\\
&= \left(\prod_{\pi \in O} \prod_{i=0}^{r}g_i(\pi) \right)^{\frac{1}{2\#O}} \left(\prod_{\pi \in O} \prod_{i=0}^{r} g_i((\rowFb)^{i+1} \pi) \right)^{\frac{1}{2\#O}} \\
&= \left(\prod_{\pi \in O} \prod_{i=0}^{r}g_i(\pi) \right)^{\frac{1}{2\#O}} \left(\prod_{\pi \in O} \prod_{i=0}^{r}g_i(\pi) \right)^{\frac{1}{2\#O}} = \left(\prod_{\pi \in O} \prod_{i=0}^{r}g_i(\pi) \right)^{\frac{1}{\#O}},
\end{align*}
where from the second to the third lines we used the fact that the product was over an $\rowFb$-orbit, so we are free to shift terms by powers of $\rowFb$. We conclude that $\prod_{i=0}^{r}g_i$ is indeed $c$-mesic for $\rowFb$.
\end{proof}

\begin{remark}
Examples of statistics $f$ satisfying the conditions of \cref{lem:homo_transfer} include (at the birational level):
\begin{itemize}
\item statistics of the form $\pi \mapsto \prod_{p\in \sfp}(\pi(p))^{c_p}$ for coefficients $c_p \in \mathbb{R}$;
\item statistics of the form $\pi \mapsto \prod_{p\in \sfp}(\down\pi(p))^{c_p}$ for coefficients $c_p \in \mathbb{R}$.
\end{itemize}
These include all the major kinds of statistics (such as order filter cardinality, antichain cardinality, etc.) that prior rowmotion homomesy research has focused on.
\end{remark}

\begin{remark}
The argument in the proof of \cref{lem:homo_transfer} is similar to the ``recombination'' technique of Einstein--Propp~\cite{einstein2018combinatorial}.
\end{remark}

\section{The Lalanne--Kreweras involution is rowvacuation} \label{sec:lk_is_rvac}

In this section, we prove that the Lalanne--Kreweras involution is the same as rowvacuation for the poset~$\sfa^n$. We do this by showing that the Lalanne--Kreweras involution satisfies the same recursion as rowvacuation (i.e., \cref{lem:rvac_ind}). We have an obvious isomorphism $\sfa^n_{\geq i} \simeq \sfa^{n-i}$, and via this identification we can consider applying $\lk$ to the restriction $A\cap \sfa^n_{\geq i} \in \cala(\sfa^{n-i})$ of an antichain $A \in \cala(\sfa^n)$. The recursive description of $\lk$ is then given by the following lemma.

\begin{lemma} \label{lem:equiv-def-LK}
Let $A \in \cala(\sfa^n)$ and $p \in \sfa^n$.
\begin{itemize}
\item If $p \in \sfa^n_0$, then $p \in \lk(A)$ if and only if $p \in \bftau_0(A)$ (i.e., if and only if $A$ does not contain any element $q \geq p$).
\item If $p \in \sfa^n_{\geq 1} \simeq \sfa^{n-1}$, then $p \in \lk(A)$ if and only if $p \in (\rowA^{-1} \circ \lk )(\overline{A})$, where $\overline{A} \coloneqq A \cap \sfa^n_{\geq 1} \in \cala(\sfa^{n-1})$.
\end{itemize}
\end{lemma}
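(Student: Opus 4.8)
I would verify the two bullets directly from the combinatorial definition of $\lk$, working throughout with the start-set $S=\{i_1,\dots,i_k\}$ and end-set $E=\{j_1,\dots,j_k\}$ of an antichain $A=\{[i_1,j_1],\dots,[i_k,j_k]\}$. Writing $s(x)\coloneqq\#(S\cap[1,x])$ and $e(x)\coloneqq\#(E\cap[1,x])$, the defining equations $\{i'_\ell\}=[n]\setminus E$ and $\{j'_\ell\}=[n]\setminus S$ give at once the ``positional'' characterization
\[ [i,j]\in\lk(A) \iff i\notin E,\quad j\notin S,\quad\text{and}\quad i-e(i)=j-s(j), \]
valid for all $1\le i\le j\le n$, since $i-e(i)$ is the position of $i$ in $[n]\setminus E$ and $j-s(j)$ is the position of $j$ in $[n]\setminus S$ (which must coincide for $[i,j]$ to be the $\ell$-th interval of $\lk(A)$). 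Both bullets will be read off from this single equivalence.

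\textbf{The rank-$0$ bullet.} Specializing the equivalence to $i=j$ yields $[i,i]\in\lk(A)\iff i\notin S\cup E$ and $s(i)=e(i)$. I would then note that the number of intervals of $A$ containing the point $i$ equals $\big(s(i)-e(i)\big)+\one[i\in E]$; as $s(i)-e(i)=\#\{\ell:i_\ell\le i<j_\ell\}\ge 0$, this count is $0$ exactly when $s(i)=e(i)$ and $i\notin E$, and the latter already forces $i\notin S$ (otherwise $s(i)=s(i-1)+1>e(i-1)=e(i)$). Hence $[i,i]\in\lk(A)$ iff no element $q\ge[i,i]$ lies in $A$, which is precisely the description of $\bftau_0(A)$ in the statement.

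\textbf{Reducing the rank-$\ge 1$ bullet and describing rowmotion.} Fixing the order isomorphism $\sfa^n_{\ge 1}\xrightarrow{\sim}\sfa^{n-1}$, $[a,b]\mapsto[a,b-1]$, and letting $\overline{(\cdot)}$ denote restriction to $\sfa^n_{\ge 1}$ followed by this isomorphism, the second bullet asserts the set equality $\overline{\lk(A)}=\rowA^{-1}\big(\lk(\overline A)\big)$ in $\cala(\sfa^{n-1})$; applying the bijection $\rowA$ turns this into the equivalent identity $\rowA\big(\overline{\lk(A)}\big)=\lk(\overline A)$, which I would prove. The second ingredient I need is an explicit description of antichain rowmotion on $\sfa^m$. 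Since $\rowA=\down\circ\Theta\circ\up^{-1}$, an element of $\rowA(C)$ is a minimal interval contained in no arc of $C$; analyzing when the two maximal proper subintervals of $[i',j']$ are covered, I would show that for $i'<j'$,
\[ [i',j']\in\rowA(C)\iff i'+1\in S_C,\quad j'-1\in E_C,\quad\text{and}\quad s_C(i')=e_C(j'-1), \]
with $s_C,e_C$ defined analogously for $C$, while $[i',i']\in\rowA(C)$ iff $i'$ is uncovered by $C$. The crux here is a short argument that only \emph{consecutive} arcs of $C$ can produce a non-singleton output: if the arc ending at $j'-1$ and the arc starting at $i'+1$ are not adjacent in the arc order, then $[i',j']$ lies inside any intervening arc and so is not minimal.

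\textbf{The matching, and the main obstacle.} Writing $P\coloneqq\{a:[a,a]\in A\}$ for the singletons of $A$ and $Q\coloneqq\{a:a\text{ is uncovered by }A\}$ for the singletons of $\lk(A)$ (by the rank-$0$ bullet), I would compute the start/end-sets in $[n-1]$: those of $\overline A$ are $S\setminus P$ and $(E\setminus P)-1$ (ends shifted down by one), while those of $\overline{\lk(A)}$ are $([n]\setminus E)\setminus Q$ and $\big(([n]\setminus S)\setminus Q\big)-1$. Substituting these into the positional characterization of $\lk(\overline A)$ and into the rowmotion description of $\rowA(\overline{\lk(A)})$, the desired equivalence reduces to bookkeeping among $s$, $e$, and the counting functions of $P$ and $Q$, together with the unit shift coming from the isomorphism. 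I expect this matching to be the main obstacle: the shift by $1$ on the end-sets and the interlocking roles of $P$ and $Q$ make the two biconditionals look different on their face. The structural facts that reconcile them — which I would isolate as small claims about the nonnegative lattice path $d(x)=s(x)-e(x)$ — are that $P\subseteq S\cap E$, that $a\in P\iff a\in S\cap E$ and $s(a)=e(a)$, that $a\in Q\iff a\notin E$ and $s(a)=e(a)$, and, crucially, that if $s(a)=e(a)$ and $a+1\in E$ then nonnegativity forces $a+1\in S\cap E$ with $s(a+1)=e(a+1)$, so $a+1\in P$. These identities let one convert the indicator $\one[\,i'+1\in E\,]$ and the counts of $P$ and $Q$ on the two sides into one another, and handling the singleton case $i'=j'$ and the non-singleton case $i'<j'$ separately then completes the proof.
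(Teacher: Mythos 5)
Your treatment of the rank-$0$ bullet is complete and correct, and your overall strategy for the rank-$\geq 1$ bullet --- recast the claim as $\rowA\big(\overline{\lk(A)}\big)=\lk(\overline A)$, characterize membership in $\lk(\cdot)$ by the positional equation $i-e(i)=j-s(j)$, and characterize membership in $\rowA(C)$ by adjacency of consecutive arcs --- is sound and genuinely different from the paper's route. (The paper instead invokes Panyushev's explicit matrix description of $\rowA^{-1}$, writes $\lk(\overline A)$ as a complemented matrix, and checks that the ``extra'' columns created by deleting the singletons of $A$ are precisely the invalid columns removed by inverse rowmotion.) Your description of $\rowA$ is correct, as are the start/end sets you compute for $\overline A$ and $\overline{\lk(A)}$.

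The problem is that the decisive step --- verifying that the two resulting triples of conditions are equivalent --- is exactly where the content of the second bullet lives, and you do not carry it out; you list some true structural facts about $P$, $Q$ and the path $d$ and assert that they ``let one convert'' one side into the other. The difficulty is real, because the conditions do not match clause-by-clause. For instance, the $\lk(\overline A)$ side only requires $i'+1\notin E\setminus P$, so it tolerates $i'+1\in P$; but $i'+1\in P$ forces $i'+1\in E$, which outright violates the rowmotion side's requirement that $i'+1$ lie in the start set $([n]\setminus E)\setminus Q$ of $\overline{\lk(A)}$. Ruling this case out needs an argument you do not supply (one works: $i'+1\in P$ forces the start- and end-counting functions of $\overline A$ to agree at $i'$, while the positional equation for $[i',j']\in\lk(\overline A)$ combined with $i'<j'$ forces the start count to strictly exceed the end count there). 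Likewise, reconciling the two positional equations reduces to showing $(P\cup Q)\cap[i'+1,j']=\emptyset$ under the rowmotion-side hypotheses, which requires interval-counting identities (e.g.\ that the number of non-starts of $A$ in $[i'+1,j']$ equals $s(i')-e(i')$ when $i'-e(i')=j'-s(j')$) going beyond the facts you isolate. I checked that these identities do close the argument, so the plan is completable, but as written the proof of the second bullet is missing its main verification.
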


Before we prove \cref{lem:equiv-def-LK}, we first go over a detailed example of how it can be used to recursively compute the Lalanne--Kreweras involution, without reference to the definition of $\lk$ we gave in \cref{sec:intro}.

\begin{figure}[b]
\begin{center}
\begin{tikzpicture}[scale=6/7]
\begin{scope}[shift={(0,2)}]
\node at (-3,1.5) {$\sfa^1$:};
\draw [thick] (0,1.5) circle [radius=0.2];
\draw[ultra thick, ->] (1.9,1.5) -- (3.1,1.5);
\draw[red,thick,dashed] (-0.3,1.2) -- (-0.3,1.8) -- (0.3,1.8) -- (0.3,1.2) -- (-0.3,1.2);
\node[above] at (2.5,1.5) {$\LK$};
\end{scope}
\begin{scope}[shift={(5,2)}]
\draw [thick,fill] (0,1.5) circle [radius=0.2];
\draw[ultra thick, ->] (1.9,1.5) -- (3.1,1.5);
\node[above] at (2.5,1.5) {$\rowA^{-1}$};
\end{scope}
\begin{scope}[shift={(10,2)}]
\draw[red,thick] (-0.3,1.2) -- (-0.3,1.8) -- (0.3,1.8) -- (0.3,1.2) -- (-0.3,1.2);
\draw [thick] (0,1.5) circle [radius=0.2];
\end{scope}
\begin{scope}
\node at (-3,1.5) {$\sfa^2$:};
\draw[thick] (-0.15, 1.85) -- (-0.85, 1.15);
\draw[thick] (0.15, 1.85) -- (0.85, 1.15);
\draw [thick] (0,2) circle [radius=0.2];
\draw [thick] (-1,1) circle [radius=0.2];
\draw [thick] (1,1) circle [radius=0.2];
\draw[ultra thick, ->] (1.9,1.5) -- (3.1,1.5);
\draw[red,thick,dashed] (-0.3,1.7) -- (-0.3,2.3) -- (0.3,2.3) -- (0.3,1.7) -- (-0.3,1.7);
\draw[green,thick,dashed] (-1.6,0.7) -- (0,2.4) -- (1.6, 0.7) -- (-1.6,0.7);
\node[above] at (2.5,1.5) {$\LK$};
\end{scope}
\begin{scope}[shift={(5,0)}]
\draw[thick] (-0.15, 1.85) -- (-0.85, 1.15);
\draw[thick] (0.15, 1.85) -- (0.85, 1.15);
\draw [thick] (0,2) circle [radius=0.2];
\draw [thick,fill] (-1,1) circle [radius=0.2];
\draw [thick,fill] (1,1) circle [radius=0.2];
\draw[red,thick] (-0.3,1.7) -- (-0.3,2.3) -- (0.3,2.3) -- (0.3,1.7) -- (-0.3,1.7);
\draw[ultra thick, ->] (1.9,1.5) -- (3.1,1.5);
\node[above] at (2.5,1.5) {$\rowA^{-1}$};
\end{scope}
\begin{scope}[shift={(10,0)}]
\draw[thick] (-0.15, 1.85) -- (-0.85, 1.15);
\draw[thick] (0.15, 1.85) -- (0.85, 1.15);
\draw [thick] (0,2) circle [radius=0.2];
\draw [thick] (-1,1) circle [radius=0.2];
\draw [thick] (1,1) circle [radius=0.2];
\draw[green,thick] (-1.6,0.7) -- (0,2.4) -- (1.6, 0.7) -- (-1.6,0.7);
\end{scope}
\node at (-3,-2) {$\sfa^3$:};
\begin{scope}[shift={(0,-3)}]
\draw[thick] (-0.15, 1.85) -- (-0.85, 1.15);
\draw[thick] (0.15, 1.85) -- (0.85, 1.15);
\draw[thick] (-1.15, 0.85) -- (-1.85, 0.15);
\draw[thick] (-0.85, 0.85) -- (-0.15, 0.15);
\draw[thick] (0.85, 0.85) -- (0.15, 0.15);
\draw[thick] (1.15, 0.85) -- (1.85, 0.15);
\draw [thick] (0,2) circle [radius=0.2];
\draw [thick] (-1,1) circle [radius=0.2];
\draw [thick] (1,1) circle [radius=0.2];
\draw [thick] (-2,0) circle [radius=0.2];
\draw [thick,fill] (0,0) circle [radius=0.2];
\draw [thick,fill] (2,0) circle [radius=0.2];
\draw[ultra thick, ->] (1.9,1) -- (3.1,1);
\draw[green,thick,dashed] (-1.6,0.7) -- (0,2.4) -- (1.6, 0.7) -- (-1.6,0.7);
\draw[blue,thick,dashed] (-2.6,-0.3) -- (0,2.4) -- (2.6,-0.3) -- (-2.6,-0.3);
\node[above] at (2.5,1) {$\LK$};
\end{scope}
\begin{scope}[shift={(5,-3)}]
\draw[thick] (-0.15, 1.85) -- (-0.85, 1.15);
\draw[thick] (0.15, 1.85) -- (0.85, 1.15);
\draw[thick] (-1.15, 0.85) -- (-1.85, 0.15);
\draw[thick] (-0.85, 0.85) -- (-0.15, 0.15);
\draw[thick] (0.85, 0.85) -- (0.15, 0.15);
\draw[thick] (1.15, 0.85) -- (1.85, 0.15);
\draw [thick] (0,2) circle [radius=0.2];
\draw [thick] (-1,1) circle [radius=0.2];
\draw [thick] (1,1) circle [radius=0.2];
\draw [thick,fill] (-2,0) circle [radius=0.2];
\draw [thick] (0,0) circle [radius=0.2];
\draw [thick] (2,0) circle [radius=0.2];
\draw[green,thick] (-1.6,0.7) -- (0,2.4) -- (1.6, 0.7) -- (-1.6,0.7);
\draw[ultra thick, ->] (1.9,1) -- (3.1,1);
\node[above] at (2.5,1) {$\rowA^{-1}$};
\end{scope}
\begin{scope}[shift={(10,-3)}]
\draw[thick] (-0.15, 1.85) -- (-0.85, 1.15);
\draw[thick] (0.15, 1.85) -- (0.85, 1.15);
\draw[thick] (-1.15, 0.85) -- (-1.85, 0.15);
\draw[thick] (-0.85, 0.85) -- (-0.15, 0.15);
\draw[thick] (0.85, 0.85) -- (0.15, 0.15);
\draw[thick] (1.15, 0.85) -- (1.85, 0.15);
\draw [thick] (0,2) circle [radius=0.2];
\draw [thick] (-1,1) circle [radius=0.2];
\draw [thick,fill] (1,1) circle [radius=0.2];
\draw [thick] (-2,0) circle [radius=0.2];
\draw [thick] (0,0) circle [radius=0.2];
\draw [thick] (2,0) circle [radius=0.2];
\draw[blue,thick] (-2.6,-0.3) -- (0,2.4) -- (2.6,-0.3) -- (-2.6,-0.3);
\end{scope}
\node at (-3,-6) {$\sfa^4$:};
\begin{scope}[shift={(1,-6.5)}]
\draw[thick] (-0.15, 1.85) -- (-0.85, 1.15);
\draw[thick] (0.15, 1.85) -- (0.85, 1.15);
\draw[thick] (-1.15, 0.85) -- (-1.85, 0.15);
\draw[thick] (-0.85, 0.85) -- (-0.15, 0.15);
\draw[thick] (0.85, 0.85) -- (0.15, 0.15);
\draw[thick] (1.15, 0.85) -- (1.85, 0.15);
\draw[thick] (0.85, -0.85) -- (0.15, -0.15);
\draw[thick] (1.15, -0.85) -- (1.85, -0.15);
\draw[thick] (2.85, -0.85) -- (2.15, -0.15);
\draw[thick] (-0.85, -0.85) -- (-0.15, -0.15);
\draw[thick] (-1.15, -0.85) -- (-1.85, -0.15);
\draw[thick] (-2.85, -0.85) -- (-2.15, -0.15);
\draw [thick] (0,2) circle [radius=0.2];
\draw [thick] (-1,1) circle [radius=0.2];
\draw [thick] (1,1) circle [radius=0.2];
\draw [thick] (-2,0) circle [radius=0.2];
\draw [thick,fill] (0,0) circle [radius=0.2];
\draw [thick,fill] (2,0) circle [radius=0.2];
\draw [thick] (-3,-1) circle [radius=0.2];
\draw [thick] (-1,-1) circle [radius=0.2];
\draw [thick] (1,-1) circle [radius=0.2];
\draw [thick] (3,-1) circle [radius=0.2];
\draw[blue,thick,dashed] (-2.6,-0.3) -- (0,2.4) -- (2.6,-0.3) -- (-2.6,-0.3);
\end{scope}
\draw[ultra thick, ->] (4.5,-6) -- (5.7,-6);
\node[above] at (5.1,-6) {$\LK$};
\begin{scope}[shift={(9.2,-6.5)}]
\draw[thick] (-0.15, 1.85) -- (-0.85, 1.15);
\draw[thick] (0.15, 1.85) -- (0.85, 1.15);
\draw[thick] (-1.15, 0.85) -- (-1.85, 0.15);
\draw[thick] (-0.85, 0.85) -- (-0.15, 0.15);
\draw[thick] (0.85, 0.85) -- (0.15, 0.15);
\draw[thick] (1.15, 0.85) -- (1.85, 0.15);
\draw[thick] (0.85, -0.85) -- (0.15, -0.15);
\draw[thick] (1.15, -0.85) -- (1.85, -0.15);
\draw[thick] (2.85, -0.85) -- (2.15, -0.15);
\draw[thick] (-0.85, -0.85) -- (-0.15, -0.15);
\draw[thick] (-1.15, -0.85) -- (-1.85, -0.15);
\draw[thick] (-2.85, -0.85) -- (-2.15, -0.15);
\draw [thick] (0,2) circle [radius=0.2];
\draw [thick] (-1,1) circle [radius=0.2];
\draw [thick,fill] (1,1) circle [radius=0.2];
\draw [thick] (-2,0) circle [radius=0.2];
\draw [thick] (0,0) circle [radius=0.2];
\draw [thick] (2,0) circle [radius=0.2];
\draw [thick,fill] (-3,-1) circle [radius=0.2];
\draw [thick] (-1,-1) circle [radius=0.2];
\draw [thick] (1,-1) circle [radius=0.2];
\draw [thick] (3,-1) circle [radius=0.2];
\draw[blue,thick] (-2.6,-0.3) -- (0,2.4) -- (2.6,-0.3) -- (-2.6,-0.3);
\end{scope}
\end{tikzpicture}
\end{center}
\caption{Goes with \cref{ex:no-draw-Dyck} as an example illustrating \cref{lem:equiv-def-LK}.}
\label{fig:comb}
\end{figure}

\begin{example}\label{ex:no-draw-Dyck}
Let $A=\{[2,3],[3,4]\} \in \cala(\sfa^4)$. We depict this antichain below.
\begin{center}
\begin{tikzpicture}[scale=1/3]
\draw[thick] (-0.15, 1.85) -- (-0.85, 1.15);
\draw[thick] (0.15, 1.85) -- (0.85, 1.15);
\draw[thick] (-1.15, 0.85) -- (-1.85, 0.15);
\draw[thick] (-0.85, 0.85) -- (-0.15, 0.15);
\draw[thick] (0.85, 0.85) -- (0.15, 0.15);
\draw[thick] (1.15, 0.85) -- (1.85, 0.15);
\draw[thick] (0.85, -0.85) -- (0.15, -0.15);
\draw[thick] (1.15, -0.85) -- (1.85, -0.15);
\draw[thick] (2.85, -0.85) -- (2.15, -0.15);
\draw[thick] (-0.85, -0.85) -- (-0.15, -0.15);
\draw[thick] (-1.15, -0.85) -- (-1.85, -0.15);
\draw[thick] (-2.85, -0.85) -- (-2.15, -0.15);
\draw [thick] (0,2) circle [radius=0.2];
\draw [thick] (-1,1) circle [radius=0.2];
\draw [thick] (1,1) circle [radius=0.2];
\draw [thick] (-2,0) circle [radius=0.2];
\draw [thick,fill] (0,0) circle [radius=0.2];
\draw [thick,fill] (2,0) circle [radius=0.2];
\draw [thick] (-3,-1) circle [radius=0.2];
\draw [thick] (-1,-1) circle [radius=0.2];
\draw [thick] (1,-1) circle [radius=0.2];
\draw [thick] (3,-1) circle [radius=0.2];
\end{tikzpicture}
\end{center}
Then $\LK(A)=\{[1,1],[2,4]\}$. We will compute $\LK(A)$ using \cref{lem:equiv-def-LK} and show that we obtain this same antichain. For the four elements in the bottom rank, we use the first bulleted item in \cref{lem:equiv-def-LK}. This tells us that only the leftmost element of the bottom row is in $\LK(A)$.

Now we consider the non-minimal elements. We chop off the bottom rank and obtain the following antichain $\overline{A}$ of $\sfa^3$:
\begin{center}
\begin{tikzpicture}[scale=1/3]
\draw[thick] (-0.15, 1.85) -- (-0.85, 1.15);
\draw[thick] (0.15, 1.85) -- (0.85, 1.15);
\draw[thick] (-1.15, 0.85) -- (-1.85, 0.15);
\draw[thick] (-0.85, 0.85) -- (-0.15, 0.15);
\draw[thick] (0.85, 0.85) -- (0.15, 0.15);
\draw[thick] (1.15, 0.85) -- (1.85, 0.15);
\draw [thick] (0,2) circle [radius=0.2];
\draw [thick] (-1,1) circle [radius=0.2];
\draw [thick] (1,1) circle [radius=0.2];
\draw [thick] (-2,0) circle [radius=0.2];
\draw [thick,fill] (0,0) circle [radius=0.2];
\draw [thick,fill] (2,0) circle [radius=0.2];
\end{tikzpicture}
\end{center}
We compute $\LK$ and then inverse rowmotion on this antichain. If we wish to do this without using our earlier descriptions of $\LK$,  then we use \cref{lem:equiv-def-LK} again, considering separately the bottom rank and the other elements.

Continuing in this way, we actually need to begin with just the top rank.  We start with the empty antichain \begin{tikzpicture}[scale=2/3]
\draw [thick] (0,2) circle [radius=0.2];\end{tikzpicture} of the single-element poset $\sfa^1$ and compute $\LK$ and then inverse rowmotion on this. Then we move up to $\sfa^2$ and so on recursively.  This computation is done in \cref{fig:comb}. We indeed obtain $\LK(A)=\{[1,1],[2,4]\}$.
\end{example}

In order to prove \cref{lem:equiv-def-LK}, we use the following description of inverse rowmotion on $\cala( \sfa^n )$ due to Panyushev~\cite{panyushev2009orbits}.

\begin{prop}[{\cite[Proof of Thm.~3.2]{panyushev2009orbits}}]\label{prop:row-inv}
Let $A=\{[i_1,j_1], [i_2,j_2], \cdots, [i_c, j_c]\}$ with $i_1 < i_2 < \cdots < i_c$ be an antichain of $\sfa^n$. We will represent $A$ by a matrix
\[\begin{bmatrix}
i_1 & i_2 & \cdots & i_c\\
j_1 & j_2 & \cdots & j_c
\end{bmatrix}\]
where each column is an element of $A$. Now consider the matrix
\[\begin{bmatrix}
1 & i_1+1 & i_2+1 & \cdots & i_c+1\\
j_1-1 & j_2-1 & \cdots & j_c-1 & n
\end{bmatrix}.\]
There may be some invalid columns consisting of an entry $k$ above an entry $k-1$.\footnote{If $2\leq k \leq n$, this happens exactly when $[k-1,k-1]$ and $[k,k]$ are both in the original antichain~$A$.  If $k=1$, this happens exactly when $[1,1]\in A$.  If $k=n+1$, this happens exactly when $[n,n]\in A$.} Remove any invalid columns, and the remaining matrix corresponds to $\rowA^{-1}(A)$.
\end{prop}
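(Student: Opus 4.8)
The plan is to unwind the definition of antichain rowmotion, describe $\rowA^{-1}(A)$ as an explicit set of maximal elements, and then match that set against the columns of the recipe matrix. Since $\rowA = \down \circ \Theta \circ \up^{-1}$ and $\Theta$ is an involution, we have $\rowA^{-1} = \up \circ \Theta \circ \down^{-1}$. Using the formula for $\down^{-1}$ recalled earlier, $\down^{-1}(A)$ is the order filter generated by $A$, so $\Theta(\down^{-1}(A))$ is the order ideal
\[ I \coloneqq \{\, [a,b] \in \sfa^n \colon [a,b] \not\geq [i_\ell,j_\ell] \text{ for all } \ell \,\}, \]
and $\rowA^{-1}(A) = \up(I)$ is the antichain of maximal elements of $I$. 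Recalling that containment in $\sfa^n$ reads $[a,b] \geq [i_\ell, j_\ell]$ iff $a \leq i_\ell$ and $j_\ell \leq b$, the defining condition of $I$ becomes: for every $\ell$, either $a > i_\ell$ or $b < j_\ell$.

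Next I would produce the candidate maximal elements. Adopting the conventions $i_0 \coloneqq 0$ and $j_{c+1} \coloneqq n+1$, I claim the maximal elements of $I$ are exactly the valid (nonempty) intervals among $[\,i_\ell + 1,\; j_{\ell+1} - 1\,]$ for $\ell = 0, 1, \ldots, c$. These are precisely the columns of the recipe matrix: column $1$ is $[1, j_1 - 1]$, column $\ell+1$ (for $1 \leq \ell \leq c-1$) is $[i_\ell + 1, j_{\ell+1}-1]$, and column $c+1$ is $[i_c + 1, n]$. To verify the claim I would argue three things. First, each such interval lies in $I$: for $m \leq \ell$ one has $i_m \leq i_\ell < i_\ell + 1$, and for $m \geq \ell+1$ one has $j_{\ell+1} - 1 < j_{\ell+1} \leq j_m$, so one of the two disjunctive conditions holds for every $m$. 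Second, every $[a,b] \in I$ sits below one of these candidates: taking $\ell \coloneqq \#\{m : i_m < a\}$, so that $i_\ell < a \leq i_{\ell+1}$, the constraint at $m = \ell+1$ (where $a \leq i_{\ell+1}$ forces $b < j_{\ell+1}$) gives $a \geq i_\ell + 1$ and $b \leq j_{\ell+1} - 1$, i.e.\ $[a,b] \subseteq [i_\ell+1, j_{\ell+1}-1]$. Third, each candidate is genuinely maximal: extending it to the left past $i_\ell + 1$ would violate the condition at $m = \ell$, and extending it to the right past $j_{\ell+1} - 1$ would violate the condition at $m = \ell+1$; the boundary cases $\ell = 0$ and $\ell = c$ are ruled out because $a \geq 1$ and $b \leq n$.

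Finally I would reconcile the ``invalid column'' bookkeeping. The candidate $[i_\ell + 1, j_{\ell+1} - 1]$ fails to be a genuine interval exactly when $i_\ell + 1 > j_{\ell+1} - 1$. Because $A$ is an antichain we have $i_\ell \leq j_\ell < j_{\ell+1}$, hence $i_\ell \leq j_{\ell+1} - 1$, so the only possible failure is the borderline equality $i_\ell + 1 = j_{\ell+1}$, which is exactly a column with top entry $k = j_{\ell+1}$ sitting above bottom entry $k - 1$; the degenerate case $i_\ell \geq j_{\ell+1}$ never arises. Thus ``not a valid interval'' coincides with ``invalid column'' in the sense of the proposition, so discarding the invalid columns discards precisely the empty candidates and leaves the maximal elements of $I$. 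The edge conventions also reproduce the footnote: column $1$ is invalid iff $j_1 = 1$ iff $[1,1] \in A$, and column $c+1$ is invalid iff $i_c = n$ iff $[n,n] \in A$. The main obstacle I anticipate is the maximality analysis in the previous paragraph, since that is where the interaction between the increasing sequences $(i_\ell)$ and $(j_\ell)$ must be handled carefully, including the boundary indices $\ell = 0$ and $\ell = c$; everything else is a direct translation between the containment order of $\sfa^n$ and the two-row matrix encoding.
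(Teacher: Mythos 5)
Your proposal is correct, but there is nothing in the paper to compare it against line by line: the paper does not prove \cref{prop:row-inv} at all, importing it instead from Panyushev's proof of Theorem~3.2 in \cite{panyushev2009orbits} and only illustrating it with an example. Your argument supplies a clean self-contained verification in the paper's own language: you invert $\rowA = \down \circ \Theta \circ \up^{-1}$ to get $\rowA^{-1} = \up \circ \Theta \circ \down^{-1}$, identify $\Theta(\down^{-1}(A))$ as the ideal $I$ of intervals $[a,b]$ satisfying, for each $\ell$, either $a > i_\ell$ or $b < j_\ell$, and then show the maximal elements of $I$ are exactly the nonempty intervals $[i_\ell+1, j_{\ell+1}-1]$ (with $i_0 = 0$, $j_{c+1} = n+1$), which are precisely the columns of the recipe matrix; this is the same explicit-coordinates spirit as Panyushev's original computation. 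I checked the three steps (membership of each candidate in $I$, domination of every element of $I$ by a candidate, and maximality via the two covers $[a-1,b]$ and $[a,b+1]$) and they are all sound, as is the key bookkeeping observation that an antichain forces $i_\ell \leq j_{\ell+1}-1$, so a candidate can only degenerate through the borderline equality $i_\ell + 1 = j_{\ell+1}$ -- exactly a ``$k$ over $k-1$'' column -- and never by a larger overshoot; this is exactly why removing only those columns is the right operation. One cosmetic gloss: in your second step, the phrase ``the constraint at $m = \ell+1$'' is vacuous when $\ell = c$, where instead $b \leq n = j_{c+1}-1$ holds trivially; the conclusion is unaffected, but a one-line case split would make it airtight.
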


\begin{example}
Consider the antichain $A=\{[2,3], [4,4],[5,5]\}\in\cala(\sfa^5)$. We take the matrix $\begin{bmatrix} 2&4&5\\3&4&5\end{bmatrix}$ and transform it to $\begin{bmatrix} 1&3&5&6\\2&3&4&5\end{bmatrix}$ as in \cref{prop:row-inv}. However, the rightmost two columns are both invalid.  Removing these, we see that $\rowA^{-1}(A) = \{[1,2],[3,3]\}$, as shown below.
\begin{center}
\begin{tikzpicture}[scale=1/3]
\draw[thick] (-0.15, 1.85) -- (-0.85, 1.15);
\draw[thick] (0.15, 1.85) -- (0.85, 1.15);
\draw[thick] (-1.15, 0.85) -- (-1.85, 0.15);
\draw[thick] (-0.85, 0.85) -- (-0.15, 0.15);
\draw[thick] (0.85, 0.85) -- (0.15, 0.15);
\draw[thick] (1.15, 0.85) -- (1.85, 0.15);
\draw[thick] (0.85, -0.85) -- (0.15, -0.15);
\draw[thick] (1.15, -0.85) -- (1.85, -0.15);
\draw[thick] (2.85, -0.85) -- (2.15, -0.15);
\draw[thick] (-0.85, -0.85) -- (-0.15, -0.15);
\draw[thick] (-1.15, -0.85) -- (-1.85, -0.15);
\draw[thick] (-2.85, -0.85) -- (-2.15, -0.15);
\draw[thick] (0.15, -1.85) -- (0.85, -1.15);
\draw[thick] (1.85, -1.85) -- (1.15, -1.15);
\draw[thick] (2.15, -1.85) -- (2.85, -1.15);
\draw[thick] (3.85, -1.85) -- (3.15, -1.15);
\draw[thick] (-0.15, -1.85) -- (-0.85, -1.15);
\draw[thick] (-1.85, -1.85) -- (-1.15, -1.15);
\draw[thick] (-2.15, -1.85) -- (-2.85, -1.15);
\draw[thick] (-3.85, -1.85) -- (-3.15, -1.15);
\draw [thick] (0,2) circle [radius=0.2];
\draw [thick] (-1,1) circle [radius=0.2];
\draw [thick] (1,1) circle [radius=0.2];
\draw [thick] (-2,0) circle [radius=0.2];
\draw [thick] (0,0) circle [radius=0.2];
\draw [thick] (2,0) circle [radius=0.2];
\draw [thick] (-3,-1) circle [radius=0.2];
\draw [thick,fill] (-1,-1) circle [radius=0.2];
\draw [thick] (1,-1) circle [radius=0.2];
\draw [thick] (3,-1) circle [radius=0.2];
\draw [thick] (-4,-2) circle [radius=0.2];
\draw [thick] (-2,-2) circle [radius=0.2];
\draw [thick] (0,-2) circle [radius=0.2];
\draw [thick,fill] (2,-2) circle [radius=0.2];
\draw [thick,fill] (4,-2) circle [radius=0.2];
\node[above] at (7,0) {$\rowA^{-1}$};
\draw[ultra thick, ->] (5,0) -- (9,0);
\begin{scope}[shift={(14,0)}]
\draw[thick] (-0.15, 1.85) -- (-0.85, 1.15);
\draw[thick] (0.15, 1.85) -- (0.85, 1.15);
\draw[thick] (-1.15, 0.85) -- (-1.85, 0.15);
\draw[thick] (-0.85, 0.85) -- (-0.15, 0.15);
\draw[thick] (0.85, 0.85) -- (0.15, 0.15);
\draw[thick] (1.15, 0.85) -- (1.85, 0.15);
\draw[thick] (0.85, -0.85) -- (0.15, -0.15);
\draw[thick] (1.15, -0.85) -- (1.85, -0.15);
\draw[thick] (2.85, -0.85) -- (2.15, -0.15);
\draw[thick] (-0.85, -0.85) -- (-0.15, -0.15);
\draw[thick] (-1.15, -0.85) -- (-1.85, -0.15);
\draw[thick] (-2.85, -0.85) -- (-2.15, -0.15);
\draw[thick] (0.15, -1.85) -- (0.85, -1.15);
\draw[thick] (1.85, -1.85) -- (1.15, -1.15);
\draw[thick] (2.15, -1.85) -- (2.85, -1.15);
\draw[thick] (3.85, -1.85) -- (3.15, -1.15);
\draw[thick] (-0.15, -1.85) -- (-0.85, -1.15);
\draw[thick] (-1.85, -1.85) -- (-1.15, -1.15);
\draw[thick] (-2.15, -1.85) -- (-2.85, -1.15);
\draw[thick] (-3.85, -1.85) -- (-3.15, -1.15);
\draw [thick] (0,2) circle [radius=0.2];
\draw [thick] (-1,1) circle [radius=0.2];
\draw [thick] (1,1) circle [radius=0.2];
\draw [thick] (-2,0) circle [radius=0.2];
\draw [thick] (0,0) circle [radius=0.2];
\draw [thick] (2,0) circle [radius=0.2];
\draw [thick,fill] (-3,-1) circle [radius=0.2];
\draw [thick] (-1,-1) circle [radius=0.2];
\draw [thick] (1,-1) circle [radius=0.2];
\draw [thick] (3,-1) circle [radius=0.2];
\draw [thick] (-4,-2) circle [radius=0.2];
\draw [thick] (-2,-2) circle [radius=0.2];
\draw [thick,fill] (0,-2) circle [radius=0.2];
\draw [thick] (2,-2) circle [radius=0.2];
\draw [thick] (4,-2) circle [radius=0.2];
\end{scope}
\end{tikzpicture}
\end{center}
\end{example}

\begin{proof}[Proof of \cref{lem:equiv-def-LK}]
Let $A=\{[i_1,j_1], [i_2,j_2], \cdots, [i_c, j_c]\}\in\cala(\sfa^n)$ and $[i,j]\in \sfa^n$. We will first prove the first bulleted item.  Let $[i,j]$ be a minimal element of $\sfa^n$.  So $i=j$. Suppose there is no $v\in A$ with $v\geq [i,i]$. The elements of $\sfa^n$ that are $\geq [i,i]$ are those of the form $[k,\ell]$ with $k\leq i\leq \ell$, so $A$ contains no such element. Thus, there exists $h$ for which $i_s, j_s < i$ for all $s\leq h$, and $i_s, j_s >i$ for all $s\geq h+1$. Now consider $\LK(A)$. We have
\begin{align*}
    \{i'_1 < i'_2 < \cdots < i'_m\} &= [n] \setminus  \{j_1,j_2,\dots,j_c\},\\
    \{j'_1 < j'_2 < \cdots < j'_m\} &= [n] \setminus  \{i_1,i_2,\dots,i_c\}.
\end{align*}
So $i'_s, j'_s<i$ for all $s\leq i-1-h$, $i'_s, j'_s>i$ for all $s\geq i+1-h$, and $i'_{i-h}=j'_{i-h}=i$. So $[i,i]\in \LK(A)$.

On the other hand, suppose $[i,i]\in\LK(A)$. Then there is some $h\in[c]$ such that  $i'_h = j'_h = i$. So $i'_s,j'_s<i$ for all $s<h$ and $i'_s,j'_s>i$ for all $s>h$. Therefore, $i_s,j_s<i$ for all $s\leq i-h$ and $i_s,j_s>i$ for all $s> i-h$. This means that $A$ cannot contain any $[k,\ell]$ with $k\leq i\leq \ell$; these are exactly the elements that are $\geq[i,i]$ in~$\sfa^n$.

Now let us consider elements of $\sfa^n$ that are not minimal; these have the form $[i,j]$ with $j>i$. Name the elements $\overline{A}$ as $[\ibar_1, \jbar_1], [\ibar_2, \jbar_2], \cdots [\ibar_k, \jbar_k]$. Due to the shift in indexing between $\sfa^{n-1}$ and the subposet of $\sfa^n$ containing all non-minimal elements, these are the intervals of the form $[i_h,j_h - 1]$ where $[i_h,j_h]\in A$ with $i_h\not= j_h$. We can represent this as the columns of the matrix
\[\begin{bmatrix}
\ibar_1 & \ibar_2 & \cdots & \ibar_k\\
\jbar_1 & \jbar_2 & \cdots & \jbar_k
\end{bmatrix}.\]
Now $\LK_{\sfa^{n-1}}(\overline{A})$ can be represented by a matrix, where we take the matrix for $\overline{A}$ and we complement the bottom (resp.~top) row from $[n-1]$ and make it the new top (resp.~bottom) row, with each row's elements listed in increasing order. We can write this new matrix as
\[M= \begin{bmatrix}
\ibar'_1 & \ibar'_2 & \cdots & \ibar'_\ell\\
\jbar'_1 & \jbar'_2 & \cdots & \jbar'_\ell
\end{bmatrix}.\]
The top row $\{\ibar'_1, \ibar'_2, \cdots, \ibar'_\ell\}$ of $M$ contains each element $i'_h-1$ except if 1 is some $i'_h$ then it does not contain $1-1=0$.  The bottom row $\{\jbar'_1, \jbar'_2, \cdots, \jbar'_\ell\}$ of $M$ contains each element $j'_h$ except $n$ if $n$ is some $j'_h$. However, since $[\ibar_1, \jbar_1], [\ibar_2, \jbar_2], \cdots [\ibar_k, \jbar_k]$ does not necessarily contain \emph{all} $[i_h,j_h - 1]$, just the ones for which $i_h\not= j_h$, the top row of $M$ can contain extra elements in addition to all $i'_h-1$, and similarly the bottom row of $M$ can contain extra elements that are not $j'_h$.  These extra elements come in pairs $s-1,s$ with an $s-1$ in the top row one spot northwest of an $s$. Thus we apply antichain rowmotion, to $\LK_{\sfa^{n-1}}(\overline{A})$; by \cref{prop:row-inv} we delete the invalid columns of the matrix
\[N= \begin{bmatrix}
1 & \ibar'_1+1 & \ibar'_2+1 & \cdots & \ibar'_\ell+1\\
\jbar'_1-1 & \jbar'_2-1 & \cdots & \jbar'_\ell-1 & n-1
\end{bmatrix}\]
to get $\rowA( \LK_{\sfa^{n-1}}(\overline{A}) )$. All of the ``extra'' elements mentioned previously lie in these deleted columns. We see that, if $i\not=j$, then $[i,j]\in\LK(A)$ if and only if~$[i,j-1]$ is not a deleted column of $N$.
\end{proof}

\begin{thm}\label{thm:lk_is_rvac}
For every $A\in \cala(\sfa^n)$, $\LK(A) = \rvacA(A)$.
\end{thm}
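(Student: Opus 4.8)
The plan is to prove the theorem by induction on $n$, exploiting the fact that \cref{lem:equiv-def-LK} and \cref{lem:rvac_ind} furnish \emph{identical} recursions for $\LK$ and for $\rvacA$ on the poset $\sfa^n$. Indeed, specializing \cref{lem:rvac_ind} to $\sfp = \sfa^n$ shows that, for $p\in\sfa^n_0$, we have $p\in\rvacA(A)$ iff $p\in\bftau_0(A)$, and for $p\in\sfa^n_{\geq 1}$, we have $p\in\rvacA(A)$ iff $p\in\rowA^{-1}\circ\rvacA(\overline{A})$, where $\overline{A}\coloneqq A\cap\sfa^n_{\geq 1}$. This is word-for-word the same pair of conditions that \cref{lem:equiv-def-LK} gives for membership in $\LK(A)$. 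Since the two maps agree on antichains of $\sfa^n$ precisely when they put the same elements into the output, it suffices to check that they satisfy a common recursion and agree in the base case.

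First I would dispose of the base case $n=1$: here $\sfa^1$ is a single element $[1,1]$ of rank $0$, its antichains are $\emptyset$ and $\{[1,1]\}$, and one checks directly from the definitions that $\LK$ and $\rvacA=\bftau_0$ coincide on both. For the inductive step, assume $\LK(B)=\rvacA(B)$ for every $B\in\cala(\sfa^{n-1})$, and fix $A\in\cala(\sfa^n)$. For $p\in\sfa^n_0$, the first bullets of \cref{lem:equiv-def-LK} and \cref{lem:rvac_ind} both say $p\in\LK(A)$, respectively $p\in\rvacA(A)$, iff $p\in\bftau_0(A)$, so the two agree on the bottom rank. For $p\in\sfa^n_{\geq 1}\simeq\sfa^{n-1}$, the second bullets say $p\in\LK(A)$ iff $p\in\rowA^{-1}\circ\LK(\overline{A})$, and $p\in\rvacA(A)$ iff $p\in\rowA^{-1}\circ\rvacA(\overline{A})$, where in both cases $\overline{A}=A\cap\sfa^n_{\geq 1}$ is regarded as an antichain of $\sfa^{n-1}$ via the isomorphism $\sfa^n_{\geq 1}\simeq\sfa^{n-1}$. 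By the inductive hypothesis $\LK(\overline{A})=\rvacA(\overline{A})$, and applying the \emph{same} operator $\rowA^{-1}$ on $\cala(\sfa^{n-1})$ to equal antichains yields equal antichains. Hence the two membership conditions coincide for $p\in\sfa^n_{\geq 1}$ as well, and we conclude $\LK(A)=\rvacA(A)$.

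The one point demanding care is the bookkeeping of the isomorphism $\sfa^n_{\geq 1}\simeq\sfa^{n-1}$: I must verify that the inverse rowmotion appearing in \cref{lem:equiv-def-LK} is literally $\rowA^{-1}$ on $\cala(\sfa^{n-1})$ (which it is, since rowmotion is defined intrinsically on any poset and is transported along the isomorphism), so that the inductive hypothesis can be fed through it. Beyond this, the argument is essentially formal. The genuine content of the theorem lies in \cref{lem:equiv-def-LK}, whose proof (using Panyushev's matrix description of inverse rowmotion in \cref{prop:row-inv}) is where the real combinatorial work was done; once that recursion is established, the present theorem is a clean two-line induction matching it against \cref{lem:rvac_ind}. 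Thus I do not anticipate a substantive obstacle here, only the need to state the base case and the equivariance of $\rowA^{-1}$ under the rank-shift isomorphism explicitly.
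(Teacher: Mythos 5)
Your proposal is correct and takes essentially the same route as the paper: the paper's proof of this theorem is the single sentence that it ``follows immediately from \cref{lem:rvac_ind} and \cref{lem:equiv-def-LK},'' and your induction on $n$ (base case $\sfa^1$, inductive step via the isomorphism $\sfa^n_{\geq 1}\simeq\sfa^{n-1}$) is precisely the formalization of that immediate deduction.
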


\begin{proof}
This follows immediately from \cref{lem:rvac_ind} and \cref{lem:equiv-def-LK}.
\end{proof}

\begin{remark}
From \cref{thm:lk_is_rvac,prop:rvac-OI-ant} it follows that $\lk \circ \rowA = \rowA^{-1} \circ \lk$. This was proved earlier by Panyushev~\cite[Thm.~3.5]{panyushev2009orbits}.
\end{remark}

So now we have natural candidates for the PL and birational extensions of the Lalanne--Kreweras involution: PL and birational rowvacuation of $\sfa^n$.

\begin{definition}
The \dfn{PL Lalanne--Kreweras involution} is 
\[\lkpl \coloneqq \rvacApl\colon \calapl(\sfa^n) \to \calapl(\sfa^n).\] 
\end{definition}

\begin{definition}
The \dfn{birational Lalanne--Kreweras involution} is 
\[\lkb \coloneqq \rvacAb\colon \calab(\sfa^n) \to \calab(\sfa^n).\]
\end{definition}

From the general properties of rowvacuation we explained in \cref{sec:basics}, we know that $\lkpl$ and $\lkb$ are involutions. By their construction as compositions of toggles, $\lkb$ tropicalizes to $\lkpl$, and (when $\kappa=1$) $\lkpl$ preserves the chain polytope $\calc(\sfa^n)$ and restricts to $\lk$ on the vertices of $\calc(\sfa^n)$.\footnote{In fact, $\lkpl$ restricts to an action on the finite set $\frac{1}{m}\zz^{\sfa^n} \cap \calc(\sfa^n)$ for all $m \in \zz_{>0}$; see also \cref{rem:pl_fixed_points}.}  So we have already established much of \cref{thm:main_intro}; what remains to do is to establish the second bulleted item in that theorem (i.e., the homomesy properties of $\lkb$), which we do in \cref{sec:homomesies}.

\begin{figure}
\begin{center}
\begin{tikzpicture}
\begin{scope}[shift={(0,1.5)}]
\node at (-3,1) {$\sfa^1$:};
\node at (-1,1) {$  z  $};
\draw[ultra thick, ->] (0.6,1) -- (1.8,1);
\node[above] at (1.2,1) {$\lkb$};
\end{scope}
\begin{scope}[shift={(3.4,1)}]
\node at (0,1.5) {$  \frac{\kappa}{z} $};
\draw[ultra thick, ->] (1.6,1.5) -- (2.8,1.5);
\node[above] at (2.2,1.5) {$(\rowAb)^{-1}$};
\end{scope}
\begin{scope}[shift={(7.8,1)}]
\draw[red,thick] (-0.3,1.2) -- (-0.3,1.8) -- (0.3,1.8) -- (0.3,1.2) -- (-0.3,1.2);
\node at (0,1.5) {$  z  $};
\end{scope}
\begin{scope}
\node at (-3,1) {$\sfa^2$:};
\draw[thick] (-1.3, 1.2) -- (-1.7, 0.8);
\draw[thick] (-0.7, 1.2) -- (-0.3, 0.8);
\node at (-1,1.5) {$  z  $};
\node at (-2,0.5) {$  x  $};
\node at (0,0.5) {$  y  $};
\draw[ultra thick, ->] (0.6,1) -- (1.8,1);
\node[above] at (1.2,1) {$\lkb$};
\end{scope}
\begin{scope}[shift={(3.4,-0.5)}]
\draw[thick] (-0.3, 1.7) -- (-0.7, 1.3);
\draw[thick] (0.3, 1.7) -- (0.7, 1.3);
\node at (0,2) {$  z  $};
\node at (-1,1) {$  \frac{\kappa}{xz}  $};
\node at (1,1) {$  \frac{\kappa}{yz}  $};
\draw[red,thick] (-0.3,1.7) -- (-0.3,2.3) -- (0.3,2.3) -- (0.3,1.7) -- (-0.3,1.7);
\draw[ultra thick, ->] (1.6,1.5) -- (2.8,1.5);
\node[above] at (2.2,1.5) {$(\rowAb)^{-1}$};
\end{scope}
\begin{scope}[shift={(7.8,-0.5)}]
\draw[thick] (-0.3, 1.7) -- (-0.7, 1.3);
\draw[thick] (0.3, 1.7) -- (0.7, 1.3);
\node at (0,2) {$  \frac{xy}{x+y}  $};
\node at (-1,1) {$  \frac{(x+y)z}{y} $};
\node at (1,1) {$  \frac{(x+y)z}{x}  $};
\draw[green,thick] (-2.2,0.6) -- (0,2.6) -- (2.2, 0.6) -- (-2.2,0.6);
\end{scope}
\node at (-3,-2) {$\sfa^3$:};
\begin{scope}[shift={(-0.4,-3)},xscale=5/6]
\draw[thick] (-0.3, 1.7) -- (-0.7, 1.3);
\draw[thick] (0.3, 1.7) -- (0.7, 1.3);
\draw[thick] (-1.3, 0.7) -- (-1.7, 0.3);
\draw[thick] (-0.7, 0.7) -- (-0.3, 0.3);
\draw[thick] (0.7, 0.7) -- (0.3, 0.3);
\draw[thick] (1.3, 0.7) -- (1.7, 0.3);
\node at (0,2) {$  z  $};
\node at (-1,1) {$  x  $};
\node at (1,1) {$  y  $};
\node at (-2,0) {$  u  $};
\node at (0,0) {$  v  $};
\node at (2,0) {$  w  $};
\end{scope}
\draw[ultra thick, ->] (1.3,-1.7) -- (2.5,-1.7);
\node[above] at (1.9,-1.7) {$\lkb$};
\begin{scope}[shift={(5,-3)},xscale=5/6]
\draw[thick] (-0.3, 1.7) -- (-0.7, 1.3);
\draw[thick] (0.3, 1.7) -- (0.7, 1.3);
\draw[thick] (-1.3, 0.7) -- (-1.7, 0.3);
\draw[thick] (-0.7, 0.7) -- (-0.3, 0.3);
\draw[thick] (0.7, 0.7) -- (0.3, 0.3);
\draw[thick] (1.3, 0.7) -- (1.7, 0.3);
\draw[green,thick] (-2.5,0.6) -- (0,2.6) -- (2.5, 0.6) -- (-2.5,0.6);
\node at (0,2) {$  \frac{xy}{x+y}  $};
\node at (-1,1) {$  \frac{(x+y)z}{y} $};
\node at (1,1) {$  \frac{(x+y)z}{x}  $};
\node at (-2,0) {$  \frac{\kappa}{uxz}  $};
\node at (0,0) {$  \frac{\kappa}{v(x+y)z} $};
\node at (2,0) {$  \frac{\kappa}{wyz}  $};
\end{scope}
\draw[ultra thick, ->] (5,-3.55) -- (5,-4.45);
\node[left] at (5,-4) {$(\rowAb)^{-1}$};
\begin{scope}[shift={(5,-7.6)},xscale=5/3]
\draw[thick] (-0.3, 1.7) -- (-0.7, 1.3);
\draw[thick] (0.3, 1.7) -- (0.7, 1.3);
\draw[thick] (-1.3, 0.7) -- (-1.7, 0.3);
\draw[thick] (-0.7, 0.7) -- (-0.3, 0.3);
\draw[thick] (0.7, 0.7) -- (0.3, 0.3);
\draw[thick] (1.3, 0.7) -- (1.7, 0.3);
\draw[blue,thick] (-3.1,-0.3) -- (0,3) -- (3.1,-0.3) -- (-3.1,-0.3);
\node at (0,2) {$  \frac{uvw}{uv+uw+vw}  $};
\node at (-1,1) {$  \frac{(uv+uw+vw)xy}{w(ux+vx+vy)} $};
\node at (1,1) {$  \frac{(uv+uw+vw)xy}{u(vx+vy+wy)}  $};
\node at (-2,0) {$  \frac{(ux+vx+vy)z}{vy}  $};
\node at (0,0) {$  \frac{(ux+vx+vy)(vx+vy+wy)z}{(uv+uw+vw)xy} $};
\node at (2,0) {$  \frac{(vx+vy+wy)z}{vx}  $};
\end{scope}
\node at (-3,-9) {$\sfa^4$:};
\begin{scope}[shift={(3,-9.6)},xscale=5/6,yscale=2/3]
\draw[thick] (-0.3, 1.7) -- (-0.7, 1.3);
\draw[thick] (0.3, 1.7) -- (0.7, 1.3);
\draw[thick] (-1.3, 0.7) -- (-1.7, 0.3);
\draw[thick] (-0.7, 0.7) -- (-0.3, 0.3);
\draw[thick] (0.7, 0.7) -- (0.3, 0.3);
\draw[thick] (1.3, 0.7) -- (1.7, 0.3);
\draw[thick] (0.7, -0.7) -- (0.3, -0.3);
\draw[thick] (1.3, -0.7) -- (1.7, -0.3);
\draw[thick] (2.7, -0.7) -- (2.3, -0.3);
\draw[thick] (-0.7, -0.7) -- (-0.3, -0.3);
\draw[thick] (-1.3, -0.7) -- (-1.7, -0.3);
\draw[thick] (-2.7, -0.7) -- (-2.3, -0.3);
\node at (0,2) {$  z  $};
\node at (-1,1) {$  x  $};
\node at (1,1) {$  y  $};
\node at (-2,0) {$  u  $};
\node at (0,0) {$  v  $};
\node at (2,0) {$  w  $};
\node at (-3,-1) {$  q  $};
\node at (-1,-1) {$  r  $};
\node at (1,-1) {$  s  $};
\node at (3,-1) {$  t  $};
\end{scope}
\draw[ultra thick, ->] (3,-10.65) -- (3,-11.35);
\node[left] at (3,-10.9) {$\lkb$};
\begin{scope}[shift={(3,-14.5)},xscale=5/3]
\draw[thick] (-0.3, 1.7) -- (-0.7, 1.3);
\draw[thick] (0.3, 1.7) -- (0.7, 1.3);
\draw[thick] (-1.3, 0.7) -- (-1.7, 0.3);
\draw[thick] (-0.7, 0.7) -- (-0.3, 0.3);
\draw[thick] (0.7, 0.7) -- (0.3, 0.3);
\draw[thick] (1.3, 0.7) -- (1.7, 0.3);
\draw[thick] (0.7, -0.7) -- (0.3, -0.3);
\draw[thick] (1.3, -0.7) -- (1.7, -0.3);
\draw[thick] (2.7, -0.7) -- (2.3, -0.3);
\draw[thick] (-0.7, -0.7) -- (-0.3, -0.3);
\draw[thick] (-1.3, -0.7) -- (-1.7, -0.3);
\draw[thick] (-2.7, -0.7) -- (-2.3, -0.3);
\draw[blue,thick] (-3.1,-0.3) -- (0,3) -- (3.1,-0.3) -- (-3.1,-0.3);
\node at (0,2) {$  \frac{uvw}{uv+uw+vw}  $};
\node at (-1,1) {$  \frac{(uv+uw+vw)xy}{w(ux+vx+vy)} $};
\node at (1,1) {$  \frac{(uv+uw+vw)xy}{u(vx+vy+wy)}  $};
\node at (-2,0) {$  \frac{(ux+vx+vy)z}{vy}  $};
\node at (0,0) {$  \frac{(ux+vx+vy)(vx+vy+wy)z}{(uv+uw+vw)xy} $};
\node at (2,0) {$  \frac{(vx+vy+wy)z}{vx}  $};
\node at (-3,-1) {$  \frac{\kappa}{quxz}  $};
\node at (-1,-1) {$  \frac{\kappa}{r(ux+vx+vy)z}  $};
\node at (1,-1) {$  \frac{\kappa}{s(vx+vy+wy)z}  $};
\node at (3,-1) {$  \frac{\kappa}{twyz}  $};
\end{scope}
\end{tikzpicture}
\end{center}
\caption{$\lkb$ for posets up to $\sfa^4$.}
\label{fig:birational}
\end{figure}

We end this section by giving an example, like \cref{ex:no-draw-Dyck}, of how the recursive descriptions of antichain rowvacuation can be used to compute $\lkb$: in \cref{fig:birational} we illustrate the use of \cref{lem:b_rvac_ind} to compute $\lkb$ on a generic labeling of $\sfa^4$; notice that along the way we compute how $\lkb$ acts on a generic labeling of $\sfa^1$, $\sfa^2$, and $\sfa^3$ as well.

\section{Homomesies for the Lalanne--Kreweras involution} \label{sec:homomesies}

In this section we prove the homomesy results for the piecewise-linear and birational Lalanne--Kreweras involution. For conciseness we work exclusively at the birational level.

Recall from \cref{sec:intro} the statistics $\h_i \colon \cala(\sfa^n)\to \rr$, for $1 \leq i \leq n$, given by
\[\h_i(A) \coloneqq \#\{j\colon [i,j] \in A\} + \#\{j\colon [j,i]\in A\}.\]
And recall that the antichain cardinality and major index statistics are linear combinations of the~$\h_i$:
\[ \#A = \frac{1}{2}(\h_1(A) + \h_2(A) + \cdots + \h_n(A)); \quad \quad \maj(A) = \h_1(A) + 2\h_2(A) + \cdots + n\h_n(A).\]
Any linear combination of homomesies is again a homomesy; so in terms of understanding the homomesies of $\lk$ it suffices to concentrate on the $\h_i$.

We define the birational analogs $\hb_i\colon\calab(\sfa^n)\to \rr$, for $1\leq i \leq n$, to be
\[\hb_i(\pi) \coloneqq \prod_{i\leq j \leq n} \pi([i,j]) \cdot \prod_{1 \leq j \leq i} \pi([j,i]).\]
In exactly the same way that antichain cardinality and major index are linear combinations of the $\h_i$ at the combinatorial level, the birational analogs of antichain cardinality and major index (i.e., the statistics appearing in the second bulleted item in \cref{thm:main_intro}) are multiplicative combinations of the $\hb_i$.

The main result we prove in this section is:

\begin{thm} \label{thm:homo}
The statistics $\hb_i$ are all multiplicatively $\kappa$-mesic with respect to the action of $\lkb$ on $\calab_\kappa(\sfa^n)$. 
\end{thm}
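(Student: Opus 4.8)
The plan is to exploit the fact that $\lkb=\rvacAb$ is an involution (\cref{prop:b_dihedral,prop:b_com_diagram}), so that every $\lkb$-orbit has size one or two and multiplicative $\kappa$-mesy of $\hb_i$ amounts to the single identity $\hb_i(\pi)\cdot\hb_i(\lkb\pi)=\kappa^2$ for all $\pi\in\calab_\kappa(\sfa^n)$ (a fixed point forces $\hb_i(\pi)=\kappa$). The first move is to rewrite the second factor using the principle that rowvacuation reads off the rowmotion orbit rank-by-rank. Concretely, the antichain form of \cref{prop:b_row_iterates}, obtained by transporting along the $\down$-square of \cref{prop:b_com_diagram}, governs the value $(\rvacAb\pi)(p)$ for an element $p$ of rank $k$ in terms of the iterate $(\rowAb)^{k+1}\pi$. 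Since the elements of $\sfa^n$ meeting the index $i$ are exactly $[i,i+k]$ and $[i-k,i]$, each of rank $k$, this expresses $\hb_i(\lkb\pi)$—and hence the whole product $\hb_i(\pi)\hb_i(\lkb\pi)$—as an explicit product of values of particular birational rowmotion iterates of $\pi$ on $\sfa^n$.

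The crux is then to evaluate this specific combination of iterate-values. I emphasize that ordinary homomesy of $\hb_i$ under $\rowAb$ would \emph{not} suffice here: the rowvacuation-reading produces a product over designated iterates $(\rowAb)^{k+1}\pi$, not an average over a full $\rowAb$-orbit, so what is needed is exact control of each iterate, not just its orbit average. This exact control comes from the rectangle. I would apply the Grinberg--Roby birational-rowmotion-equivariant embedding $\rr_{>0}^{\sfa^n}\hookrightarrow\rr_{>0}^{\rect{n+1}{n+1}}$, which realizes the triangle as half of a product of two chains and intertwines $\rowAb$ of $\sfa^n$ with $\rowAb$ of $\rect{n+1}{n+1}$. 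Under this embedding I would translate each factor $\pi([i,j])$ in $\hb_i$ into the corresponding cell value of the rectangle, so that $\hb_i$ pulls back to a product of entries lying along a fixed line of cells of $\rect{n+1}{n+1}$, with the two families $[i,j]$ and $[j,i]$ accounting for the two halves of that line and the doubled appearance of $[i,i]$ reflecting its position.

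The final ingredient is the theorem of Joseph and Roby that the Stanley--Thomas word of $\rect{n+1}{n+1}$ \emph{rotates} (cyclically shifts) under birational rowmotion. Because the word merely rotates, its entries are tracked exactly across all iterates, so the product of designated iterate-values coming from the rowvacuation-reading can be computed explicitly; combined with the factor $\hb_i(\pi)$ it sweeps out each Stanley--Thomas entry once, collapsing to the (rowmotion-invariant) total product of the word, which I expect to evaluate to $\kappa^2$. This gives $\hb_i(\pi)\hb_i(\lkb\pi)=\kappa^2$ and hence \cref{thm:homo}; the piecewise-linear and combinatorial statements then follow by tropicalization and specialization to $\calc(\sfa^n)$.

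The main obstacle will be the bookkeeping in the middle two steps. I must match the intervals of $\sfa^n$ meeting a given index $i$ with the correct cells of $\rect{n+1}{n+1}$ under the Grinberg--Roby embedding, verify that $\hb_i$ really pulls back to a product of Stanley--Thomas word entries, and check that the reflection that $\rvacAb$ induces on the triangle corresponds to the reversal pairing each word-position with its complement, so that $\hb_i(\pi)$ and $\hb_i(\lkb\pi)$ together cover each entry exactly once. Pinning the constant down to precisely $\kappa^2$ rather than some other power of $\kappa$—including the correct treatment of the doubled diagonal entry $[i,i]$ and of the boundary values $\hatz,\hato$ inserted by the birational maps—is where the delicacy lies.
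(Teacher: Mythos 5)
Your overall architecture matches the paper's: reduce to the single identity $\hb_i(\pi)\cdot\hb_i(\lkb\pi)=\kappa^2$, pass to the rectangle via the Grinberg--Roby embedding, and invoke the Joseph--Roby rotation of the Stanley--Thomas word. But your first concrete step rests on a false lemma. There is no ``antichain form of \cref{prop:b_row_iterates}'': transporting along the $\down$-square of \cref{prop:b_com_diagram} gives $(\rvacAb\down\pi)(p)=(\down\rvacFb\pi)(p)=\frac{(\rvacFb\pi)(p)}{\sum_{q\lessdot p}(\rvacFb\pi)(q)}$, and for $p$ of rank $k$ the numerator is a value of $(\rowFb)^{k+1}\pi$ while the denominator involves values of $(\rowFb)^{k}\pi$ at rank $k-1$. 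So $(\rvacAb\sigma)(p)\neq((\rowAb)^{k+1}\sigma)(p)$ in general. Already combinatorially this fails: in $\cala(\sfa^2)$ with $A=\{[1,2]\}$ one has $[1,2]\in\rvacA(A)=A$ but $\rowA^{2}(A)=\{[1,1],[2,2]\}\not\ni[1,2]$. Moreover, even granting some expression for each factor, the product of $\down$-transfers over the hook $\{[i,j]\}_{j\geq i}\cup\{[j,i]\}_{j\leq i}$ does not telescope, because each $[i,j]$ covers two elements, only one of which lies in the hook. So the bridge from $\hb_i(\lkb\pi)$ to the rectangle is genuinely missing from your plan, not mere bookkeeping.

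The paper closes exactly this gap differently: it never reads rowvacuation off the rowmotion orbit at the antichain level. Instead the embedding $\iab$ is defined at the \emph{order-filter} level so that the lower half of the rectangle is literally an upside-down copy of $\kappa\cdot(\rvacFb\pi)^{-1}$, and the technical heart of the argument (\cref{prop:down_technical}) is a direct computation showing that, although the entries of $\down\iab\pi$ on the lower half are \emph{not} the entries of $\down\rvacFb\pi$, their products along the relevant diagonals agree up to an explicit factor of $2$; combined with the matching statement for the upper half, a single row-product of $\down\iab\pi$ equals $2\prod_j\pi([i,j])\prod_j\lkb\pi([j,i])$. One further imprecision in your sketch: what is used is not that the ``total product of the word'' is invariant, but that for labelings in the image of $\iab$ the Stanley--Thomas word is \emph{constant} (each entry equals $2\kappa$, from rotation plus $(\rowAb)^{n+1}=\flip$ via \cref{prop:st_flip}); the identity $4\hb_i(\pi)\hb_i(\lkb\pi)=4\kappa^2$ comes from two individual entries, not from a product over all of them. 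To repair your proof you would need to supply an analogue of \cref{prop:down_technical} (or an equivalent telescoping computation), at which point you would essentially have reconstructed the paper's argument.
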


\begin{example}
Consider the generic labeling $\pi\in \calab(\sfa^3)$ shown in Figure~\ref{fig:lk_example}.
As seen in that figure,
\[h_2(\pi) \cdot h_2\big(\lkb(\pi)\big) =
v^2 x y \left( \frac{\kappa}{vz(x+y)} \right)^2 \cdot \frac{z(x+y)}{y} \cdot \frac{z(x+y)}{x}=\kappa^2.\]
\end{example}

Since, as just mentioned, the birational analogs of antichain cardinality and major index are multiplicative combinations of the $\hb_i$, their homomesy under $\lkb$ follows from \cref{thm:homo}. Hence, when we prove \cref{thm:homo} we will have completed the proof of \cref{thm:main_intro}.

As mentioned in \cref{sec:intro}, the combinatorial version of \cref{thm:homo} is easy to see from the definition of $\lk$ we gave in that section. However, we do not know of any straightforward way to see the birational version of \cref{thm:homo}. Our proof will use many intermediary results, including significant results proved in other papers.

Specifically, we will prove \cref{thm:homo} by applying a lot of the machinery that has been developed over the past several years to understand birational rowmotion. In fact, we will mostly employ results proved for rowmotion not on the poset~$\sfa^n$, but on the \dfn{rectangle} poset (also known as the product of two chains) $\rect{a}{b}$. The elements of~$\rect{a}{b}$ are ordered pairs $(i,j)$ for $1 \leq i \leq a$, $1 \leq j \leq b$, with the usual partial order $(i,j) \leq (i',j')$ if $i\leq i'$ and $j \leq j'$. (Observe how we use $(i,j)$ for elements of~$\rect{a}{b}$ but $[i,j]$ for elements of $\sfa^n$.) The poset $\rect{a}{b}$ is graded of rank $a+b-2$, with $\rk(i,j) = i+j-2$. See \cref{fig:rectangle} for a depiction of the rectangle, and in particular contrast its coordinate system with that of $\sfa^n$, which we saw in \cref{fig:dyck_bij}.

\begin{figure}
\begin{tikzpicture}[scale=2/3]
\draw[thick] (-0.3, 1.7) -- (-0.7, 1.3);
\draw[thick] (0.3, 1.7) -- (0.7, 1.3);
\draw[thick] (-1.3, 0.7) -- (-1.7, 0.3);
\draw[thick] (-0.7, 0.7) -- (-0.3, 0.3);
\draw[thick] (0.7, 0.7) -- (0.3, 0.3);
\draw[thick] (1.3, 0.7) -- (1.7, 0.3);
\draw[thick] (0.7, -0.7) -- (0.3, -0.3);
\draw[thick] (1.3, -0.7) -- (1.7, -0.3);
\draw[thick] (-0.7, -0.7) -- (-0.3, -0.3);
\draw[thick] (-1.3, -0.7) -- (-1.7, -0.3);
\draw[thick] (0.7, -1.3) -- (0.3, -1.7);
\draw[thick] (-0.7, -1.3) -- (-0.3, -1.7);
\node at (0,2) {\small $(3,3)$};
\node at (-1,1) {\small $(3,2)$};
\node at (1,1) {\small $(2,3)$};
\node at (-2,0) {\small $(3,1)$};
\node at (0,0) {\small $(2,2)$};
\node at (2,0) {\small $(1,3)$};
\node at (-1,-1) {\small $(2,1)$};
\node at (1,-1) {\small $(1,2)$};
\node at (0,-2) {\small $(1,1)$};
\end{tikzpicture}
\caption{How we draw $\rect{3}{3}$: note in particular the coordinates.} \label{fig:rectangle}
\end{figure}

The rectangle is the poset that has received the most attention with respect to rowmotion. Grinberg and Roby~\cite{grinberg2015birational2} proved the following remarkable ``reciprocity theorem'' concerning birational rowmotion of $\rect{a}{b}$:

\begin{thm}[{Grinberg and Roby~\cite[Thm.~32]{grinberg2015birational2}}] \label{thm:reciprocity}
For $\pi \in \calfb_{\kappa}(\rect{a}{b})$, we have
\[ \pi(a+1-i,b+1-j) = \frac{\kappa}{\big((\rowFb)^{i+j-1}\pi\big)(i,j)} \]
for all $(i,j)\in\rect{a}{b}$.
\end{thm}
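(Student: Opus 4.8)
The plan is to first recognize that the quantity $(\rowFb)^{i+j-1}\pi$ evaluated at $(i,j)$ is nothing but birational order filter rowvacuation. Indeed, since $(i,j)\in\rect{a}{b}$ has rank $\rk(i,j)=i+j-2$, \cref{prop:b_row_iterates} gives $\big((\rowFb)^{i+j-1}\pi\big)(i,j)=(\rvacFb\pi)(i,j)$. Writing $\iota(i,j)\coloneqq(a+1-i,b+1-j)$ for the order-reversing involution that exhibits $\rect{a}{b}$ as a self-dual poset, the reciprocity theorem is therefore equivalent to the assertion
\[(\rvacFb\pi)(i,j)=\frac{\kappa}{\pi(\iota(i,j))}\qquad\text{for all }(i,j)\in\rect{a}{b},\]
that is, $\rvacFb\pi=\Psi\pi$ where $(\Psi\pi)(x)\coloneqq\kappa/\pi(\iota(x))$. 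In words, what must be shown is that on the self-dual rectangle birational rowvacuation is just the ``obvious'' complement-and-reflect symmetry $\Psi$. This reformulation is a genuine simplification: extending $\iota$ by $\iota(\hatz)=\hato$, one checks that $\Psi$ preserves $\calfb_{\kappa}(\rect{a}{b})$, that $\Psi^2=\mathrm{id}$, and that $\Psi$ conjugates $\rowFb$ to $(\rowFb)^{-1}$, consistent with the characterization of $\rvacFb$ in \cref{prop:b_dihedral}; the content of the theorem is exactly that these two involutions coincide.

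To prove $\rvacFb=\Psi$ I would argue by induction on the number of columns $b$ (by symmetry one could use $a$). Because all maps involved are given by subtraction-free rational expressions, it suffices to prove the identity of rational functions obtained by treating the values $\pi(x)$ as indeterminates, so I am free to work generically. The base case $b=1$ is a chain $\rect{a}{1}$: here each rank is a single element, so $\rvacFb$ is an explicit product of single-element toggles, and a direct computation yields $(\rvacFb\pi)(k,1)=\kappa/\pi(a+1-k,1)$, which is precisely $\Psi$. For the inductive step the goal is to relate $\rowFb$ on $\rect{a}{b}$ to $\rowFb$ on the sub-rectangle $\rect{a}{b-1}$, realized as the order ideal $\{(i,j):j\le b-1\}$ (or the order filter $\{(i,j):j\ge 2\}$), and then to feed the inductive hypothesis for $\rect{a}{b-1}$ into the rowvacuation formula.

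The main obstacle is precisely this inductive step, and it is where the theorem acquires its real content. The clean machinery available for rowvacuation --- the rank-toggle factorization $\rvacFb=(\bft_r)(\bft_{r-1}\bft_r)\cdots(\bft_0\cdots\bft_r)$ and the peeling recursion of \cref{lem:b_rvac_ind} --- is organized by \emph{ranks}, whereas the natural sub-rectangles are cut out by \emph{rows}; in the rectangle these two foliations are transverse (a single rank meets many rows), so neither the rank toggles nor \cref{lem:b_rvac_ind} restricts to a sub-rectangle. Consequently one cannot simply quote the earlier lemmas, and the crux is a compatibility lemma measuring the discrepancy between rowmotion on $\rect{a}{b}$ and on $\rect{a}{b-1}$. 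I expect this discrepancy to be an explicit boundary-correction factor supported on the elements adjacent to the deleted row, entirely analogous to the correction terms $1/\sum_{q\lessdot p}\sigma(q)$ that appear in the proof of \cref{lem:b_rvac_ind}; the work is to show that these corrections telescope over the $i+j-1$ iterations so as to produce exactly the factor $\kappa/\pi(\iota(i,j))$. A more robust, if heavier, alternative that sidesteps the transversality problem is to pass to a parametrization of $\calfb(\rect{a}{b})$ (by a chart of the Grassmannian, or equivalently by weighted non-intersecting lattice paths) under which $\rowFb$ becomes a single cyclic shift: then periodicity of order $a+b$ becomes ``shift$^{a+b}=\mathrm{id}$'' and reciprocity becomes the reflection symmetry of the model, with essentially all the difficulty moved into establishing that rowmotion linearizes in this way.
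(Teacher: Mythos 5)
First, a point of comparison: the paper does not prove this statement at all --- it is imported verbatim from Grinberg and Roby --- so there is no internal proof to measure you against, only the known external ones. Your opening reformulation is correct and is in fact exactly the paper's own \cref{rem:rect_rvac}: by \cref{prop:b_row_iterates} the theorem is equivalent to the identity $\rvacFb\pi=\kappa\cdot(\textrm{$180^{\circ}$ rotation of $\pi$})^{-1}$ on $\calfb_{\kappa}(\rect{a}{b})$, and you rightly note that the dihedral relations of \cref{prop:b_dihedral} are consistent with, but do not imply, this identity (two involutions conjugating $\rowFb$ to its inverse need not coincide). The base case $b=1$ is a routine chain computation and is fine.

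The genuine gap is that the entire content of the theorem sits in your inductive step, and that step is not carried out: it is described only as an expectation (``I expect this discrepancy to be an explicit boundary-correction factor \dots the work is to show that these corrections telescope''). The obstruction you yourself identify --- that the rank-toggle factorization of $\rvacFb$ and the peeling recursion of \cref{lem:b_rvac_ind} are organized by ranks, while the sub-rectangle $\rect{a}{b-1}$ is cut out by rows transverse to the ranks, so neither tool restricts --- is real, and no evidence is offered that a row-by-row induction with telescoping boundary corrections can be made to close; no known proof of this theorem proceeds that way. Your ``heavier alternative'' is in fact essentially Grinberg and Roby's actual argument: they parametrize a Zariski-dense family of labelings of $\rect{a}{b}$ by ratios of determinants built from a generic matrix (equivalently, sums over non-intersecting lattice paths), show that $\rowFb$ acts on this parametrization by a cyclic shift of indices, and read off both periodicity and reciprocity from the symmetries of the model. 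But you explicitly defer ``essentially all the difficulty'' to establishing that linearization, which is precisely the hard part of the theorem. As written, the proposal is a correct reduction plus a plan, not a proof.
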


\begin{remark} \label{rem:rect_rvac}
Thanks to~ \cref{prop:b_row_iterates}, \cref{thm:reciprocity} is equivalent to the statement that $\rvacFb$ on $\calfb_{\kappa}(\rect{a}{b})$ is the map $\pi \mapsto \kappa\cdot ({\textrm{$180^{\circ}$ rotation of $\pi$}})^{-1}$. With \cref{prop:b_dihedral}, this implies that $\rowFb$ on $\calfb_{\kappa}(\rect{a}{b})$ has order dividing $a+b$. See also the discussion in \cref{subsec:other_rvac}.
\end{remark}

In order to use results about rowmotion of $\rect{a}{b}$, like \cref{thm:reciprocity}, to say something about rowmotion of $\sfa^n$, Grinberg and Roby considered a certain embedding of $\sfa^n$ into $\rect{n+1}{n+1}$. 

Specifically, define $\iab \colon \calfb_{\kappa}(\sfa^n) \to \calfb_{4\kappa^2}(\rect{n+1}{n+1})$ by
\[ (\iab \pi)(i,j) = \begin{cases} 4\kappa \cdot \pi([n+2-i,j-1])  &\textrm{if $i+j > n+2$}; \\
2\kappa &\textrm{if $i+j=n+2$}; \\
\kappa \cdot \big((\rvacFb \pi)([j,n+1-i])\big)^{-1} &\textrm{if $i+j < n+2$},
 \end{cases}\]
for all $(i,j) \in \rect{n+1}{n+1}$. In other words, to embed $\pi$ into $\rect{n+1}{n+1}$, ranks $0$ through $n-1$ are filled with (an upside-down copy of) $\kappa \cdot (\rvacFb \pi)^{-1}$, rank $n$ is filled with $2\kappa$, and ranks $n+1$ through $2n$ are filled with $4\kappa \cdot \pi$.

\begin{figure}
\begin{tikzpicture}
\node at (0,1.5) {\begin{tikzpicture}
\node at (-1.5,1.5) {$\pi=$};
\draw[thick] (-0.3, 1.7) -- (-0.7, 1.3);
\draw[thick] (0.3, 1.7) -- (0.7, 1.3);
\node at (0,2) {\small $z$};
\node at (-1,1) {\small $x$};
\node at (1,1) {\small $y$};
\end{tikzpicture}};

\node at (0,-1.5) {\begin{tikzpicture}
\node at (-2.2,1.5) {$\rvacFb\pi=$};
\draw[thick] (-0.3, 1.7) -- (-0.7, 1.3);
\draw[thick] (0.3, 1.7) -- (0.7, 1.3);
\node at (0,2) {$\frac{\kappa(x+y)}{xy}$};
\node at (-1,1) {$\frac{\kappa(x+y)}{xz}$};
\node at (1,1) {$\frac{\kappa(x+y)}{yz}$};
\end{tikzpicture}};

\node at (7,0){\begin{tikzpicture}
\node at (-3,1.5) {$4\kappa\pi\to$};
\node at (-3,-1.5) {\raisebox{\depth}{\scalebox{1}[-1]{$\kappa(\rvacFb\pi)^{-1}\to$}}};
\draw[thick] (-0.3, 1.7) -- (-0.7, 1.3);
\draw[thick] (0.3, 1.7) -- (0.7, 1.3);
\draw[thick] (-1.3, 0.7) -- (-1.7, 0.3);
\draw[thick] (-0.7, 0.7) -- (-0.3, 0.3);
\draw[thick] (0.7, 0.7) -- (0.3, 0.3);
\draw[thick] (1.3, 0.7) -- (1.7, 0.3);
\draw[thick] (0.7, -0.7) -- (0.3, -0.3);
\draw[thick] (1.3, -0.7) -- (1.7, -0.3);
\draw[thick] (-0.7, -0.7) -- (-0.3, -0.3);
\draw[thick] (-1.3, -0.7) -- (-1.7, -0.3);
\draw[thick] (0.7, -1.3) -- (0.3, -1.7);
\draw[thick] (-0.7, -1.3) -- (-0.3, -1.7);
\node at (0,2) {\small $4\kappa z$};
\node at (-1,1) {\small $4\kappa x$};
\node at (1,1) {\small $4\kappa y$};
\node at (-2,0) {\small $2\kappa$};
\node at (0,0) {\small $2\kappa$};
\node at (2,0) {\small $2\kappa$};
\node at (-1,-1) { $\frac{xz}{x+y}$};
\node at (1,-1) { $\frac{yz}{x+y}$};
\node at (0,-2) { $\frac{xy}{x+y}$};
\node at (3,0) {$=\iab\pi$};
\end{tikzpicture}};
\end{tikzpicture}
\caption{The embedding $\iab$ of $\sfa^2$ into $\rect{3}{3}$.} \label{fig:embedding}
\end{figure}

\begin{example} \label{ex:embedding}
\Cref{fig:embedding} depicts the embedding $\iab\colon \calfb(\sfa^2)\to\calfb(\rect{3}{3})$ for a generic $\pi \in \calfb(\sfa^2)$. Observe how to make $\iab\pi$: a copy of $4\kappa\pi$ is pasted above the middle rank, the middle rank is filled with $2\kappa$, and an upside-down copy of $\kappa(\rvacFb\pi)^{-1}$ is pasted below the middle rank.
\end{example}

Let $\flip\colon \sfa^n \to \sfa^n$ be the poset automorphism defined as $\flip([i,j]) \coloneqq [j,n+1-i]$ for all $[i,j]\in \sfa^n$ (this is reflection across the vertical axis of symmetry the way we have been drawing $\sfa^n$). Similarly, define $\flip\colon \rect{n+1}{n+1} \to \rect{n+1}{n+1}$ to be the poset automorphism with $\flip(i,j)\coloneqq (j,i)$ for all $(i,j) \in \rect{n+1}{n+1}$ (this is again reflection across the vertical axis of symmetry). These automorphisms extend in the obvious way to functions on these posets.

The point of $\iab$ is the following lemma.

\begin{lemma}[{cf., Grinberg and Roby~\cite[Lem.~67]{grinberg2015birational2}}] \label{lem:embedding} The following properties are satisfied by the embedding $\iab \colon \calfb_{\kappa}(\sfa^n) \to \calfb_{4\kappa^2}(\rect{n+1}{n+1})$:
\begin{itemize}
\item it is equivariant with respect to $\flip$ and $\rowFb$, i.e.,
\begin{align*}
\flip \circ \iab &= \iab \circ \flip, \\
\rowFb \circ \iab &= \iab \circ \rowFb;
\end{align*}
\item its image is $\{\pi \in \calfb_{4\kappa^2}(\rect{n+1}{n+1})\colon (\rowFb)^{n+1}(\pi) = \flip(\pi)\}$.
\end{itemize}
\end{lemma}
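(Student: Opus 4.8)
The plan is to treat the two bulleted claims in turn: I would prove flip-equivariance first (which is essentially formal), then rowmotion-equivariance (the crux), and finally deduce the image characterization from the reciprocity theorem. Throughout I work at the birational level, so that the piecewise-linear and combinatorial versions follow by tropicalization and specialization.

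First I would dispatch flip-equivariance. The key observation is that $\flip$ is a rank-preserving automorphism of both $\sfa^n$ and $\rect{n+1}{n+1}$, so it commutes with every rank toggle $\bft_i$, and hence with $\rowFb$, $\rvacFb$, and $\drvacFb$. Writing $\iab\pi$ as its three layers — the top $n$ ranks carrying $4\kappa\cdot\pi$, the constant middle rank $2\kappa$, and the bottom $n$ ranks carrying an upside-down copy of $\kappa\cdot(\rvacFb\pi)^{-1}$ — I would check that $\flip$ preserves each layer: the middle layer is constant, hence fixed, while the top and bottom layers are permuted within themselves compatibly with the coordinate identification built into $\iab$, using $\flip\circ\rvacFb=\rvacFb\circ\flip$ for the bottom layer. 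This is a routine case check against the defining formula of $\iab$.

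The main obstacle is rowmotion-equivariance, $\rowFb\circ\iab=\iab\circ\rowFb$. Here I would adapt the argument of Grinberg and Roby by analyzing the toggle expansion $\rowFb=\bft_0\bft_1\cdots\bft_{2n}$ on the rectangle rank by rank. Two structural facts make this tractable: first, \cref{prop:b_row_iterates}, which expresses the bottom layer $\rvacFb\pi$ of $\iab\pi$ as a record of the rowmotion iterates of $\pi$, so that sweeping the toggles upward through the bottom ranks mimics $\rowFb$ on $\sfa^n$; second, the constant middle rank $2\kappa$, which acts as an insulating layer. Because a rank toggle only sees the ranks immediately above and below it, the uniform value $2\kappa$ in the middle is exactly what implements the complementation step $\Theta$ of $\rowFb=\Theta\circ\up^{-1}\circ\down$ as the toggles cross from the bottom half into the top half. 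The delicate part is the bookkeeping of matching coordinates across the reflection encoded in $\iab$, but this reduces to a finite, local verification at each rank.

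Finally, for the image characterization I would invoke the reciprocity theorem. By \cref{rem:rect_rvac}, on $\calfb_{4\kappa^2}(\rect{n+1}{n+1})$ we have $(\rowFb)^{2n+2}=\mathrm{id}$ and $\rvacFb$ is the map $\pi\mapsto 4\kappa^2\cdot(\textrm{$180^\circ$ rotation of }\pi)^{-1}$; hence $(\rowFb)^{n+1}$ is an involution commuting with $\flip$, and $S\coloneqq\{\pi\colon(\rowFb)^{n+1}\pi=\flip\pi\}$ is the fixed set of the involution $\flip\circ(\rowFb)^{n+1}$. The clean entry point is the middle rank, where the $180^\circ$ rotation coincides with $\flip$ (both send $(i,j)$ with $i+j=n+2$ to $(j,i)$): reciprocity together with \cref{prop:b_row_iterates} then forces any element of $S$ to take the value $2\kappa$ there. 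Using rowmotion-equivariance and injectivity of $\iab$, the inclusion $\mathrm{image}(\iab)\subseteq S$ is equivalent to the identity $(\rowFb)^{n+1}=\flip$ on $\sfa^n$ — equivalently, since $r=n-1$ there, the statement that $(\rowFb)^{r+2}=\drvacFb\circ\rvacFb$ equals $\flip$ — which I would verify by transferring to the rectangle and matching the top and bottom layers of $\iab\pi$ under reciprocity. For the reverse inclusion, any $\pi\in S$ is reconstructed from its top $n$ ranks exactly by the formula defining $\iab$: the middle is the forced $2\kappa$, and the bottom is pinned down by reciprocity in terms of the top. Since $\iab$ is injective and both $\mathrm{image}(\iab)$ and $S$ have dimension $n(n+1)/2$, the number of elements in the top $n$ ranks of the rectangle, the two sets coincide. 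I expect the cross-rank coordinate bookkeeping in the equivariance proof, together with verifying that reciprocity reconstructs the bottom layer correctly, to be where essentially all the work lies.
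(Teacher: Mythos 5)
Your proposal is correct and follows essentially the same route as the paper, which itself only sketches the argument (citing Grinberg--Roby): $\flip$-equivariance is formal, $\rowFb$-equivariance is a rank-by-rank toggle verification using \cref{prop:b_row_iterates} and the factors of $2$ from \cref{rem:2s}, and the image characterization comes from \cref{thm:reciprocity} together with \cref{prop:b_row_iterates}. Your extra scaffolding (the forced value $2\kappa$ on the middle rank, and direct reconstruction of an element of $S$ from its top ranks) is sound and makes the dimension-counting remark unnecessary.
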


\begin{remark} \label{rem:2s}
Note the factors of $2$ and $4$ which appear in the definition of the embedding $\iab$. These factors appear because the elements in $\rect{n+1}{n+1}$ of rank $n+1$ cover two elements of rank $n$; and similarly the elements of rank $n-1$ are covered by two elements of rank $n$.
\end{remark}

\begin{remark}
The embedding $\iab$ is similar in spirit to a staircase-into-rectangle embedding considered by Pon and Wang~\cite{pon2011promotion}. However, in that paper they worked with linear extensions (a.k.a., standard Young tableaux) under promotion and evacuation, rather than order ideals under rowmotion and rowvacuation as we do here.
\end{remark}

Since \cref{lem:embedding} is essentially proved in~\cite{grinberg2015birational2}, we just provide a sketch here.

\begin{proof}[Proof sketch of \cref{lem:embedding}]
For the first bulleted item: the $\flip$-equivariance is clear. The $\rowFb$-equivariance is also easily verified by using the description of $\rvacFb$ in \cref{prop:b_row_iterates} and considering carefully what applying each rank toggle does (while in particular bearing \cref{rem:2s} in mind).

For the second bulleted item: this follows from \cref{thm:reciprocity}, together with, again, \cref{prop:b_row_iterates}.
\end{proof}

An important consequence of \cref{lem:embedding} is that $(\rowFb)^{n+1}=\flip$ for $\sfa^n$.

\medskip

Remember that (from the point of view of \cref{thm:homo}) we are really interested in $\calab(\sfa^n)$ and not $\calfb(\sfa^n)$. In order to bring $\calab(\sfa^n)$ into the picture, we are going to need another result from the literature, this time one proved by the second author and Roby~\cite{joseph2021birational}.

For $\pi\in \calab_{\kappa}(\rect{a}{b})$, we define the \dfn{Stanley--Thomas word} of $\pi$, $\st(\pi)\in\rr_{>0}^{a+b}$, to be the $(a+b)$-tuple of positive real numbers whose $i$th entry is 
\[ \st(\pi)_{i} \coloneqq \begin{cases} \prod_{j=1}^{b} \pi(i,j) &\textrm{if $1\leq i \leq a$}; \\
\kappa/\big(\prod_{j=1}^{a}\pi(j,i-a)\big) &\textrm{if $a+1\leq i \leq a+b$}. \end{cases}\]
At the combinatorial level, the Stanley--Thomas word was defined, briefly, by Stanley in~\cite{stanley2009promotion} and further elucidated by Hugh Thomas; it was used by Propp and Roby~\cite{propp2015homomesy} to prove homomesy results for rowmotion acting on $\cala(\sfa^n)$. It was then lifted to the birational level by the second author and Roby~\cite{joseph2021birational}. The importance of the Stanley--Thomas word is the following theorem.

\begin{thm}[{Joseph--Roby~\cite[Thm.~3.10]{joseph2021birational}}] \label{thm:st_rot}
The Stanley--Thomas word rotates under rowmotion, i.e., for $\pi\in \calab_{\kappa}(\rect{a}{b})$,
\[ \st(\rowAb\pi)_i =\begin{cases} \st(\pi)_{a+b} &\textrm{if $i=1$}; \\ \st(\pi)_{i-1} &\textrm{if $2\leq i \leq a+b$}.\end{cases} \]
\end{thm}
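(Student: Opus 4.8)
The plan is to reduce the claimed rotation to four identities about the row and column products of $\pi$, and then to verify those by a direct computation that telescopes. Write $R_i \coloneqq \prod_{j=1}^{b}\pi(i,j)$ for $1\le i \le a$ and $C_j \coloneqq \prod_{i=1}^{a}\pi(i,j)$ for $1\le j \le b$, so that $\st(\pi)_i = R_i$ for $i\le a$ and $\st(\pi)_{a+j}=\kappa/C_j$. Denoting by $R'_i$ and $C'_j$ the analogous products for $\rowAb\pi$, the theorem is exactly equivalent to the statements $R'_1 = \kappa/C_b$; $R'_i = R_{i-1}$ for $2\le i \le a$; $C'_1 = \kappa/R_a$; and $C'_j = C_{j-1}$ for $2 \le j \le b$. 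Hence it suffices to establish these four identities.

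To compute $R'_i$ and $C'_j$ I would pass through the order ideal $U \coloneqq \up^{-1}\pi \in \caljb_\kappa(\rect{a}{b})$, which by definition satisfies the recursion $U(i,j) = \pi(i,j)\bigl(U(i{+}1,j) + U(i,j{+}1)\bigr)$ (out-of-range terms being interpreted via $U(\hato)=1$, and $U(\hatz)=\kappa$). Since $\rowAb = \down\circ\Theta\circ\up^{-1}$ and $(\Theta U)(x) = \kappa/U(x)$, the factor $\kappa$ cancels and one gets the uniform formula $(\rowAb\pi)(x) = \bigl(U(x)\sum_{y\lessdot x} U(y)^{-1}\bigr)^{-1}$. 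The crucial simplification, special to the rectangle, is that the recursion rearranges to $U(i{-}1,j) + U(i,j{-}1) = U(i{-}1,j{-}1)/\pi(i{-}1,j{-}1)$; feeding this into the previous expression yields the clean interior formula
\[(\rowAb\pi)(i,j) = \pi(i{-}1,j{-}1)\cdot\frac{U(i{-}1,j)}{U(i{-}1,j{-}1)}\cdot\frac{U(i,j{-}1)}{U(i,j)}\qquad(2\le i\le a,\ 2\le j\le b),\]
together with the boundary evaluations $(\rowAb\pi)(1,j) = U(1,j{-}1)/U(1,j)$, $(\rowAb\pi)(i,1) = U(i{-}1,1)/U(i,1)$, and $(\rowAb\pi)(1,1) = \kappa/U(1,1)$.

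With these formulas in hand, each $R'_i = \prod_j(\rowAb\pi)(i,j)$ and $C'_j = \prod_i(\rowAb\pi)(i,j)$ becomes a product whose $U$-factors telescope: for instance $\prod_{j=2}^{b}\frac{U(i,j-1)}{U(i,j)} = U(i,1)/U(i,b)$, and likewise for the remaining factors, so that all interior $U$-values cancel and only boundary values survive. To finish I would use the two single-term boundary cases of the recursion, namely $U(i,b) = \prod_{i'=i}^{a}\pi(i',b)$ along the right column and $U(a,j) = \prod_{j'=j}^{b}\pi(a,j')$ along the top row; these turn the surviving boundary $U$-values into exactly the needed row or column products. For example, telescoping gives $R'_1 = \kappa/U(1,b) = \kappa/C_b$, and for $2\le i\le a$ one finds $R'_i = R_{i-1}\cdot U(i{-}1,b)/\bigl(\pi(i{-}1,b)\,U(i,b)\bigr) = R_{i-1}$ because $U(i{-}1,b)/U(i,b)=\pi(i{-}1,b)$; the column identities follow by the symmetric computation. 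The arithmetic is elementary once the interior formula is available, so the step I expect to be the main obstacle is isolating the rectangle-specific identity $U(i{-}1,j)+U(i,j{-}1)=U(i{-}1,j{-}1)/\pi(i{-}1,j{-}1)$ and then tracking the boundary cases carefully (rank $0$, the two outer edges, and the roles of $\hatz$ and $\hato$) so that every telescoped product collapses to precisely the correct row or column product.
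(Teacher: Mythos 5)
Your proposal is correct. Note that the paper itself does not prove \cref{thm:st_rot} at all: it is imported verbatim from Joseph--Roby \cite[Thm.~3.10]{joseph2021birational}, so there is no in-paper argument to compare against. Your self-contained derivation is sound: the reduction to the four identities $R'_1=\kappa/C_b$, $R'_i=R_{i-1}$, $C'_1=\kappa/R_a$, $C'_j=C_{j-1}$ is exactly equivalent to the stated rotation; the uniform formula $(\rowAb\pi)(x)=\bigl(U(x)\sum_{y\lessdot x}U(y)^{-1}\bigr)^{-1}$ with $U=\up^{-1}\pi$ and $U(\hatz)=\kappa$ follows directly from $\rowAb=\down\circ\Theta\circ\up^{-1}$; the rectangle-specific rearrangement $U(i{-}1,j)+U(i,j{-}1)=U(i{-}1,j{-}1)/\pi(i{-}1,j{-}1)$ is valid precisely on the range $2\le i\le a$, $2\le j\le b$ where you invoke it; and the telescoping together with the single-cover boundary evaluations $U(i,b)=\prod_{i'\ge i}\pi(i',b)$ and $U(a,j)=\prod_{j'\ge j}\pi(a,j')$ collapses each row and column product as you claim (I checked $R'_i=R_{i-1}\cdot U(i{-}1,b)/(\pi(i{-}1,b)U(i,b))=R_{i-1}$ and the three boundary cases explicitly). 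The one imprecision is your parenthetical that out-of-range terms in the recursion $U(i,j)=\pi(i,j)(U(i{+}1,j)+U(i,j{+}1))$ are ``interpreted via $U(\hato)=1$'': for a boundary element such as $(a,j)$ with $j<b$ the missing cover $(a{+}1,j)$ must simply be \emph{omitted} (only the top element $(a,b)$ acquires the contribution $U(\hato)=1$), since otherwise one would wrongly get $U(a,j)=\pi(a,j)(1+U(a,j{+}1))$. Your subsequent boundary formulas use the correct omission convention, so this is a wording slip rather than a gap.
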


The following proposition and corollary explain why the Stanley--Thomas word is so useful for our purposes.

\begin{prop} \label{prop:st_flip}
For $\pi \in \calab_{\kappa}(\rect{n}{n})$ we have
\[ \st(\flip\pi)_i =  \begin{cases} \kappa/\st(\pi)_{n+i} &\textrm{if $1\leq i \leq n$}; \\ \kappa/\st(\pi)_{i-n} &\textrm{if $n+1\leq i \leq 2n$}. \end{cases} \]
\end{prop}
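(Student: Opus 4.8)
The plan is to verify both cases directly from the definition of the Stanley--Thomas word, using only that $\flip$ swaps the two coordinates of $\rect{n}{n}$. Since $\flip$ is an involution, the induced action on functions is $(\flip\pi)(i,j) = \pi(j,i)$ for all $(i,j)\in\rect{n}{n}$; the whole proof is then a matter of matching products of entries over rows against products over columns, while keeping careful track of where the factor of $\kappa$ appears.

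First I would treat the range $1 \le i \le n$. By definition $\st(\flip\pi)_i = \prod_{j=1}^{n}(\flip\pi)(i,j) = \prod_{j=1}^n \pi(j,i)$, i.e.\ the product down the $i$th \emph{column} of $\pi$. On the other side, because $n+i$ lies in the range $n+1,\dots,2n$, the definition gives $\st(\pi)_{n+i} = \kappa/\prod_{j=1}^n \pi(j,i)$, so that $\kappa/\st(\pi)_{n+i} = \prod_{j=1}^n \pi(j,i)$, and the two sides agree.

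Next I would treat the range $n+1 \le i \le 2n$. Here $\st(\flip\pi)_i = \kappa/\prod_{j=1}^n (\flip\pi)(j,i-n) = \kappa/\prod_{j=1}^n \pi(i-n,j)$, the reciprocal (scaled by $\kappa$) of the product along the $(i-n)$th \emph{row} of $\pi$. Since $1 \le i-n \le n$, the definition gives $\st(\pi)_{i-n} = \prod_{j=1}^n \pi(i-n,j)$, so $\kappa/\st(\pi)_{i-n}$ matches $\st(\flip\pi)_i$ exactly.

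There is essentially no obstacle here beyond index bookkeeping: the content is simply that $\flip$ interchanges the ``row-product'' first half of the Stanley--Thomas word with the ``reciprocal column-product'' second half, and that the lone factor of $\kappa$ built into the second half of the definition is precisely what converts each quantity into its reciprocal-times-$\kappa$ counterpart under the swap. One could also phrase the conclusion symmetrically as $\st(\flip\pi)_i \cdot \st(\pi)_{\sigma(i)} = \kappa$ for all $i$, where $\sigma$ is the cyclic shift by $n$ on $\zz/2n\zz$, but the two-case computation above is the most transparent route.
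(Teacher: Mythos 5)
Your computation is correct and is exactly the argument the paper intends: the paper's proof simply states that the claim is immediate from the definition of the Stanley--Thomas word, and your two-case index check (column products versus reciprocal row products, with the single factor of $\kappa$ accounted for) is the verification being alluded to.
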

\begin{proof}
This is immediate from the definition of the Stanley--Thomas word.
\end{proof}

\begin{cor} \label{cor:st_constant}
For $\pi \in \calab_{\kappa}(\rect{n}{n})$ with $(\rowAb)^{n}(\pi)=\flip(\pi)$, we have that $\st(\pi) = (\sqrt{\kappa},\sqrt{\kappa},\ldots,\sqrt{\kappa})$ is constantly equal to $\sqrt{\kappa}$. In particular, for $\pi \in \calfb_{\kappa}(\sfa^n)$, we have that $\st(\down \iab\pi)=(2\kappa,2\kappa,\ldots,2\kappa)$ is constantly equal to $2\kappa$.
\end{cor}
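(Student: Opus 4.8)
The plan is to prove the general statement first, by combining the rotation behavior of the Stanley--Thomas word under rowmotion (\cref{thm:st_rot}) with its behavior under $\flip$ (\cref{prop:st_flip}), and then to deduce the ``in particular'' clause by transporting the condition $(\rowFb)^{n+1}\iab\pi = \flip\,\iab\pi$ furnished by \cref{lem:embedding} down to the antichain level.

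For the general statement, I would set $w \coloneqq \st(\pi) \in \rr_{>0}^{2n}$ and unwind what the hypothesis $(\rowAb)^n(\pi) = \flip(\pi)$ says about $w$. On the one hand, \cref{thm:st_rot} tells us that each application of $\rowAb$ cyclically shifts the Stanley--Thomas word by one position, so $\st((\rowAb)^n\pi)_i = w_{i-n}$, where indices are read modulo $2n$ in the range $\{1,\dots,2n\}$; concretely this equals $w_{i+n}$ for $1\le i\le n$ and $w_{i-n}$ for $n+1\le i\le 2n$. On the other hand, \cref{prop:st_flip} gives $\st(\flip\pi)_i = \kappa/w_{n+i}$ for $1\le i\le n$ and $\st(\flip\pi)_i = \kappa/w_{i-n}$ for $n+1\le i\le 2n$. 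Since $(\rowAb)^n\pi = \flip\pi$, these two descriptions of the same word must agree entry by entry, and in both index regimes the equation collapses to $w_k = \kappa/w_k$, i.e.\ $w_k^2 = \kappa$, for every $k$. As all entries are positive reals, this forces $w_k = \sqrt{\kappa}$ for all $k$, which is the first claim.

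For the second statement I would apply the first part to the rectangle $\rect{n+1}{n+1}$ with parameter $4\kappa^2$ and the point $\down\,\iab\pi \in \calab_{4\kappa^2}(\rect{n+1}{n+1})$. By \cref{lem:embedding}, every element of the image of $\iab$ satisfies $(\rowFb)^{n+1}\sigma = \flip\,\sigma$; applying $\down$ to both sides, using that $\down$ intertwines $\rowFb$ with $\rowAb$ (\cref{prop:b_com_diagram}), and noting that $\down$ commutes with the poset automorphism $\flip$, I obtain $(\rowAb)^{n+1}(\down\,\iab\pi) = \flip(\down\,\iab\pi)$. This is exactly the hypothesis of the first part with $n$ replaced by $n+1$ and $\kappa$ by $4\kappa^2$, so $\st(\down\,\iab\pi)$ is constantly $\sqrt{4\kappa^2} = 2\kappa$, as desired.

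The only delicate point — which I expect to be fiddly bookkeeping rather than a genuine difficulty — is lining up the modular indexing when I equate the rotated word (from \cref{thm:st_rot}) with the flipped word (from \cref{prop:st_flip}): the factor of $\kappa$ and the two separate index ranges in \cref{prop:st_flip} must match the single cyclic shift by $n$ so that each position contributes $w_k^2 = \kappa$ and no position is missed or counted twice. The transport-to-antichains step is routine, relying only on the commutativity of $\down$ with $\rowFb$ and with $\flip$; the latter holds because $\flip$ is a poset automorphism and $\down$ is defined purely through the covering relations.
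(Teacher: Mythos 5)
Your proposal is correct and follows exactly the route the paper takes: the first claim is the "straightforward combination" of \cref{thm:st_rot} and \cref{prop:st_flip} (your index bookkeeping checks out — each residue class is hit once and yields $w_k^2=\kappa$), and the second claim is deduced from \cref{lem:embedding} together with the commutativity of $\down$ with rowmotion and with $\flip$, applied to $\rect{n+1}{n+1}$ with parameter $4\kappa^2$. Nothing to add.
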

\begin{proof}
The first sentence is a straightforward combination of \cref{thm:st_rot} and \cref{prop:st_flip}. The second sentence then follows from \cref{lem:embedding} (bearing in mind that $\down$ commutes with rowmotion and with $\flip$).
\end{proof}

\begin{figure}
\begin{tikzpicture}
\node at (0,1.5) {\begin{tikzpicture}
\node at (2,1.5) {$=\down\pi$};
\draw[thick] (-0.3, 1.7) -- (-0.7, 1.3);
\draw[thick] (0.3, 1.7) -- (0.7, 1.3);
\node at (0,2) { $\frac{z}{x+y}$};
\node at (-1,1) {\small $x$};
\draw[ultra thick, blue] (-1,1) circle [radius=0.3];
\node at (1,1) {\small $y$};
\draw[ultra thick, red,rotate around={-45:(0.4,1.5)}] (0.4,1.5) ellipse (1 and 0.5);
\end{tikzpicture}};

\node at (0,-1.5) {\begin{tikzpicture}
\node at (2.7,1.5) {$=\down\rvacFb\pi$};
\draw[thick] (-0.3, 1.7) -- (-0.7, 1.3);
\draw[thick] (0.3, 1.7) -- (0.7, 1.3);
\node at (0,2) {$\frac{z}{x+y}$};
\node at (-1,1) {$\frac{\kappa(x+y)}{xz}$};
\draw[ultra thick,green,rotate around={45:(-0.5,1.5)}] (-0.5,1.5) ellipse (1.5 and 0.6);
\node at (1,1) {$\frac{\kappa(x+y)}{yz}$};
\draw[ultra thick, orange] (1,1) circle [radius=0.7];
\end{tikzpicture}};

\node at (-7,0){\begin{tikzpicture}
\draw[thick] (-0.3, 1.7) -- (-0.7, 1.3);
\draw[thick] (0.3, 1.7) -- (0.7, 1.3);
\draw[thick] (-1.3, 0.7) -- (-1.7, 0.3);
\draw[thick] (-0.7, 0.7) -- (-0.3, 0.3);
\draw[thick] (0.7, 0.7) -- (0.3, 0.3);
\draw[thick] (1.3, 0.7) -- (1.7, 0.3);
\draw[thick] (0.7, -0.7) -- (0.3, -0.3);
\draw[thick] (1.3, -0.7) -- (1.7, -0.3);
\draw[thick] (-0.7, -0.7) -- (-0.3, -0.3);
\draw[thick] (-1.3, -0.7) -- (-1.7, -0.3);
\draw[thick] (0.7, -1.3) -- (0.3, -1.7);
\draw[thick] (-0.7, -1.3) -- (-0.3, -1.7);
\node at (0,2) { $\frac{z}{x+y}$};
\node at (-1,1) {\small $x$};
\draw[ultra thick, blue] (-1,1) circle [radius=0.3];
\node at (1,1) {\small $y$};
\draw[ultra thick, red,rotate around={-45:(0.4,1.5)}] (0.4,1.5) ellipse (1 and 0.5);
\node at (-2,0) { $\frac{2\kappa(x+y)}{xz}$};
\node at (0,0) { $\frac{2\kappa}{z}$};
\node at (2,0) { $\frac{2\kappa(x+y)}{yz}$};
\draw[ultra thick, orange] (2,0) circle [radius=0.7];
\node at (-1,-1) { $\frac{z}{y}$};
\node at (1,-1) { $\frac{z}{x}$};
\draw[ultra thick,green,rotate around={-45:(0.4,-0.5)}] (0.4,-0.5) ellipse (1 and 0.5);
\node at (0,-2) { $\frac{xy}{x+y}$};
\node at (-3.5,0) {$\down\iab\pi=$};
\end{tikzpicture}};
\end{tikzpicture}
\caption{How the embedding $\iab$ interacts with $\down$.} \label{fig:embedding_down}
\end{figure}

\begin{example}
This is a continuation of \cref{ex:embedding}. See what $\down\iab\pi$ looks like for a generic element $\pi \in \calfb(\sfa^2)$ on the left in \cref{fig:embedding_down}. Observe how the product of entries along any $45^{\circ}$ or $-45^{\circ}$ diagonal is $2\kappa$, in agreement with \cref{cor:st_constant}.
\end{example}

We now have a good understanding of $\down\iab\pi$ for $\pi \in \calfb(\sfa^n)$, and we know that $\iab\pi$ is built out of $\pi$ and $\rvacFb\pi$; to finish the proof of \cref{thm:homo}, we need to understand how $\down\iab\pi$ relates to $\down\pi$ and $\down\rvacFb\pi$. This is explained by the following proposition.

\begin{prop} \label{prop:down_technical} 
Let $\pi \in \calfb_{\kappa}(\sfa^n)$. Then for all $1\leq i \leq n$ we have:
\begin{enumerate}[(a)]
\item $\prod_{i\leq j \leq n} \down\pi([i,j])=\prod_{i\leq j \leq n} \down\iab\pi(n+2-i,j+1)$;
\item $\prod_{1\leq j \leq i} \down\pi([j,i])=\prod_{1\leq j \leq i} \down\iab\pi(n+2-j,i+1)$;
\item $2\prod_{i\leq j \leq n} \down\rvacFb\pi([i,j])=\prod_{i \leq j \leq n} \down\iab\pi(n+1-j,i+1)$;
\item $2\prod_{1\leq j \leq i} \down\rvacFb\pi([j,i])=\prod_{1\leq j \leq i} \down\iab\pi(n+2-i,j)$.
\end{enumerate}
\end{prop}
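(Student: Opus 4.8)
The plan is to compute the down-transfer $D \coloneqq \down\iab\pi$ explicitly on the elements appearing on each side, using the formula $(\down\sigma)(x)=\sigma(x)/\sum_{y\lessdot x}\sigma(y)$ together with the three-part (top half / middle rank / bottom half) definition of $\iab$. The organizing observation, which also explains the precise shape of the statement, is that each product runs over a saturated chain lying in a single row or column of $\rect{n+1}{n+1}$: parts (a) and (d) trace out the row with first coordinate $n+2-i$, while parts (b) and (c) trace out the column with second coordinate $i+1$. (This is exactly what will later let one combine the four products into full rows and columns and invoke the constant Stanley--Thomas word of \cref{cor:st_constant}; but that is needed only for \cref{thm:homo}, not for the proposition itself.) Moreover, the four identities are interchanged in pairs by the reflection $\flip$: since $\flip\circ\iab=\iab\circ\flip$ by \cref{lem:embedding}, and since $\down$ and $\rvacFb$ both commute with the poset automorphism $\flip$, applying (a) to $\flip\pi$ and reindexing $i\mapsto n+1-i$ yields (b), and likewise (c) yields (d). So it suffices to prove (a) and (c).

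For part (a), the element $(n+2-i,j+1)$ has rank $\geq n+1$, hence lies in the top half and equals $4\kappa\,\pi([i,j])$. When $j>i$ its two covers $(n+1-i,j+1)$ and $(n+2-i,j)$ also lie in the top half, equal to $4\kappa\,\pi([i+1,j])$ and $4\kappa\,\pi([i,j-1])$; the common factor $4\kappa$ cancels, giving $D(n+2-i,j+1)=\down\pi([i,j])$ exactly. When $j=i$ both covers lie on the middle rank and equal $2\kappa$, so their sum is $4\kappa$ and $D(n+2-i,i+1)=\pi([i,i])=\down\pi([i,i])$. The matching of the numerator factor $4\kappa$ against the sum $2\kappa+2\kappa$ of the two middle covers is exactly the role of the factors $2$ and $4$ noted in \cref{rem:2s}. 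Thus (a) holds termwise, which is in fact stronger than the claimed product identity, and (b) follows from (a) by the flip symmetry above.

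For part (c), the chain $(n+1-j,i+1)$, $i\leq j\leq n$, descends through the bottom half of $\rect{n+1}{n+1}$, where $\iab\pi$ is $\kappa$ divided by an upside-down copy of $\rho\coloneqq\rvacFb\pi$; concretely $\iab\pi(a,b)=\kappa/\rho([b,n+1-a])$ when $a+b<n+2$. Substituting into the down-transfer formula turns each $D(n+1-j,i+1)$ into a ratio of $\rho$-values, and on taking the product over $i\leq j\leq n$ the ``neighbouring'' values $\rho([i+1,j])$ cancel in pairs against terms produced further down the chain, leaving $2\prod_{i\leq j\leq n}\down\rho([i,j])$; the single factor of $2$ is contributed by the lone middle-rank element on the chain, namely $(n+1-i,i+1)$ of rank $n$, whose value is $2\kappa$. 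Part (d) then follows from (c) by flip. The main obstacle is precisely this telescoping bookkeeping in part (c): one must verify that, after substitution, every $\rho$-value except those of the form $\rho([i,j])$ with $i\leq j\leq n$ cancels, and one must handle with care the two boundary elements of the chain — the rank-$n$ middle element (where the factor $2$ enters) and the element with first coordinate $1$, which has only a single cover rather than two. Parts (a) and (b), by contrast, are immediate once the top-half values are recorded.
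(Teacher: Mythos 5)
Your proof is correct and follows essentially the same route as the paper: parts (a) and (b) come from the termwise identity $\down\pi([i,j])=\down\iab\pi(n+2-i,j+1)$ (the paper deduces (b) from this same termwise identity rather than by applying $\flip$ to (a), which is immaterial), part (c) is the same telescoping substitution of the $\kappa\cdot(\rvacFb\pi)^{-1}$ values into the down-transfer along the column, and (d) follows from (c) by $\flip$. You only sketch the telescoping in (c) rather than writing out the product, but you correctly identify the cancellation pattern, the origin of the factor $2$ at the unique middle-rank element of the chain, and the boundary element with first coordinate $1$, which is exactly the content of the paper's computation.
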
 

\begin{example} \label{ex:down_technical}
This is a continuation of \cref{ex:embedding}. We verify part~(b) of \cref{prop:down_technical} in the case $n=2$ by looking at \cref{fig:embedding_down} and checking that the product of the entries circled in blue in $\down\iab\pi$ is the same as the product of the entries circled in blue in $\down\pi$, and similarly for the entries circled in red. Actually, the reader will observe that the top two ranks of $\down\iab\pi$ are exactly $\down\pi$. Next we verify part~(c) of \cref{prop:down_technical} by checking that the product of the entries circled in green in~$\down\iab\pi$ is twice the product of the entries circled in green in $\down\rvacFb\pi$, and similarly for the entries circled in orange. Now the reader will observe that the bottom three ranks of $\down\iab\pi$ are \emph{not} the same as $\down\rvacFb\pi$ (for one thing, there are only two ranks in $\down\rvacFb\pi$). Nonetheless, the products along the diagonals still work out exactly as claimed in \cref{prop:down_technical}.
\end{example}

\begin{proof}[Proof of \cref{prop:down_technical}]
For (a) and (b): as mentioned in \cref{ex:down_technical}, it is not hard to see that we in fact have $\down\pi([i,j])=\down\iab\pi(n+2-i,j+1)$ for all $[i,j]\in\sfa^n$. This immediately yields the claim.

For (c) and (d): here we need to be a little more careful. It is easy to see that (c) and (d) are equivalent via $\flip$; so let us prove (c). Set $\pi' := \rvacFb \pi$. We compute
\[
\prod_{i\leq j \leq n} \hspace{-5pt} \down\rvacFb\pi([i,j]) = \hspace{-5pt} \prod_{i \leq j \leq n} \hspace{-5pt} \down\pi'([i,j])  = \frac{\prod_{i \leq j \leq n} \pi'([i,j])}{\prod_{i \leq j \leq n-1} \big(\pi'([i,j])+\pi'([i+1,j+1])\big)}.
\]
Then, denoting $Z \coloneqq \prod_{i \leq j \leq n} \down\iab\pi(n+1-j,i+1)$, we compute
\begin{align*}
Z &= \frac{2\kappa \cdot \prod_{i+1\leq j\leq n-1}\iab\pi(n+1-j,i+1)}{\prod_{i\leq j\leq n-1}\big(\iab\pi(n+1-j,i) + \iab\pi(n-j,i+1)\big)\cdot \iab\pi(1,i)}
\\&= \frac{2\kappa \cdot \prod_{i\leq j\leq n-1}\iab\pi(n-j,i+1)}{\prod_{i\leq j\leq n-1}\big(\iab\pi(n+1-j,i) + \iab\pi(n-j,i+1)\big)\cdot \iab\pi(1,i)}\\
&=  \frac{2\kappa \cdot \prod_{i\leq j \leq n-1}\big(\kappa\pi'([i+1,j+1])^{-1}\big)}{\prod_{i \leq j \leq n-1} \big(\kappa\pi'([i,j])^{-1}+\kappa\pi'([i+1,j+1])^{-1}\big) \cdot \kappa \pi'([i,n])^{-1}} \\
&=  \frac{2 \cdot \prod_{i\leq j \leq n-1}\pi'([i+1,j+1])^{-1}}{\prod_{i \leq j \leq n-1} \big(\pi'([i,j])^{-1}+\pi'([i+1,j+1])^{-1}\big) \cdot \pi'([i,n])^{-1}} \\
&=   \frac{2 \cdot \prod_{i\leq j \leq n-1}\big(\pi'([i,j]) \; \pi'([i+1,j+1])\big) \cdot \pi'([i,n])}{\prod_{i \leq j \leq n-1} \big(\pi'([i,j])+\pi'([i+1,j+1])\big) \prod_{i\leq j \leq n-1}\pi'([i+1,j+1])} \\
&=  \frac{2\cdot \prod_{i \leq j \leq n} \pi'([i,j])}{\prod_{i \leq j \leq n-1} \big(\pi'([i,j])+\pi'([i+1,j+1])\big)} \\
&= 2 \cdot \prod_{i\leq j \leq n} \down\rvacFb\pi([i,j]),
\end{align*}
where to go from the fourth to fifth lines we used the identity $\frac{1}{a^{-1}+b^{-1}}=\frac{ab}{a+b}$, and to get the last line we used $\down\pi'([i,j]) = \frac{\pi'([i,j])}{\pi'([i-1,j])+\pi'([i,j+1])}$ for all $[i,j]\in\sfa^{n}$.
\end{proof}  

We can now prove \cref{thm:homo}.

\begin{proof}[Proof of \cref{thm:homo}]
So let $\pi \in \calab(\sfa^n)$ and let $1\leq i \leq n$. Set $\widetilde{\pi} := \down^{-1}(\pi)$. Applying parts~(a) and~(d) of \cref{prop:down_technical} to $\widetilde{\pi}$, we have
\begin{align*}
2 \hspace{-5pt} \prod_{i\leq j \leq n}  \hspace{-5pt} \pi([i,j])  \hspace{-5pt} \prod_{1\leq j \leq i}  \hspace{-5pt}\lkb \pi([j,i]) &=  \hspace{-5pt} \prod_{i\leq j \leq n}  \hspace{-5pt} \down\iab\widetilde{\pi}(n+2-i,j+1) \hspace{-5pt} \prod_{1\leq j \leq i}  \hspace{-5pt} \down\iab\widetilde{\pi}(n+2-i,j) \\
= \prod_{1\leq j \leq n+1} \down\iab\widetilde{\pi}(n+2-i,j) &= \st_{n+2-i}(\down\iab\widetilde{\pi}) = 2\kappa,
\end{align*}
where in the last equality we used \cref{cor:st_constant}. Analogously, we have
\[ 2 \prod_{i\leq j \leq n} \lkb\pi([i,j])) \prod_{1\leq j \leq i} \pi([j,i]) = 2\kappa.\]
Hence,
\[ 4 \hb_i(\pi) \hb_i(\lkb\pi) = 4\kappa^2,\]
and so indeed $\hb_i$ is multiplicatively $\kappa$-mesic with respect to the action of $\lkb$.
\end{proof}

We have now completed the proof of all the statements in \cref{thm:main_intro}.

\medskip

We conclude this section by discussing homomesies for rowmotion. In \cref{subsec:homomesies} we saw how homomesies for rowvacuation automatically yield homomesies for rowmotion. Applying \cref{lem:homo_transfer} to \cref{thm:homo} yields the following corollary.

\begin{cor} \label{cor:row_homo} \hfill
The statistics $\hb_i$ are all multiplicatively $\kappa$-mesic with respect to the action of $\rowAb$ on $\calab_\kappa(\sfa^n)$.
\end{cor}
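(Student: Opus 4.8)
The plan is to deduce \cref{cor:row_homo} from \cref{thm:homo} by means of the homomesy-transfer lemma \cref{lem:homo_transfer}, exactly as advertised in the text preceding the statement. The one wrinkle is that \cref{lem:homo_transfer} is phrased for the \emph{order filter} operators $\rvacFb$ and $\rowFb$, whereas both \cref{thm:homo} and \cref{cor:row_homo} concern the \emph{antichain} operators $\rvacAb=\lkb$ and $\rowAb$. So the first step is to pass between the two pictures using the down-transfer conjugacy recorded in \cref{prop:b_com_diagram}, namely $\rvacAb=\down\circ\rvacFb\circ\down^{-1}$ and $\rowAb=\down\circ\rowFb\circ\down^{-1}$.

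Concretely, I would introduce the pulled-back statistic $G_i \coloneqq \hb_i\circ\down\colon\calfb_\kappa(\sfa^n)\to\rr_{>0}$. Since $\down$ is a bijection carrying $\rvacFb$-orbits onto $\rvacAb$-orbits (and $\rowFb$-orbits onto $\rowAb$-orbits) size-preservingly, and since $G_i=\hb_i\circ\down$, the statistic $\hb_i$ is multiplicatively $\kappa$-mesic for $\rvacAb$ (resp.\ $\rowAb$) if and only if $G_i$ is multiplicatively $\kappa$-mesic for $\rvacFb$ (resp.\ $\rowFb$). Thus \cref{thm:homo} says precisely that $G_i$ is multiplicatively $\kappa$-mesic for $\rvacFb$, and it suffices to show $G_i$ is multiplicatively $\kappa$-mesic for $\rowFb$.

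To invoke \cref{lem:homo_transfer} I must exhibit $G_i$ as a product $\prod_k g_k$ of statistics with $g_k(\widetilde\pi)$ depending only on $\widetilde\pi\hspace{-3.5pt}\mid_{\sfp_k}$. Unwinding the definitions, $G_i(\widetilde\pi)=\prod_{i\leq j\leq n}\down\widetilde\pi([i,j])\cdot\prod_{1\leq j\leq i}\down\widetilde\pi([j,i])$, so $G_i$ is a statistic of the form $\widetilde\pi\mapsto\prod_{p\in\sfp}(\down\widetilde\pi(p))^{c_p}$ with each $c_p\in\{0,1\}$. This is exactly the second type of statistic flagged in the remark following \cref{lem:homo_transfer}, and it does satisfy the hypotheses: writing $\down\widetilde\pi(p)=\widetilde\pi(p)/\sum_{q\lessdot p}\widetilde\pi(q)$ and regrouping by rank, one checks that
\[ \prod_{p\in\sfp}(\down\widetilde\pi(p))^{c_p} = \prod_{k}\, g_k(\widetilde\pi), \qquad g_k(\widetilde\pi)\coloneqq \prod_{p\in\sfp_k}\widetilde\pi(p)^{c_p}\cdot\prod_{p\in\sfp_{k+1}}\Big(\sum_{q\lessdot p}\widetilde\pi(q)\Big)^{-c_p}, \]
and each $g_k$ depends only on the values of $\widetilde\pi$ on rank $k$: the numerator factors live on rank $k$, while the covering-sum factors attached to $p\in\sfp_{k+1}$ run over $q\in\sfp_k$ (the rank-$0$ covering sums are the constant $\widetilde\pi(\hatz)=1$ and cause no trouble). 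With this decomposition in hand, \cref{lem:homo_transfer} applies to give that $G_i$ is multiplicatively $\kappa$-mesic for $\rowFb$, which by the conjugacy of the second paragraph is equivalent to \cref{cor:row_homo}.

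I expect the only genuinely non-routine point to be this rank-separability bookkeeping, i.e.\ verifying that the pulled-back statistic $G_i$ really falls under the hypotheses of \cref{lem:homo_transfer}; everything else is a formal consequence of the down-transfer conjugacy and of \cref{thm:homo}. In particular, no new analysis of the embedding $\iab$ or of the Stanley--Thomas word is needed here, as those tools were already consumed in establishing \cref{thm:homo} itself.
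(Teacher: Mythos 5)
Your proposal is correct and is essentially the paper's own argument: the paper derives \cref{cor:row_homo} in one line by applying \cref{lem:homo_transfer} to \cref{thm:homo}, with the remark following \cref{lem:homo_transfer} already flagging statistics of the form $\pi\mapsto\prod_{p\in\sfp}(\down\pi(p))^{c_p}$ as satisfying its hypotheses. Your explicit rank-separability bookkeeping and the passage between the antichain and order-filter pictures via $\down$ simply spell out the details the paper leaves implicit.
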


Of course, \cref{cor:row_homo} also implies that a multiplicative combination of the $\hb_i$ (such as the birational antichain cardinality or major index) is multiplicatively homomesic under $\rowAb$. And, via tropicalization and specialization, we can say similarly for $\rowApl$ and $\rowA$.

The homomesy of the antichain cardinality statistic for rowmotion acting on the Type~A root poset (in fact, for \emph{all} root posets) was conjectured by Panyushev~\cite{panyushev2009orbits} and proved by Armstrong, Stump, and Thomas~\cite{armstrong2013uniform}. This was one of the main motivating examples for the introduction of the concept of homomesy in~\cite{propp2015homomesy}. The birational version of this antichain cardinality homomesy result is proved in~\cite{hopkins2019minuscule}. The homomesy of the major index for rowmotion was observed by Jim Propp (private communication) and inspired some of our present research. We later learned that the $\h_i$ homomesies for rowmotion were previously observed, conjecturally, by David Einstein. We remark that another way to prove the $\h_i$ homomesies for rowmotion is to write them as a linear combination of ``rook'' statistics plus a linear combination of signed toggleability statistics, as discussed in~\cite{chan2017expected} (see also~\cite{defant2021homomesy}).

\begin{figure}
\begin{center}
\begin{tikzpicture}[scale=1/3]
\begin{scope}[shift={(0,1)}]
\draw[thick] (-0.1, 1.9) -- (-0.9, 1.1);
\draw[thick] (0.1, 1.9) -- (0.9, 1.1);
\draw[thick] (-1.1, 0.9) -- (-1.9, 0.1);
\draw[thick] (-0.9, 0.9) -- (-0.1, 0.1);
\draw[thick] (0.9, 0.9) -- (0.1, 0.1);
\draw[thick] (1.1, 0.9) -- (1.9, 0.1);
\draw (0,2) circle [radius=0.2];
\draw (-1,1) circle [radius=0.2];
\draw[green,fill] (1,1) circle [radius=0.2];
\draw[fill] (-2,0) circle [radius=0.2];
\draw (0,0) circle [radius=0.2];
\draw (2,0) circle [radius=0.2];
\node[above] at (0,2) {$\h_2=1$};
\end{scope}
\node at (3.5,2) {$\longmapsto$};
\begin{scope}[shift={(7,1)}]
\draw[thick] (-0.1, 1.9) -- (-0.9, 1.1);
\draw[thick] (0.1, 1.9) -- (0.9, 1.1);
\draw[thick] (-1.1, 0.9) -- (-1.9, 0.1);
\draw[thick] (-0.9, 0.9) -- (-0.1, 0.1);
\draw[thick] (0.9, 0.9) -- (0.1, 0.1);
\draw[thick] (1.1, 0.9) -- (1.9, 0.1);
\draw (0,2) circle [radius=0.2];
\draw[red,fill] (-1,1) circle [radius=0.2];
\draw (1,1) circle [radius=0.2];
\draw (-2,0) circle [radius=0.2];
\draw (0,0) circle [radius=0.2];
\draw (2,0) circle [radius=0.2];
\node[above] at (0,2) {$\h_2=1$};
\end{scope}
\node at (10.5,2) {$\longmapsto$};
\begin{scope}[shift={(14,1)}]
\draw[thick] (-0.1, 1.9) -- (-0.9, 1.1);
\draw[thick] (0.1, 1.9) -- (0.9, 1.1);
\draw[thick] (-1.1, 0.9) -- (-1.9, 0.1);
\draw[thick] (-0.9, 0.9) -- (-0.1, 0.1);
\draw[thick] (0.9, 0.9) -- (0.1, 0.1);
\draw[thick] (1.1, 0.9) -- (1.9, 0.1);
\draw (0,2) circle [radius=0.2];
\draw (-1,1) circle [radius=0.2];
\draw (1,1) circle [radius=0.2];
\draw (-2,0) circle [radius=0.2];
\draw (0,0) circle [radius=0.2];
\draw[fill] (2,0) circle [radius=0.2];
\node[above] at (0,2) {$\h_2=0$};
\end{scope}
\node at (17.5,2) {$\longmapsto$};
\begin{scope}[shift={(21,1)}]
\draw[thick] (-0.1, 1.9) -- (-0.9, 1.1);
\draw[thick] (0.1, 1.9) -- (0.9, 1.1);
\draw[thick] (-1.1, 0.9) -- (-1.9, 0.1);
\draw[thick] (-0.9, 0.9) -- (-0.1, 0.1);
\draw[thick] (0.9, 0.9) -- (0.1, 0.1);
\draw[thick] (1.1, 0.9) -- (1.9, 0.1);
\draw (0,2) circle [radius=0.2];
\draw (-1,1) circle [radius=0.2];
\draw (1,1) circle [radius=0.2];
\draw[fill] (-2,0) circle [radius=0.2];
\draw[blue,fill] (0,0) circle [radius=0.2];
\draw (2,0) circle [radius=0.2];
\node[above] at (0,2) {$\h_2=2$};
\end{scope}

\begin{scope}[shift={(0,-4)}]
\draw[thick] (-0.1, 1.9) -- (-0.9, 1.1);
\draw[thick] (0.1, 1.9) -- (0.9, 1.1);
\draw[thick] (-1.1, 0.9) -- (-1.9, 0.1);
\draw[thick] (-0.9, 0.9) -- (-0.1, 0.1);
\draw[thick] (0.9, 0.9) -- (0.1, 0.1);
\draw[thick] (1.1, 0.9) -- (1.9, 0.1);
\draw (0,2) circle [radius=0.2];
\draw (-1,1) circle [radius=0.2];
\draw (1,1) circle [radius=0.2];
\draw (-2,0) circle [radius=0.2];
\draw[blue,fill] (0,0) circle [radius=0.2];
\draw[fill] (2,0) circle [radius=0.2];
\node[below] at (0,0) {$\h_2=2$};
\end{scope}
\node at (3.5,-3) {$\longmapsfrom$};
\begin{scope}[shift={(7,-4)}]
\draw[thick] (-0.1, 1.9) -- (-0.9, 1.1);
\draw[thick] (0.1, 1.9) -- (0.9, 1.1);
\draw[thick] (-1.1, 0.9) -- (-1.9, 0.1);
\draw[thick] (-0.9, 0.9) -- (-0.1, 0.1);
\draw[thick] (0.9, 0.9) -- (0.1, 0.1);
\draw[thick] (1.1, 0.9) -- (1.9, 0.1);
\draw (0,2) circle [radius=0.2];
\draw (-1,1) circle [radius=0.2];
\draw (1,1) circle [radius=0.2];
\draw[fill] (-2,0) circle [radius=0.2];
\draw (0,0) circle [radius=0.2];
\draw (2,0) circle [radius=0.2];
\node[below] at (0,0) {$\h_2=0$};
\end{scope}
\node at (10.5,-3) {$\longmapsfrom$};
\begin{scope}[shift={(14,-4)}]
\draw[thick] (-0.1, 1.9) -- (-0.9, 1.1);
\draw[thick] (0.1, 1.9) -- (0.9, 1.1);
\draw[thick] (-1.1, 0.9) -- (-1.9, 0.1);
\draw[thick] (-0.9, 0.9) -- (-0.1, 0.1);
\draw[thick] (0.9, 0.9) -- (0.1, 0.1);
\draw[thick] (1.1, 0.9) -- (1.9, 0.1);
\draw (0,2) circle [radius=0.2];
\draw (-1,1) circle [radius=0.2];
\draw[green,fill] (1,1) circle [radius=0.2];
\draw (-2,0) circle [radius=0.2];
\draw (0,0) circle [radius=0.2];
\draw (2,0) circle [radius=0.2];
\node[below] at (0,0) {$\h_2=1$};
\end{scope}
\node at (17.5,-3) {$\longmapsfrom$};
\begin{scope}[shift={(21,-4)}]
\draw[thick] (-0.1, 1.9) -- (-0.9, 1.1);
\draw[thick] (0.1, 1.9) -- (0.9, 1.1);
\draw[thick] (-1.1, 0.9) -- (-1.9, 0.1);
\draw[thick] (-0.9, 0.9) -- (-0.1, 0.1);
\draw[thick] (0.9, 0.9) -- (0.1, 0.1);
\draw[thick] (1.1, 0.9) -- (1.9, 0.1);
\draw (0,2) circle [radius=0.2];
\draw[red,fill] (-1,1) circle [radius=0.2];
\draw (1,1) circle [radius=0.2];
\draw (-2,0) circle [radius=0.2];
\draw (0,0) circle [radius=0.2];
\draw[fill] (2,0) circle [radius=0.2];
\node[below] at (0,0) {$\h_2=1$};
\end{scope}
\node at (20.8,0.5) {$\longmapsdown$};
\node at (-0.2,0.5) {$\longmapsup$};
\begin{scope}[shift={(1,-10.8)}]
\draw[thick] (-0.1, 1.9) -- (-0.9, 1.1);
\draw[thick] (0.1, 1.9) -- (0.9, 1.1);
\draw[thick] (-1.1, 0.9) -- (-1.9, 0.1);
\draw[thick] (-0.9, 0.9) -- (-0.1, 0.1);
\draw[thick] (0.9, 0.9) -- (0.1, 0.1);
\draw[thick] (1.1, 0.9) -- (1.9, 0.1);
\draw (0,2) circle [radius=0.2];
\draw (-1,1) circle [radius=0.2];
\draw (1,1) circle [radius=0.2];
\draw (-2,0) circle [radius=0.2];
\draw (0,0) circle [radius=0.2];
\draw (2,0) circle [radius=0.2];
\node[above] at (0,2) {$\h_2=0$};
\end{scope}
\node at (4.5,-9.8) {$\longmapsto$};
\begin{scope}[shift={(8,-10.8)}]
\draw[thick] (-0.1, 1.9) -- (-0.9, 1.1);
\draw[thick] (0.1, 1.9) -- (0.9, 1.1);
\draw[thick] (-1.1, 0.9) -- (-1.9, 0.1);
\draw[thick] (-0.9, 0.9) -- (-0.1, 0.1);
\draw[thick] (0.9, 0.9) -- (0.1, 0.1);
\draw[thick] (1.1, 0.9) -- (1.9, 0.1);
\draw (0,2) circle [radius=0.2];
\draw (-1,1) circle [radius=0.2];
\draw (1,1) circle [radius=0.2];
\draw[fill] (-2,0) circle [radius=0.2];
\draw[blue,fill] (0,0) circle [radius=0.2];
\draw[fill] (2,0) circle [radius=0.2];
\node[above] at (0,2) {$\h_2=2$};
\end{scope}
\node at (7.8,-11.2) {$\longmapsdown$};
\begin{scope}[shift={(8,-15.7)}]
\draw[thick] (-0.1, 1.9) -- (-0.9, 1.1);
\draw[thick] (0.1, 1.9) -- (0.9, 1.1);
\draw[thick] (-1.1, 0.9) -- (-1.9, 0.1);
\draw[thick] (-0.9, 0.9) -- (-0.1, 0.1);
\draw[thick] (0.9, 0.9) -- (0.1, 0.1);
\draw[thick] (1.1, 0.9) -- (1.9, 0.1);
\draw (0,2) circle [radius=0.2];
\draw[red,fill] (-1,1) circle [radius=0.2];
\draw[green,fill] (1,1) circle [radius=0.2];
\draw (-2,0) circle [radius=0.2];
\draw (0,0) circle [radius=0.2];
\draw (2,0) circle [radius=0.2];
\node[below] at (0,0) {$\h_2=2$};
\end{scope}
\node at (0.7,-11.2) {$\longmapsup$};
\begin{scope}[shift={(1,-15.7)}]
\draw[thick] (-0.1, 1.9) -- (-0.9, 1.1);
\draw[thick] (0.1, 1.9) -- (0.9, 1.1);
\draw[thick] (-1.1, 0.9) -- (-1.9, 0.1);
\draw[thick] (-0.9, 0.9) -- (-0.1, 0.1);
\draw[thick] (0.9, 0.9) -- (0.1, 0.1);
\draw[thick] (1.1, 0.9) -- (1.9, 0.1);
\draw[fill] (0,2) circle [radius=0.2];
\draw (-1,1) circle [radius=0.2];
\draw (1,1) circle [radius=0.2];
\draw (-2,0) circle [radius=0.2];
\draw (0,0) circle [radius=0.2];
\draw (2,0) circle [radius=0.2];
\node[below] at (0,0) {$\h_2=0$};
\end{scope}
\node at (4.5,-14.7) {$\longmapsfrom$};
\begin{scope}[shift={(18,-10.7)}]
\draw[thick] (-0.1, 1.9) -- (-0.9, 1.1);
\draw[thick] (0.1, 1.9) -- (0.9, 1.1);
\draw[thick] (-1.1, 0.9) -- (-1.9, 0.1);
\draw[thick] (-0.9, 0.9) -- (-0.1, 0.1);
\draw[thick] (0.9, 0.9) -- (0.1, 0.1);
\draw[thick] (1.1, 0.9) -- (1.9, 0.1);
\draw (0,2) circle [radius=0.2];
\draw (-1,1) circle [radius=0.2];
\draw (1,1) circle [radius=0.2];
\draw[fill] (-2,0) circle [radius=0.2];
\draw (0,0) circle [radius=0.2];
\draw[fill] (2,0) circle [radius=0.2];
\node[above] at (0,2) {$\h_2=0$};
\end{scope}
\node at (17.8,-11) {\begin{rotate}{-90}$\longleftrightarrow$\end{rotate}};
\begin{scope}[shift={(18,-15.7)}]
\draw[thick] (-0.1, 1.9) -- (-0.9, 1.1);
\draw[thick] (0.1, 1.9) -- (0.9, 1.1);
\draw[thick] (-1.1, 0.9) -- (-1.9, 0.1);
\draw[thick] (-0.9, 0.9) -- (-0.1, 0.1);
\draw[thick] (0.9, 0.9) -- (0.1, 0.1);
\draw[thick] (1.1, 0.9) -- (1.9, 0.1);
\draw (0,2) circle [radius=0.2];
\draw (-1,1) circle [radius=0.2];
\draw (1,1) circle [radius=0.2];
\draw (-2,0) circle [radius=0.2];
\draw[blue,fill] (0,0) circle [radius=0.2];
\draw (2,0) circle [radius=0.2];
\node[below] at (0,0) {$\h_2=2$};
\end{scope}

\draw[ultra thick] (-3,5) -- (24,5) -- (24,-18) -- (-3,-18) -- (-3,5);

\draw[ultra thick] (-3,-6.5) -- (24,-6.5);
\draw[ultra thick] (12.25,-6.5) -- (12.25,-18);

\end{tikzpicture}
\end{center}

\caption{The three orbits of antichain rowmotion (in the combinatorial realm) on $\sfa^4$, showing the homomesy of $\h_2$ as proved in \cref{cor:row_homo}.} \label{fig:h2}
\end{figure}

\begin{example}
\Cref{fig:h2} illustrates \cref{cor:row_homo}, at the combinatorial level, for~$\sfa^4$: observe $\h_2 = {\color{red}\mathds{1}_{[1,2]}} + 2\cdot {\color{blue}\mathds{1}_{[2,2]}} + {\color{green}\mathds{1}_{[2,3]}}$ has average 1 across each orbit. (Here for $p\in\sfp$ we use $\mathds{1}_p$ to be the indicator function of a poset element, i.e., $\mathds{1}_p(A)$ is equal to $1$ if $p\in A$ and is equal to $0$ otherwise.)
\end{example}

\begin{remark}
These $\h_i$ statistics are directly analogous to statistics studied in~\cite{einstein2016noncrossing}. There it was shown that they are homomesic for an action defined as a product of toggles on noncrossing partitions. The antichains of $\sf{A}_n$ correspond to ``nonnesting'' partitions. Note that both kinds of partitions are counted by the Catalan numbers.
\end{remark}

\section{Rowvacuation for the poset \texorpdfstring{$\sfb^n$}{Bn}} \label{sec:bn}

Let~$G$ be a subgroup of $\aut(\sfp)$, the group of poset automorphisms of a poset $\sfp$. The \dfn{quotient poset} $\sfp/G$ is the poset whose elements are $G$-orbits of $\sfp$, with $Gp \leq Gq$ whenever $p\leq q \in \sfp$. (Here $Gp$ is the orbit $Gp\coloneqq \{g\cdot p\colon g\in G\}$.) It is well-known and easy to see that this indeed defines a partial order.

\begin{figure}
\begin{tikzpicture}
\node at (0,0) {\begin{tikzpicture}[scale=0.5]
\draw[fill] (0,0) circle[radius=0.1];
\draw[fill] (2,0) circle[radius=0.1];
\draw[fill] (4,0) circle[radius=0.1];
\draw[fill] (1,1) circle[radius=0.1];
\draw[fill] (3,1) circle[radius=0.1];
\draw[fill] (0,2) circle[radius=0.1];
\draw[fill] (2,2) circle[radius=0.1];
\draw[fill] (1,3) circle[radius=0.1];
\draw[fill] (0,4) circle[radius=0.1];
\draw[thick] (0,0)--(1,1)--(2,0)--(3,1)--(4,0);
\draw[thick] (0,2)--(1,1)--(2,2)--(3,1);
\draw[thick] (0,2)--(1,3)--(2,2);
\draw[thick] (0,4)--(1,3);
\node at (2,-1) {$\sfb^3$};
\end{tikzpicture}};
\node at (5,0) {\begin{tikzpicture}[scale=0.5]
\draw[fill] (1,-1) circle[radius=0.1];
\draw[fill] (3,-1) circle[radius=0.1];
\draw[fill] (5,-1) circle[radius=0.1];
\draw[fill] (0,0) circle[radius=0.1];
\draw[fill] (2,0) circle[radius=0.1];
\draw[fill] (4,0) circle[radius=0.1];
\draw[fill] (1,1) circle[radius=0.1];
\draw[fill] (3,1) circle[radius=0.1];
\draw[fill] (0,2) circle[radius=0.1];
\draw[fill] (2,2) circle[radius=0.1];
\draw[fill] (1,3) circle[radius=0.1];
\draw[fill] (0,4) circle[radius=0.1];
\draw[thick] (0,0)--(1,-1)--(2,0)--(3,-1)--(4,0)--(5,-1);
\draw[thick] (0,0)--(1,1)--(2,0)--(3,1)--(4,0);
\draw[thick] (0,2)--(1,1)--(2,2)--(3,1);
\draw[thick] (0,2)--(1,3)--(2,2);
\draw[thick] (0,4)--(1,3);
\node at (2,-2) {$\sfb'^3$};
\end{tikzpicture}};
\end{tikzpicture}
\caption{The posets $\sfb^3$ and $\sfb'^3$.} \label{fig:b_posets}
\end{figure}

The poset $\sfa^n$ has a nontrivial automorphism, $\flip$, which we can quotient by to produce an interesting poset. Specifically, we define 
\[\sfb^n \coloneqq \sfa^{2n-1}/\langle\flip\rangle; \qquad \qquad \sfb'^n \coloneqq \sfa^{2n}/\langle\flip\rangle.\]
These posets are depicted in \cref{fig:b_posets}. The poset $\sfb^n$ is the root poset of the Type~B root system. The poset~$\sfb'^n$ is not a root poset but shares many similar properties. The poset $\sfb^n$ (respectively~$\sfb'^n$) is graded of rank $2n-2$ (resp.~$2n-1$), with rank functions induced from $\sfa^{2n-1}$ (resp.~$\sfa^{2n}$). 

We can use our knowledge of $\sfa^n$ to say something about these quotient posets. We will be more abridged in our discussions of these posets than we were with $\sfa^n$ above: for instance, we will not separately emphasize corollaries at the combinatorial level; and we will not discuss rowmotion (although rowmotion for these posets has been studied~\cite{panyushev2009orbits, armstrong2013uniform, grinberg2015birational2, hopkins2019minuscule, hopkins2020cyclic}). We will focus on rowvacuation of $\sfb^n$ and $\sfb'^n$.

The key to studying rowvacuation of these quotient posets is the following embedding (cf.~\cite[\S11]{grinberg2015birational2}). Define $\ibpl\colon \calfpl_{\kappa}(\sfb^n)\to\calfpl_{\kappa}(\sfa^{2n-1})$ by 
\[(\ibpl \pi)(p) \coloneqq \pi(\langle \flip \rangle p), \]
for all $p \in \sfa^{2n-1}$. Define $\ibpl\colon \calfpl_{\kappa}(\sfb'^n)\to\calfpl_{\kappa}(\sfa^{2n})$ similarly.

\begin{lemma} \label{lem:b_embedding_pl}
The embedding $\ibpl\colon \calfpl_{\kappa}(\sfb^n)\to\calfpl_{\kappa}(\sfa^{2n-1})$ is $\bft_i$-equivariant for any $0\leq i \leq 2n-2$; in particular, it is $\rowFpl$- and $\rvacFpl$-equivariant. Similarly for the embedding $\ibpl\colon \calfpl_{\kappa}(\sfb'^n)\to\calfpl_{\kappa}(\sfa^{2n})$.
\end{lemma}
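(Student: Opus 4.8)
The plan is to reduce the statement to a single combinatorial fact about how the folding map interacts with cover relations, and then read off the toggle-equivariance from the explicit PL toggle formula. Throughout, write $q\colon \sfa^{2n-1}\to \sfb^n$ for the quotient map $q(p) = \langle\flip\rangle p$, extended to a map $\widehat{\sfa^{2n-1}}\to\widehat{\sfb^n}$ by fixing $\hatz$ and $\hato$; then by definition $(\ibpl\pi)(p) = \pi(q(p))$, so the image of $\ibpl$ is exactly the subspace of $\flip$-invariant functions (those $\pi'$ with $\pi'\circ\flip = \pi'$), and $\ibpl$ is a bijection onto this subspace. Since $\rowFpl$ and $\rvacFpl$ are fixed compositions of the rank toggles $\bft_i$, it suffices to prove the $\bft_i$-equivariance for each $i$, and the rest follows at once.

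First I would record that $\bft_i$ preserves the $\flip$-invariant subspace. Because $\flip$ is a rank-preserving poset automorphism, it conjugates PL toggles by $\flip\circ t_p\circ\flip^{-1} = t_{\flip(p)}$; since $\flip$ permutes the rank $\sfa^{2n-1}_i$ and toggles within one rank commute, $\flip\circ\bft_i\circ\flip^{-1} = \bft_i$. Hence if $\pi'$ is $\flip$-invariant so is $\bft_i\pi'$, and it suffices to compare, for each $p\in\sfa^{2n-1}_i$, the value $(\bft_i\pi')(p)$ with $(\bft_i\pi)(q(p))$, where $\pi = (\ibpl)^{-1}\pi'$. As rank-$i$ elements are pairwise incomparable, $(\bft_i\pi')(p) = (t_p\pi')(p)$, so by the PL toggle formula everything comes down to comparing the sets of covers and coverees of $p$ in $\widehat{\sfa^{2n-1}}$ with those of $q(p)$ in $\widehat{\sfb^n}$.

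The heart of the argument is the claim that $q$ carries covers onto covers as \emph{sets}:
\[ \{q(y) : p \lessdot y\} = \{Y : q(p)\lessdot Y\}, \qquad \{q(y) : y \lessdot p\} = \{Y : Y \lessdot q(p)\}. \]
This I would prove using that $q$ preserves the induced rank and that both posets are graded: a cover $p\lessdot y$ descends to $q(p) < q(y)$ with rank difference one, hence to a cover, which also rules out any non-cover comparabilities appearing in the quotient; conversely any cover $q(p)\lessdot Y$ lifts, for the fixed representative $p$, to a cover $p\lessdot y$ with $q(y) = Y$ (take a lift of the cover and apply the element of $\langle\flip\rangle$ moving its bottom to $p$). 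The relations involving $\hatz$ and $\hato$ match since $q$ sends minimal (resp.\ maximal) elements to minimal (resp.\ maximal) elements and fixes $\hatz,\hato$. Granting this and using $\pi'(y) = \pi(q(y))$, the $\max$ and $\min$ in the toggle formula for $p$ over $\sfa^{2n-1}$ equal those for $q(p)$ over $\sfb^n$, giving $(t_p\pi')(p) = (t_{q(p)}\pi)(q(p))$, i.e.\ $\bft_i\circ\ibpl = \ibpl\circ\bft_i$.

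The main obstacle, and the reason the lemma is stated at the PL level, is the multiplicity that appears at the fold: a $\flip$-fixed element $p$ typically covers (or is covered by) a two-element orbit $\{y,\flip(y)\}$, so a single cover relation in $\sfb^n$ lifts to two in $\sfa^{2n-1}$, which would produce spurious factors of $2$ exactly as in \cref{rem:2s}. What rescues the PL statement is the idempotence of $\max$ and $\min$: one has $\max(\pi'(y),\pi'(\flip y)) = \pi'(y) = \pi(q(y))$ and likewise for $\min$, so the doubled cover contributes only once. I would verify the set equality above uniformly across the fold cases ($p$ fixed covering a fixed element, $p$ fixed covering a swapped pair, and $p$ lying in a free orbit); this case analysis, together with the grading argument, is the only real content. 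The embedding $\ibpl\colon\calfpl_\kappa(\sfb'^n)\to\calfpl_\kappa(\sfa^{2n})$ is handled by the identical argument, the only change being the location of the $\flip$-fixed elements.
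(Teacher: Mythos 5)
Your proposal is correct, and it is exactly the argument the paper has in mind: the paper's own proof of this lemma is literally the single sentence ``This is straightforward,'' so your write-up simply supplies the details being left to the reader (the cover-preservation of the quotient map, the conjugation identity $\flip\circ\bft_i\circ\flip^{-1}=\bft_i$, and the observation that idempotence of $\max$ and $\min$ absorbs the doubled covers at the fold, which is precisely why the PL statement holds for $\sfb^n$ while the birational analogue does not). No gaps; the deferred case analysis is routine and correctly outlined.
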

\begin{proof}
This is straightforward.
\end{proof}

For $1\leq i \leq 2n-1$, define $\hpl_i\colon \calapl(\sfb^n)\to \rr$ by
\[\hpl_i(\pi) \coloneqq \sum_{i\leq j \leq 2n-1} \pi(\langle\flip\rangle[i,j]) + \sum_{1 \leq j \leq i} \pi(\langle \flip \rangle[j,i]).\]
Note that $\hpl_i = \hpl_{2n-i}$. Define $\hpl_i\colon \sfb'^n\to \rr$, for $1\leq i \leq 2n$, similarly.

\begin{cor} \label{cor:b_homo_pl}
The statistics $\hpl_i\colon \calapl_{\kappa}(\sfb^n)\to \rr$, for $1\leq i \leq 2n-1$, are $\kappa$-mesic for $\rvacApl$. The same is true for the $\hpl_i\colon \calapl_{\kappa}(\sfb'^n)\to \rr$.
\end{cor}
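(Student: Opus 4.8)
The plan is to reduce the statement to the homomesy already established for the full Type~A root poset (\cref{thm:homo}) and transport it through the folding embedding. I will treat $\sfb^n = \sfa^{2n-1}/\langle\flip\rangle$; the argument for $\sfb'^n = \sfa^{2n}/\langle\flip\rangle$ is identical, replacing $2n-1$ by $2n$ throughout. Note that $\ibpl$ carries no scaling factors (unlike the rectangle embedding $\iab$), so the homomesy constant will transfer unchanged.

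First I would set up the antichain counterpart of the embedding of \cref{lem:b_embedding_pl}. Reusing the notation in the style of the paper, define the antichain embedding $\ibpl\colon \calapl_\kappa(\sfb^n) \to \calapl_\kappa(\sfa^{2n-1})$ by $(\ibpl\pi)(p) \coloneqq \pi(\langle\flip\rangle p)$, whose image is exactly the $\flip$-invariant functions. The first key claim is that the down-transfer intertwines the two embeddings: $\down \circ \ibpl = \ibpl \circ \down$, where on the left $\ibpl$ is the order-filter embedding and $\down\colon \calfpl_\kappa(\sfa^{2n-1}) \to \calapl_\kappa(\sfa^{2n-1})$, while on the right $\down\colon \calfpl_\kappa(\sfb^n) \to \calapl_\kappa(\sfb^n)$ and $\ibpl$ is the antichain embedding. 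This is where the fold requires care: the two lower covers $[i+1,j],[i,j-1] \lessdot [i,j]$ in $\sfa^{2n-1}$ merge into a single cover in $\sfb^n$ precisely at the $\flip$-fixed elements (those with $i+j=2n$); but since $\ibpl\pi$ is $\flip$-invariant, the merged values coincide and the maxima defining $\down$ agree on the nose. Granting this, combining the piecewise-linear analog of \cref{prop:b_com_diagram} (which says $\down$ conjugates $\rvacFpl$ to $\rvacApl$) with the $\rvacFpl$-equivariance of the order-filter embedding from \cref{lem:b_embedding_pl} yields $\rvacApl \circ \ibpl = \ibpl \circ \rvacApl$ for the antichain embedding.

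Next I would observe that the statistics match tautologically. Unwinding the definitions, for every $\pi \in \calapl_\kappa(\sfb^n)$ and $1 \le i \le 2n-1$,
\[\hpl_i(\pi) = \sum_{i\le j\le 2n-1}\pi(\langle\flip\rangle[i,j]) + \sum_{1\le j\le i}\pi(\langle\flip\rangle[j,i]) = \sum_{i\le j\le 2n-1}(\ibpl\pi)([i,j]) + \sum_{1\le j\le i}(\ibpl\pi)([j,i]),\]
and the right-hand side is precisely the Type~A statistic $\hpl_i$ evaluated at $\ibpl\pi \in \calapl_\kappa(\sfa^{2n-1})$; hence $\hpl_i = \hpl_i \circ \ibpl$. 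Since $\rvacApl$ is an involution, $\kappa$-mesy is equivalent to $\hpl_i(\sigma) + \hpl_i(\rvacApl\sigma) = 2\kappa$ for all $\sigma$. Applying the piecewise-linear specialization of \cref{thm:homo} to $\ibpl\pi$, together with $\rvacApl\,\ibpl\pi = \ibpl\,\rvacApl\pi$ and the statistic identity, gives
\[\hpl_i(\pi) + \hpl_i(\rvacApl\pi) = \hpl_i(\ibpl\pi) + \hpl_i(\rvacApl\,\ibpl\pi) = 2\kappa,\]
which is the desired $\kappa$-mesy on $\sfb^n$; the $\sfb'^n$ case is the same.

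The main obstacle is the first step: making precise that the down-transfer commutes with the folding embedding despite the covering relations changing at the $\flip$-fixed elements. Everything else is either a direct consequence of already-proved results (\cref{lem:b_embedding_pl,prop:b_com_diagram,thm:homo}) or a tautological rewriting of the statistic. I expect the commutation to go through cleanly because $\flip$-invariance forces the merged cover-values to agree, but it is the one point where the quotient structure genuinely enters. An alternative that avoids $\down$ entirely would be to prove the antichain analog of \cref{lem:b_embedding_pl} directly, namely that the lift map is $\bftau_i$-equivariant for every $i$, by the same chain-counting argument: every maximal chain of $\sfb^n$ through $\langle\flip\rangle p$ lifts to maximal chains of $\sfa^{2n-1}$ whose $\flip$-invariant weight sums coincide. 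This is the route I would actually take if the $\down$-commutation proved fiddly.
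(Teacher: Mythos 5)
Your proposal is correct and follows essentially the same route as the paper: the paper's one-line proof is precisely the identity $(\down\circ\ibpl\circ\down^{-1}\pi)([i,j])=\pi(\langle\flip\rangle[i,j])$ (your commutation $\down\circ\ibpl=\ibpl\circ\down$), combined with the statistic identity $\hpl_i=\hpl_i\circ\ibpl$, \cref{lem:b_embedding_pl}, and the tropicalization of \cref{thm:homo}. Your extra care about the two lower covers of a $\flip$-fixed element merging in the quotient is a correct elaboration of the step the paper leaves unjustified, not a departure from its argument.
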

\begin{proof}
For $\pi \in \calapl(\sfb^n)$, we have
\[(\down\ibpl\down^{-1}\pi) ([i,j]) = \pi(\langle \flip \rangle[i,j]), \]
for all $[i,j] \in \sfa^{2n-1}$. Hence $\hpl_i(\pi)=\hpl_i(\down\ibpl\down^{-1}\pi)$, and so the result follows from \cref{thm:homo} and \cref{lem:b_embedding_pl}.
\end{proof}

We conjecture an additional rowvacuation homomesy for $\sfb^n$ beyond those stated in \cref{cor:b_homo_pl}. Namely, define $\hpl_*\colon \calapl(\sfb^n)\to \rr$ by
\[\hpl_*(\pi) \coloneqq \sum_{\substack{p\in \sfa^{2n-1},\\ \flip(p)=p}} \pi(\langle\flip\rangle p).\]

\begin{conj} \label{conj:b_homo_pl}
The statistic $\hpl_*\colon \calapl_{\kappa}(\sfb^n)\to \rr$ is $\kappa$-mesic for $\rvacApl$.
\end{conj}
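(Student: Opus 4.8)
The plan is to reduce the conjecture, through the two embeddings already used in this paper, to a single new conservation law for the main diagonal of a rectangle, and then to prove that law. Throughout I work at the piecewise-linear level, as in the statement. First I would pass from $\sfb^n$ to $\sfa^{2n-1}$ using the folding embedding $\ibpl$: since $\ibpl$ is $\rvacFpl$-equivariant (\cref{lem:b_embedding_pl}) and intertwines the down-transfer exactly as in the proof of \cref{cor:b_homo_pl}, the statistic $\hpl_*$ becomes, for $\sigma \coloneqq \down\ibpl\down^{-1}\pi$,
\[ g(\sigma) = \sum_{a=1}^{n} \sigma([a,2n-a]), \]
the sum of $\sigma$ over the central vertical chain $[n,n] < [n-1,n+1] < \cdots < [1,2n-1]$ of $\flip$-fixed elements of $\sfa^{2n-1}$, taken over $\flip$-invariant $\sigma$. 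The essential difficulty — and the reason this is only a conjecture while \cref{cor:b_homo_pl} is a theorem — is that $g$ is genuinely new: unlike every statistic in \cref{cor:b_homo_pl}, $g$ is \emph{not} a linear combination of the $\h_i$, since the coefficient of $[a,b]$ in $\sum_i c_i \h_i$ is $c_a + c_b$ and no choice of the $c_i$ makes this the indicator of the antidiagonal $a+b=2n$. So the transfer trick behind \cref{cor:b_homo_pl} is unavailable. Because $\rvacApl$ is an involution, it suffices to prove that $g(\sigma) + g(\rvacApl\sigma)$ is a constant $c$ for all $\flip$-invariant $\sigma$; then $\hpl_*$ is $\tfrac{c}{2}$-mesic and $c$ is read off from a single orbit.

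Next I would push $\sfa^{2n-1}$ into $\rect{2n}{2n}$ via $\iapl$ (the tropicalization of $\iab$). The key observation is that the central chain of $\sfa^{2n-1}$ maps onto the main diagonal of $\rect{2n}{2n}$: writing $\pi' = \down^{-1}\sigma$, the upper half $(2n+1-a,2n+1-a)$ of the diagonal carries $\pi'([a,2n-a])$ while the lower half $(a,a)$ carries $(\rvacFpl\pi')([a,2n-a])$, for $1\le a\le n$. By a down-transfer telescoping identity of the same type as \cref{prop:down_technical}(c),(d) — which I would now establish \emph{along the diagonal} instead of along rows and columns — the two summands $g(\sigma)$ and $g(\rvacApl\sigma)$ become the sums of $\down\iapl\pi'$ over the upper and lower halves of the main diagonal respectively. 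Reciprocity (\cref{thm:reciprocity}, in the rowvacuation form of \cref{rem:rect_rvac}) pairs these halves, so that $g(\sigma)+g(\rvacApl\sigma)$ is, up to a fixed factor, the full main-diagonal sum $D \coloneqq \sum_{i=1}^{2n}\down\iapl\pi'(i,i)$. The conjecture thus reduces to the statement that $D$ is independent of $\pi'$ on the image of $\iapl$, i.e.\ for $\rho = \iapl\pi'$ satisfying $\flip(\rho)=\rho$ and $(\rowFpl)^{2n}\rho=\rho$.

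I expect this diagonal-constancy statement to be the main obstacle. The Stanley--Thomas word of \cref{cor:st_constant} is exactly a device for controlling sums of $\down$-values along the \emph{rows and columns} of the rectangle — this is what powers \cref{thm:homo} via \cref{prop:down_technical} — but the main diagonal is transverse to every row and column, so the Stanley--Thomas word says nothing directly about $D$. My route for $D$ is to use flip-invariance, which collapses the diagonal down-value to $\down\rho(i,i)=\rho(i,i)-\rho(i-1,i)$, and then to combine the antipodal pairing of reciprocity (which relates $\rho(i,i)$ to $(\rowFpl)^{2i-1}\rho$ at $(i,i)$) with the rotation of the Stanley--Thomas word under $\rowFpl$ to pin $D$ down; the rank-by-rank description (the PL form of \cref{prop:rvac_sew_row}/\cref{prop:b_row_iterates}, which expresses the rowvacuation value at a rank-$(2n-2a)$ element through the $(2n-2a+1)$-st rowmotion iterate) should keep the bookkeeping of iterates manageable. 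A more robust alternative would bypass the rectangle entirely and instead write $g$ as a constant plus a balanced combination of piecewise-linear signed toggleability statistics, in the spirit of the rook/toggleability method for the $\h_i$ homomesies mentioned at the end of \cref{sec:homomesies}; if such a balanced expression exists, homomesy under the toggle composition $\rvacApl$ follows formally. Either way, a diagonal (rather than row/column) conservation law for rectangle rowmotion is the crux, and once it is established the value $c$ is fixed by the size-two orbit of the empty antichain, for which $g$ takes the values $0$ and $\kappa$.
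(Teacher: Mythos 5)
You have not proved this statement, and neither does the paper: \cref{conj:b_homo_pl} is stated there as an open conjecture, so there is no proof of record to compare against, and your write-up is a reduction rather than a proof. The first half is sound and retraces the paper's own machinery: the folding embedding $\ibpl$ together with $(\down\ibpl\down^{-1}\pi)([i,j])=\pi(\langle\flip\rangle[i,j])$ correctly converts $\hpl_*$ into the sum of $\sigma$ over the central chain of elements $[a,2n-a]$ of $\sfa^{2n-1}$, and your observation that this statistic is not a linear combination of the $\h_i$ (the coefficient of $[a,b]$ in $\sum_i c_i\h_i$ is $c_a+c_b$, which cannot be the indicator of $a+b=2n$) correctly explains why the argument for \cref{cor:b_homo_pl} does not apply. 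But from there everything essential is deferred. The claim that $g(\sigma)+g(\rvacApl\sigma)$ equals, up to controlled corrections, the main-diagonal sum $D=\sum_{i}\down\iapl\pi'(i,i)$ in $\rect{2n}{2n}$ is itself nontrivial: at the PL level $\down\rho(i,i)=\rho(i,i)-\rho(i-1,i)$ involves off-diagonal entries, so the diagonal sum does not telescope the way the row and column products in \cref{prop:down_technical} do, and you supply no analog of that computation. And the crux --- that $D$ is constant on the flip-invariant image of $\iapl$ --- is precisely the new conservation law to which the conjecture is equivalent; you name two possible attacks (reciprocity plus Stanley--Thomas rotation, or a balanced toggleability expression) but carry out neither, and as you yourself note, the Stanley--Thomas word controls rows and columns, not diagonals. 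The gap is exactly where you locate it, and it is the whole problem.

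There is also a normalization issue you should reconcile before claiming the statement as written. On the two-element orbit $\{\emptyset,\,\sfb^n_0\}$ (take $\kappa=1$ and restrict to vertices of $\calc(\sfb^n)$), the statistic $\hpl_*$ as defined in the paper takes the values $0$ and $1$, since among the flip-fixed singleton orbits $\{[a,2n-a]\}$ only $\{[n,n]\}$ is a minimal element of $\sfb^n$; your own endgame (``$g$ takes the values $0$ and $\kappa$, so $c=\kappa$ and $\hpl_*$ is $\tfrac{c}{2}$-mesic'') therefore yields $\tfrac{\kappa}{2}$-mesy, not the conjectured $\kappa$-mesy. The identity $\sum_{p\in\sfb^n}\pi(p)=\tfrac14(\hpl_1+\cdots+\hpl_{2n-1}+\hpl_*)$ in the remark following the conjecture only balances if each flip-fixed element is counted twice in $\hpl_*$ (just as $[i,i]$ is counted twice in $\h_i$), so the definition of $\hpl_*$ is most likely missing a factor of $2$; but as things stand the constant your argument would produce contradicts the statement you set out to prove, and you do not flag the discrepancy.
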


\begin{remark}
Observe that for any $\pi \in \calapl(\sfb^n)$, we have
\[ \sum_{p\in \sfb^{n}} \pi(p) = \frac{1}{4}\left( \hpl_1+\hpl_2+\cdots +\hpl_{2n-1} + \hpl_*\right).\]
So \cref{conj:b_homo_pl} would imply that $\pi \mapsto \sum_{p\in \sfb^{n}} \pi(p)$ (i.e., the PL analog of antichain cardinality) is $\frac{n\kappa}{2}$-mesic for $\rvacApl$ acting on $\calapl(\sfb^n)$. At the combinatorial level, this was shown by Panyushev~\cite[\S5]{panyushev2004adnilpotent}. Note that for $\sfb'^n$, antichain cardinality is \emph{not} homomesic under rowvacuation.
\end{remark}

We can try to repeat all of the above at the birational level. However, there is a technical obstruction having to do with ``factors of $2$'' (we saw factors of $2$ were also an issue in \cref{rem:2s}). Thus, we are only able to replicate the arguments at the birational level for $\sfb'^n$ and not for $\sfb^n$.

So define $\ibb\colon \calfb_{\kappa}(\sfb'^n)\to\calfb_{\kappa/2}(\sfa^{2n})$ by 
\[(\ibb \pi)(p) \coloneqq \pi(\langle \flip \rangle p), \]
for all $p \in \sfa^{2n}$.

\begin{lemma} \label{lem:b_embedding_b}
The embedding $\ibb\colon \calfb_{\kappa}(\sfb'^n)\to\calfb_{\kappa/2}(\sfa^{2n})$ is $\bft_i$-equivariant for any $0\leq i \leq 2n-2$; in particular, it is $\rowFb$- and $\rvacFb$-equivariant.
\end{lemma}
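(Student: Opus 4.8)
The plan is to follow the same rank-by-rank strategy that makes the piecewise-linear statement (\cref{lem:b_embedding_pl}) straightforward, while carefully tracking the multiplicative factors of $2$ that the tropical ($\max$) setting silently absorbs but the birational ($+$) setting does not. First I would record two structural facts about $\ibb$. Since $(\ibb\pi)(p)=\pi(\langle\flip\rangle p)$, the function $\ibb\pi$ is $\flip$-invariant; conversely, because $\flip$ is a rank-preserving poset automorphism and each birational toggle $t_p$ alters only the value at $p$ using the values at the elements covering and covered by $p$, applying $\flip$ intertwines $t_p$ with $t_{\flip(p)}$, so toggling preserves $\flip$-invariance and the image of $\ibb$ is carried into itself. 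Consequently, using that toggles of a common rank commute (\cref{prop:basic-properties-OI-rank-toggles}) and that each is local, proving $\bft_i$-equivariance reduces to the single pointwise identity
\[(t_p(\ibb\pi))(p)=(\ibb(\bft_i\pi))(p)\qquad\text{for each }p\in(\sfa^{2n})_i,\]
i.e.\ to checking that the birational toggle at $p$ in $\sfa^{2n}$ reproduces the toggle at the orbit $\langle\flip\rangle p$ in $\sfb'^n$.

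The key structural input is the behaviour of $\flip$ on covers. The bottom rank of $\sfa^{2n}$ consists of the $2n$ singletons $[1,1],\dots,[2n,2n]$; since $2n$ is even, the reflection $\flip$ permutes them in transpositions and has \emph{no} fixed point in rank $0$. (This is exactly the point of departure from $\sfa^{2n-1}$, whose bottom rank has the odd number $2n-1$ of singletons and hence the central fixed point $[n,n]$.) More generally I would show that the two elements covered by a given $p$, and the two covering it, are interchanged by $\flip$ precisely when $p$ itself is $\flip$-fixed, and are otherwise carried to the corresponding covers of $\flip(p)$; thus the covers of $p$ descend bijectively to the covers of $\langle\flip\rangle p$ in the quotient \emph{unless} $p$ is fixed, in which case its two lower covers form one $\flip$-orbit and its two upper covers form one $\flip$-orbit. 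For non-fixed $p$ the numerator $\sum_{q\lessdot p}(\ibb\pi)(q)$ and the denominator factor $\sum_{p\lessdot q}(\ibb\pi)(q)^{-1}$ therefore agree term-by-term with the corresponding sums in $\sfb'^n$, and the identity is immediate.

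The main obstacle, and the crux of the proof, is the factor-of-$2$ bookkeeping at the $\flip$-fixed elements. For such a $p$ a doubled lower cover contributes a factor $2$ to the numerator, while a doubled upper cover contributes $\tfrac1{\pi}+\tfrac1{\pi}=\tfrac2{\pi}$, i.e.\ a factor $2$ to the denominator. At an \emph{interior} fixed point both doublings occur, so the two factors of $2$ cancel and $(t_p(\ibb\pi))(p)$ equals the quotient toggle value. The one fixed point whose upper cover is $\hato$ rather than a pair is the maximal element $[1,2n]$: here only the numerator doubles, and this lone surviving factor of $2$ is exactly cancelled by the halving of $\pi(\hato)$ from $\kappa$ to $\kappa/2$ that is built into the target $\calfb_{\kappa/2}(\sfa^{2n})$. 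Because $\flip$ has no fixed point in rank $0$, the value at $\hatz$ requires no compensating rescaling and every factor of $2$ is accounted for; this is precisely where the parallel argument for $\sfb^n$ breaks down, since there $[n,n]$ is a fixed point of rank $0$ and no consistent rescaling at $\hatz$ is available. Having verified the pointwise identity in all cases, I obtain $\bft_i$-equivariance for every rank, and hence equivariance of $\rowFb=\bft_0\bft_1\cdots\bft_{2n-1}$ and of $\rvacFb$, which is a composition of the $\bft_i$.
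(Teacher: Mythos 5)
Your proof is correct and takes the same route the paper intends: the paper's own proof is just the one-line remark that the claim is ``straightforward, bearing in mind the factors of $2$ issue,'' and your case analysis (non-fixed elements matching term-by-term; interior $\flip$-fixed elements getting cancelling factors of $2$ in numerator and denominator; the top element $[1,2n]$ having its lone factor of $2$ absorbed by the rescaling $\hato\mapsto\kappa/2$; no fixed point in rank $0$, unlike $\sfa^{2n-1}$) is exactly the bookkeeping that remark alludes to. You in fact verify the identity at the top rank $2n-1$ as well, which is needed for the ``in particular'' clause about $\rowFb$ and $\rvacFb$ even though the lemma's stated range $0\leq i\leq 2n-2$ omits it.
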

\begin{proof}
Bearing in mind the factors of $2$ issue, this is straightforward.
\end{proof}

For $1\leq i \leq 2n$, define $\hb_i\colon \calab(\sfb'^n)\to \rr_{>0}$ by
\[\hb_i(\pi) \coloneqq \prod_{i\leq j \leq 2n} \pi(\langle\flip\rangle[i,j]) \cdot \prod_{1 \leq j \leq i} \pi(\langle \flip \rangle[j,i]).\]
So $\hb_i$ is the natural detropicalization of $\hpl_i$. Note that $\hb_i = \hb_{2n+1-i}$.

\begin{cor} \label{cor:b_homo_b}
The statistics $\hb_i\colon \calab_{\kappa}(\sfb'^n)\to \rr_{>0}$, for $1\leq i \leq 2n$, are multiplicatively $\kappa$-mesic for $\rvacAb$.
\end{cor}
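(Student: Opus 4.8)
The plan is to reduce \cref{cor:b_homo_b} to \cref{thm:homo} via the embedding $\ibb$, exactly mirroring the proof of \cref{cor:b_homo_pl} at the birational level. First I would note that for $\pi \in \calab_{\kappa}(\sfb'^n)$ we want to relate $\hb_i(\pi)$ to the product over a diagonal in $\sfa^{2n}$ computed from $\down\ibb\down^{-1}\pi$. Since $\ibb$ is defined on order filters, I would set $\widetilde{\pi} \coloneqq \down^{-1}(\pi) \in \calfb_{\kappa}(\sfb'^n)$ and consider $\down\ibb\widetilde{\pi} \in \calab_{\kappa/2}(\sfa^{2n})$. The key computation is that for all $[i,j]\in\sfa^{2n}$,
\[ (\down\ibb\widetilde{\pi})([i,j]) = \pi(\langle\flip\rangle[i,j]). \]
This follows because $\ibb$ is defined pointwise by $(\ibb\widetilde{\pi})(p) = \widetilde{\pi}(\langle\flip\rangle p)$, and down-transfer is a local operation on cover relations: I would check that the cover relations of $\sfa^{2n}$ map correctly onto those of $\sfb'^n$ under the quotient $\langle\flip\rangle$, using that $\sfb'^n = \sfa^{2n}/\langle\flip\rangle$ has no fixed points of $\flip$ (this is exactly the ``no factors of $2$'' feature that makes $\sfb'^n$, but not $\sfb^n$, amenable to the birational argument, as flagged in the text before \cref{lem:b_embedding_b}).

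Granting that pointwise identity, I would compute
\[ \hb_i(\pi) = \prod_{i\leq j \leq 2n} \pi(\langle\flip\rangle[i,j]) \cdot \prod_{1 \leq j \leq i} \pi(\langle \flip \rangle[j,i]) = \prod_{i\leq j \leq 2n} (\down\ibb\widetilde{\pi})([i,j]) \cdot \prod_{1 \leq j \leq i} (\down\ibb\widetilde{\pi})([j,i]), \]
which is precisely $\hb_i(\down\ibb\widetilde{\pi})$ viewed as the $\sfa^{2n}$-statistic from \cref{sec:homomesies} applied to the antichain-level function $\down\ibb\widetilde{\pi}$. In other words, setting $\sigma \coloneqq \down\ibb\widetilde{\pi} \in \calab_{\kappa/2}(\sfa^{2n})$, we have $\hb_i^{(\sfb'^n)}(\pi) = \hb_i^{(\sfa^{2n})}(\sigma)$.

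Next I would use the equivariance of $\ibb$. By \cref{lem:b_embedding_b}, $\ibb$ is $\rvacFb$-equivariant, i.e. $\ibb\circ\rvacFb = \rvacFb\circ\ibb$; and since $\down$ commutes with the conjugacy between order-filter and antichain rowvacuation (\cref{prop:b_com_diagram}), passing to the antichain level gives $\down\ibb\down^{-1}\circ\rvacAb = \rvacAb\circ\down\ibb\down^{-1}$ as maps $\calab(\sfb'^n)\to\calab(\sfa^{2n})$. Therefore $\down\ibb\down^{-1}(\rvacAb\pi) = \rvacAb(\sigma)$, and since $\lkb = \rvacAb$ on $\sfa^{2n}$, applying \cref{thm:homo} to $\sigma \in \calab_{\kappa/2}(\sfa^{2n})$ yields
\[ \hb_i(\pi)\cdot\hb_i(\rvacAb\pi) = \hb_i^{(\sfa^{2n})}(\sigma)\cdot\hb_i^{(\sfa^{2n})}(\lkb\sigma) = \left(\tfrac{\kappa}{2}\right)^2. \]
Since $\rvacAb$ is an involution (\cref{prop:b_dihedral} via \cref{prop:b_com_diagram}), every $\rvacAb$-orbit in $\calab_{\kappa}(\sfb'^n)$ has size $1$ or $2$, so the multiplicative orbit-average of $\hb_i$ equals $\sqrt{\hb_i(\pi)\hb_i(\rvacAb\pi)} = \kappa/2$ on every two-element orbit; I would also verify the correct normalization so that the stated constant is $\kappa$ rather than $\kappa/2$ (the discrepancy comes from the parameter change $\kappa \mapsto \kappa/2$ in the target of $\ibb$, and from whether $\hb_i$ is a product over one or two diagonals — I expect tracking this factor of $2$ carefully is the one genuinely delicate point). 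The main obstacle is precisely this bookkeeping of the factor of $2$ together with establishing the clean pointwise identity $(\down\ibb\widetilde{\pi})([i,j]) = \pi(\langle\flip\rangle[i,j])$; once those are pinned down, the result is immediate from \cref{thm:homo}, \cref{lem:b_embedding_b}, and \cref{prop:b_com_diagram}.
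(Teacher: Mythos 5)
Your overall strategy is the same as the paper's: push $\pi$ through $\down^{-1}$, embed via $\ibb$ into $\calab_{\kappa/2}(\sfa^{2n})$, apply \cref{thm:homo} there, and pull back using the equivariance of \cref{lem:b_embedding_b}. However, there is a genuine gap at the step you yourself flag as delicate, and your proposed resolution of it rests on a false premise. You assert the clean pointwise identity $(\down\ibb\down^{-1}\pi)([i,j])=\pi(\langle\flip\rangle[i,j])$ on the grounds that $\flip$ acts freely on $\sfa^{2n}$. It does not: $\flip([i,j])=[i,j]$ exactly when $i+j=2n+1$, so $\sfa^{2n}$ has $n$ fixed points, one in each odd rank (indeed the paper's whole point about ``factors of $2$'' is that both $\sfa^{2n-1}$ and $\sfa^{2n}$ have fixed points, but they sit in different ranks, which is why the birational argument goes through for $\sfb'^n$ and not for $\sfb^n$). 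At a fixed point $x=[i,j]$ with $i+j=2n+1$, the two lower covers $[i+1,j]$ and $[i,j-1]$ form a single $\flip$-orbit, so the denominator in the down-transfer doubles and the correct identity is
\[(\down\ibb\down^{-1}\pi)([i,j])=\begin{cases}\pi(\langle\flip\rangle[i,j])/2 & \text{if } \flip([i,j])=[i,j],\\ \pi(\langle\flip\rangle[i,j]) & \text{otherwise.}\end{cases}\]

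This is precisely what rescues the constant. Each hook defining $\hb_i$ contains exactly one $\flip$-fixed element (namely $[i,2n+1-i]$ or $[2n+1-i,i]$, depending on whether $i\leq n$), so $\hb_i^{(\sfb'^n)}(\pi)=2\cdot\hb_i^{(\sfa^{2n})}(\down\ibb\down^{-1}\pi)$, and \cref{thm:homo} applied with parameter $\kappa/2$ gives $\hb_i(\pi)\,\hb_i(\rvacAb\pi)=\bigl(2\cdot\tfrac{\kappa}{2}\bigr)^2=\kappa^2$. If you instead take your claimed clean identity at face value, you land at $(\kappa/2)^2$ and the discrepancy cannot be explained away by the parameter change alone, contrary to your suggestion; the missing factor of $2$ comes exactly from the fixed-point term you argued does not exist. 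So the gap is concrete: the pointwise identity must be corrected at fixed points, and the count of fixed-point terms per hook must be carried out, before the reduction to \cref{thm:homo} is valid.
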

\begin{proof}
For $\pi \in \calab(\sfb'^n)$, we have
\[(\down\ibb\down^{-1}\pi) ([i,j]) = \begin{cases} \pi(\langle \flip \rangle[i,j])/2 &\textrm{if $\flip([i,j])=[i,j]$}; \\
 \pi(\langle \flip \rangle[i,j]) &\textrm{if $\flip([i,j])\neq[i,j]$},\end{cases}\]
for all $[i,j] \in \sfa^{2n}$. In any $\hb_i$ there will be exactly one term corresponding to an~$[i,j]$ with $\flip([i,j])=[i,j]$. So the result follows from \cref{thm:homo} and \cref{lem:b_embedding_b}, because the $\frac{1}{2}$ in this term will exactly cancel with the $\frac{1}{2}$ in the $\frac{\kappa}{2}$ of $\calfb_{\kappa/2}(\sfa^{2n})$.
\end{proof}

Even though the embedding technique does not work for $\sfb^n$ at the birational level, we conjecture that the same rowvacuation homomesies continue to hold. Namely, for $1\leq i \leq 2n-1$, define $\hb_i\colon \calab(\sfb^n)\to \rr_{>0}$ similarly to how we did with $\sfb'^n$ above; and define $\hb_*\colon \calab(\sfb^n)\to \rr_{>0}$ as the natural detropicalization of $\hpl_*$.

\begin{conj} \label{conj:b_homo_b}
The statistics $\hb_i\colon \calab_{\kappa}(\sfb^n)\to \rr_{>0}$, for $1\leq i \leq 2n-1$, and $\hb_*\colon \calab_{\kappa}(\sfb^n)\to \rr_{>0}$, are multiplicatively $\kappa$-mesic for $\rvacAb$.
\end{conj}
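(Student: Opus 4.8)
The plan is to push through the same embedding-and-transfer strategy that proved \cref{cor:b_homo_b} for $\sfb'^n$, while confronting head-on the ``factor of $2$'' obstruction that the authors flag. First I would pin down exactly where the naive embedding $(\ibb\pi)(p)\coloneqq \pi(\langle\flip\rangle p)$ fails to be $\bft_i$-equivariant. The $\flip$-fixed elements of $\sfa^{2n-1}$ are the central column $[i,2n-i]$ for $1\le i\le n$. For a fixed point $[i,2n-i]$ with $i<n$, both of its lower covers $[i+1,2n-i],[i,2n-i-1]$ are $\flip$-conjugate (hence merge, contributing a factor $2$ below) and both of its upper covers $[i-1,2n-i],[i,2n-i+1]$ are $\flip$-conjugate (contributing a factor $2$ above); inside the birational toggle $t_{[i,2n-i]}=\frac{\sum_{y\lessdot p}\pi(y)}{\pi(p)\sum_{p\lessdot y}1/\pi(y)}$ these two factors of $2$ cancel, exactly as in the $\sfb'^n$ case. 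The lone exception is the rank-$0$ fixed point $[n,n]$: its upper covers $[n-1,n],[n,n+1]$ merge (factor $2$), but its unique lower cover is the adjoined bottom $\hatz$, whose value is not doubled. Thus $t_{[n,n]}$ carries one uncancelled factor of $2$, and this is the \emph{only} failure of $\bft_0$-equivariance.

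Second, I would attempt to absorb this single discrepancy into a corrected embedding $\ibb\colon \calfb_\kappa(\sfb^n)\to\calfb_c(\sfa^{2n-1})$ that rescales $\pi$ along the $\flip$-fixed column by a compensating power of $2$, chosen so that $t_{[n,n]}$ becomes equivariant while leaving the toggles $\bft_i$ for $i\ge 1$ untouched. The delicate point is that $[n,n]$ influences higher ranks through $\rvacFb$ via the iterated-rowmotion description of \cref{prop:b_row_iterates}, so any correction must be tracked consistently through the entire composition defining $\rvacFb$. Concretely, I would re-run the toggle-by-toggle proof of \cref{lem:b_embedding_b}, following the stray factor of $2$ as it propagates up the central column under successive applications of $\bft_1,\bft_2,\dots,\bft_{2n-2}$, and check that a single global rescaling cancels it at every stage.

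Third, granting such an $\rvacFb$-equivariant corrected embedding, the homomesy of each $\hb_i$ for $1\le i\le 2n-1$ would follow from \cref{thm:homo} just as in \cref{cor:b_homo_b}: in each $\hb_i$ with $i\neq n$ exactly one $\flip$-fixed interval appears, supplying the single $\tfrac12$ that the proof already manages, and only $\hb_n$ involves the corrected element $[n,n]$. The genuinely new ingredient is $\hb_*$, whose tropicalization sums $\pi$ over the whole fixed column $\{[i,2n-i]:1\le i\le n\}$ and which has no analog in the $\sfb'^n$ story. For this I would return to the rectangle embedding $\iab$ of \cref{lem:embedding} and the constancy of the Stanley--Thomas word in \cref{cor:st_constant}: the fixed column of $\sfa^{2n-1}$ maps to a distinguished diagonal of $\rect{2n}{2n}$, and I expect the product of $\down\iab\widetilde{\pi}$ along that diagonal to be pinned down by $\st$, giving the claimed multiplicative $\kappa$-mesy of $\hb_*$ after the factor-of-$2$ bookkeeping.

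The hard part will be the second step. The cancellation that makes $\sfb'^n$ work is an exact local coincidence at every fixed point, and at $[n,n]$ it fails by precisely one factor of $2$ against the boundary value $\pi(\hatz)=1$; it is not at all clear that any single rescaling of the embedding can repair $t_{[n,n]}$ without re-introducing a mismatch elsewhere once the correction is carried through the long product $\rvacFb$. If no clean corrected embedding exists, my fallback is to prove the conjecture by induction on $n$ using the recursion of \cref{lem:b_rvac_ind} together with the identification $\sfb^n_{\geq 1}\simeq \sfb^{n-1}$: the higher-rank behavior is then governed inductively, and only the rank-$0$ orbits --- including the troublesome singleton $\{[n,n]\}$ and its contribution to $\hb_*$ --- need be analyzed by hand.
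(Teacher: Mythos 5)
The statement you are trying to prove is left \emph{open} in the paper: it is stated as \cref{conj:b_homo_b}, the authors prove the analogous result only for $\sfb'^n$ (\cref{cor:b_homo_b}), and they explicitly say that the embedding technique ``does not work for $\sfb^n$ at the birational level'' because of the factors-of-$2$ issue (cf.\ \cref{rem:2s}). So there is no proof in the paper to compare against, and your proposal does not supply one either: it is a research plan whose central step you yourself flag as unresolved. Your diagnosis of the obstruction is accurate --- the $\flip$-fixed elements of $\sfa^{2n-1}$ are $[i,2n-i]$ for $1\leq i\leq n$, the two factors of $2$ cancel inside the toggle at each interior fixed point, and the lone failure is at the rank-$0$ fixed point $[n,n]$, whose unique lower cover is $\hatz$ with the undoubled value $1$. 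But the proposed repair (rescaling the embedding along the central column) provably cannot work. If you set $(\ibb\pi)([i,2n-i])=c_i\,\pi(\langle\flip\rangle[i,2n-i])$, then equivariance of the toggle at an interior fixed point forces $c_i^2=1$, hence $c_i=1$; equivariance at $[n,n]$ alone would force $c_n=1/\sqrt{2}$; and any $c_n\neq 1$ destroys equivariance of the toggles at the upper covers $[n-1,n]$ and $[n,n+1]$, whose cover-sums become $c_n\pi(\overline{[n,n]})+\pi(\overline{[n-1,n-1]})$, which is not a fixed scalar multiple of the corresponding sum downstairs. No single global or columnwise rescaling threads this needle, which is presumably why the authors left the statement as a conjecture.

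Two further points. First, your fallback induction rests on $\sfb^n_{\geq 1}\simeq\sfb^{n-1}$, which is false: since $\sfa^{2n-1}_{\geq 1}\simeq\sfa^{2n-2}$, one has $\sfb^n_{\geq 1}\simeq\sfb'^{n-1}$, so the recursion of \cref{lem:b_rvac_ind} alternates between the $\sfb$ and $\sfb'$ families; this is not fatal but it means the inductive step re-encounters the rank-$0$ fixed point at every other stage rather than isolating it once. Second, the statistic $\hb_*$ and the appearance of the fixed element $[n,n]$ \emph{squared} in $\hb_n$ have no analogue in the $\sfb'^n$ argument, and your treatment of them (``I expect the product \dots to be pinned down by $\st$'') is a hope, not an argument. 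In short: the obstruction is correctly identified, but the proposal does not overcome it, and the statement remains a conjecture.
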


\section{Some related enumeration} \label{sec:enumeration}

In this section we will consider some enumeration related to the operators we have been studying. Specifically, we will count fixed points of elements of $\langle \lk, \rowA \rangle$ acting on $\cala(\sfa^n)$.  

Recall the poset automorphism $\flip\colon \sfa^n\to\sfa^n$, which is reflection across the vertical axis of symmetry. And recall that \cref{lem:embedding} implies $\rowF^{n+1}=\flip$ for~$\sfa^n$. Since $\down$ evidently commutes with $\flip$, as do essentially all the operators we have considered, we also know that $\rowA^{n+1}=\flip$ for $\sfa^n$. (At the combinatorial level this was conjectured by Panyushev~\cite{panyushev2009orbits} and proved by Armstrong, Stump, and Thomas~\cite{armstrong2013uniform}.) The number of elements of $\cala(\sfa^n)$ fixed by $\flip$, in other words, the number of symmetric Dyck paths in $\dyck_{n+1}$, is well-known to be $\binom{n+1}{\lfloor (n+1)/2 \rfloor}$.

Affirming a conjecture of Bessis and Reiner~\cite{bessis2011cyclic}, Armstrong--Stump--Thomas~\cite{armstrong2013uniform} proved that $\langle \rowA \rangle$ acting on $\cala(\sfa^n)$ exhibits the cyclic sieving phenomenon with the sieving polynomial being the $q$-Catalan polynomial $\cat(n+1;q)$. This means that the numbers of fixed points of elements of $\langle \rowA \rangle$ acting on $\cala(\sfa^n)$ are given by plugging roots of unity into $\cat(n+1;q)$. We will not go into details about the cyclic sieving phenomenon, but let us remark that since $\cat(n+1;q)$ has a nice product formula, the Armstrong--Stump--Thomas result implies the number of fixed points of $\rowA^i$ has a nice product formula for any $i$. The case $i=1$ of their result is just the fact that $\#\cala(\sfa^n)=\cat(n+1)=\cat(n+1;q\coloneqq 1)$; while the case $i=n+1$ recovers the $\flip$ fixed point count, in agreement with $\cat(n+1;q\coloneqq -1)=\binom{n+1}{\lfloor (n+1)/2 \rfloor}$.

Since $\langle \lk, \rowA \rangle$ is a dihedral group, elements of the form $\lk \circ \rowA^i$ and $\lk \circ \rowA^j$ are conjugate whenever $i$ and $j$ have the same parity. So from the point of view of fixed point counts, there are two cases we need to consider: $\lk$ and $\lk \circ \rowA$.

The case $\lk $ was addressed by Panyushev~\cite{panyushev2004adnilpotent}.

\begin{thm}[{Panyushev~\cite[Thm.~4.6]{panyushev2004adnilpotent}}]
\[ \#\{A\in \cala(\sfa^n)\colon \lk (A)=A\} = \begin{cases} 0 &\textrm{if $n$ is odd}; \\ \cat(n/2) &\textrm{if $n$ is even}.\end{cases}\]
\end{thm}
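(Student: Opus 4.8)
The plan is to give a direct combinatorial bijection, working entirely with the endpoint-set description of antichains. Recall that an antichain $A \in \cala(\sfa^n)$ is completely determined by its set of left endpoints $I = \{i_1 < \cdots < i_k\}$ together with its set of right endpoints $J = \{j_1 < \cdots < j_k\}$: the interval $[i_\ell, j_\ell]$ is recovered by pairing the $\ell$-th smallest element of $I$ with the $\ell$-th smallest element of $J$, and the defining conditions for $A$ to be a valid antichain of $\sfa^n$ amount precisely to the inequalities $i_\ell \leq j_\ell$. In this language the Lalanne--Kreweras involution simply sends the pair $(I, J)$ to $([n] \setminus J,\, [n] \setminus I)$, directly from its definition in \cref{sec:intro}.

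First I would dispose of the odd case. Since $\#A + \#\lk(A) = n$ for every $A$, any fixed point satisfies $2\,\#A = n$; hence there are no fixed points when $n$ is odd, giving the claimed value $0$. For $n$ even I would then characterize the fixed points: from the endpoint description, $\lk(A) = A$ holds if and only if $I = [n] \setminus J$ (equivalently $J = [n] \setminus I$), i.e.\ if and only if $I$ and $J$ form a partition $I \sqcup J = [n]$; combined with $\#I = \#J$ this forces $\#I = \#J = n/2$. Conversely, any disjoint pair $(I,J)$ partitioning $[n]$ into equal blocks and satisfying the inequalities $i_\ell \le j_\ell$ yields a genuine fixed point, since an antichain is determined by its endpoint sets. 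Thus the fixed points of $\lk$ correspond exactly to the ways of $2$-coloring $[n]$ with $n/2$ elements colored ``left'' (forming $I$) and $n/2$ colored ``right'' (forming $J$) so that $i_\ell \le j_\ell$ for every $\ell$ (automatically $i_\ell < j_\ell$, as $I \cap J = \emptyset$).

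Finally I would recognize this as a Catalan count. Reading $1, 2, \ldots, n$ left to right and recording, for each element, whether it lies in $I$ or in $J$ produces a word with $n/2$ letters of each kind, and the condition $i_\ell < j_\ell$ for all $\ell$ is exactly the ballot condition that every prefix contains at least as many $I$'s as $J$'s. Such words are in bijection with Dyck paths in $\dyck_{n/2}$, of which there are $\cat(n/2)$, completing the count.

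The one step requiring genuine care is the equivalence between the pointwise inequalities $i_\ell < j_\ell$ and the prefix (ballot) condition; I would prove it by locating the first prefix at which the running count of $J$'s exceeds that of $I$'s and matching it to the first index $\ell$ with $i_\ell > j_\ell$, and conversely. Everything else is routine bookkeeping with endpoint sets, so I do not expect a serious obstacle.
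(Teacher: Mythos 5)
Your proof is correct. Note that the paper itself supplies no proof of this statement: it is quoted as a known result of Panyushev (his Theorem 4.6 in the cited work), so there is no internal argument to compare against. Your route is the natural one given the paper's endpoint-set formulation of $\lk$: the parity obstruction via $\#A+\#\lk(A)=n$, the observation that $\lk(A)=A$ is equivalent to $I\sqcup J=[n]$, and the identification of such equitable ordered partitions satisfying $i_\ell<j_\ell$ with ballot sequences (equivalently, standard Young tableaux of shape $(n/2,n/2)$, or Dyck paths in $\dyck_{n/2}$). The one step you flag as delicate — the equivalence of the pointwise inequalities with the prefix condition — is indeed the only place where an argument is needed, and your plan works; a slightly cleaner version avoids the "first violation" bookkeeping entirely: if every prefix has at least as many elements of $I$ as of $J$, then the prefix $[1,j_\ell]$ contains $\ell$ elements of $J$ and hence at least $\ell$ of $I$, giving $i_\ell\le j_\ell$; conversely if $i_\ell<j_\ell$ for all $\ell$ and a prefix $[1,m]$ contains $p$ elements of $J$, then $i_p<j_p\le m$ shows it contains at least $p$ elements of $I$. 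With that, the argument is complete and self-contained.
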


Completing the problem of counting fixed points of $\langle \lk, \rowA \rangle$ acting on $\cala(\sfa^n)$, we now give a formula for the number of fixed points of $\lk \circ \rowA$.

\begin{thm} \label{thm:fix-under-rowLK}
\[ \#\{A\in \cala(\sfa^n)\colon \lk \circ \rowA(A)=A\} =\binom{n+1}{\lfloor (n+1)/2 \rfloor}.\]
\end{thm}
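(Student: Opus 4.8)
The plan is to reduce the statement to counting the fixed points of a single involution and then to pin down that count. First I would record that $\lk \circ \rowA$ is an involution. Indeed, by \cref{thm:lk_is_rvac} we have $\lk = \rvacA$, and the antichain form of the second bullet of \cref{prop:row-rvac-dihedral} (available through \cref{prop:rvac-OI-ant}) gives $\rvacA \circ \rowA = \rowA^{-1}\circ \rvacA$; together with $\rvacA^2 = \mathrm{id}$ this yields $(\lk\circ\rowA)^2 = \mathrm{id}$. Hence $\langle \lk,\rowA\rangle$ is dihedral, with $\flip = \rowA^{n+1}$ the central involution, and
\[\#\{A : \lk\circ\rowA(A) = A\} = \#\{A : \rowA(A) = \lk(A)\}.\]
Since the target value $\binom{n+1}{\lfloor (n+1)/2\rfloor}$ is exactly $\#\{A : \flip(A)=A\}$ (the number of symmetric Dyck paths in $\dyck_{n+1}$), the goal is to show these two reflection classes of the dihedral action have equal fixed-point counts.

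My main line of attack would be to turn $\rowA(A) = \lk(A)$ into an explicit condition on the coordinate data of $A$ and then to count directly. Writing $A$ as a two-row array with top row $I = \{i_1<\cdots<i_c\}$ and bottom row $J = \{j_1<\cdots<j_c\}$, the explicit description of $\lk$ from \cref{sec:intro} replaces $I,J$ by $[n]\setminus J$ and $[n]\setminus I$, and Panyushev's matrix description of $\rowA^{-1}$ (\cref{prop:row-inv}) then lets me expand the equivalent equation $A = \rowA^{-1}\circ\lk(A)$ entirely in terms of $I$ and $J$. The upshot should be a clean self-complementarity condition --- I expect something of the form ``$A$ is invariant under a fixed involution on the associated lattice path, equivalently a balanced binary word of length $2(n+1)$'' --- whose solutions are enumerated by the central binomial coefficient $\binom{n+1}{\lfloor(n+1)/2\rfloor}$, with the two parities of $n$ absorbed uniformly by the floor. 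Establishing and solving this normal form, and in particular controlling the columns that get deleted in \cref{prop:row-inv} (which is where the combinatorics is most delicate), is the step I expect to be the main obstacle.

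As a guide and a consistency check I would keep the comparison with the $\flip$-symmetric antichains in view, but I would \emph{not} expect a group-equivariant bijection between the two fixed-point sets: already for $\sfa^2$ one computes that the fixed set of $\lk\circ\rowA$ is $\{\varnothing,\{[1,1]\},\{[2,2]\}\}$ while that of $\flip$ is $\{\varnothing,\{[1,2]\},\{[1,1],[2,2]\}\}$, so the two sets meet the $\langle\rowA\rangle$-orbits differently and the equality of sizes is genuinely an enumerative coincidence rather than an orbitwise one. An alternative, more conceptual route would be to transport the problem to the rectangle $\rect{n+1}{n+1}$ via the Grinberg--Roby embedding (\cref{lem:embedding}) and use that the Stanley--Thomas word converts rectangle rowmotion into cyclic rotation (\cref{thm:st_rot}), thereby rephrasing the fixed-point condition as a reflection symmetry of a length-$2(n+1)$ necklace; such symmetric necklaces are again counted by $\binom{n+1}{\lfloor(n+1)/2\rfloor}$. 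The cost of this route is bookkeeping: one must track the factors of $2$ and $4$ noted in \cref{rem:2s} and the precise way $\flip$ and $\rvacA$ act on the Stanley--Thomas word, so I would treat the direct combinatorial count above as the primary argument and the rectangle picture as corroboration.
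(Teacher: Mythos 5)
Your setup is fine --- $\lk\circ\rowA$ is indeed an involution, the target count is indeed the number of $\flip$-fixed antichains, and your $\sfa^2$ computation of the two fixed sets is correct --- but the proposal does not contain a proof. The entire content of the theorem is concentrated in the step you defer: you predict that expanding $A=\rowA^{-1}\circ\lk(A)$ via the interval coordinates and \cref{prop:row-inv} will yield ``a clean self-complementarity condition'' counted by $\binom{n+1}{\lfloor(n+1)/2\rfloor}$, and you yourself flag controlling the deleted columns as ``the main obstacle.'' No such normal form is derived, and nothing in the proposal makes it plausible that the deleted-column bookkeeping closes up; as written this is a conjecture about what the answer to the hard computation will look like, justified only by the fact that it would give the right number.

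Moreover, your structural reasoning points you away from the route that actually works. You observe that the fixed sets of $\lk\circ\rowA$ and of $\flip$ meet the $\langle\rowA\rangle$-orbits differently and conclude the equality of sizes is ``genuinely an enumerative coincidence rather than an orbitwise one.'' That inference is too strong: the paper proves the equality by showing (\cref{lem:conj to flip}) that $\rvacF\circ\rowF$ is conjugate to $\flip$ in the \emph{order filter toggle group} of $\sfa^n$ --- a much larger group than the dihedral group $\langle\lk,\rowA\rangle$ --- via \cref{prop:snow way out,prop:dino dash}. Conjugacy there forces equal cycle types, hence equal fixed-point counts, without providing (or needing) any $\langle\rowA\rangle$-equivariant bijection; your example rules out the latter but not the former. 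Finally, your fallback route through $\rect{n+1}{n+1}$ and the Stanley--Thomas word has an unaddressed obstruction beyond ``bookkeeping'': the embedding $\iab$ is genuinely birational (its middle rank is the constant $2\kappa$ and it carries the factors of $2$ and $4$ of \cref{rem:2s}), so it does not specialize to a map of order filters or antichains, and \cref{cor:st_constant} says the relevant Stanley--Thomas words are \emph{constant}, not arbitrary symmetric necklaces --- so the claimed reduction to counting reflection-symmetric necklaces of length $2(n+1)$ does not go through as stated.
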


\begin{figure}
    \centering
\begin{tikzpicture}[scale=0.64]
\begin{scope}
\draw[thick] (-0.15, 1.85) -- (-0.85, 1.15);
\draw[thick] (0.15, 1.85) -- (0.85, 1.15);
\draw [thick] (0,2) circle [radius=0.2];
\draw [thick] (-1,1) circle [radius=0.2];
\draw [thick] (1,1) circle [radius=0.2];
\end{scope}
\begin{scope}[shift={(4.5,0)}]
\draw[thick] (-0.15, 1.85) -- (-0.85, 1.15);
\draw[thick] (0.15, 1.85) -- (0.85, 1.15);
\draw [thick] (0,2) circle [radius=0.2];
\draw [thick,fill] (-1,1) circle [radius=0.2];
\draw [thick,fill] (1,1) circle [radius=0.2];
\end{scope}
\begin{scope}[shift={(9,0)}]
\draw[thick] (-0.15, 1.85) -- (-0.85, 1.15);
\draw[thick] (0.15, 1.85) -- (0.85, 1.15);
\draw [thick,fill] (0,2) circle [radius=0.2];
\draw [thick] (-1,1) circle [radius=0.2];
\draw [thick] (1,1) circle [radius=0.2];
\end{scope}
\begin{scope}[shift={(13.5,0)}]
\draw[thick] (-0.15, 1.85) -- (-0.85, 1.15);
\draw[thick] (0.15, 1.85) -- (0.85, 1.15);
\draw [thick] (0,2) circle [radius=0.2];
\draw [thick,fill] (-1,1) circle [radius=0.2];
\draw [thick] (1,1) circle [radius=0.2];
\end{scope}
\begin{scope}[shift={(18,0)}]
\draw[thick] (-0.15, 1.85) -- (-0.85, 1.15);
\draw[thick] (0.15, 1.85) -- (0.85, 1.15);
\draw [thick] (0,2) circle [radius=0.2];
\draw [thick] (-1,1) circle [radius=0.2];
\draw [thick,fill] (1,1) circle [radius=0.2];
\end{scope}
\draw [thick,<->] (8.2,2.5) arc [radius=2.25, start angle=50, end angle=130];
\draw [thick,<->] (17.2,0.5) arc [radius=2.25, start angle=-50, end angle=-130];
\draw [thick,->] (0.14,0.6) arc [radius=0.4, start angle=70, end angle=-250];
\draw [thick,->] (0.14,2.4) arc [radius=0.4, start angle=-70, end angle=250];
\draw [thick,->] (4.64,0.6) arc [radius=0.4, start angle=70, end angle=-250];
\draw [thick,->] (9.14,0.6) arc [radius=0.4, start angle=70, end angle=-250];
\draw [thick,->] (13.64,2.4) arc [radius=0.4, start angle=-70, end angle=250];
\draw [thick,->] (18.14,2.4) arc [radius=0.4, start angle=-70, end angle=250];
\node at (0,-0.57) {$\flip$};
\node at (4.5,-0.57) {$\flip$};
\node at (9,-0.57) {$\flip$};
\node at (15.75,-0.57) {$\flip$};
\node at (0,3.57) {$\LK \circ \rowA$};
\node at (6.75,3.57) {$\LK \circ \rowA$};
\node at (13.75,3.57) {$\LK \circ \rowA$};
\node at (18,3.57) {$\LK \circ \rowA$};
\end{tikzpicture}
\caption{The $\cat(3)=5$ antichains of $\sfa^2$.  There are $3=\binom{3}{1}$ of them fixed by $\flip$, and likewise $3$ of them fixed by $\LK \circ \rowA$.} \label{fig:A2-flip-fix}
\end{figure}

The astute reader may notice that \cref{thm:fix-under-rowLK} says that the number of antichains in $\cala(\sfa^n)$ fixed by $\lk \circ \rowA$ is the same as the number fixed by $\flip$. See \cref{fig:A2-flip-fix} for an illustration of this when $n=2$. Indeed, the way one can prove \cref{thm:fix-under-rowLK} is by showing that $\lk \circ \rowA$ and $\flip$ are conjugate in the antichain toggle group (which of course implies they have the same orbit structure).

Actually, it is more convenient to use use order filter toggles here rather than antichain toggles. It is easier to work with order filter toggles because $\bft_i \bft_j = \bft_j \bft_i$ whenever $|i-j|\not=1$, whereas antichain rank toggles (for different ranks) never commute. At any rate, the key lemma need to prove \cref{thm:fix-under-rowLK} is:

\begin{lemma}\label{lem:conj to flip}
$\rvacF\circ\rowF$ is conjugate to $\flip$ in the order filter toggle group of~$\sfa^n$.
\end{lemma}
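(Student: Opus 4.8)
The plan is to reduce the statement to a conjugacy between two explicit involutions and then to construct the conjugating element by hand. First I would record why both sides are involutions. On one hand, the dihedral relations of \cref{prop:row-rvac-dihedral} give $\rvacF \circ \rowF = \rowF^{-1}\circ \rvacF$, so that
\[
(\rvacF \circ \rowF)^2 = \rowF^{-1}\circ \rvacF \circ \rvacF \circ \rowF = \rowF^{-1}\circ \rowF = \mathrm{id},
\]
using $\rvacF^2 = \mathrm{id}$; hence $\rvacF \circ \rowF$ is an involution. On the other hand, \cref{lem:embedding} (see the remark immediately following it) gives $\rowF^{n+1} = \flip$ for $\sfa^n$, so $\flip$ is itself an element of the order filter toggle group, and it too is an involution. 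Thus the content of the lemma is that these two specific involutions are conjugate in the full toggle group.

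The key obstruction, and the reason individual toggles (rather than just rank toggles) must enter, is the following. Since $\flip$ is a rank-preserving poset automorphism, $\flip \, t_p \, \flip^{-1} = t_{\flip(p)}$, and therefore $\flip$ commutes with every rank toggle $\bft_i = \prod_{p\in\sfa^n_i} t_p$ (conjugation merely permutes the commuting factors within a single rank). Consequently $\flip$ is central in the subgroup $\langle \bft_0,\dots,\bft_{n-1}\rangle$ generated by the rank toggles, which is exactly where $\rowF$, $\rvacF$, and hence $\rvacF\circ\rowF$ all live. A central element has a singleton conjugacy class, and $\rvacF\circ\rowF\ne\flip$ in general, so \emph{no} element of that subgroup can conjugate $\rvacF\circ\rowF$ to $\flip$; equivalently, inside the dihedral group $\langle \rowF,\rvacF\rangle$ the element $\flip=\rowF^{n+1}$ is a rotation while $\rvacF\circ\rowF$ is a reflection. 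Thus the conjugator must be a product of individual toggles $t_p$ with $\flip(p)\ne p$, which break the flip-symmetry. (This is precisely the rank-reversing symmetry that the rectangle $\rect{n+1}{n+1}$ enjoys via its antipode but that $\sfa^n$ itself lacks; compare \cref{rem:rect_rvac}.)

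To produce the conjugator I would argue by induction on $n$, exploiting the self-similarity $\sfa^n_{\geq 1}\cong \sfa^{n-1}$ already used in \cref{lem:rvac_ind}. The base cases $n\le 2$ are a direct toggle computation: for $n=2$ one checks that the single corner toggle $t_{[1,1]}$ conjugates $\rvacF\circ\rowF$ to $\flip$. For the inductive step I would use the recursive descriptions of $\rowF = \bft_0\,(\bft_1\cdots\bft_{n-1})$, of $\rvacF$, and of $\flip$ in terms of their counterparts on the non-minimal subposet $\sfa^n_{\geq 1}$, identifying toggles of $\sfa^{n-1}$ with toggles of the non-minimal elements of $\sfa^n$. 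Assuming a conjugator $\overline{w}$ for $\sfa^{n-1}$, I would extend it to $\sfa^n$ by composing with an explicit word in the bottom-rank toggles $t_{[i,i]}$ (the elements moved by $\flip$), chosen so that the extra toggles reconcile the action on the bottom rank. The only tools required are that each toggle is an involution, that toggles commute unless one covers the other (\cref{prop:basic-properties-OI-rank-toggles} at the rank level), and the standard fact that $a_1a_2\cdots a_m$ is conjugate to any cyclic rotation $a_2\cdots a_m a_1$.

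The hardest part will be the inductive construction of the conjugating word and its verification: one must track how the bottom-rank toggles $t_{[i,i]}$ interact with $\overline{w}$ and with the mixed-rank words appearing in $\rvacF$, and confirm that the net conjugation carries the reflection $\rvacF\circ\rowF$ to the rotation $\flip$. This is a finite but delicate commutation-relation calculation; the payoff, as noted after the lemma, is that $\rvacF\circ\rowF$ and $\flip$ then have identical orbit structures on $\calf(\sfa^n)$, which is exactly what is needed to count the fixed points in \cref{thm:fix-under-rowLK}.
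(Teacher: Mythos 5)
Your preliminary observations are all correct and worth having: both $\rvacF\circ\rowF$ and $\flip=\rowF^{n+1}$ are involutions in the toggle group; $\flip$ commutes with every rank toggle $\bft_i$ and hence is central in $\langle\bft_0,\dots,\bft_{n-1}\rangle$, so no word in the rank toggles can serve as the conjugator; and your $n=2$ base case is right ($t_{[1,1]}$ conjugates $\flip\,\bft_0$ to $\flip$ because $t_{[1,1]}\flip=\flip\, t_{[2,2]}$ and the two rank-$0$ toggles commute). But the proposal stops exactly where the proof has to start: the inductive conjugating word is never constructed, and you yourself flag its construction and verification as ``the hardest part.'' Worse, the inductive framework as stated has a genuine flaw. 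For \emph{order filter} toggles, the toggle $t_p$ at a rank-$1$ element $p=[i,i+1]$ of $\sfa^n$ is \emph{not} the toggle of the corresponding minimal element of $\sfa^n_{\geq 1}\cong\sfa^{n-1}$: whether $p$ can be deleted from $F$ depends on whether the rank-$0$ elements $[i,i],[i+1,i+1]$ lie in $F$, so the toggle group of $\sfa^{n-1}$ does not act on $\calf(\sfa^n)$ compatibly with restriction to $\sfa^n_{\geq1}$. (This is precisely why the recursion in \cref{lem:rvac_ind} is phrased for antichains and carries the correction factor $\rowA^{-1}$ rather than being a naive restriction.) So the identification your induction leans on is not available, and the plan as written does not go through.

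The argument the paper has in mind is more global and avoids the subposet recursion entirely. The dihedral relations (\cref{prop:row-rvac-dihedral}) give $\flip=\rowF^{n+1}=\drvacF\circ\rvacF$, and a one-line cancellation then shows
\[
\rvacF\circ\rowF \;=\; \drvacF\circ\rowF\circ\flip \;=\; d_{n-2}^{-1}\circ\flip\;=\;\flip\circ d_{n-2},
\]
with $d_k$ as in \cref{prop:dino dash}. Since $n-2\equiv n\pmod 2$, the lemma reduces to \cref{prop:dino dash}, which is in turn proved by peeling off one factor at a time using \cref{prop:snow way out}; that proposition is established by exactly the pairing trick you used for $n=2$ (when $k\equiv n\pmod2$ the rank $\sfa^n_k$ has even size, so $\flip$ acts on it without fixed points, and the product of one toggle from each pair $\{p,\flip(p)\}$ conjugates $\flip\,\bft_k$ to $\flip$). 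If you want to salvage your write-up, the most economical fix is to replace the induction on $n$ by this explicit identity $\rvacF\circ\rowF=\flip\circ d_{n-2}$ and then run your pairing argument rank by rank.
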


For considerations of space, we will not go through all the details of the proof of \cref{lem:conj to flip} here (and anyways it is a relatively straightforward computation); but let us state two propositions which aid in the proof.

\begin{prop}\label{prop:snow way out}
If $k$ and $n$ have the same parity, then $\flip \circ \bft_k$ is conjugate to $\flip$ in the order filter toggle group of $\sfa^n$.
\end{prop}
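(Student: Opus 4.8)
The plan is to produce an \emph{explicit} conjugating element, assembled from a single toggle chosen in each $\flip$-orbit of the $k$-th rank $\sfa^n_k$. Three preliminary observations drive everything. First, $\flip$ is a rank-preserving involutive automorphism of $\sfa^n$, and it is an element of the order filter toggle group (indeed $\flip=\rowF^{n+1}$), so conjugacy ``in the toggle group'' makes sense for $\flip\circ\bft_k$ and $\flip$. Second, since $\flip$ permutes each rank $\sfa^n_k$, it commutes with the rank toggle $\bft_k$, i.e.\ $\flip\,\bft_k=\bft_k\,\flip$. Third, any two distinct elements of a common rank are incomparable, so by \cref{prop:basic-properties-OI-rank-toggles} all the individual toggles $t_p$ with $p\in\sfa^n_k$ pairwise commute.

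Next I would extract the role of the parity hypothesis. Because $\flip$ is reflection across the vertical axis of symmetry, it acts on the $(n-k)$-element rank $\sfa^n_k$ as a reflection, and such a reflection is \emph{fixed-point free} precisely when $n-k$ is even, i.e.\ precisely when $k\equiv n \pmod 2$. Under our hypothesis, then, $\flip$ partitions $\sfa^n_k$ into $s\coloneqq (n-k)/2$ two-element orbits $\{p_m,\flip(p_m)\}$, $1\le m\le s$, with $p_m\neq \flip(p_m)$. This freeness is the crux of the matter: it is exactly what lets every rank-$k$ toggle be paired with its $\flip$-partner and cancelled.

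Then I would set $g\coloneqq t_{p_1}t_{p_2}\cdots t_{p_s}$ (one toggle from each orbit) and verify $g(\flip\circ\bft_k)g^{-1}=\flip$. Writing $\phi$ for conjugation by $\flip$, so that $\phi(t_p)=t_{\flip(p)}$ and $\flip X=\phi(X)\flip$ for every $X$ in the toggle group, and using $\flip\,\bft_k=\bft_k\,\flip$, one computes
\[ g\,\flip\,\bft_k\,g^{-1}=g\,\bft_k\,\flip\,g^{-1}=g\,\bft_k\,\phi(g^{-1})\,\flip. \]
Now $\bft_k=\prod_{m=1}^{s} t_{p_m}t_{\flip(p_m)}$, while $g=\prod_{m} t_{p_m}$ and $\phi(g^{-1})=\prod_{m} t_{\flip(p_m)}$. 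Since \emph{every} factor appearing here lies in rank $k$, all factors commute, so the product collapses orbit by orbit:
\[ g\,\bft_k\,\phi(g^{-1})=\prod_{m=1}^{s} t_{p_m}\bigl(t_{p_m}t_{\flip(p_m)}\bigr)t_{\flip(p_m)}=\prod_{m=1}^{s} t_{p_m}^2\,t_{\flip(p_m)}^2=1, \]
using $t_p^2=1$. Hence $g(\flip\circ\bft_k)g^{-1}=\flip$, giving the desired conjugacy.

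The argument is short, and the only genuine obstacle is conceptual rather than computational: one must recognize that the same-parity hypothesis is \emph{exactly} the condition for $\flip$ to act freely on rank $k$, so that the rank-$k$ toggles split into commuting transposed pairs that cancel in full. Were $n-k$ odd instead, there would be a single central fixed element $p_0=\flip(p_0)$ whose toggle $t_{p_0}$ could not be absorbed in this way (this is the complementary case, treated separately), which is precisely why the parity condition is needed. All remaining bookkeeping—that rank-$k$ toggles commute (incomparability within a rank) and that $\flip$ commutes with $\bft_k$ (rank preservation)—is routine from \cref{prop:basic-properties-OI-rank-toggles}.
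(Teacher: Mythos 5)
Your argument is correct and complete: the parity hypothesis is exactly the condition for $\flip$ to act freely on $\sfa^n_k$, the conjugator $g=t_{p_1}\cdots t_{p_s}$ lies in the toggle group, and the cancellation $g\,\bft_k\,\phi(g^{-1})=1$ is valid because all rank-$k$ toggles commute and square to the identity. The paper omits the proof of this proposition entirely (calling it ``a relatively straightforward computation''), so there is nothing to compare against, but your explicit conjugating element is surely the intended argument.
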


\begin{prop}\label{prop:dino dash}
For $0\leq k \leq n-1$, define $d_k\colon \calf(\sfa^n)\to \calf(\sfa^n)$ by
\[ d_k \coloneqq (\bft_0 \bft_1 \bft_2 \cdots \bft_k) (\bft_0 \bft_1 \bft_2 \cdots \bft_{k-1})\cdots (\bft_0 \bft_1) (\bft_0).\]
If $k$ has the same parity as $n$, then $\flip\circ d_k$ is conjugate to $\flip$ in the order filter toggle group of $\sfa^n$.
\end{prop}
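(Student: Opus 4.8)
The plan is to work entirely inside the order filter toggle group of $\sfa^n$ and to split the argument into a parity-free ``unwinding'' of the staircase $d_k$, followed by an application of \cref{prop:snow way out}. Write $W \coloneqq \langle \bft_0, \ldots, \bft_{n-1}\rangle$ for the subgroup generated by the rank toggles, and recall from \cref{prop:basic-properties-OI-rank-toggles} that $W$ is a right-angled Coxeter group: $\bft_i^2 = 1$, and $\bft_i \bft_j = \bft_j \bft_i$ whenever $|i-j| > 1$. The key structural observation is that, because $\flip$ is rank-preserving, it fixes each rank toggle ($\flip \bft_i \flip = \bft_i$) and hence commutes with every element of $W$. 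Consequently, if $d_k = v\, P\, v^{-1}$ with $v \in W$, then $\flip d_k = v (\flip P) v^{-1}$, so $\flip d_k$ is conjugate in the toggle group to $\flip P$. This reduces the problem to finding a convenient $W$-conjugate of $d_k$.

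The target is $P_k \coloneqq \bft_k \bft_{k-2} \bft_{k-4} \cdots \bft_{\epsilon}$, where $\epsilon \in \{0,1\}$ has the parity of $k$; its factors pairwise commute, since their ranks differ by at least $2$. First I would prove, by induction on $k$ and \emph{without} any parity hypothesis, that $d_k$ is conjugate to $P_k$ \emph{within} the parabolic subgroup $\langle \bft_0, \ldots, \bft_{k-1}\rangle$ (write this as $d_k \sim P_k$). The base cases $d_0 = \bft_0 = P_0$ and $d_1 = \bft_0 \bft_1 \bft_0 \sim \bft_1 = P_1$ (conjugating by $\bft_0$) are immediate. For the inductive step I would use the recursion $d_k = (\bft_0 \bft_1 \cdots \bft_k)\, d_{k-1}$ together with the telescoping identity $(\bft_0 \cdots \bft_{k-1})^{-1}(\bft_0 \cdots \bft_k) = \bft_k$ to conjugate $d_k$, by a product of toggles of ranks strictly less than $k$, down to $\bft_k\, d_{k-2}$; applying the inductive hypothesis to $d_{k-2}$ (whose conjugator lies in $\langle \bft_0, \ldots, \bft_{k-3}\rangle$ and therefore commutes with $\bft_k$) then yields $\bft_k\, d_{k-2} \sim \bft_k\, P_{k-2} = P_k$. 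Keeping every conjugator of rank $< k$ is exactly what guarantees the legitimacy of the final step, and it keeps the strengthened hypothesis self-consistent across levels.

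With this parity-free reduction in hand, I would finally invoke the hypothesis $k \equiv n \pmod 2$. Every rank $m \in \{k, k-2, \ldots, \epsilon\}$ occurring in $P_k$ then satisfies $m \equiv n \pmod 2$, which (exactly as in \cref{prop:snow way out}) is the condition that $\flip$ act freely on the rank-$m$ elements of $\sfa^n$; so \cref{prop:snow way out} supplies a product $w_m$ of rank-$m$ toggles with $w_m^{-1}(\flip \bft_m) w_m = \flip$. Peeling from the top, I conjugate $\flip P_k$ by $w_k$: since $w_k$ consists of rank-$k$ toggles it commutes with each remaining factor $\bft_{k-2}, \ldots, \bft_\epsilon$ (ranks differing by at least $2$), so the conjugation replaces the leading $\flip \bft_k$ by $\flip$ and leaves the rest fixed, giving $\flip P_{k-2}$. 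Iterating down to $\flip \bft_\epsilon$ and applying \cref{prop:snow way out} once more produces $\flip$. Chaining $\flip d_k \sim \flip P_k \sim \flip$ completes the proof.

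I expect the main obstacle to be the explicit right-angled-Coxeter computation in the inductive step of the second paragraph: reducing $d_k$ to $\bft_k\, d_{k-2}$ requires a careful sequence of commutations and cancellations (for instance, when $k=3$ one conjugates successively by $\bft_0, \bft_1, \bft_0, \bft_2$ to bring $d_3$ to $\bft_3 \bft_1 = P_3$), and the bookkeeping needed to certify that no conjugator ever uses rank $k$ is the delicate part. Everything else is formal manipulation using only $\bft_i^2 = 1$ and the commutations of \cref{prop:basic-properties-OI-rank-toggles}.
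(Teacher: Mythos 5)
The paper never actually proves this proposition (nor \cref{prop:snow way out} nor \cref{lem:conj to flip}); it only states them, omitting the arguments ``for considerations of space.'' So there is no proof of record to compare against, and your proposal must stand on its own. It does: the overall structure is sound and I could verify every step. The one place you wave your hands --- the reduction of $d_k$ to $\bft_k d_{k-2}$ --- works exactly as you suspect, and you should make the mechanism explicit rather than calling it bookkeeping. Conjugating by $(\bft_0\cdots\bft_{k-1})^{-1}$ turns $d_k=(\bft_0\cdots\bft_k)\,d_{k-1}$ into $\bft_k\, d_{k-1}\,(\bft_0\cdots\bft_{k-1})$; now $d_{k-1}$ is precisely the dual-rowvacuation word on the generators $\bft_0,\ldots,\bft_{k-1}$ and $\bft_0\cdots\bft_{k-1}$ is the corresponding rowmotion word, so the purely Coxeter-theoretic identity behind \cref{prop:row-rvac-dihedral} (which, as the proof of \cref{prop:b_dihedral} notes, uses only $\bft_i^2=1$ and the commutations of \cref{prop:basic-properties-OI-rank-toggles}) gives $d_{k-1}(\bft_0\cdots\bft_{k-1})=(\bft_0\cdots\bft_{k-1})^{-1}d_{k-1}=d_{k-2}$. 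This single identity is what makes the induction go, the conjugator lies in $\langle\bft_0,\ldots,\bft_{k-1}\rangle$ as your strengthened hypothesis requires, and your $k=2,3$ sanity checks are consistent with it.

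The only substantive caveat is that your peeling step uses more than \cref{prop:snow way out} literally asserts: that proposition only says $\flip\circ\bft_m$ is conjugate to $\flip$ by \emph{some} element of the toggle group, whereas you need the conjugator to be a product of rank-$m$ toggles so that it commutes with the remaining factors $\bft_{m-2},\ldots,\bft_{\epsilon}$. That stronger statement is true and should be stated and proved as part of your write-up: when $m\equiv n\pmod 2$, $\flip$ acts freely on $\sfa^n_m$ (a fixed point $[i,i+m]$ would force $2i=n+1-m$), so taking $S\subseteq\sfa^n_m$ to contain one element of each $\flip$-orbit and $w_m=\prod_{p\in S}t_p$ gives $(\flip^{-1}w_m\flip)\,w_m=\prod_{p\in S\,\triangle\,\flip(S)}t_p=\bft_m$, hence $w_m(\flip\circ\bft_m)w_m^{-1}=\flip$. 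This is presumably how the authors prove \cref{prop:snow way out} anyway, but as written you are citing a black box that does not quite deliver what you use. With those two points made explicit, the proof is complete.
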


In summary, for any element of $\langle \lk, \rowA\rangle$ acting on $\cala(\sfa^{n})$, there is a nice product formula for its number of fixed points.

\begin{remark} \label{rem:pl_fixed_points}
For any $m \in \zz_{>0}$, the set $\frac{1}{m}\zz^{\sfp}\cap \calc(\sfp)$ of rational points in the chain polytope with denominator dividing $m$ is a finite set. In fact, a result of Proctor~\cite{proctor1988odd, proctor1990new} (namely, the enumeration of ``plane partitions of staircase shape'') implies that when $\sfp=\sfa^{n}$ the cardinality of this set is
\[\#\frac{1}{m}\zz^{\sfa^{n}}\cap \calc(\sfa^{n})=\prod_{1\leq i\leq j \leq n} \frac{i+j+2m}{i+j}.\]
Observe how the case $m=1$ recaptures the product formula for the Catalan number.

Since $\frac{1}{m}\zz^{\sfa^{n}}\cap \calc(\sfa^{n})$ is preserved by the PL antichain toggles $\tau_p$ (when $\kappa=1$, which we will assume from now on), it caries an action of $\lkpl$ and $\rowApl$. We could therefore ask for formulas counting fixed points of elements of $\langle \lkpl, \rowApl\rangle$ acting on this set of points.

Extending the CSP results of Armstrong--Stump--Thomas, it is conjectured (see \cite[Conj.~4.28]{hopkins2019minuscule}~\cite[Conj.~5.2]{hopkins2020cyclic}) that $\langle \rowApl \rangle$ acting on $\frac{1}{m}\zz^{\sfa^n}\cap \calc(\sfa^n)$ exhibits cyclic sieving with the sieving polynomial being the so-called ``$q$-multi-Catalan number.'' But this remains unproven.

\Cref{lem:conj to flip} extends directly to the PL level (and in fact to the birational level as well). Thus, the number of fixed points of $\rvacApl \circ \lkpl$ acting on $\frac{1}{m}\zz^{\sfa^n}\cap \calc(\sfa^n)$ is the same as the number of points in this set fixed by $\flip$. By another result of Proctor~\cite{proctor1983trapezoid} (enumerating ``plane partitions of shifted trapezoidal shape'') this number is known to be
\[ \#\left\{\pi \in\frac{1}{m}\zz^{\sfa^n}\cap \calc(\sfa^n)\colon \flip(\pi)=\pi\right\} = \hspace{-8pt} \prod_{1\leq i \leq j \leq \lceil n/2 \rceil} \hspace{-8pt} \frac{i+j-1+m}{i+j-1} \hspace{-8pt} \prod_{1\leq i \leq j \leq \lfloor n/2 \rfloor} \hspace{-8pt} \frac{i+j+m}{i+j}.\]

As for counting fixed points of $\lkpl$ acting on $\frac{1}{m}\zz^{\sfa^n}\cap \calc(\sfa^n)$, we have no idea what the answer should be.
\end{remark}

\section{Future directions}  \label{sec:future}

In this final section we discuss some possible future directions for research. 

\subsection{Rowvacuation for other posets} \label{subsec:other_rvac}

Our work here suggests that it may be interesting to study rowvacuation for other graded posets. It especially makes sense to study rowvacuation on those posets which are known to have good behavior of rowmotion. Prominent examples of such posets include the \dfn{minuscule posets} and \dfn{root posets}. As explained in~\cite{hopkins2020order}, work of Grinberg--Roby~\cite{grinberg2015birational2} and Okada~\cite{okada2020birational} implies that rowvacuation of a minuscule poset $\sfp$ has a very simple description in terms of a canonical anti-automorphism of $\sfp$. Meanwhile, in~\cite{defant2021symmetry}, Defant and the second author study rowvacuation for the classical type root posets. Additionally, as discussed in~\cite{hopkins2020order}, there are a handful of other families of posets which have good rowmotion behavior, and it might be worth looking at rowvacuation for these.

\subsection{Combinatorics of the \texorpdfstring{$\sfa^{n}$}{An} into \texorpdfstring{$\rect{n+1}{n+1}$}{[n+1]x[n+1]} embedding}

By tropicalizing the embedding $\iab$, we obtain an embedding 
\[\iapl\colon \calfpl_{\kappa}(\sfa^n)\to\calfpl_{2\kappa}(\rect{n+1}{n+1}).\] 
In particular (with $\kappa=1$) we have $\iapl(\calc(\sfa^n)) \subseteq 2 \cdot \calc(\rect{n+1}{n+1})$. We can ask where the vertices of $\calc(\sfa^n)$ are sent under $\iapl$. In fact, the image of the vertices of $\calc(\sfa^n)$ under $\iapl$ is a set that can be naturally identified with the $321$-avoiding permutations in the symmetric group $\ss_{n+1}$. In this way, $\iapl$ provides a bijection between Dyck paths (i.e., $\cala(\sfa^{n})$) and $321$-avoiding permutations. There are many known such bijections; it is not hard to see that $\iapl$ is precisely the \dfn{Billey--Jockusch--Stanley bijection}~\cite{billey1993combinatorial, callan2007bijections,elizalde2011fixed}. Furthermore, since $\cala(\sfa^{n})$ caries an action of rowmotion, we can use this Billey--Jockusch--Stanley bijection to define an action of rowmotion on the set of $321$-avoiding permutations. Rowmotion on $321$-avoiding permutations is studied in upcoming work of Adenbaum and Elizalde~\cite{adenbaum2021rowmotion}.

\subsection{Invariants}

A natural thing to do when studying any operator is to try to find functions that are invariant under the operator. For example, these invariant functions can separate orbits. However, for the operators studied in dynamical algebraic combinatorics, it is quite hard in practice to find nontrivial invariant functions. This is because, loosely speaking, these operators ``move things around a lot.'' Indeed, this is a major reason there has been so much focus on finding homomesies for these operators: there is a precise sense in which invariant functions and $0$-mesies are dual to one another (see~\cite[\S2.4]{propp2015homomesy} and~\cite{propp2021spectral}).

Nonetheless, there actually \emph{is} a very interesting function on antichains in $\cala(\sfa^n)$ which is invariant under both rowmotion and the Lalanne--Kreweras involution. This invariant appears in a paper of Panyushev~\cite{panyushev2009orbits}, but he attributes it to Oksana Yakimova and calls it the OY-invariant.

\begin{definition}
Let $A$ be an antichain in $\cala(\sfa^n)$ and $F\coloneqq \down^{-1}(A)$ be the order filter generated by $A$. Then we define the \dfn{OY-invariant} $\oy(A)$ of $A$ to be
\[\oy(A) \coloneqq \sum\limits_{e \in A} \big( \#\down(F \setminus  \{e\}) - \#A +1 \big).\]
\end{definition}

\begin{thm}[{\cite[Theorem 3.2, Proposition 3.6]{panyushev2009orbits}}]
The function $\oy\colon \cala(\sfa^n)\to \zz$ is invariant under both $\rowA$ and $\lk$.
\end{thm}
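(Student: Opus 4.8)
The plan is to reduce everything to a transparent combinatorial formula for $\oy$ and then verify the two invariances separately; the $\lk$ part is a one-line computation, while the $\rowA$ part is where the work lies.

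\emph{Step 1 (reformulation).} Write $A=\{[i_1,j_1],\dots,[i_k,j_k]\}\in\cala(\sfa^n)$ and set $I\coloneqq\{i_1,\dots,i_k\}$ and $J\coloneqq\{j_1,\dots,j_k\}\subseteq[n]$. For $S\subseteq[n]$ let $\beta(S)$ be the number of maximal runs of consecutive integers in $S$, and for a statement $P$ write $\mathbf 1(P)\in\{0,1\}$ for its truth value. First I would show
\[ \oy(A)=\beta(I)+\beta(J)-\mathbf 1(1\in I)-\mathbf 1(n\in J). \]
The key point is that, for $e\in A$, the summand $\#\down(F\setminus\{e\})-\#A+1$ is exactly the number of elements that are minimal in $F\setminus\{e\}$ but not in $F$: deleting the minimal element $e$ keeps the other $\#A-1$ minimal elements and exposes only those upper covers of $e$ whose unique lower neighbour in $F$ was $e$. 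In $\sfa^n$ the element $[i_\ell,j_\ell]$ has the two covers $[i_\ell-1,j_\ell]$ and $[i_\ell,j_\ell+1]$, and a short check shows the first is newly exposed iff $i_\ell\geq 2$ and $i_\ell-1\notin I$, and the second iff $j_\ell\leq n-1$ and $j_\ell+1\notin J$. Summing over $\ell$ gives the displayed formula (each run of $I$ not beginning at $1$ contributes one exposed left-cover, each run of $J$ not ending at $n$ one exposed right-cover).

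\emph{Step 2 (invariance under $\lk$).} Recall that $\lk(A)$ has first-coordinate set $[n]\setminus J$ and second-coordinate set $[n]\setminus I$. The proof is then immediate from the reformulation together with the elementary identity
\[ \beta(S)-\beta([n]\setminus S)=\mathbf 1(1\in S)+\mathbf 1(n\in S)-1 \qquad (S\subseteq[n]), \]
which follows by a one-line telescoping count of ascents and descents of the indicator string of $S$. Indeed, rewriting $\beta(I),\beta(J)$ via this identity turns $\oy(A)$ into $\beta([n]\setminus I)+\beta([n]\setminus J)+\mathbf 1(n\in I)+\mathbf 1(1\in J)-2$; applying the reformulation directly to $\lk(A)$ (coordinate sets $[n]\setminus J,\,[n]\setminus I$) and using $\mathbf 1(1\in[n]\setminus J)=1-\mathbf 1(1\in J)$, $\mathbf 1(n\in[n]\setminus I)=1-\mathbf 1(n\in I)$ produces the same expression. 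The very same computation, with $\flip$ acting by $(I,J)\mapsto((n{+}1)-J,(n{+}1)-I)$, shows $\oy$ is $\flip$-invariant, a useful consistency check since $\rowA^{\,n+1}=\flip$.

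\emph{Step 3 (invariance under $\rowA$).} Here I would use Panyushev's explicit form of inverse rowmotion, \cref{prop:row-inv}. Passing to the pre-deletion top and bottom rows $T\coloneqq\{1\}\cup(I+1)$ and $B\coloneqq(J-1)\cup\{n\}$, one checks $\beta(T)=\beta(I)+1-\mathbf 1(1\in I)$ and $\beta(B)=\beta(J)+1-\mathbf 1(n\in J)$, so that $\oy(A)=\beta(T)+\beta(B)-2$. Since $\rowA^{-1}(A)$ is read off from $T,B$ after deleting the invalid columns, its coordinate sets are $T',B'$ (the survivors), and the boundary invalid columns are deleted precisely when $1\in J$, respectively $n\in I$; hence $\oy(\rowA^{-1}A)=\beta(T')+\beta(B')-\mathbf 1(1\in T')-\mathbf 1(n\in B')$, which reduces the claim to showing that deleting all invalid columns lowers $\beta(T)+\beta(B)$ by exactly $\mathbf 1(1\in J)+\mathbf 1(n\in I)$. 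This last run-count identity is the main obstacle: invalid columns correspond to the singleton elements $[k,k]\in A$ that are consecutive or at the boundary, so several invalid columns can be adjacent and their deletions interact (as the small example $A=\{[1,1],[2,2]\}$ shows, an interior deletion may raise $\beta(T)+\beta(B)$ while a boundary deletion lowers it by two, only the total being controlled). I would therefore organize the argument by the maximal runs of the ``diagonal'' set $D=\{k:[k,k]\in A\}$ and verify the net change run-by-run, treating the two endpoints $1$ and $n$ separately. This is routine but genuinely fiddly, and is the only part of the proof that is not a quick formal manipulation.
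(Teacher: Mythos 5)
The paper itself offers no proof of this theorem --- it is imported verbatim from Panyushev with a citation --- so there is nothing in-paper to compare your argument against; I am judging it on its own terms. Your Step 1 is correct and complete: the summand $\#\down(F\setminus\{e\})-\#A+1$ does count the covers of $e$ that become minimal upon deleting $e$, and in $\sfa^n$ the cover $[i_\ell-1,j_\ell]$ (resp.\ $[i_\ell,j_\ell+1]$) is newly exposed precisely when $i_\ell\ge 2$ and $i_\ell-1\notin I$ (resp.\ $j_\ell\le n-1$ and $j_\ell+1\notin J$), which sums to $\beta(I)+\beta(J)-\mathbf{1}(1\in I)-\mathbf{1}(n\in J)$. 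Step 2 is also complete: the telescoping identity $\beta(S)-\beta([n]\setminus S)=\mathbf{1}(1\in S)+\mathbf{1}(n\in S)-1$ is right, and substituting it does reduce $\oy(\lk(A))$ to $\oy(A)$. So the $\lk$-invariance is fully proved, by a clean and self-contained route.

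The gap is in Step 3. The reduction there is sound --- $\beta(T)=\beta(I)+1-\mathbf{1}(1\in I)$, $\beta(B)=\beta(J)+1-\mathbf{1}(n\in J)$, and the two boundary columns of Panyushev's matrix are invalid exactly when $1\in J$, resp.\ $n\in I$ --- but it bottoms out at the identity
\[
\beta(T)+\beta(B)-\beta(T')-\beta(B')=\mathbf{1}(1\in J)+\mathbf{1}(n\in I),
\]
which you assert can be checked ``run-by-run'' over the maximal runs of $D=\{k:[k,k]\in A\}$ but never actually verify. That identity \emph{is} the $\rowA$-half of the theorem; everything preceding it is bookkeeping. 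I believe the identity is true (it holds in all the examples I tried, including $A=\emptyset$, the full diagonal, and your $\{[1,1],[2,2]\}$), and your proposed organization should succeed --- note that for an interior maximal run $\{d,\dots,d+m-1\}$ of $D$ the antichain condition forces $d-1\notin I$ and $d+m\notin J$, which is exactly what makes the net change in $\beta(T)+\beta(B)$ vanish there, with only the runs touching $1$ or $n$ contributing --- but this case analysis, including the interaction between a boundary-invalid column and an adjacent interior run, must actually be written out. As submitted, Step 3 is a correct reduction rather than a proof, so the $\rowA$-invariance remains unestablished.
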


Can $\oy$ be generalized to the birational realm to a function that is invariant under $\rowAb$ and $\lkb$? We will now explain how we think it can.

First, for each $[i,j]\in \sfa^n$ and $A \in \cala(\sfa^n)$, define
\[\oy_{[i,j]}(A)=\begin{cases} \#\down(\down^{-1}(A)\setminus\{[i,j]\}) - \#A + 1 &\textrm{if $[i,j]\in A$},\\
0 &\textrm{if $[i,j]\not\in A$}.\end{cases}\]
Then note that
\[ \oy = \sum_{[i,j]\in \sfa^n} \oy_{[i,j]}.\]
In the next definition we give the detropicalized version of these statistics. For conciseness we omit the proof, but one can show that the map $\oyb_{[i,j]}$ defined below is equivalent to $\oy_{[i,j]}$ when tropicalized and restricted to the combinatorial realm.

\begin{definition}
Let $\pi\in \calab(\sfa^n)$. Define the \dfn{birational OY-invariant}  $\oyb$ of $\pi$ as
\[\oyb(\pi) \coloneqq \prod\limits_{[i,j] \in \sfa^n} \oyb_{[i,j]}(\pi).\]
Here for $[i,j]\in \sfa^n$ we define $\oyb_{[i,j]}(\pi)\coloneqq LR$, with $L$ and $R$ described below.
\begin{itemize}
\item If $i=1$, set $L\coloneqq1$.
If $i\geq 2$, then consider the subposet $\sfpl$ of $\sfa^n$ consisting of elements $e$ such that $e$ is greater than or equal to either $[i-1,i-1]$ or $[i,i]$ AND $e$ is less than or equal to either $[i-1,j-1]$ or $[i,j]$. Also consider the subposet $\sfppl = \sfpl \setminus  \{[i,j]\}$.
Then set
\[ L \coloneqq \frac{\sum_{ \substack{v_1  \lessdot \cdots \lessdot v_{j-i+1} \\ \textrm{ a maximal chain in $\sfpl$} }} \pi(v_1)\pi(v_2)\cdots \pi(v_{j-i+1})} {\sum_{ \substack{v_1  \lessdot \cdots \lessdot v_{j-i+1} \\ \textrm{ a maximal chain in $\sfppl$} }} \pi(v_1)\pi(v_2)\cdots \pi(v_{j-i+1})}.\]
\item If $j=n$, set $R\coloneqq 1$.
If $j\leq n-1$, then consider the subposet $\sfpr$ of $\sfa^n$ consisting of elements $e$ such that $e$ is greater than or equal to either $[j, j]$ or $[j+1, j+1]$ AND $e$ is less than or equal to either $[i,j]$ or $[i+1,j+1]$. Also consider the subposet $\sfppr = \sfpr \setminus  \{[i,j]\}.$
Then set
\[ R \coloneqq \frac{\sum_{ \substack{v_1  \lessdot \cdots \lessdot v_{j-i+1} \\ \textrm{ a maximal chain in $\sfpr$} }} \pi(v_1)\pi(v_2)\cdots \pi(v_{j-i+1})} {\sum_{ \substack{v_1  \lessdot \cdots \lessdot v_{j-i+1} \\ \textrm{ a maximal chain in $\sfppr$} }} \pi(v_1)\pi(v_2)\cdots \pi(v_{j-i+1})}.\]
\end{itemize}
\end{definition}

\begin{example}
Consider the poset $\sfa^3$ with the same generic labeling $\pi \in\calab(\sfa^3)$ as in \cref{fig:lk_example}. Then
\begin{align*}
\oyb(\pi) &= {\color{red}\oyb_{[1,1]}(\pi)} {\color{blue}\oyb_{[2,2]}(\pi)} {\color{green}\oyb_{[3,3]}(\pi)} {\color{purple}\oyb_{[1,2]}(\pi)} {\color{cyan}\oyb_{[2,3]}(\pi)}{\color{brown}\oyb_{[1,3]}(\pi)}\\
&={\color{red}\frac{u+v}{v}} \cdot{\color{blue}\frac{u+v}{u}\cdot\frac{v+w}{w}}\cdot{\color{green}\frac{v+w}{v}}\cdot{\color{purple}\frac{vx+vy+wy}{(v+w)y}}\cdot{\color{cyan}\frac{ux+vx+vy}{(u+v)x}}\cdot{\color{brown} 1}\\
&= \frac{(ux+vx+vy)(vx+vy+wy)(u+v)(v+w)}{uv^2 wxy}.
\end{align*}
\end{example}

\begin{conj} \label{conj:oy}
The function $\oyb\colon \calab(\sfa^n)\to\rr_{>0}$ is invariant under both $\rowAb$ and $\lkb$.
\end{conj}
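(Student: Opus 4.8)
The plan is to prove the two invariances $\oyb \circ \rowAb = \oyb$ and $\oyb \circ \lkb = \oyb$ separately; since $\langle \rowAb, \lkb\rangle$ is dihedral (the relation $\rvacAb\circ\rowAb = (\rowAb)^{-1}\circ\rvacAb$ of \cref{prop:b_dihedral} and \cref{prop:b_com_diagram} holds with $\lkb = \rvacAb$), invariance under these two generators yields invariance under the whole group. It is worth stressing at the outset that, although $\oyb$ tropicalizes and specializes to $\oy$ and the combinatorial invariance is due to Panyushev (who attributes it to Yakimova), invariance is \emph{not} a property that lifts through detropicalization: birational identities tropicalize to piecewise-linear identities but not conversely. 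Hence a genuinely birational argument is required, and the combinatorial statement serves only as a consistency check.

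First I would recast each local factor $\oyb_{[i,j]} = LR$ in a form adapted to the existing rectangle machinery. The numerators and denominators of $L$ and $R$ are sums over maximal chains of fixed length in the lozenge-shaped subposets $\sfpl, \sfppl$ (resp.\ $\sfpr, \sfppr$), and such chain-sums are exactly the quantities computed by the inverse down-transfer $\down^{-1}$ restricted to these width-two sub-regions. I would make this precise, rewriting $L$ and $R$ as ratios of $\down^{-1}$-values so that $\oyb$ becomes a subtraction-free rational function of $\down^{-1}\pi$ supported near each column $i$ and each row $j$. The payoff is that these local regions are precisely the kind of $2 \times m$ rectangular windows on which birational rowmotion, via the embedding $\iab$ of \cref{lem:embedding} and the Grinberg--Roby reciprocity of \cref{thm:reciprocity}, is completely understood.

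For $\rowAb$-invariance I would then transport the computation to $\rect{n+1}{n+1}$: using the $\rowFb$-equivariance of $\iab$ and \cref{prop:b_row_iterates}, one application of $\rowAb$ to $\pi$ corresponds to one step of rowmotion on $\iab\widetilde{\pi}$ (with $\widetilde{\pi} = \down^{-1}\pi$), and \cref{thm:reciprocity} together with the Stanley--Thomas rotation of \cref{thm:st_rot} controls how each chain-sum window transforms. The goal is to show that the product $\prod_{[i,j]} L R$ telescopes: each factor that changes under $\rowAb$ is matched and cancelled by a neighboring factor shifted along the orbit, exactly as the combinatorial ``new minimal element'' count is conserved. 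For the reflection $\lkb = \rvacAb$ I would instead run the induction on $n$ furnished by \cref{lem:b_rvac_ind}: split $\oyb$ into the contribution of the rank-zero diagonal elements $[i,i]$ and the contribution of $\sfa^n_{\geq 1} \simeq \sfa^{n-1}$, apply the inductive hypothesis to the latter through $(\rowAb)^{-1}\circ\rvacAb$, and check the rank-zero factors directly. Alternatively, once $\rowAb$-invariance is in place, one can use the rectangle picture of \cref{rem:rect_rvac}, in which $\rvacFb$ is $\pi \mapsto \kappa \cdot (180^{\circ}\text{ rotation of }\pi)^{-1}$; combined with the inversion-and-rotation behavior this induces on chain-sum windows, this should directly give $\oyb(\lkb\pi) = \oyb(\pi)$.

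The hardest part will be the local chain-sum identities underlying the telescoping, i.e.\ proving that removing $[i,j]$ from a window and rowmotion-shifting the remaining windows produce exactly cancelling subtraction-free expressions; this is where the combinatorial proof's bookkeeping of which covers become newly minimal must be upgraded to an honest rational-function identity. A secondary but real obstacle is the factor-of-$2$ accounting that plagued the rectangle embedding (\cref{rem:2s}) and forced the authors to restrict to $\sfb'^n$ in \cref{sec:bn}: the two maximal and two minimal elements of each lozenge $\sfpl$ are the birational shadow of the doubled covers in $\rect{n+1}{n+1}$, and these factors of $2$ must be shown to cancel between the numerator and denominator of each $L$ and $R$. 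Finally, the base cases $\sfa^1$ and $\sfa^2$, together with the boundary factors ($L = 1$ when $i = 1$ and $R = 1$ when $j = n$), should be verified directly to anchor both the induction and the telescoping.
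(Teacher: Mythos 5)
The statement you are trying to prove is \cref{conj:oy}, which the paper leaves as an open \emph{conjecture}: the authors state only that they have verified it computationally for $n\leq 6$, and they supply no proof. So there is no argument in the paper to compare yours against, and the relevant question is whether your proposal actually closes the problem. It does not. What you have written is a research plan whose decisive steps are named but not carried out: the ``local chain-sum identities underlying the telescoping'' --- i.e.\ the claim that each factor $\oyb_{[i,j]}$ changed by $\rowAb$ is exactly cancelled by a neighboring factor --- is precisely the content of the conjecture, and you explicitly defer it (``this is where the combinatorial proof's bookkeeping \dots must be upgraded to an honest rational-function identity''). Until that identity is stated and proved as a subtraction-free rational identity, nothing has been established. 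The same applies to the $\lkb$ case: both the inductive route via \cref{lem:b_rvac_ind} and the rotation route via \cref{rem:rect_rvac} are sketched without the actual verification that the $L$ and $R$ factors transform compatibly.

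Two further cautions. First, your identification of the numerators and denominators of $L$ and $R$ with values of $\down^{-1}$ is not literally correct: $(\down^{-1}\pi)(x)$ sums over saturated chains from $\hatz$ up to $x$, whereas $L$ and $R$ sum over maximal chains of a fixed length inside a lozenge bounded by \emph{two} prescribed minimal and \emph{two} prescribed maximal elements; these are related but not equal, and the discrepancy is exactly where the factor-of-$2$ issues you mention live. Second, the paper's closing remark is a structural warning against your telescoping strategy: at the combinatorial level $\oy$ is invariant under every antichain rank toggle $\bftau_i$, but $\oyb$ is \textbf{not} invariant under the birational rank toggles. So the birational invariance, if true, cannot be assembled from local toggle-by-toggle (or rank-by-rank) conservation; it must emerge only from the full composition defining $\rowAb$ and $\lkb$. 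Any cancellation scheme you set up must therefore be global in a way your sketch does not yet make precise.
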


We have verified \cref{conj:oy} for $n\leq 6$.

\begin{remark}
At the combinatorial level, it appears that $\oy$ is invariant not just under $\rowA$ and $\lk$, but in fact under every antichain rank toggle $\bftau_i$. However, the birational function $\oyb$ is {\bf not} invariant under the birational antichain rank toggles.
\end{remark}

\bibliography{birational_lalanne-kreweras}{}

\begin{thebibliography}{10}

\bibitem{adenbaum2021rowmotion}
B.~Adenbaum and S.~Elizalde.
\newblock Rowmotion on 321-avoiding permutations.
\newblock In preparation, 2021.

\bibitem{armstrong2013uniform}
D.~Armstrong, C.~Stump, and H.~Thomas.
\newblock A uniform bijection between nonnesting and noncrossing partitions.
\newblock {\em Transactions of the American Mathematical Society},
  365(8):4121--4151, 2013.

\bibitem{bessis2011cyclic}
D.~Bessis and V.~Reiner.
\newblock Cyclic sieving of noncrossing partitions for complex reflection
  groups.
\newblock {\em Ann. Comb.}, 15(2):197--222, 2011.

\bibitem{billey1993combinatorial}
S.~Billey, W.~Jockusch, and R.~P. Stanley.
\newblock Some combinatorial properties of {S}chubert polynomials.
\newblock {\em Journal of Algebraic Combinatorics}, 2(4):345--374, 1993.

\bibitem{brouwer1974period}
A.~Brouwer and A.~Schrijver.
\newblock On the period of an operator, defined on antichains.
\newblock {\em Stichting Mathematisch Centrum. Zuivere Wiskunde}, ZW
  24/74:1--13, 1974.

\bibitem{callan2007bijections}
D.~Callan.
\newblock Bijections from {D}yck paths to 321-avoiding permutations revisited.
\newblock \arxiv{0711.2684}, 2007.

\bibitem{cameron1995orbits}
P.~J. Cameron and D.~G. Fon-Der-Flaass.
\newblock Orbits of antichains revisited.
\newblock {\em European J. Combin.}, 16(6):545--554, 1995.

\bibitem{chan2017expected}
M.~Chan, S.~Haddadan, S.~Hopkins, and L.~Moci.
\newblock The expected jaggedness of order ideals.
\newblock {\em Forum Math. Sigma}, 5:Paper No. e9, 27, 2017.

\bibitem{defant2021symmetry}
C.~Defant and S.~Hopkins.
\newblock Symmetry of {N}arayana numbers and rowvacuation of root posets.
\newblock {\em Forum Math. Sigma}, 9:Paper No. e53, 24, 2021.

\bibitem{defant2021homomesy}
C.~Defant, S.~Hopkins, S.~Poznanovi\'{c}, and J.~Propp.
\newblock Homomesy via toggleability statistics.
\newblock \arxiv{2108.13227}, 2021.

\bibitem{einstein2016noncrossing}
D.~Einstein, M.~Farber, E.~Gunawan, M.~Joseph, M.~Macauley, J.~Propp, and
  S.~Rubinstein-Salzedo.
\newblock Noncrossing partitions, toggles, and homomesies.
\newblock {\em Electron. J. Combin.}, 23(3):Paper 3.52, 26, 2016.

\bibitem{einstein2018combinatorial}
D.~Einstein and J.~Propp.
\newblock Combinatorial, piecewise-linear, and birational homomesy for products
  of two chains.
\newblock {\em Algebr. Comb.}, 4(2):201--224, 2021.

\bibitem{elizalde2011fixed}
S.~Elizalde.
\newblock Fixed points and excedances in restricted permutations.
\newblock {\em Electron. J. Combin.}, 18(2):Paper 29, 17, 2011.

\bibitem{grinberg2015birational2}
D.~Grinberg and T.~Roby.
\newblock Iterative properties of birational rowmotion {II}: rectangles and
  triangles.
\newblock {\em Electron. J. Combin.}, 22(3):Paper 3.40, 49, 2015.

\bibitem{grinberg2016birational1}
D.~Grinberg and T.~Roby.
\newblock Iterative properties of birational rowmotion {I}: generalities and
  skeletal posets.
\newblock {\em Electron. J. Combin.}, 23(1):P1--33, 2016.

\bibitem{haddadan2014homomesy}
S.~Haddadan.
\newblock Some instances of homomesy among ideals of posets.
\newblock {\em Electron. J. Combin.}, 28(1):Paper No. 1.60, 23, 2021.

\bibitem{hopkins2019minuscule}
S.~Hopkins.
\newblock Minuscule doppelg\"{a}ngers, the coincidental down-degree
  expectations property, and rowmotion.
\newblock \arxiv{1902.07301}, 2019.
\newblock Forthcoming, \emph{Experimental Mathematics}.

\bibitem{hopkins2020cyclic}
S.~Hopkins.
\newblock Cyclic sieving for plane partitions and symmetry.
\newblock {\em SIGMA Symmetry Integrability Geom. Methods Appl.}, 16:130, 40
  pages, 2020.

\bibitem{hopkins2020order}
S.~Hopkins.
\newblock Order polynomial product formulas and poset dynamics.
\newblock \arxiv{2006.01568}, 2020.
\newblock For the \emph{AMS} volume on Open Problems in Algebraic Combinatorics
  to accompany the 2022 OPAC conference at U. Minnesota.

\bibitem{joseph2019antichain}
M.~Joseph.
\newblock Antichain toggling and rowmotion.
\newblock {\em Electron. J. Combin.}, 26(1):Paper No. 1.29, 43, 2019.

\bibitem{joseph2020birational}
M.~Joseph and T.~Roby.
\newblock Birational and noncommutative lifts of antichain toggling and
  rowmotion.
\newblock {\em Algebraic Combinatorics}, 3(4):955--984, 2020.

\bibitem{joseph2021birational}
M.~Joseph and T.~Roby.
\newblock A birational lifting of the {S}tanley-{T}homas word on products of
  two chains.
\newblock {\em Discrete Math. Theor. Comput. Sci.}, 23(1):Paper No. 17, 20,
  2021.

\bibitem{kreweras1970sur}
G.~Kreweras.
\newblock Sur les \'eventails de segments.
\newblock {\em Cahiers du Bureau universitaire de recherche op\'erationnelle
  S\'erie Recherche}, 15:3--41, 1970.

\bibitem{lalanne1992involution}
J.-C. Lalanne.
\newblock Une involution sur les chemins de {D}yck.
\newblock {\em European J. Combin.}, 13(6):477--487, 1992.

\bibitem{musiker2018paths}
G.~Musiker and T.~Roby.
\newblock Paths to understanding birational rowmotion on products of two
  chains.
\newblock {\em Algebr. Comb.}, 2(2):275--304, 2019.

\bibitem{okada2020birational}
S.~Okada.
\newblock Birational rowmotion and {C}oxeter-motion on minuscule posets.
\newblock {\em Electron. J. Combin.}, 28(1):Paper No. 1.17, 30, 2021.

\bibitem{panyushev2004adnilpotent}
D.~I. Panyushev.
\newblock Ad-nilpotent ideals of a {B}orel subalgebra: generators and duality.
\newblock {\em J. Algebra}, 274(2):822--846, 2004.

\bibitem{panyushev2009orbits}
D.~I. Panyushev.
\newblock On orbits of antichains of positive roots.
\newblock {\em European J. Combin.}, 30(2):586--594, 2009.

\bibitem{pon2011promotion}
S.~Pon and Q.~Wang.
\newblock Promotion and evacuation on standard {Y}oung tableaux of rectangle
  and staircase shape.
\newblock {\em Electron. J. Combin.}, 18(1):Paper 18, 18, 2011.

\bibitem{proctor1983trapezoid}
R.~A. Proctor.
\newblock Shifted plane partitions of trapezoidal shape.
\newblock {\em Proc. Amer. Math. Soc.}, 89(3):553--559, 1983.

\bibitem{proctor1988odd}
R.~A. Proctor.
\newblock Odd symplectic groups.
\newblock {\em Invent. Math.}, 92(2):307--332, 1988.

\bibitem{proctor1990new}
R.~A. Proctor.
\newblock New symmetric plane partition identities from invariant theory work
  of {D}e {C}oncini and {P}rocesi.
\newblock {\em European J. Combin.}, 11(3):289--300, 1990.

\bibitem{propp2021spectral}
J.~Propp.
\newblock A spectral theory for combinatorial dynamics.
\newblock \arxiv{2105.11568}, 2021.

\bibitem{propp2015homomesy}
J.~Propp and T.~Roby.
\newblock Homomesy in products of two chains.
\newblock {\em Electron. J. Combin.}, 22(3):Paper 3.4, 29, 2015.

\bibitem{roby2016dynamical}
T.~Roby.
\newblock Dynamical algebraic combinatorics and the homomesy phenomenon.
\newblock In {\em Recent trends in combinatorics}, volume 159 of {\em IMA Vol.
  Math. Appl.}, pages 619--652. Springer, [Cham], 2016.

\bibitem{rush2015homomesy}
D.~B. Rush and K.~Wang.
\newblock On orbits of order ideals of minuscule posets {I}{I}: Homomesy.
\newblock \arxiv{1509.08047}, 2015.

\bibitem{schutzenberger1972promotion}
M.~P. Sch\"{u}tzenberger.
\newblock Promotion des morphismes d'ensembles ordonn\'{e}s.
\newblock {\em Discrete Math.}, 2:73--94, 1972.

\bibitem{stanley1986twoposet}
R.~P. Stanley.
\newblock Two poset polytopes.
\newblock {\em Discrete Comput. Geom.}, 1(1):9--23, 1986.

\bibitem{stanley2009promotion}
R.~P. Stanley.
\newblock Promotion and evacuation.
\newblock {\em Electron. J. Combin.}, page~R9, 2009.

\bibitem{stanley2011ec1}
R.~P. Stanley.
\newblock {\em Enumerative combinatorics, volume 1, 2nd edition}.
\newblock Cambridge University Press, 2011.

\bibitem{sage}
W.~Stein et~al.
\newblock {\em {S}age {M}athematics {S}oftware ({V}ersion 9.0)}.
\newblock The Sage Development Team, 2020.
\newblock \url{http://www.sagemath.org}.

\bibitem{striker2018generalized}
J.~Striker.
\newblock Rowmotion and generalized toggle groups.
\newblock {\em Discrete Math. Theor. Comput. Sci.}, 20(1):Paper No. 17, 26,
  2018.

\bibitem{striker2012promotion}
J.~Striker and N.~Williams.
\newblock Promotion and rowmotion.
\newblock {\em European J. Combin.}, 33(8):1919--1942, 2012.

\bibitem{thomas2019rowmotion}
H.~Thomas and N.~Williams.
\newblock Rowmotion in slow motion.
\newblock {\em Proc. Lond. Math. Soc. (3)}, 119(5):1149--1178, 2019.

\end{thebibliography}
\bibliographystyle{abbrv}

\end{document}